\documentclass[11pt, twoside, leqno]{article}

\usepackage{amssymb}
\usepackage{amsmath}
\usepackage{amsthm}
\usepackage{color}
\usepackage{mathrsfs}
\usepackage{txfonts}

\usepackage{appendix}

\usepackage{indentfirst}

\usepackage{enumerate}

\allowdisplaybreaks

\pagestyle{myheadings}\markboth{\footnotesize\rm\sc
Aline Bonami, Yong Jiao, Guangheng Xie, Dachun Yang and Dejian Zhou}
{\footnotesize\rm\sc Products and Commutators of
 Martingales in $H_1$ and  ${\rm BMO}$}

\textwidth=15cm
\textheight=21cm
\oddsidemargin 0.46cm
\evensidemargin 0.46cm

\parindent=13pt

\def\XXint#1#2#3{{\setbox0=\hbox{$#1{#2#3}{\int}$ }
\vcenter{\hbox{$#2#3$ }}\kern-.6\wd0}}

\def\red{\color{red}}

\newcommand{\E}{\mathbb{E}}
\newcommand{\R}{\mathbb{R}}

\newtheorem{theorem}{Theorem}[section]
\newtheorem{lemma}[theorem]{Lemma}
\newtheorem{corollary}[theorem]{Corollary}

\newtheorem{example}[theorem]{Example}
\newtheorem{proposition}[theorem]{Proposition}
\theoremstyle{definition}
\newtheorem{definition}[theorem]{Definition}
\newtheorem{remark}[theorem]{Remark}
\newcounter{rea}
\setcounter{rea}{0}

\renewcommand{\appendix}{\par
   \setcounter{section}{0}%
   \setcounter{subsection}{0}%
   \setcounter{subsubsection}{0}%
   \gdef\thesection{\@Alph\c@section}%
   \gdef\thesubsection{\@Alph\c@section.\@arabic\c@subsection}%
   \gdef\theHsection{\@Alph\c@section.}%
   \gdef\theHsubsection{\@Alph\c@section.\@arabic\c@subsection}%
   \csname appendixmore\endcsname
 }

\numberwithin{equation}{section}

\begin{document}

\arraycolsep=1pt

\title{\bf\Large  Products and Commutators of
 Martingales in $H_1$ and  ${\rm BMO}$
	\footnotetext{\hspace{-0.35cm} 2020 {\it
			Mathematics Subject Classification}. Primary 60G42;
		Secondary 60G46, 47B47, 42B25, 42B30.
		\endgraf {\it Key words and phrases.} martingale Hardy space, martingale BMO,
		bilinear decomposition, commutator, dyadic Hilbert transform.
		\endgraf This project is supported by  the National
		Natural Science Foundation of China
		(Grant Nos. 12001541, 12125109, 12201647, 11971058 and 12071197), the National Key
		Research and Development Program of China
		(Grant No. 2020YFA0712900), and the Natural Science Foundation of Hunan Province
		(Grant Nos. 2021JJ40711 and 2021JJ40714).}}
\author{Aline Bonami\footnote{Corresponding author,
E-mail: \texttt{aline.bonami@univ-orleans.fr}/{\red February 06, 2023}
/Final version.},\ \ Yong Jiao, Guangheng Xie, Dachun Yang and Dejian Zhou}
\date{}
\maketitle

\vspace{-0.7cm}

\begin{center}
\begin{minipage}{13cm}
{\small {\bf Abstract}\quad
Let $f:=(f_n)_{n\in \mathbb{Z}_+}$ and $g:=(g_n)_{n\in \mathbb{Z}_+}$ be
two martingales related to the probability space
$(\Omega,\mathcal F,\mathbb P)$ equipped with the
filtration $(\mathcal F_n)_{n\in \mathbb{Z}_+}.$ Assume that
$f$ is in the martingale Hardy space $H_1$ and $g$ is
in its dual space, namely the martingale $\rm BMO.$ Then the semi-martingale
$f\cdot g:=(f_ng_n)_{n\in \mathbb{Z}_+}$ may be written as the sum
$$f\cdot g=G(f, g)+L( f,g).$$
Here $L( f,g):=(L( f,g)_n)_{n\in\mathbb{Z}_+}$
with $L( f,g)_n:=\sum_{k=0}^n(f_k-f_{k-1})(g_k-g_{k-1)})$
for any $n\in\mathbb{Z}_+$, where $f_{-1}:=0=:g_{-1}$.
The authors prove that $L( f,g)$ is a process with bounded
variation and limit in $L^1,$ while  $G(f,g)$ belongs to
the martingale Hardy-Orlicz space
$H_{\log}$ associated with the Orlicz function
$$\Phi(t):=\frac{t}{\log(e+t)},\quad \forall\, t\in[0,\infty).$$
The above bilinear decomposition $L^1+H_{\log}$ is sharp in the sense that,
for particular martingales, the  space $L^1+H_{\log}$
cannot be replaced  by a smaller space having a larger dual.
As an application, the authors  characterize the largest subspace of $H_1$,
denoted by $H^b_1$ with $b\in {\rm BMO}$, such that the commutators $[T, b]$ with
classical sublinear operators $T$ are bounded from $H^b_1$ to $L^1$.
This endpoint boundedness of commutators allow the authors to give more applications.
On the one hand, in the martingale setting, the authors
obtain the endpoint estimates of commutators for both
martingale transforms and  martingale fractional
integrals. On the other hand, in harmonic analysis,
the authors establish the endpoint estimates of commutators
both for the dyadic Hilbert transform beyond doubling measures
and for the maximal operator of Ces\`{a}ro means of
Walsh--Fourier series.
}
\end{minipage}
\end{center}

\vspace{0.1cm}

\section{Introduction}

Motivated by developments in
geometric function theory and nonlinear elasticity,
Bonami et al. in their pioneer work \cite{BIJZ2007}
investigated the linear decomposition of
products of two functions respectively in the Hardy space $H_1(\mathbb{R}^n)$ and
the space ${\rm BMO}(\mathbb{R}^n)$ of
functions of bounded mean oscillation and conjectured in \cite[Conjecture 1.7]{BIJZ2007}
that this linear decomposition should be bilinear.
Via the wavelet multiresolution analysis, Bonami et al. \cite{BGK2012}
completely solved this conjecture by proving that
there exist two bounded bilinear operators
$$L:\ H_1(\mathbb{R}^n)\times {\rm BMO}(\mathbb{R}^n)\to L^1(\mathbb{R}^n)$$
and
$$G:\ H_1(\mathbb{R}^n)\times {\rm BMO}(\mathbb{R}^n)\to H_{\varphi}(\mathbb{R}^n)$$
such that the product $fg$ of any $f\in H_1(\mathbb{R}^n)$
and $g\in {\rm BMO}(\mathbb{R}^n)$
can be represented as
$$fg=L(f,g)+G(f,g).$$
Here the product is taken in the sense of Schwartz distributions
and $H_{\varphi}(\mathbb{R}^n)$ is
a Musielak--Orlicz Hardy space related to the Musielak--Orlicz
function
\begin{align}\label{1.1x}
\varphi(x,t):=\frac {t}{\log(e+|x|)+\log(e+t)}
\end{align}
for any $(x,t)\in \mathbb{R}^n\times[0,\infty)$.
The question of the optimality of the result was raised in this article,
to know whether $H_{\varphi}(\mathbb{R}^n)$ can be replaced by
a smaller vector space. The pointwise multiplier theorem of
Nakai and Yabuta \cite{NY1985}
allowed the authors in \cite{BGK2017} to answer that the smallest
Banach space containing
$H_{\varphi}(\mathbb{R}^n)$ is in some sense the smallest Banach space
containing these products. Optimality was deduced in one dimension in
\cite{bk14} from an exact factorization and in \cite{BGK2017} for $n\geq
2$ from a weak factorization.
More related progress on this subject over $\mathbb R^n$ can be found
in \cite{cky18,blyy21,yyz21,zyy22}.
Other contexts than $\R^n$ have also been studied recently. In particular,
Fu et al. \cite{fyl17} and Liu et al. \cite{LYY2018}
established  bilinear decomposition on  metric
measure spaces of homogeneous type.

It is natural to consider the product of general martingales. It turns
out that the bilinear decomposition appears particularly intuitive in the
context of martingales, even if new difficulties appear. Let us first fix
some symbols before describing this new situation.

Let $(\Omega,\mathcal F,\mathbb P)$ be a complete probability space and
$(\mathcal F_{n})_{n\in{\mathbb Z_+}}$ an increasing sequence of
sub-$\sigma$-algebras of $\mathcal F$ satisfying
$\mathcal F=\sigma(\bigcup_{n\in\mathbb Z_+}\mathcal F_n)$.
We assume for simplicity that $\mathcal{F}_0=\{\emptyset, \Omega\}$.
The martingale Hardy space and the martingale ${\rm BMO}$
are denoted, respectively, by $H_1$ and ${\rm BMO}$.
Martingales $f=(f_n)_{n\in \mathbb{Z}_+}$ that belong to $H_1$ or
${\rm BMO}$ can be identified with their limits $f_\infty,$
so that these martingale spaces may be seen as spaces of
functions (or random variables) on $\Omega,$ but it is no more
the case for the products we are interested in:
if $f=(f_n)_{n\in \mathbb{Z}_+}$  belongs to $H_1$ and
$g=(g_n)_{n\in \mathbb{Z}_+}$ belongs to ${\rm BMO},$
the product $f_\infty g_\infty$ is not integrable
in general, so that such an identification is not
possible. Recall that it is possible in the sense
of distributions in the case of $\R^n.$ Here it is
natural to define $f\cdot g$ as the discrete process
$(f_n g_n)_{n\in Z_+}.$ But it is not a martingale.
Fortunately it is a semi-martingale, which is the sum
of a martingale and a process with bounded variation.
Our main result  gives the decomposition of the
semi-martingale $f\cdot g$ into two parts, one a martingale
in $H_{\log}$ and one a bounded variation process.
We call $\mathcal BV$ the space of such processes,
that is, the space of adapted sequences of random variables,
$h=(h_n)_{n\in Z_+}$, such that
$$\E\left(\sum_{n=1}^{\infty} \left|h_n-h_ {n-1}\right|\right)<\infty,$$
where $\E$ denotes the expectation.
On the other hand, $H_{\log}$ is the martingale
Orlicz Hardy space associated with the Orlicz function
\begin{align}\label{1.1y}
\Phi(t):=\frac{t}{\log(e+t)}, \ \forall\,t\in[0,\infty).
\end{align}

For an adapted process $h=(h_n)_{n\in\mathbb{Z}_+},$ we let
$d_0(h):= h_0$ and $d_n(h)=d_nh:=h_n-h_{n-1}$ for any $n\in\mathbb N$.
The main result of our first part is the following one.
\begin{theorem}\label{thm-bilinear}
For any $(f,g)\in H_1\times {\rm BMO},$ one can write
	$$f \cdot g= L(f,g)+G(f,g),$$
where $L$ and $G$ are two bounded bilinear operators with
	$$L:\ H_1\times {\rm BMO}\to \mathcal BV$$
	and
	$$G:\ H_1\times {\rm BMO}\to H_{\log}.$$
	Moreover, for any $n\in{\mathbb Z}_+,$
	$$L(f,g)_n:=\sum_{k=0}^n d_k(f)d_k(g).$$
\end{theorem}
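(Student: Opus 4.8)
The plan is to obtain the splitting from discrete integration by parts and then to estimate the two resulting processes separately. For every $n\in\mathbb{Z}_+$, telescoping gives $f_ng_n=\sum_{k=0}^n(f_kg_k-f_{k-1}g_{k-1})$, and, with $f_{-1}:=g_{-1}:=0$,
$$f_kg_k-f_{k-1}g_{k-1}=f_{k-1}d_k(g)+g_{k-1}d_k(f)+d_k(f)d_k(g).$$
As the $k=0$ terms of the first two summands vanish, this is exactly $f\cdot g=L(f,g)+G(f,g)$ with $L(f,g)_n=\sum_{k=0}^n d_k(f)d_k(g)$ and $G(f,g)_n=\sum_{k=1}^n\bigl(f_{k-1}d_k(g)+g_{k-1}d_k(f)\bigr)$; since $f_{k-1}$ and $g_{k-1}$ are $\mathcal F_{k-1}$-measurable, $d_k(G(f,g))=f_{k-1}d_k(g)+g_{k-1}d_k(f)$ is a martingale difference, so $G(f,g)$ is a martingale, and the bilinearity of $L$ and $G$ is immediate. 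To justify the estimates below rigorously, I would first prove the two boundedness assertions for pairs $(f,g)$ in a dense subclass --- say, bounded martingales that are constant from some index on, for which every sum in sight converges in $L^1$ --- and then pass to general $(f,g)\in H_1\times{\rm BMO}$ using the completeness of $\mathcal BV$ and $H_{\log}$.

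For the bound $L\colon H_1\times{\rm BMO}\to\mathcal BV$, note that $\|L(f,g)\|_{\mathcal BV}\approx|f_0g_0|+\E\bigl[\sum_{k\ge1}|d_k(f)|\,|d_k(g)|\bigr]$ with $|f_0g_0|\lesssim\|f\|_{H_1}\|g\|_{\rm BMO}$, so it suffices to control the last expectation. Using the atomic decomposition of $H_1$, I would reduce to $f=a$, a $(1,\infty)$-atom attached to a stopping time $\tau$: it is supported on $F:=\{\tau<\infty\}$, satisfies $d_k(a)=0$ for $k\le\tau$ and $\|S(a)\|_{L^2}\le\mathbb P(F)^{-1/2}$, and $S(a)$ is supported on $F$. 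Then the Cauchy--Schwarz inequality gives
$$\E\Bigl[\sum_{k>\tau}|d_k(a)|\,|d_k(g)|\Bigr]\le\|S(a)\|_{L^2}\,\Bigl\|\mathbf 1_F\Bigl(\sum_{k>\tau}|d_k(g)|^2\Bigr)^{1/2}\Bigr\|_{L^2}\lesssim\mathbb P(F)^{-1/2}\bigl(\mathbb P(F)\|g\|_{\rm BMO}^2\bigr)^{1/2}=\|g\|_{\rm BMO},$$
where the middle factor is estimated by conditioning on $\mathcal F_\tau$ and invoking the defining inequality $\E\bigl(\sum_{k>\tau}|d_k(g)|^2\mid\mathcal F_\tau\bigr)\le\|g\|_{\rm BMO}^2$. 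Summing over the atomic decomposition yields $\|L(f,g)\|_{\mathcal BV}\lesssim\|f\|_{H_1}\|g\|_{\rm BMO}$; moreover $\E\bigl[\sum_{k\ge1}|d_k(f)|\,|d_k(g)|\bigr]<\infty$ forces the partial sums of $L(f,g)$ to converge in $L^1$, which is the asserted limit in $L^1$.

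For the bound $G\colon H_1\times{\rm BMO}\to H_{\log}$, I would use the square-function characterization $\|h\|_{H_{\log}}\approx\|S(h)\|_{L_\Phi}$ of the martingale Hardy--Orlicz space together with the generalized Hölder inequality $\|uv\|_{L_\Phi}\lesssim\|u\|_{L^1}\|v\|_{\exp L}$, which holds because $\Phi^{-1}(t)\approx t\log(e+t)$. From $|f_{k-1}|\le f^*:=\sup_j|f_j|$, $|g_{k-1}|\le g^*$ and the triangle inequality for square functions, one gets the pointwise bound $S\bigl(G(f,g)\bigr)\le f^*S(g)+g^*S(f)$. Here $f^*$ and $S(f)$ lie in $L^1$ with norms $\lesssim\|f\|_{H_1}$ (Davis' inequality), while the martingale John--Nirenberg inequality gives $\|g-g_0\|_p\lesssim p\,\|g\|_{\rm BMO}$ for every $p\ge1$, hence, via the Burkholder--Davis--Gundy inequalities, $\|g^*\|_{\exp L}+\|S(g)\|_{\exp L}\lesssim\|g\|_{\rm BMO}$. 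Combining these,
$$\|S(G(f,g))\|_{L_\Phi}\lesssim\|f^*\|_{L^1}\|S(g)\|_{\exp L}+\|S(f)\|_{L^1}\|g^*\|_{\exp L}\lesssim\|f\|_{H_1}\|g\|_{\rm BMO},$$
which is the desired estimate.

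I expect this last step to be the main obstacle, and more precisely the passage through the Orlicz scale: one needs at hand (or must establish) both the square-function characterization of $H_{\log}$ and the generalized Hölder inequality tied to exactly the Orlicz function $\Phi(t)=t/\log(e+t)$, and one needs that for martingale ${\rm BMO}$ the maximal and square functions are exponentially integrable --- it is precisely this John--Nirenberg exponential integrability that produces the logarithmic loss and singles out $\Phi$. By comparison, the density reduction in the first step and the atom bookkeeping in the second are routine.
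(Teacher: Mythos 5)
Your splitting is the same telescoping identity \eqref{decom} the paper uses (with $G=\Pi_1+\Pi_2$ merged from the start), and your mechanism for the $H_{\log}$ bound --- a generalized H\"older inequality between $L^1$ and the exponential class, fed by John--Nirenberg --- is exactly that of Lemmas \ref{JN} and \ref{key-lem}. However, two steps as you state them do not go through.

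The most serious one is your appeal to ``the atomic decomposition of $H_1$'' into $(1,\infty)$-atoms attached to stopping times $\tau$, with $S(a)$ supported on $\{\tau<\infty\}$ and $\|S(a)\|_{L^2}\le[\mathbb P(\{\tau<\infty\})]^{-1/2}$. For a general (non-regular) filtration no such decomposition of $H_1=H_1^S$ exists: the natural exit times of $S_{n+1}(f)$ are not stopping times, since $S_{n+1}(f)$ is not $\mathcal F_n$-measurable. This is precisely the difficulty the paper singles out and circumvents by the Davis decomposition (Lemma \ref{lem-Davis}) $f=f^1+f^d$, where $f^1\in h_1$ admits an atomic decomposition with simple $(s,\infty)$-atoms built from the \emph{predictable} function $s$ (Lemma \ref{lem-atomic-decom}) and $f^d\in h_1^d$ is a sum of integrable jumps. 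For the $\mathcal BV$ bound the gap is harmless in the end, because the inequality $\E\bigl[\sum_k|d_kf|\,|d_kg|\bigr]\le\sqrt2\,\|f\|_{H_1}\|g\|_{\mathrm{BMO}}$ is Fefferman's inequality, read off from the proof of the duality $(H_1)^*=\mathrm{BMO}$ without any atoms; that is all Proposition \ref{pro-3} uses. But any other place where you lean on these stopping-time atoms must be rerouted through the Davis decomposition.

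The second gap is in the estimate of the term $f^*S(g)$: you need $\|S(g)\|_{\exp L}\lesssim\|g\|_{\mathrm{BMO}}$, and the route you indicate (John--Nirenberg moment growth $\|g-g_0\|_{L^p}\lesssim p\|g\|_{\mathrm{BMO}}$ composed with Burkholder--Davis--Gundy) does not deliver it: the constant in $\|S(g)\|_{L^p}\le C_p\|g\|_{L^p}$ grows like $\sqrt p$ as $p\to\infty$, so you only obtain $\|S(g)\|_{L^p}\lesssim p^{3/2}\|g\|_{\mathrm{BMO}}$, i.e. membership in $\exp L^{2/3}$, and the product of $L^1$ with $\exp L^{2/3}$ does not land in $L^{\log}$. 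The claim itself is true --- from $\E_n\bigl(S^2(g)-S_n^2(g)\bigr)=\E_n\bigl(|g_\infty-g_n|^2\bigr)\le\|g\|_{{\rm bmo}_2}^2$ together with the bounded jumps $\sup_n\|d_ng\|_{L^\infty}\le\|g\|_{\mathrm{BMO}}$ one gets a sub-Gaussian, hence a fortiori $\exp L$, bound for $S(g)$ --- but it has to be proved by that conditional-increment argument, not by composing moment inequalities. The paper sidesteps the issue entirely: $\Pi_2$ is controlled by $S(\Pi_2(f,g))\le M(g)S(f)$, which only needs $M(g)\in\exp L$ (Lemma \ref{JN}), while $\Pi_1$ is shown to land in the smaller space $h_1\subset H_1\subset H_{\log}$ via $s(\Pi_1(a,g))\le M(a)\|g\|_{{\rm bmo}_2}$ on atoms and jumps (Proposition \ref{pro-1}); this stronger conclusion for $\Pi_1$ is reused in the commutator theory and is lost by your merged estimate. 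Finally, remember to normalize $g_0=0$ before invoking the exponential integrability of $M(g)$, as the paper does at the start of Section \ref{sect-bilinear}.
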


The bilinearity of $L$ is clear by its definition and follows immediately for $G.$
The martingale $G(f, g)$ is the sum of two paraproducts, $\Pi_1(f,g)$
and $\Pi_2(f,g),$ which are respectively such that
$d_n(\Pi_1(f,g))=f_{n-1}d_n(g)$ and
$d_n(\Pi_2(f,g))=g_{n-1}d_n(f)$ for any $n\in{\mathbb Z}_+$.

\begin{remark}
\begin{enumerate}[{\rm (i)}]
\item We will show that Theorem \ref{thm-bilinear}
is somewhat sharp in the sense of duality. For this,
we will rely this theorem to its dual and use the
characterization of pointwise multipliers of $\mathrm{BMO}$
given in \cite{NS2014}.
Precisely, if  Theorem \ref{thm-bilinear} holds true for $\mathcal Y$ with
$\mathcal Y\subset H_{\log}$,
then
$$\left(L^1+\mathcal Y\right)^{\ast}
=\left(L^1+H_{\log}\right)^{\ast}.$$

\item
As quoted above, in the same problem on $\R^n$,
the Orlicz function $\Phi$ in \eqref{1.1y} that appears in the definition
of $H_{\log}$ is replaced by the   Musielak--Orlicz function
$\varphi$ in \eqref{1.1x}; see \cite{BGK2012}.
The dependence in $x$ is there for the behavior at $\infty$ in $\R^n$ and
does not appear in local results or periodic ones; see, for instance,
\cite{BIJZ2007,cky18,yyz21,zyy22} in the Euclidean space. It is natural to find the same
Orlicz function here as that for periodic functions on $\R$ or for the
dyadic situation which was studied by Bakas et al. \cite{BPRS2020}.

\item We would like to mention that Odysseas Bakas, 
Zhendong Xu, Yujia Zhai, and Hao Zhang \cite{BXZZ}
have independently obtained the analog of Theorem \ref{thm-bilinear}. They then developed very interesting generalizations to $H^p$ for any $p\in (0,1)$ and to martingales related to non probability measures, while our article takes another direction.
It is worth mentioning that in Theorem \ref{thm-bilinear} 
we insist on the meaning of the product. An analog of the definition 
of the product as a distribution as in the classical case does not 
seem available in general. This is why we define the product as 
a semi-martingale, namely a sum of a martingale and a process with bounded variation.
\end{enumerate}	
\end{remark}

The next observation will be central in our further developments and
gives another way to see the products involved in the theorem.
Observe that $L(f,g)$ converges in $L^1$ and almost surely (for short a.s.).
When $n\to\infty$, since $f_n g_n$ converges also a.s., the same is valid for
$G(f,g)_n$ but the fact that we only know that it is
in $H_{\log}$ does not allow us to recover the martingale
from its a.s. limit. Nevertheless it is  the case when
$f$ is an $L^2$-martingale and, in this case, the decomposition can
also be written in terms of limit values at $\infty.$ In general, the $H_{\log}$
part of the product can be recovered as a limit,
using the density of $L^2$-martingales in $H_1.$

\medskip

One remarkable application of the aforementioned
bilinear decomposition in harmonic analysis
is due to Ky \cite{K2013,K2015}.
It is well known that commutators
generated by both Calder\'on--Zygmund operators and ${\rm BMO}(\mathbb R^n)$ functions
may not map $H_1(\mathbb R^n)$ continuously into $L^1(\mathbb R^n)$.
Using the bilinear decomposition, Ky \cite{K2013}
characterized the largest subspace $\mathcal Z$ of $H_1(\mathbb R^n)$
such that most classical commutators
are bounded from this subspace $\mathcal Z$ to $L^1(\mathbb R^n)$. The second goal
of this article is to adapt this characterization in
the martingale setting and establish  endpoint
estimates of commutators
generated by both martingale operators and multiplications by
${\rm BMO}$ functions.

Commutators of martingale transforms
were first investigated by Janson \cite{J1981}
and then studied by Chao and Peng \cite{CP1996} for regular martingales.
Commutators of martingale transforms for non-regular martingales
were recently investigated by Treil \cite{Tr2013}.
In addition, the boundedness of commutators of martingale fractional integrals were
developed by Chao and Ombe \cite{CO1985} and very recently
by Nakai et al. \cite{NS2019,ANS2020}.
However, up to now, there does not exist any endpoint
estimate of martingale commutators.

More precisely, let $b\in \mathrm{BMO}$, $q\in[1,\infty),$ and
$T$ be in a class $\mathcal K_q$ of sublinear operators
containing almost all important operators in the
martingale setting (see Definition \ref{def-K} for its definition).
The {\it commutator $[T,b]$} of the sublinear operator $T$
is defined by setting (when it makes sense), for any
$f\in H_1$ and $x\in \Omega$,
$$[T,b](f)(x):=T\left(bf-b(x)f \right)(x).$$
Moreover, if $T$ is linear, then
$[T,b](f)=T(bf)-bT(f).$
Now, we establish the (sub)bilinear
decomposition for the commutator $[T,b]$ as follows.
\begin{theorem}\label{bdT}

Let  $q\in[1,\infty)$ and $T\in \mathcal{K}_q$.
Then there exists  a bounded subbilinear operator
$$R:\ H_1\times \mathrm{BMO} \to L^q$$
such that, for any  $(f,b)\in H_1\times \mathrm{BMO}$,
$$|T(L(f,b))|-R(f,b)\leq |[T,b](f)|\leq R(f,b)+|T(L(f,b))|,$$

  In particular, if $T$ is linear, then, for any
  $(f,b)\in H_1\times \mathrm{BMO}$, the bilinear operator $R(f,b)$ defined by setting
$$R(f,b):=[T,b](f)-T(L(f,b))$$
  is bounded from $\ H_1\times \mathrm{BMO}$ into $L^q.$
	\end{theorem}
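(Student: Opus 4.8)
The plan is to derive Theorem~\ref{bdT} directly from the bilinear decomposition of Theorem~\ref{thm-bilinear}, exploiting the (sub)linearity of $T\in\mathcal K_q$ together with the specific structure of the operators in the class $\mathcal K_q$. The starting point is the pointwise identity that the commutator essentially transforms under the splitting $f\cdot b=L(f,b)+G(f,b)$. Writing things informally in the linear case, one expects
$$[T,b](f)=T(bf)-bT(f)=T\big(L(f,b)\big)+T\big(G(f,b)\big)-bT(f),$$
so that the candidate remainder is $R(f,b):=T(G(f,b))-bT(f)$, or equivalently $R(f,b)=[T,b](f)-T(L(f,b))$ as stated. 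Thus the whole content of the theorem is the claim that $R$ maps $H_1\times\mathrm{BMO}$ boundedly into $L^q$; the two-sided pointwise estimate in the sublinear case then follows from the sublinearity inequality $|T(u+v)|-|T(v)|\le|T(u)|\le|T(u+v)|+|T(v)|$ applied with $u=L(f,b)$ and the remaining terms grouped into $v$, together with a symmetric use of $bf-b(x)f$.

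First I would record precisely, from Definition~\ref{def-K} (referenced but not reproduced in this excerpt), the two structural properties of $T\in\mathcal K_q$ that are needed: namely that $T$ is bounded on $L^q$ for the relevant exponent (or from $L^2$ to $L^q$, or from $H_1$ to $L^q$), and — crucially — that $T$ maps the Hardy--Orlicz space $H_{\log}$ (or at least the paraproduct-type pieces $G(f,b)$) boundedly into $L^q$. This last property is what makes the class $\mathcal K_q$ exactly the right class: classical sublinear operators that are bounded on $H_1$ are typically also bounded from $H_{\log}$ to the Orlicz space $L^{\log}\subset L^q$ near the origin, or satisfy a weak-type replacement that still lands in $L^q$. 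Next I would handle the term $bT(f)$: since $b\in\mathrm{BMO}$ and $T(f)\in L^q$ (or in $H_1$, hence $L^1$), a direct product estimate $\|bT(f)\|_{L^q}\lesssim\|b\|_{\mathrm{BMO}}\|T(f)\|_{?}$ will generally fail pointwise for $q=1$, so instead I would regroup: observe $[T,b](f)-T(L(f,b))=T(G(f,b))-bT(f)$ need not each be separately in $L^q$, but their combination is controlled because $G(f,b)=f\cdot b-L(f,b)-\text{(nothing)}$ — more honestly, because $G(f,b)$ is precisely $f\cdot b$ minus the $\mathcal{BV}$ part, and the commutator structure $T(bf-b(x)f)(x)$ already subtracts off the constant, cancelling the problematic $bT(f)$ contribution up to the controllable paraproduct remainder.

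The key steps, in order, are: (1) establish the pointwise (sub)bilinear identity/inequality relating $[T,b](f)$, $T(L(f,b))$, and a remainder expressed through $G(f,b)$ and the paraproducts $\Pi_1,\Pi_2$; (2) invoke Theorem~\ref{thm-bilinear} to get $\|G(f,b)\|_{H_{\log}}\lesssim\|f\|_{H_1}\|b\|_{\mathrm{BMO}}$; (3) use the defining boundedness properties of $\mathcal K_q$ to pass from $H_{\log}$ (and from $H_1$) bounds to $L^q$ bounds on the relevant images under $T$; (4) assemble these into $\|R(f,b)\|_{L^q}\lesssim\|f\|_{H_1}\|b\|_{\mathrm{BMO}}$, and check the subbilinearity of $R$ (subadditivity in each slot up to a constant) from that of $T$ and the bilinearity of $L$ and $G$. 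I expect the main obstacle to be step~(1) together with the $H_{\log}\to L^q$ mapping in step~(3): the pointwise identity is delicate because $T$ acts on $bf-b(x)f$ rather than on $bf$, so one must track the role of the subtracted constant $b(x)f(x)$ carefully and verify it does not spoil measurability or integrability; and establishing that members of $\mathcal K_q$ genuinely send the Orlicz Hardy space $H_{\log}$ into $L^q$ (not merely into the larger space $L^{\log}$) is exactly where the precise hypotheses bundled into Definition~\ref{def-K} must be used, presumably via a weak-type $(1,1)$ or $(H_{\log}, L^{1,\infty})$ estimate upgraded using the finiteness of the probability measure to land in $L^q$ for every $q\ge 1$. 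A secondary technical point is the sublinear (as opposed to linear) case: here one cannot literally write $R(f,b)=[T,b](f)-T(L(f,b))$, so $R$ must be constructed as $R(f,b):=|T(G(f,b))|+|bT(f)|$ (or a similar majorant built from the paraproduct pieces), and one then verifies the two displayed inequalities by two applications of sublinearity, once for $T(bf-b(x)f)$ split as $T(L(f,b))+T(G(f,b))$ and once absorbing the constant-multiple term, so that $R$ is genuinely subbilinear and $L^q$-bounded.
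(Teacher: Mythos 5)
Your algebraic starting point is right (and essentially forced by the statement): in the linear case $[T,b](f)=T(\Pi_1(f,b))+T(\Pi_2(f,b))+T(L(f,b))-bT(f)$, so the content is to bound the remainder in $L^q$. But the route you propose for that bound does not work. You plan to control the remainder by invoking $\|G(f,b)\|_{H_{\log}}\lesssim\|f\|_{H_1}\|b\|_{\mathrm{BMO}}$ from Theorem \ref{thm-bilinear} and then a mapping property ``$T:H_{\log}\to L^q$'' that you hope is ``bundled into Definition \ref{def-K}''. It is not: the class $\mathcal K_q$ only assumes $T$ bounded from $H_1$ to $L^q$, from $L^1$ to $L^{q,\infty}$, plus the atomwise/jumpwise conditions \eqref{AsT}--\eqref{comTd}. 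Since $H_{\log}\supsetneq H_1$, no $H_{\log}\to L^q$ bound is available (nor true in general), so $T(\Pi_2(f,b))$ cannot be estimated on its own; likewise $bT(f)$ is not separately in $L^q$. You correctly sense that a cancellation between these two terms must be exploited, but you never identify the mechanism, and that is where the whole proof lives.

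What the paper actually does: it treats $\Pi_1$ and $\Pi_2$ asymmetrically. By Proposition \ref{pro-1}, $\Pi_1(f,b)$ lands in $h_1\subset H_1$ (not merely $H_{\log}$), so $T(\Pi_1(f,b))$ is handled directly by the $H_1\to L^q$ boundedness. The problematic pair is grouped into the single operator $U(f,b)(x)=T(\Pi_2(f,b)-b(x)f)(x)$ of \eqref{def-U}, and Lemma \ref{Ufg} bounds $U$ by running the Davis and atomic decompositions of $f$ and proving uniform estimates on atoms and jumps (Lemma \ref{Uag}). The key computation for an atom $a$ relative to $n$ uses $\Pi_2(a,b_{n-1})=ab_{n-1}$ to rewrite
$$U(a,b)(x)=T\bigl(\Pi_2(a,b-b_{n-1})+[b_{n-1}-b_{n-1}(x)]a+[b_{n-1}(x)-b(x)]a\bigr),$$
then: (a) Lemma \ref{g-gn} shows $\Pi_2(a,b-b_{n-1})\in H_1$ with norm $\lesssim\|b\|_{\mathrm{BMO}}$ --- the subtraction of $b_{n-1}$ upgrades the paraproduct from $H_{\log}$ to $H_1$ on atoms, which is exactly the gain your $H_{\log}$ route forfeits; (b) the middle term is $[T,b_{n-1}](a)$, controlled by hypothesis \eqref{comT}; (c) the last term is $(b-b_{n-1})T(a)$ up to sign, controlled by hypothesis \eqref{AsT}. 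Without this atom-level decomposition and without using \eqref{AsT}--\eqref{comTd}, the proof cannot close; your proposal as written has a genuine gap at precisely this step. (Your fallback definition $R(f,b):=|T(G(f,b))|+|bT(f)|$ for the sublinear case suffers from the same defect: neither summand is finite in $L^q$ in general; the paper instead sets $R(f,b):=|U(f,b)|+|T(\Pi_1(f,b))|$.)
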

This means that continuity properties for the
commutator may be deduced from continuity properties for $T(L(f, b)).$
For any $b\in \mathrm{BMO}$, we introduce a new martingale
Hardy space $H_1^b$ (see Definition \ref{def-h1b} below)
which is related to
endpoint estimates  of commutators $[T,b]$ for any $T\in
\mathcal{K}_q$.

\begin{theorem}\label{cmT}
	Let  $q\in[1,\infty)$, $T\in \mathcal{K}_q$, and
	$b\in \mathrm{BMO}.$
	Then there exists a positive constant $C$ such that,
	for any $f\in H_1^b,$
	$$\|[T,b](f)\|_{L^q}\leq C\|f\|_{H_1^b}.$$
\end{theorem}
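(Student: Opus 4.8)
The plan is to combine Theorem \ref{bdT} with the definition of the space $H_1^b$. From Theorem \ref{bdT} we already have, for any $f\in H_1$ and $b\in\mathrm{BMO}$,
$$
|[T,b](f)|\le R(f,b)+|T(L(f,b))|,
$$
where $R:\ H_1\times\mathrm{BMO}\to L^q$ is bounded. Taking $L^q$-norms and using the triangle inequality, it suffices to estimate $\|R(f,b)\|_{L^q}$ and $\|T(L(f,b))\|_{L^q}$ separately. The first term is immediate: by the boundedness of $R$,
$$
\|R(f,b)\|_{L^q}\le C\|f\|_{H_1}\|b\|_{\mathrm{BMO}}\le C\|b\|_{\mathrm{BMO}}\|f\|_{H_1^b},
$$
where the last step uses that the $H_1^b$-norm controls the $H_1$-norm (this should be part of Definition \ref{def-h1b}, since $H_1^b$ is a subspace of $H_1$). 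So the whole matter reduces to controlling $\|T(L(f,b))\|_{L^q}$ by $\|f\|_{H_1^b}$.

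The second step is where the space $H_1^b$ does its job. By Theorem \ref{thm-bilinear}, $L(f,b)=(L(f,b)_n)_{n}$ lies in $\mathcal{BV}$ with $\|L(f,b)\|_{\mathcal{BV}}\lesssim\|f\|_{H_1}\|b\|_{\mathrm{BMO}}$, and in particular $L(f,b)$ converges in $L^1$ to a limit which I will still denote $L(f,b)$. The space $H_1^b$ should be defined precisely so that $f\in H_1^b$ forces $L(f,b)$ to lie in $H_1$ (the martingale Hardy space), with
$$
\|L(f,b)\|_{H_1}\le C\|f\|_{H_1^b};
$$
indeed, the natural definition is $\|f\|_{H_1^b}:=\|f\|_{H_1}+\|L(f,b)\|_{H_1}$ (up to the identification of the $\mathcal{BV}$-limit with a martingale, which is licit because an $L^1$-limit of a martingale difference series is again a martingale). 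Then, since $T\in\mathcal{K}_q$ is (sub)bounded from $H_1$ to $L^q$ by the defining properties of the class $\mathcal{K}_q$ (see Definition \ref{def-K}), we get
$$
\|T(L(f,b))\|_{L^q}\le C\|L(f,b)\|_{H_1}\le C\|f\|_{H_1^b}.
$$
Combining the two estimates yields $\|[T,b](f)\|_{L^q}\le C\|f\|_{H_1^b}$, as desired. For the linear case one argues identically, using instead the identity $R(f,b)=[T,b](f)-T(L(f,b))$ from Theorem \ref{bdT}.

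The main obstacle, and the only genuinely nontrivial point, is the interplay between the $\mathcal{BV}$-valued operator $L$ from Theorem \ref{thm-bilinear} and the membership of $L(f,b)$ in $H_1$: one must check that when $f$ is taken in the restricted class $H_1^b$, the process $L(f,b)$ — a priori only a bounded variation process converging in $L^1$ — can be viewed as an $H_1$-martingale, and that the two topologies are compatible on this intersection. This is essentially built into Definition \ref{def-h1b}, so the proof here is really a short deduction; the real content was packaged earlier. A secondary technical point is that $T$ acts on a martingale $L(f,b)$ that is reconstructed from an $L^1$-limit, so one should invoke the remark following Theorem \ref{thm-bilinear} (density of $L^2$-martingales, recovery of the $H_{\log}$/$H_1$ part as a limit) to make sense of $T(L(f,b))$ and to justify passing the $H_1\to L^q$ bound through the limit; a density/Fatou argument on $\|T(L(f,b)_n)\|_{L^q}$ handles this cleanly.
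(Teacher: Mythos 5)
Your overall strategy is exactly the paper's: apply Theorem \ref{bdT} to get $|[T,b](f)|\le R(f,b)+|T(L(f,b))|$, bound $\|R(f,b)\|_{L^q}$ by the boundedness of $R$, and bound $\|T(L(f,b))\|_{L^q}$ by the $H_1\to L^q$ boundedness of $T$ (Definition \ref{def-K}(i)) together with $\|L(f,b)\|_{H_1}\lesssim\|f\|_{H_1^b}$. The one genuine gap is in how you obtain this last inequality: you assume that $\|f\|_{H_1^b}$ is \emph{defined} as $\|f\|_{H_1}+\|L(f,b)\|_{H_1}$, but Definition \ref{def-h1b} actually sets
$$\|f\|_{H_1^b}:=\|f\|_{H_1}\|b\|_{\mathrm{BMO}}+\left\|\sup_{n\in\mathbb Z_+}\left|[\mathbb E_n,b](f)\right|\right\|_{L^1},$$
i.e.\ it is phrased through the maximal commutator, not through $L(f,b)$. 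The statement you need --- that $f\in H_1^b$ forces $L(f,b)\in H_1$ with $\|L(f,b)\|_{H_1}\lesssim\|f\|_{H_1^b}$ --- is precisely the equivalence of (i) and (ii) in Theorem \ref{4ec}, which the paper's proof cites as its first step. That equivalence is not free: it is obtained by applying Theorem \ref{bdT} to the Doob maximal operator $M\in\mathcal K_1$ (Corollary \ref{bdEn}), which gives that $\sup_{n}|[\mathbb E_n,b](f)|\in L^1$ if and only if $\sup_{n}|\mathbb E_n(L(f,b))|\in L^1$, and then invoking the Davis inequality (Lemma \ref{davis-ine}) to pass from $M(L(f,b))\in L^1$ to $L(f,b)\in H_1$. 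Once this bridge is inserted, your argument coincides with the paper's.

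Two smaller points. First, the identification of the $\mathcal{BV}$-limit with a martingale is indeed needed, but your justification (``an $L^1$-limit of a martingale difference series is again a martingale'') is off: the summands $d_kf\,d_kb$ are not martingale differences; one simply takes the martingale generated by the $L^1$-function $L(f,b)_\infty$, and the well-definedness of $T(L(f,b))$ and of $[T,b](f)$ on all of $H_1$ is handled, as in the proof of Theorem \ref{bdT}, by a priori estimates on the dense subspace of $L^2$-martingales. Second, the inequality $\|f\|_{H_1}\le\|f\|_{H_1^b}$ you invoke is not literally true when $\|b\|_{\mathrm{BMO}}$ is small; what the definition gives directly is $\|f\|_{H_1}\|b\|_{\mathrm{BMO}}\le\|f\|_{H_1^b}$, which is exactly what the estimate of $\|R(f,b)\|_{L^q}$ requires (and is harmless in any case since $b$ is fixed).
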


\begin{remark}
Let $b\in \mathrm{BMO}$. It is worth   mentioning that the space
$H_1^b$ in Theorem \ref{cmT} is sharp in the sense that
$\mathcal{Y}:=H_1^b$ is the largest subspace of $H_1$
such that, for any $T\in \mathcal{K}_1$,
the commutator $[T,b]$ is bounded from $\mathcal{Y}$ to $L^1 $;
	see Remark \ref{rem-commutator} below.
\end{remark}
\begin{remark}
	By analogy with  the classical case on $\mathbb R^n$ (see \cite{Per, K2013}),
	examples of functions in $H_1^b$ are given by atoms $a$
	related to the integer $n\in\mathbb Z_{+}$ such that $\E_{n} (a b)=0 $
	as well as sums of such atoms.
\end{remark}
We point out that our results have wide applications in martingale
theory and harmonic analysis.
On the one hand, we obtain the endpoint boundedness of the commutators
of both martingale transforms and martingale fractional
integrals (see Section \ref{exampleK} below).
Note that these endpoint estimates were not considered before.
So, these estimates complete the story of martingale
commutators investigated by Janson \cite{J1981},
Chao et al. \cite{CO1985,CP1996}, Nakai et al.
\cite{NS2019,ANS2020}, and Treil \cite{Tr2013}.

On the other hand, we provide some applications in harmonic analysis.
In recent years, dyadic operators have attracted
a lot of attention related to the so-called
$A_2$-conjecture in harmonic analysis.
Especially, the boundedness of the dyadic Hilbert transform
beyond doubling measures (also known as the dyadic shift; see, for instance, \cite{Pe2019})
was first characterized
by L\'opez-S\'anchez et al. \cite{LLP2014}.
Motivated by this, we establish the endpoint estimate of
the commutator for the dyadic Hilbert transform beyond doubling measures.
Additionally, we establish the endpoint estimate of
the commutator for the maximal operator
of Ces\`{a}ro means of Walsh--Fourier series.
To the best of our knowledge, the commutator for the
maximal operator of Ces\`{a}ro means of Walsh--Fourier
series had not been investigated before.

Observe that the Vilenkin system is a natural generalization of the Walsh system
(see, for instance, \cite{PSTW22,PTW22}). It is interesting to see whether or not
our methods still work for the Vilenkin system.

The remainder of this article is organized as follows.

In Section \ref{sect-preliminary}, we present some notation and
preliminaries about the martingale Hardy space,
the martingale {\rm BMO} space, and the atomic
decomposition  that we use to prove our main results.

Section \ref{sect-bilinear} is devoted to proving
Theorem \ref{thm-bilinear}. Once written the product
of martingales via martingale paraproducts, write
$$f\cdot g=\Pi_1(f,g)+\Pi_2(f,g)+L(f,g),
\ \forall\,(f,g)\in H_1\times {\rm BMO} ,$$
the problem is reduced to the study of these three bilinear operators.
The main difficulty is to find a suitable decomposition of martingales
from the martingale Hardy space $H_1$.
Recall that wavelets or atomic decompositions  are used in
$\R^n$ (see \cite{BCKLYY2019,BGK2012}). However,
the martingale Hardy space $H_1$ associated with
the martingale square function does not admit
a classical atomic decomposition when the underlying filtration is not
regular. To overcome this difficulty, we use the Davis decomposition
(see Lemma \ref{lem-Davis} below) and
decompose the martingale $f\in H_1$ into two parts. The first one is
in the martingale Hardy space $ h_1$ defined by the
martingale conditional square operator and has an atomic decomposition.
The second one is in the
 $ h_1^d$, so that it is an $\ell^1$ sum of integrable jumps.
 So it is sufficient to estimate $\{\Pi_i(a,g)\}_{i=1}^2$,
 where $a$ is an  atom (or a  jump martingale, that is,
 a martingale $h$ such that $d_k h$ is $0$ except for one value of $k$).

In Section \ref{sect-commutator}, we establish the
(sub)bilinear decomposition of commutators and the
endpoint estimate of commutators.
We first introduce a class $\mathcal K_q$, with $q\in [1,\infty)$, of sublinear operators.
Comparing with \cite{K2013}, we do not limit ourselves only
to the case $q=1$ and benefit from this is to treat
martingale fractional integral operators.
Applying Theorem \ref{thm-bilinear},
we first establish a subbilinear decomposition of the
sublinear commutator $[T,b]$ (see Theorem \ref{bdT} below),
and then establish an equivalent characterization of
the martingale Hardy-type space $H_1^b$ via the bilinear
operator $L$ (see Theorem \ref{4ec} below). All these would
suffice for us to show Theorem \ref{cmT}.
In Section \ref{exampleK}, we provide  examples of operators in  the class $\mathcal K_q,$
such as martingale transforms and  martingale fractional integral operators.

Finally, Section \ref{sect-application} focuses on applications
of our results in harmonic analysis. In Section
\ref{subsect-hilbert}, we obtain the endpoint
estimate of the commutator for the dyadic
Hilbert transform beyond doubling measures.
Section \ref{CWF} contains the endpoint estimate
of commutators of the maximal operator of
Ces\`{a}ro means of Walsh--Fourier series.

Throughout this article, we always let $\mathbb{N}:=\{1,2, \ldots\}$,
$\mathbb{Z}_+:=\mathbb{N} \cup \{0\}$,  and
$\mathbb{Z}:=\{0,\pm 1,   \ldots \}$, respectively.
We use $C$ to denote a positive constant, which may differ from line to line.
The symbol $f\lesssim Cg$ means that there exists
a positive constant $C$ such that $f\leq Cg$. If
we write $f\approx g$, then it stands for $f\lesssim g$
and $g\lesssim f$. If $f\leq Cg$ and $g=h$ or $g\leq h$,
we then write $f\lesssim g\approx h$ or $f\lesssim g\lesssim h$,
rather than $f\lesssim g=h$ or $f\lesssim g\leq h$.
For any subset $E$ of $\Omega$, we use ${\mathbf{1}}_E$
to denote its \textit{characteristic function}.
For any measurable function $f$, define
$\mathrm{supp}\,(f):=\{x\in\Omega:\ f(x)\neq0\}$.

\section{Preliminaries}\label{sect-preliminary}
This section includes some basic
preliminary background concerning martingale Hardy spaces and BMO spaces
 that are needed throughout this article.
Our notation and terminology are standard as may be
found in  monographs \cite{Ga1973, Lo1993,We1994}.

\subsection{Martingale Hardy spaces}\label{subsect-hardy}

Let $(\Omega,\mathcal F,\mathbb P)$ be a complete probability space and
$(\mathcal F_{n})_{n\in{\mathbb Z_+}}$   an increasing sequence of
sub-$\sigma$-algebras of $\mathcal F$ satisfying
$\mathcal F=\sigma(\bigcup_{n\in\mathbb Z_+}\mathcal F_n)$.
The expectation operator with respect to $\mathcal F$
is denoted by $\mathbb E$.
The conditional expectation operators with respect to $(\mathcal F_n)_{n\in\mathbb Z_+}$
are denoted, respectively, by $(\mathbb E_n)_{n\in\mathbb Z_+}$. The sequence
$f:=(f_{n})_{n\in \mathbb{Z}_+}\subset L^1$ is called a {\it martingale} if,
 for any $n\in \mathbb{Z}_+$,
$$\E_n\left(f_{n+1}\right)=f_n.$$
Denote by $\mathcal M$ the set of all the martingales $f:=(f_n)_{n\in\mathbb Z_+}$
related to $(\mathcal F_n)_{n\in\mathbb Z_+}$.
For any $f\in\mathcal M$, define its {\it martingale difference}
by setting (with convenience, $f_{-1}:=0$ and $\mathbb{E}_{-1}:=0$)
\[d_n(f)=d_nf:=f_n-f_{n-1},\quad\forall\, n\in\mathbb{Z}_+.\]
As usual, for any martingale $f\in \mathcal{M}$ and any
$p\in[1, \infty]$, let
$$\|f\|_{L^p}:=\sup_{n\in \mathbb{Z}_+}\|f_n\|_{L^p}
:=\sup_{n\in \mathbb{Z}_+}\left(\int_{\Omega}
\left|f_n\right|^p\,d\mathbb{P}\right)^{\frac1p}.$$
If $\|f\|_{L^p}<\infty$, then $f$ is called an {\it $L^p$-bounded martingale}.

\begin{remark}\label{ffinf}
If $p\in(1,\infty)$ and $f\in \mathcal{M}$ is an
$L^p$-bounded martingale, then there exists an
$f_{\infty}\in L^p$ such that $f_n=\E_n(f_{\infty})$
for each $n\in \mathbb{Z}_+$ and $\|f_n-f_{\infty}\|_{L^p}\to 0$
as $n\to \infty$; see, for instance, \cite[p.\,28]{Lo1993}.
In this case, we will also speak of the {\em $L^p$-martingale.}
Recall that this is no more true for an $L^1$-bounded martingale $f:=(f_n)_{n\in\mathbb{Z}_+}$.
If a martingale $(f_n)_{n\in \mathbb{Z}_+}$ is
generated by a measurable function $f_{\infty}\in L^1$, that is,
$f_{n}=\mathbb{E}_n(f_{\infty})$ for each
$n\in\mathbb{Z}_+$, then, in this case, one also has
$\|f_n-f_{\infty}\|_{L^1}\to0$ as $n\to\infty$
and we will also speak of the {\em $L^1$-martingale.}

When a martingale $f:=(f_n)_{n\in \mathbb{Z}_+}$ is generated by a
function $f_{\infty}$, that is, when $f$ is an
{\em $L^1$-martingale,}  we will not distinguish
at times the martingale
and $f_{\infty}$.
\end{remark}

For any $n\in\mathbb \mathbb{Z}_+$, the {\it Doob maximal operators}
 $M(f)$, the {\it square function}  $S(f)$,
and the {\it conditional square functions}  $s(f)$ of a
martingale $f$ are defined, respectively, by setting
\[
 M(f):=\sup_{n\in\mathbb Z_+}{|\mathbb E_n(f)|},
\
S(f):={\left(\sum_{n\in \mathbb{Z}_+}{|{d_nf}|}^2\right)}^\frac12,\]
and \[
s(f):=\left(\sum_{n=1}^\infty
\mathbb E_{n-1}|d_nf|^2+|d_0f|^2\right)^\frac12.\]

\begin{definition}\label{def-hardy}
The {\it martingale Hardy spaces} $H_1$, $h_1$,
and $h_1^d$ are defined, respectively, by setting
$$H_1:=\left\{f\in\mathcal M:\ \|f\|_{H_1}
:=\|S(f)\|_{L^1}<\infty \right\},$$

$$h_1:=\left\{f\in\mathcal M:\ f_0=0,\ \|f\|_{h_1}
:=\|s(f)\|_{L^1}<\infty \right\},$$
and
$$h_1^d:=\left\{f\in\mathcal M:\ \|f\|_{h_1^d}:=
\sum_{k\in \mathbb{Z}_+}\|d_kf\|_{L^1}<\infty\right\}.$$
\end{definition}
Clearly, martingales in $H_1$ are $L^1$-martingales.

The following is the famous Davis inequality;
see \cite{Da1970} or \cite[Theorem 2.1.9]{Lo1993}.

\begin{lemma}\label{davis-ine}
There exists a  constant $C\in [1,\infty)$ such that, for any  $f\in H_1$,
$$
	C^{-1}\|f\|_{H_1}\leq \|M(f)\|_{L^1}\leq C\|f\|_{H_1}.
$$
\end{lemma}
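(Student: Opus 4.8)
The plan is to prove the two inequalities separately; the content here is the Burkholder--Davis--Gundy comparison at the endpoint $p=1$, and the device that makes the endpoint work is a Davis-type splitting that removes the large jumps. Write $\lambda_n:=\sup_{0\le k\le n}|d_kf|$, a nondecreasing adapted process with $\lambda_{n-1}$ being $\mathcal F_{n-1}$-measurable, and set $\lambda_\infty:=\sup_n\lambda_n=\sup_n|d_nf|$; note the pointwise bounds $\lambda_\infty\le S(f)$ and $\lambda_\infty\le 2M(f)$. Decompose $f=g+h$ by
$$d_nh:=d_nf\,\mathbf{1}_{\{|d_nf|>2\lambda_{n-1}\}}-\mathbb{E}_{n-1}\!\left(d_nf\,\mathbf{1}_{\{|d_nf|>2\lambda_{n-1}\}}\right),\qquad d_ng:=d_nf-d_nh,$$
so that $g$ and $h$ are martingales, $g_0=0$, and on the set $\{|d_nf|>2\lambda_{n-1}\}$ one has $\lambda_n=|d_nf|$ and hence $|d_nf|\le 2(\lambda_n-\lambda_{n-1})$.

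First I would dispose of $h$. Since conditional expectation does not increase the $L^1$ norm, summing the last inequality and telescoping gives
$$\mathbb{E}\sum_n|d_nh|\ \le\ 2\,\mathbb{E}\sum_n|d_nf|\,\mathbf{1}_{\{|d_nf|>2\lambda_{n-1}\}}\ \le\ 4\,\mathbb{E}\sum_n(\lambda_n-\lambda_{n-1})\ =\ 4\,\mathbb{E}\lambda_\infty,$$
so $\|h\|_{h_1^d}\le 4\,\mathbb{E}\lambda_\infty$. Because $S(h)\le\sum_n|d_nh|$ and $M(h)\le\sum_n|d_nh|$, this yields
$$\|S(h)\|_{L^1}+\|M(h)\|_{L^1}\ \lesssim\ \mathbb{E}\lambda_\infty\ \le\ \min\{\|S(f)\|_{L^1},\,2\|M(f)\|_{L^1}\}.$$
In particular $h\in H_1$ with the desired bounds; crucially, this step uses only $\|S(f)\|_{L^1}<\infty$, which is enough to bootstrap a priori finiteness of $\|M(f)\|_{L^1}$ later.

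The core is the estimate for $g$. By construction $|d_nf|\,\mathbf{1}_{\{|d_nf|\le 2\lambda_{n-1}\}}\le 2\lambda_{n-1}$, hence $|d_ng|\le 4\lambda_{n-1}$: the jumps of $g$ are dominated by the predictable process $4\lambda_{n-1}$. For such a martingale, stopping at the first time $|g_n|$ or $\lambda_n$ exceeds a threshold produces a uniformly bounded martingale with no uncontrolled jump, so the standard good-$\lambda$ inequalities
$$\mathbb{P}\big(S(g)>2\lambda,\ M(g)\le\delta\lambda\big)\ \le\ C\delta^2\,\mathbb{P}\big(S(g)>\lambda\big),\qquad \mathbb{P}\big(M(g)>2\lambda,\ S(g)\le\delta\lambda\big)\ \le\ C\delta^2\,\mathbb{P}\big(M(g)>\lambda\big)$$
hold with an absolute constant $C$; integrating in $\lambda$ and choosing $\delta$ small (absorbing the left-hand side, legitimate once its finiteness is known) gives $\|S(g)\|_{L^1}\approx\|M(g)\|_{L^1}$. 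Combining with $g=f-h$ and the previous paragraph, $\|S(g)\|_{L^1}\lesssim\|M(g)\|_{L^1}\lesssim\|M(f)\|_{L^1}$ and $\|M(g)\|_{L^1}\lesssim\|S(g)\|_{L^1}\lesssim\|S(f)\|_{L^1}$; adding back the $h$-estimates and using the triangle inequality for $S$ and for $M$ yields both directions of the lemma.

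The main obstacle is precisely the good-$\lambda$ inequality for $g$ at the endpoint: for a general martingale a single ``bad'' jump spoils the stopping-time comparison, which is why the Davis splitting is indispensable; one must also localize carefully (stopping $g$ at the first time the partial sums $\sum_{k\le n}|d_kg|^2$ exceed $\lambda^2$) so that the right-hand sides genuinely carry the factor $\mathbb{P}(S(g)>\lambda)$ rather than a constant, and justify the absorption by first truncating $f$ to an $L^\infty$-bounded martingale and passing to the limit. Alternatively one may simply invoke this as the classical Davis inequality \cite{Da1970}, but the sketch above is in essence its proof.
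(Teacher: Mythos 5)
The paper does not prove this lemma at all: it is invoked as the classical Davis inequality, with pointers to Davis's original article \cite{Da1970} and to \cite[Theorem 2.1.9]{Lo1993}. What you have written is, as you acknowledge at the end, a reconstruction of the standard proof from the literature: the Davis splitting $f=g+h$ that strips off the large jumps (the same device that underlies Lemma \ref{lem-Davis} in the paper), the telescoping bound $\|h\|_{h_1^d}\le 4\,\mathbb{E}\lambda_\infty\le 4\min\{\|S(f)\|_{L^1},2\|M(f)\|_{L^1}\}$, the predictable domination $|d_ng|\le 4\lambda_{n-1}$, and then a good-$\lambda$ comparison of $S(g)$ and $M(g)$. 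All of these ingredients are correctly set up, and for the purposes of the paper it would in any case suffice to cite the references, as the authors do.

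One step as displayed needs repair. The inequalities $\mathbb{P}(S(g)>2\lambda,\ M(g)\le\delta\lambda)\le C\delta^2\,\mathbb{P}(S(g)>\lambda)$ (and its twin) are not the correct form: the standard stopping-time proof requires a global control of the overshoot of $g$ at the first time $M_n(g)$ (resp.\ $S_n(g)$) exceeds $\delta\lambda$, and this overshoot is bounded only by the predictable majorant $4\lambda_{n}$, which records the jumps of $f$, not of $g$; the event $\{M(g)\le\delta\lambda\}$ does not control it. The correct statement carries the majorant in the smallness hypothesis, e.g. $\mathbb{P}(S(g)>2\lambda,\ \max(M(g),4\lambda_\infty)\le\delta\lambda)\le C\delta^2\,\mathbb{P}(S(g)>\lambda)$, and integration then yields $\|S(g)\|_{L^1}\lesssim\|M(g)\|_{L^1}+\mathbb{E}\lambda_\infty$ rather than $\|S(g)\|_{L^1}\lesssim\|M(g)\|_{L^1}$; the extra term is harmless because $\mathbb{E}\lambda_\infty\le 2\|M(f)\|_{L^1}$ (resp.\ $\le\|S(f)\|_{L^1}$ in the other direction), so the argument closes exactly as you intend. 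Your closing remark about stopping when $\lambda_n$ exceeds a threshold shows you know this; the displayed good-$\lambda$ inequalities should simply be written with that extra term. With that correction the sketch is a sound rendering of Davis's proof.
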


The following result is a part of  \cite[Lemma 2.15]{We1994}.
\begin{lemma}\label{lem-Davis}
For any $f\in H_1$,  there exist a positive constant $C$
and two martingales $f^1\in h_1$ and $f^d\in h_1^d$ such
that $f=f^1+f^d$ with
	\begin{align}\label{davis-dec}
		\left\|f^1\right\|_{h_1}\leq C\|f\|_{H_1}\quad\mbox{and}\quad
		\left\|f^d\right\|_{h_1^d}\leq C\|f\|_{H_1}.
	\end{align}
\end{lemma}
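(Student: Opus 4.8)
\textbf{Proof plan for Lemma \ref{lem-Davis} (Davis decomposition).}
The plan is to exhibit the two martingales $f^{1}$ and $f^{d}$ explicitly by splitting the martingale differences according to whether $|d_{n}f|$ exceeds twice the running maximal function or not. Set $M_{n}(f):=\sup_{0\le k\le n}|f_{k}|$ (so that $M(f)=\sup_{n}M_{n}(f)$) and, for each $n\in\mathbb{Z}_{+}$, define
\[
u_{n}:=d_{n}(f)\,\mathbf{1}_{\{|d_{n}f|\le 2M_{n-1}(f)\}},\qquad
v_{n}:=d_{n}(f)\,\mathbf{1}_{\{|d_{n}f|> 2M_{n-1}(f)\}},
\]
with the convention $M_{-1}(f):=0$, so that $u_{0}=0$ and $v_{0}=f_{0}$. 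Then $d_{n}(f)=u_{n}+v_{n}$, and the candidates are obtained by recentering: put $d_{n}(f^{d}):=v_{n}-\mathbb{E}_{n-1}(v_{n})$ and $d_{n}(f^{1}):=u_{n}+\mathbb{E}_{n-1}(v_{n})$, so that both are martingale difference sequences and $f=f^{1}+f^{d}$ with $f^{1}_{0}=0$ (note $\mathbb{E}_{-1}:=0$, so $d_{0}(f^{d})=v_{0}=f_{0}$). All of this is routine; the content is in the two norm estimates.

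First I would bound $\|f^{d}\|_{h_{1}^{d}}$. Since $|d_{n}(f^{d})|\le |v_{n}|+\mathbb{E}_{n-1}|v_{n}|$ and conditional expectation is an $L^{1}$-contraction, it suffices to control $\sum_{n}\mathbb{E}(|v_{n}|)$, i.e. $\sum_{n}\mathbb{E}\big(|d_{n}f|\,\mathbf{1}_{\{|d_{n}f|>2M_{n-1}(f)\}}\big)$. On the event $\{|d_{n}f|>2M_{n-1}(f)\}$ one has $|f_{n}|\ge |d_{n}f|-|f_{n-1}|\ge |d_{n}f|-M_{n-1}(f)>\tfrac12|d_{n}f|$, hence $M_{n}(f)>\tfrac12|d_{n}f|$ while $M_{n-1}(f)<\tfrac12|d_{n}f|$, so on this event $|d_{n}f|\le 2\big(M_{n}(f)-M_{n-1}(f)\big)$. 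Summing the telescoping series gives $\sum_{n}|v_{n}|\le 2\sup_{n}M_{n}(f)=2M(f)$ pointwise, and therefore, by Lemma \ref{davis-ine},
\[
\left\|f^{d}\right\|_{h_{1}^{d}}\le 2\sum_{n}\mathbb{E}(|v_{n}|)
\le 4\,\mathbb{E}\big(M(f)\big)=4\|M(f)\|_{L^{1}}\le C\|f\|_{H_{1}}.
\]

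Next I would bound $\|f^{1}\|_{h_{1}}=\|s(f^{1})\|_{L^{1}}$. Write $d_{n}(f^{1})=u_{n}+\mathbb{E}_{n-1}(v_{n})$ and estimate the conditional square function of each piece. For the $u_{n}$ part, use $|u_{n}|\le 2M_{n-1}(f)\le 2M(f)$, so $\mathbb{E}_{n-1}|u_{n}|^{2}\le 2M(f)\,\mathbb{E}_{n-1}|u_{n}|$ and hence $\sum_{n}\mathbb{E}_{n-1}|u_{n}|^{2}\le 2M(f)\sum_{n}\mathbb{E}_{n-1}|u_{n}|$; since $\sum_{n}|u_{n}|=\sum_{n}|d_{n}f|\mathbf{1}_{\{|d_{n}f|\le 2M_{n-1}(f)\}}$ need not be pointwise bounded, the standard device is instead to run the argument so that the conditional square function is dominated using the inequality $\sum_n \mathbb{E}_{n-1}|u_n|^2 \le 2\,\mathbb{E}\!\left(\sup_n M_n(f)\,\cdot\,\sum_n |u_n| \,\big|\,\mathcal F_\cdot\right)$ controlled after taking expectations by Cauchy--Schwarz together with Doob's $L^2$ bound on the localized martingale, exactly as in \cite[Theorem 2.1.9]{Lo1993} or \cite[Lemma 2.15]{We1994}; the outcome is $\|s((u_{n})_{n})\|_{L^{1}}\le C\|M(f)\|_{L^{1}}$. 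For the $\mathbb{E}_{n-1}(v_{n})$ part, since these terms are $\mathcal F_{n-1}$-measurable one has $\mathbb{E}_{n-1}|\mathbb{E}_{n-1}(v_{n})|^{2}=|\mathbb{E}_{n-1}(v_{n})|^{2}\le \big(\mathbb{E}_{n-1}|v_{n}|\big)^{2}$, and then
\[
\Big(\sum_{n}|\mathbb{E}_{n-1}(v_{n})|^{2}\Big)^{1/2}
\le \sum_{n}\mathbb{E}_{n-1}|v_{n}|,
\]
whose $L^{1}$ norm is $\sum_{n}\mathbb{E}|v_{n}|\le 2\|M(f)\|_{L^{1}}$ by the bound already proved. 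Combining the two pieces and invoking Lemma \ref{davis-ine} once more yields $\|f^{1}\|_{h_{1}}\le C\|f\|_{H_{1}}$, which completes the proof. The main obstacle is the first ($u_{n}$) estimate for $s(f^{1})$: unlike the $h_{1}^{d}$ bound, there is no pointwise telescoping for $\sum_{n}|u_{n}|$, so one must pass to expectations early and combine the pointwise bound $|u_{n}|\le 2M(f)$ with a Cauchy--Schwarz step and Doob's inequality applied to the truncated martingale, which is the delicate part of the classical Davis argument.
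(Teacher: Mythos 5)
Your overall architecture (split $d_nf$ into a ``big jump'' part $v_n$ and a ``small jump'' part $u_n$, recenter by $\mathbb{E}_{n-1}$, bound $f^d$ in $h_1^d$ by telescoping and $f^1$ in $h_1$ via a predictable bound on $|u_n|$) is the classical Davis scheme; note that the paper does not reprove this lemma but simply cites \cite[Lemma 2.15]{We1994}. However, your key telescoping step is wrong. You truncate at twice the running maximum $M_{n-1}(f)=\sup_{k\le n-1}|f_k|$ of the \emph{partial sums} and claim that on $\{|d_nf|>2M_{n-1}(f)\}$ one has $|d_nf|\le 2\bigl(M_n(f)-M_{n-1}(f)\bigr)$. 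From $M_n(f)>\tfrac12|d_nf|$ and $M_{n-1}(f)<\tfrac12|d_nf|$ you can only conclude $M_n(f)-M_{n-1}(f)>0$, not that this gap is at least $\tfrac12|d_nf|$. Concretely, take $f_{n-1}=1$ (so $M_{n-1}(f)=1$) and $d_nf=-2.1$, hence $f_n=-1.1$ and $M_n(f)=1.1$: the event occurs, yet $|d_nf|=2.1$ while $2\bigl(M_n(f)-M_{n-1}(f)\bigr)=0.2$. Consequently the claimed pointwise bound $\sum_n|v_n|\le 2M(f)$ fails and the $h_1^d$ estimate collapses.

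The standard remedy is to truncate with the running maximum of the \emph{differences}, $\lambda_{n-1}:=\max_{k\le n-1}|d_kf|$: on $\{|d_nf|>2\lambda_{n-1}\}$ one has $\lambda_n=|d_nf|$, hence $|d_nf|=\lambda_n<2(\lambda_n-\lambda_{n-1})$, the telescoping gives $\sum_n|v_n|\le 2\lambda_\infty\le 2S(f)$ pointwise, so $\|f^d\|_{h_1^d}\le 4\|f\|_{H_1}$ without even invoking Lemma \ref{davis-ine}; and $|u_n|\le 2\lambda_{n-1}$ is still a predictable bound, so the $f^1$ half proceeds as before. (Alternatively, your $M_{n-1}(f)$-based truncation can be salvaged by raising the threshold to $|d_nf|>4M_{n-1}(f)$, since then $M_n(f)-M_{n-1}(f)\ge |d_nf|-2M_{n-1}(f)>\tfrac12|d_nf|$.) A secondary point: your treatment of $\sum_n\mathbb{E}_{n-1}|u_n|^2$ --- the genuinely delicate half of the lemma --- is not actually an argument; the displayed ``conditional expectation'' inequality is not meaningful as written, and you ultimately defer to \cite{Lo1993} and \cite{We1994}. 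Since the paper's own proof is precisely such a citation, that deferral is forgivable, but it means the only portion you prove from scratch is the portion that contains the error.
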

 We will need to use at the same time the fact that $f\in H_1$
 and $f$ belongs to $L^2.$ Going back to the proof,
 one has the following property.
 \begin{remark}\label{sup-davis}
In Lemma \ref{lem-Davis}, if, moreover, $f$ is an $ L^2$-martingale,
 	then both $f^1$ and $f^d$ may be chosen so that
 	they are also  $ L^2$-martingales.
\end{remark}

The {\it Orlicz space} $L^{\log}$ is defined to
be the set of all the measurable functions $f$ such that
\[\|f\|_{L^{\log}}
:=\inf\left\{\lambda\in (0,\infty):\ \int_{\Omega}\frac{|f|/\lambda}{\log(e+|f|/\lambda)}
\,d\mathbb{P}\leq 1\right\}<\infty.\]
Clearly, $\|\cdot\|_{L^{\log}}$ is a quasi-norm and $L^1\subset L^{\log}$ with continuous embedding.
If we replace   $\|\cdot\|_{L^1}$ in Definition \ref{def-hardy}
therein by $\|\cdot\|_{L^{\log}}$, then we obtain the
definition of the {\it martingale Hardy spaces $H_{\log}$ and $h_{\log}$}.

The following result is a part of both
\cite[Theorem 2.11]{We1994} and \cite[Theorem 2.5]{MNS2012}.
\begin{lemma}\label{mar-ine}
\begin{enumerate}
\item[{\rm (i)}]
There exists a positive constant $C$ such that, for any $f\in h_1$,
	$$\|f\|_{H_1}\leq C \|f\|_{h_1}.$$
	
\item[{\rm (ii)}]
There exists a positive constant $C$ such that, for any $f\in h_{\log}$,
	$$\|f\|_{H_{\log}}\leq C \|f\|_{h_{\log}}.$$
\end{enumerate}
\end{lemma}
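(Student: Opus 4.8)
The plan is to reduce both assertions to a single distributional inequality comparing the square function $S(f)$ with the conditional square function $s(f)$, namely
\[
\mathbb{P}\big(S(f)>\lambda\big)\le \mathbb{P}\big(s(f)>\lambda\big)+\frac{1}{\lambda^2}\,\E\big(\min\{s(f)^2,\lambda^2\}\big),\qquad \forall\,\lambda\in(0,\infty),
\]
valid for every martingale $f\in\mathcal M$ with $f_0=0$. Granting this, (i) follows by integrating in $\lambda$ (the layer-cake formula), and (ii) by integrating against $\Phi'(\lambda)\,d\lambda$, so the entire martingale-theoretic content sits in this one inequality.

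To establish it, I would fix $\lambda>0$ and write $s_n(f):=\big(\sum_{k=1}^n\E_{k-1}|d_kf|^2\big)^{1/2}$, so that $s_n(f)$ is $\mathcal F_{n-1}$-measurable and $s_n(f)\uparrow s(f)$. Then $\tau:=\inf\{n\in\Z_+:\ s_{n+1}(f)>\lambda\}$ is a predictable stopping time, i.e. $\{\tau\ge n\}\in\mathcal F_{n-1}$. Consider the stopped martingale $f^{\tau}:=(f_{\tau\wedge n})_{n\in\Z_+}$, whose differences are $d_k(f^{\tau})=\mathbf{1}_{\{\tau\ge k\}}d_kf$; since $\mathbf{1}_{\{\tau\ge k\}}$ is $\mathcal F_{k-1}$-measurable, one gets $s(f^{\tau})^2=\sum_{k\le\tau}\E_{k-1}|d_kf|^2$, which by the definition of $\tau$ is at most $\lambda^2$ and is trivially at most $s(f)^2$; hence $s(f^{\tau})^2\le\min\{s(f)^2,\lambda^2\}$. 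Because $\E\big(S(f^{\tau})^2\big)=\E\big(s(f^{\tau})^2\big)\le\lambda^2$, the martingale $f^{\tau}$ is $L^2$-bounded, and Chebyshev's inequality gives $\mathbb{P}(S(f^{\tau})>\lambda)\le\lambda^{-2}\E(\min\{s(f)^2,\lambda^2\})$. Finally, on $\{\tau=\infty\}$ we have $d_k(f^{\tau})=d_kf$ for all $k$, so $S(f^{\tau})=S(f)$ there; thus $\{S(f)>\lambda\}\subset\{\tau<\infty\}\cup\{S(f^{\tau})>\lambda\}$, and since $\{\tau<\infty\}=\{s(f)>\lambda\}$ the displayed inequality follows.

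For (i), integrating over $\lambda\in(0,\infty)$, the first term contributes $\E(s(f))=\|f\|_{h_1}$, while Tonelli's theorem turns the second into $\E\int_0^\infty\lambda^{-2}\min\{s(f)^2,\lambda^2\}\,d\lambda=2\,\E(s(f))$; hence $\|f\|_{H_1}=\|S(f)\|_{L^1}\le 3\|f\|_{h_1}$ and, in particular, $S(f)\in L^1$. For (ii), multiply by $\Phi'(\lambda)$ and integrate, using $\E\,\Phi(X)=\int_0^\infty\Phi'(\lambda)\,\mathbb{P}(X>\lambda)\,d\lambda$ for $X\ge0$. The first term yields $\E\,\Phi(s(f))$; for the second, fixing the value $a:=s(f)(\omega)$ one computes $\int_0^a\Phi'(\lambda)\,d\lambda+a^2\int_a^\infty\lambda^{-2}\Phi'(\lambda)\,d\lambda=\Phi(a)+a^2\int_a^\infty\lambda^{-2}\Phi'(\lambda)\,d\lambda$, and the elementary bound $\Phi'(\lambda)\le[\log(e+\lambda)]^{-1}$ gives $a^2\int_a^\infty\lambda^{-2}\Phi'(\lambda)\,d\lambda\le\frac{a^2}{\log(e+a)}\int_a^\infty\lambda^{-2}\,d\lambda=\Phi(a)$. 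Therefore $\E\,\Phi(S(f))\le 3\,\E\,\Phi(s(f))$. To turn this modular inequality into the quasi-norm inequality, recall $\|X\|_{L^{\log}}\le 1\iff\E\,\Phi(X)\le 1$ and the homogeneity of $\|\cdot\|_{L^{\log}}$; combined with the pointwise estimate $\Phi(t/C)\le\frac{\log(e+C)+1}{C}\,\Phi(t)$ (which follows from $e+t\le(e+C)(1+t/C)$), choosing $C$ so large that $\log(e+C)+1\le C/3$ yields $\|S(f)\|_{L^{\log}}\le C\|s(f)\|_{L^{\log}}$, i.e. $\|f\|_{H_{\log}}\le C\|f\|_{h_{\log}}$.

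The main obstacle is the distributional inequality, and within it the choice of the \emph{predictable} stopping time $\tau$ tied to the conditional square function together with the resulting bound $\E(S(f^{\tau})^2)\le\lambda^2$ on the stopped martingale; everything afterwards is routine, apart from having to use the precise form of $\Phi$ in the tail estimate for $\int_a^\infty\lambda^{-2}\Phi'(\lambda)\,d\lambda$ in part (ii). An alternative for (i), more in the spirit of the rest of the paper, is to invoke the atomic decomposition of $h_1$: for a conditional-square-function atom $a$ supported on a set $F$ one has $\|S(a)\|_{L^1}\le\mathbb{P}(F)^{1/2}\|s(a)\|_{L^2}\le 1$ by the Cauchy--Schwarz inequality, and subadditivity of $S$ then gives $\|f\|_{H_1}\lesssim\|f\|_{h_1}$; the corresponding argument for $h_{\log}$ is more delicate, which is why the distributional approach is preferable for handling (i) and (ii) simultaneously.
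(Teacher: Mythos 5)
Your proof is correct, and it is genuinely different from what the paper does: the paper offers no proof at all for this lemma, simply citing \cite[Theorem 2.11]{We1994} and \cite[Theorem 2.5]{MNS2012}, whose arguments run through the atomic decomposition of $h_1$ (respectively of the Orlicz space $h_{\log}$) --- essentially the route you sketch in your final remark, where Lemma \ref{TaC}(iii) with $p=1$ gives $\|S(a)\|_{L^1}\le 1$ for a simple $(s,\infty)$-atom $a$. Your distributional inequality
$\mathbb{P}(S(f)>\lambda)\le \mathbb{P}(s(f)>\lambda)+\lambda^{-2}\,\E(\min\{s(f)^2,\lambda^2\})$,
proved via the predictable stopping time $\tau=\inf\{n:\ s_{n+1}(f)>\lambda\}$ and the identity $\E(S(f^{\tau})^2)=\E(s(f^{\tau})^2)$, is the classical Burkholder--Garsia good-$\lambda$ comparison; all the steps check out, including the $\mathcal F_{k-1}$-measurability of $\mathbf{1}_{\{\tau\ge k\}}$, the bound $s(f^\tau)^2\le\min\{s(f)^2,\lambda^2\}$, the identification $\{\tau<\infty\}=\{s(f)>\lambda\}$ (using $f_0=0$, which holds by the definition of $h_1$), the computation $\int_0^\infty\lambda^{-2}\min\{a^2,\lambda^2\}\,d\lambda=2a$, the tail estimate via $\Phi'(\lambda)\le[\log(e+\lambda)]^{-1}$, and the passage from the modular inequality $\E\,\Phi(S(f))\le3\,\E\,\Phi(s(f))$ to the quasi-norm inequality. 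What your approach buys is a single self-contained argument that treats (i) and (ii) uniformly with explicit constants and avoids invoking the atomic decomposition of the Orlicz--Hardy space $h_{\log}$ (which the paper never states, only the $h_1$ version in Lemma \ref{lem-atomic-decom}); what the citation-based route buys is brevity and consistency with the atomic machinery the paper uses everywhere else. The only cosmetic caveat is that the infimum defining $\|\cdot\|_{L^{\log}}$ need not be attained, so in the last step one should normalize by $(1+\varepsilon)\|s(f)\|_{L^{\log}}$ and let $\varepsilon\to0$; this does not affect the conclusion.
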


\subsection{Martingale BMO spaces and John--Nirenberg inequality}\label{subsec-bmo}

This section is devoted to definitions and basic results concerning  martingale BMO spaces.
Let $X_1$ and $X_2$ be two quasi-normed spaces both of which are subspaces
of some Hausdorff topological vector space.
The {\it space} $X_1+X_2$ is defined to be the set of all the elements $x$ of the
form $x=x_1+x_2,$ where $x_1\in X_1$ and $x_2\in X_2$, and is equipped
with the quasi-norm
$$\|x\|_{X_1+X_2}:=\inf\left\{\left\|x_1\right\|_{X_1}+\left\|x_2\right\|_{X_2}\right\},$$
where the infimum is taken over all
the elements $x_1\in X_1$ and $x_2\in X_2$ whose sum is equal to $x.$
The {\it space} $X_1\cap X_2$ is defined to be the set of all
the elements $x\in X_1\cap X_2$ and is equipped
with the quasi-norm
$$
	\|x\|_{X_1\cap X_2}
	:=\max\left\{\left\|x\right\|_{X_1},\left\|x\right\|_{X_2}\right\}.
$$

\begin{definition}\label{def-bmo}
	For any $p\in[1,\infty)$, the
	{\it martingale space ${\rm BMO}_p$ }
	is defined to be the set of
	all the martingales $f\in L^p$ with the norm
	$$\|f\|_{{\rm BMO}_p}:=\sup_{n\in\mathbb Z_{+}}\left\|\mathbb{E}_n
	\left(\left|f-f_{n-1}\right|^p\right)\right\|^{\frac1p}_{L^{\infty}}<\infty.$$
	The
	{\it martingale space ${\rm bmo}_p$ }
	is defined to be the set of
	all the martingales $f\in L^p$ with the norm
	$$\|f\|_{{\rm bmo}_p}:=\sup_{n\in\mathbb Z_{+}}\left\|\mathbb{E}_n
	\left(\left|f-f_{n}\right|^p\right)\right\|^{\frac1p}_{L^{\infty}}<\infty.$$
		The
	{\it martingale space ${\rm bmo}^d$ }
	is defined to be the set of
	all the martingales $f\in L^{\infty}$ with the norm
	$$\|f\|_{{\rm bmo}^d}:=\sup_{n\in\mathbb Z_{+}}\left\|d_nf\right\|_{L^{\infty}}<\infty.$$
\end{definition}

It is well known that, for any $p\in[1,\infty)$,
$$L^{\infty}\subset {\rm BMO}_p\subset {\rm bmo}_p\subset L^p;$$
see Weisz \cite[p.\,51]{We1994} for more details.
In the case $p=2$, it is obvious that
$$
	{\rm BMO}_2={\rm bmo}_2\cap {\rm bmo}^d
$$
with equal norms.
The John--Nirenberg inequality states that, for any given $p\in [1,\infty)$,
${\rm BMO}_p={\rm BMO}_1$
with equivalent norms.
We refer the reader to  \cite{Ga1973} or
\cite[Chapter 4]{Lo1993} for more details and also Nakai and Sadasue \cite{NS2017}
for more related studies.
Henceforth, in the sequel, we simply write ${\rm BMO}$
instead of ${\rm BMO}_p$ for any given $p\in[1,\infty)$.

Now,  we recall the dual theorem on martingale Hardy spaces.
For any (quasi) Banach space $X$, we denote by $X^{\ast}$ the {\it dual space} of $X$,
namely, the space of all continuous linear functional on $X$.
\begin{lemma}\label{duality}
The following duality results hold true:
	\begin{enumerate}[{\rm(i)}]
		\item $(H_1)^{\ast}={\rm BMO}$ with equivalent norms;
		\item $(h_1)^{\ast}={\rm bmo}_2$ and $(h_1^d)^*=\mathrm{bmo}^d$ with equivalent norms.
	\end{enumerate}
\end{lemma}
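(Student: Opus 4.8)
The plan is to establish part (ii) first and then deduce part (i) from it, the bridge being $\mathrm{BMO}=\mathrm{BMO}_{2}=\mathrm{bmo}_{2}\cap\mathrm{bmo}^{d}$ (with equal norms, as recalled just before the lemma, combined with the John--Nirenberg inequality). Throughout, for $L^{2}$-martingales $f$ and $g$ we use the pairing $\langle f,g\rangle:=\E(f_{\infty}g_{\infty})=\sum_{n\in\Z_{+}}\E(d_{n}f\,d_{n}g)$, the second equality being orthogonality of martingale differences; the duality pairings in the statement are the continuous extensions of this one. The common mechanism for every ``converse'' inclusion is the same: the relevant space of $L^{2}$-martingales embeds continuously and densely in the Hardy-type space, so a bounded functional is represented by Riesz by some $L^{2}$ function $b$, and it then suffices to bound a BMO-type norm of $b$ by testing the functional against well-chosen $L^{2}$-martingales.

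For $(h_{1}^{d})^{\ast}=\mathrm{bmo}^{d}$, the inclusion $\mathrm{bmo}^{d}\hookrightarrow(h_{1}^{d})^{\ast}$ is immediate from $|\langle f,g\rangle|\le\sum_{k}\|d_{k}f\|_{L^{1}}\|d_{k}g\|_{L^{\infty}}$; conversely, given $\ell\in(h_{1}^{d})^{\ast}$, one argues level by level: for $k\ge1$, sending $u\in L^{1}(\mathcal F_{k})$ with $\E_{k-1}(u)=0$ to the martingale whose only nonzero difference is $d_{k}=u$ is an isometric embedding into $h_{1}^{d}$, so $\ell$ on this layer is an $L^{1}$-functional of norm $\le\|\ell\|$; extending it by Hahn--Banach and representing it by $\psi_{k}\in L^{\infty}(\mathcal F_{k})$, the choice $d_{k}g:=\psi_{k}-\E_{k-1}\psi_{k}$ (the case $k=0$ being trivial) produces $g\in\mathrm{bmo}^{d}$ with $\|g\|_{\mathrm{bmo}^{d}}\lesssim\|\ell\|$ and $\E(d_{k}f\,d_{k}g)=\E(d_{k}f\,\psi_{k})$, which identifies $\ell$ with $\langle\cdot,g\rangle$ on finitely supported martingales, hence everywhere. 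For $(h_{1})^{\ast}=\mathrm{bmo}_{2}$: if $g\in\mathrm{bmo}_{2}$ and $a$ is an $h_{1}$-atom whose predictable support lies in $A\in\mathcal F_{n}$ with $\|s(a)\|_{L^{\infty}}\le\mathbb P(A)^{-1}$, then $\langle a,g\rangle=\E(a_{\infty}(g_{\infty}-g_{n}))$ since $a_{m}=0$ for $m\le n$, and Cauchy--Schwarz with $\E(|a_{\infty}|^{2})=\E(s(a)^{2})\le\mathbb P(A)^{-1}$ and $\E(\mathbf 1_{A}|g_{\infty}-g_{n}|^{2})\le\mathbb P(A)\|g\|_{\mathrm{bmo}_{2}}^{2}$ gives $|\langle a,g\rangle|\le\|g\|_{\mathrm{bmo}_{2}}$; summing against an atomic decomposition $f=\sum_{j}\lambda_{j}a_{j}$ with $\sum_{j}|\lambda_{j}|\lesssim\|f\|_{h_{1}}$ (equivalently, via Garsia's stopping-time argument) yields $\mathrm{bmo}_{2}\hookrightarrow(h_{1})^{\ast}$. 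Conversely, for $\ell\in(h_{1})^{\ast}$, from $\|h\|_{h_{1}}\le\|h\|_{L^{2}}$ for mean-zero $h\in L^{2}$ one represents $\ell$ on mean-zero $L^{2}$-martingales by some mean-zero $b\in L^{2}$; testing against $h:=(b-b_{n})\mathbf 1_{A}$ with $A\in\mathcal F_{n}$ gives $\ell(h)=\int_{A}\E_{n}(|b-b_{n}|^{2})$ and $\|h\|_{h_{1}}\le\|h\|_{L^{2}}=(\int_{A}\E_{n}(|b-b_{n}|^{2}))^{1/2}$, whence $\int_{A}\E_{n}(|b-b_{n}|^{2})\le\|\ell\|^{2}$ for every such $A$, i.e. $\|b\|_{\mathrm{bmo}_{2}}\le\|\ell\|$.

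Part (i) then follows quickly. For $\mathrm{BMO}\hookrightarrow(H_{1})^{\ast}$: every $b\in\mathrm{BMO}$ lies in $\mathrm{bmo}_{2}$ and in $\mathrm{bmo}^{d}$ with norms $\le\|b\|_{\mathrm{BMO}}$; split $f\in H_{1}\cap L^{2}$ via the Davis decomposition (Lemma~\ref{lem-Davis}), with both pieces in $L^{2}$ by Remark~\ref{sup-davis}, as $f=f^{1}+f^{d}$ with $f^{1}\in h_{1}$ and $f^{d}\in h_{1}^{d}$; applying part (ii) to each piece bounds $|\langle f,b\rangle|=|\langle f^{1},b\rangle+\langle f^{d},b\rangle|$ by $C\|f\|_{H_{1}}\|b\|_{\mathrm{BMO}}$, which extends to all of $H_{1}$ by density of $H_{1}\cap L^{2}$. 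For $(H_{1})^{\ast}\hookrightarrow\mathrm{BMO}$: from $\|h\|_{H_{1}}=\|S(h)\|_{L^{1}}\le\|h\|_{L^{2}}$ one represents $\ell\in(H_{1})^{\ast}$ on $L^{2}$ by some $b\in L^{2}$; testing against $(b-b_{n})\mathbf 1_{A}$ bounds $\|b\|_{\mathrm{bmo}_{2}}$ by $\|\ell\|$ as above, while testing against the jump martingale with sole difference $d_{n}=\operatorname{sgn}(d_{n}b)\mathbf 1_{A}-\E_{n-1}(\operatorname{sgn}(d_{n}b)\mathbf 1_{A})$, which has $h_{1}^{d}$-norm $\le2\mathbb P(A)$ and on which $\ell$ evaluates to $\int_{A}|d_{n}b|$, bounds $\|b\|_{\mathrm{bmo}^{d}}$ by $2\|\ell\|$; hence $b\in\mathrm{bmo}_{2}\cap\mathrm{bmo}^{d}=\mathrm{BMO}$, and density of $L^{2}$ in $H_{1}$ completes the identification.

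The only genuinely delicate point — and the one place where non-regularity of the filtration intervenes — is the density of $L^{2}$-martingales in $H_{1}$: a naive stopping-time truncation $f^{\tau_{k}}$ need not lie in $L^{2}$, since the stopped square function can acquire an uncontrolled terminal jump. This is circumvented by routing through the Davis decomposition once more: the $h_{1}$-part is approximated in $h_{1}$ (hence in $H_{1}$, by Lemma~\ref{mar-ine}) by finite sums of its atoms, which are $L^{2}$, and the $h_{1}^{d}$-part is approximated in $h_{1}^{d}$ (hence in $H_{1}$, since $S(\cdot)\le\sum_{k}|d_{k}(\cdot)|$) by truncating each difference both in range and in index. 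The remaining points — independence of the pairings from the chosen decompositions and the passage from the dense subspaces to the full spaces, using the continuity of $\langle\cdot,b\rangle$ supplied by the corresponding ``easy'' inclusion — are routine. All of this is classical and may be found, in one form or another, in Garsia~\cite{Ga1973}, Long~\cite{Lo1993}, and Weisz~\cite{We1994}; part (ii) is isolated here mainly because it streamlines the proof of part (i).
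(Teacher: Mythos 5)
The paper offers no proof of this lemma at all: it cites Long \cite[Theorem 2.2.2]{Lo1993} for (i) and Weisz \cite[Theorems 2.23 and 2.32]{We1994} for (ii). So what you have written is the classical textbook argument rather than a variant of anything in the paper, and most of it is sound: the forward inclusions, the layer-by-layer Hahn--Banach construction for $(h_1^d)^{\ast}$, the jump-martingale test for the $\mathrm{bmo}^d$ bound, and the reduction of (i) to (ii) via $\mathrm{BMO}_2=\mathrm{bmo}_2\cap\mathrm{bmo}^d$ together with the Davis decomposition are all correct in outline.

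There is, however, one step that fails as written: the converse inclusion $(h_1)^{\ast}\subset\mathrm{bmo}_2$ (and the identical step inside your proof of $(H_1)^{\ast}\subset\mathrm{BMO}$). For the test martingale $h:=(b-b_n)\mathbf{1}_A$ with $A\in\mathcal F_n$ you estimate $\|h\|_{h_1}\le\|h\|_{L^2}$ and conclude $\int_A\E_n(|b-b_n|^2)\,d\mathbb{P}\le\|\ell\|^2$ for every such $A$, ``i.e.\ $\|b\|_{\mathrm{bmo}_2}\le\|\ell\|$.'' That last inference is a non sequitur: an integral bound without the factor $\mathbb{P}(A)$ gives no control on the essential supremum of $\E_n(|b-b_n|^2)$ when $\mathbb{P}(A)$ is small (taking $A=\Omega$ already contains the full content of your inequality). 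What is needed is $\int_A\E_n(|b-b_n|^2)\,d\mathbb{P}\le\|\ell\|^2\,\mathbb{P}(A)$, after which one chooses $A=\{\E_n(|b-b_n|^2)>\lambda\}\in\mathcal F_n$. The missing factor comes from localizing the $h_1$-norm of the test function: since $d_m(h)=\mathbf{1}_A\,d_m(b)$ for $m>n$ and $d_m(h)=0$ for $m\le n$, the function $s(h)$ is supported in $A$, so the Cauchy--Schwarz inequality gives $\|h\|_{h_1}=\|s(h)\mathbf{1}_A\|_{L^1}\le[\mathbb{P}(A)]^{1/2}\|s(h)\|_{L^2}=[\mathbb{P}(A)]^{1/2}\|h\|_{L^2}$. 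This is exactly the localization you do use, correctly, in the forward direction for atoms (and which the paper exploits in Lemma \ref{TaC}); once it is inserted the argument closes. Your $\mathrm{bmo}^d$ test, by contrast, is fine, because the $L^1$-norm of the jump martingale already carries the factor $\mathbb{P}(A)$.
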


We refer the reader to \cite[Theorem 2.2.2]{Lo1993} for the proof of item (i)
and to both \cite[Theorem 2.23]{We1994} and \cite[Theorem 2.32]{We1994} for the proof of item (ii).

\subsection{Atomic decompositions of martingale Hardy spaces}

In this subsection, we introduce atomic decompositions of
Hardy spaces defined in Section \ref{subsect-hardy}.
We first recall the definition of atoms; see
the monograph \cite[Chapter 2]{We1994} of Weisz for more details.

\begin{definition}\label{def-simple-atom}
	A measurable function $a$ is called a
	{\it simple $(s,\infty)$-atom} if there exist an
	integer $n\in\mathbb{Z}_{+}$ and a set $A\in \mathcal F_n$ such that
	\begin{enumerate}[{\rm (i)}]
		\item $a_n:=\mathbb{E}_n(a)=0$;
		\item $\mathrm{supp}\,(a)\subset A$;
		\item $
		\|s(a)\|_{L^{\infty}}\leq \left[\mathbb{P}(A)\right]^{-1}.$
	\end{enumerate}
If the above {\rm (iii)} is replaced by $	\|a\|_{L^{\infty}}
\leq \left[\mathbb{P}(A)\right]^{-1}$, then one obtains
the definition of a {\it simple $\infty$-atom}.
\end{definition}

In the following lemma, we collect several  useful  properties related to simple $(s,\infty)$-atoms.
\begin{lemma}\label{TaC}
Let $a$ be a simple $(s,\infty)$-atom with respect to some $n\in\mathbb{Z}_+$ and $A\in \mathcal{F}_n$.
Then the following hold true:
\begin{enumerate}[{\rm (i)}]
	\item for any $T\in \{M, S, s\}$,  the support of $T(a)$ is contained in the set $A$;
	\item for any $p\in[1,2]$, one has $\|M(a)\|_{L^p}\leq 2 [\mathbb{P}(A)]^{1/p-1}$;
	\item for any  $p\in[1,2]$ and $T\in \{S,s\}$,
	one has $\|T(a)\|_{L^p}\leq  [\mathbb{P}(A)]^{1/p-1}$;
	\item  for any $p\in[1,2]$, one has $\|a\|_{L^p} \leq [\mathbb{P}(A)]^{1/p-1}$.
\end{enumerate}
In particular, $a$ is in $h_1.$
\end{lemma}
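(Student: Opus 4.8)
\textbf{Proof plan for Lemma \ref{TaC}.}
The strategy is to exploit the localization of the atom combined with $L^2$-orthogonality of martingale differences, then interpolate (via Hölder) down to $L^p$ for $p\in[1,2]$. First I would establish (i). Let $a$ be a simple $(s,\infty)$-atom relative to $n\in\Z_+$ and $A\in\mathcal F_n$. Since $\mathbb E_n(a)=0$ and $\mathrm{supp}\,(a)\subset A$ with $A\in\mathcal F_n$, one checks that $a_k=\mathbb E_k(a)=0$ for all $k\le n$ (for $k\le n$, $\mathbb E_k(a)=\mathbb E_k(\mathbb E_n(a))=0$), so $d_k(a)=0$ for $k\le n$; moreover for $k>n$, $d_k(a)=\mathbb E_k(a)-\mathbb E_{k-1}(a)$, and since $a\mathbf 1_{\Omega\setminus A}=0$ with $A\in\mathcal F_n\subset\mathcal F_{k-1}$, one gets $\mathbb E_{k-1}(a)\mathbf 1_{\Omega\setminus A}=\mathbb E_{k-1}(a\mathbf 1_{\Omega\setminus A})=0$, hence $d_k(a)$ is supported in $A$. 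Therefore $M(a)$, $S(a)$, and $s(a)$ are all supported in $A$. (For $M(a)$ note $\mathbb E_k(a)=0$ off $A$ for every $k$.)

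Next I would prove (iii) for $T=s$. By definition of a simple $(s,\infty)$-atom, $\|s(a)\|_{L^\infty}\le[\mathbb P(A)]^{-1}$, and by (i) the function $s(a)$ is supported in $A$; hence for $p\in[1,2]$,
$$\|s(a)\|_{L^p}^p=\int_A |s(a)|^p\,d\mathbb P\le [\mathbb P(A)]^{-p}\,\mathbb P(A)=[\mathbb P(A)]^{1-p},$$
so $\|s(a)\|_{L^p}\le[\mathbb P(A)]^{1/p-1}$. For $T=S$, the point is that $\|S(a)\|_{L^2}\le\|s(a)\|_{L^2}$: indeed, by orthogonality of martingale differences and the tower property,
$$\|S(a)\|_{L^2}^2=\sum_{k}\mathbb E\left(|d_k(a)|^2\right)=\sum_k\mathbb E\left(\mathbb E_{k-1}|d_k(a)|^2\right)=\mathbb E\left(s(a)^2\right)=\|s(a)\|_{L^2}^2,$$
using $d_0(a)=0$. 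Combining with $\|s(a)\|_{L^2}\le[\mathbb P(A)]^{-1/2}$ gives $\|S(a)\|_{L^2}\le[\mathbb P(A)]^{-1/2}$. Since $S(a)$ is supported in $A$, Hölder's inequality with exponent $2/p$ yields, for $p\in[1,2]$,
$$\|S(a)\|_{L^p}=\left(\int_A|S(a)|^p\,d\mathbb P\right)^{1/p}\le\|S(a)\|_{L^2}\,[\mathbb P(A)]^{1/p-1/2}\le[\mathbb P(A)]^{1/p-1}.$$
This proves (iii).

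For (iv), since $a$ is an $L^1$-martingale limit with $|a|\le S(a)$ pointwise (as $a=\sum_k d_k(a)$ and, more simply, $|a_\infty|^2=|\sum_k d_k a|^2$; in any case $\|a\|_{L^p}\le\|S(a)\|_{L^p}$ by the Burkholder–Davis–Gundy inequality for $p\in[1,2]$, or directly $\|a\|_{L^2}=\|S(a)\|_{L^2}$ by orthogonality), part (iii) immediately gives $\|a\|_{L^p}\le[\mathbb P(A)]^{1/p-1}$; alternatively note $a$ is supported in $A$ and apply Hölder together with the $L^2$ bound $\|a\|_{L^2}=\|S(a)\|_{L^2}\le[\mathbb P(A)]^{-1/2}$. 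For (ii), by Doob's maximal inequality in $L^2$, $\|M(a)\|_{L^2}\le 2\|a\|_{L^2}\le 2[\mathbb P(A)]^{-1/2}$, and since $M(a)$ is supported in $A$ by (i), Hölder again gives $\|M(a)\|_{L^p}\le\|M(a)\|_{L^2}[\mathbb P(A)]^{1/p-1/2}\le 2[\mathbb P(A)]^{1/p-1}$ for $p\in[1,2]$. Finally, the concluding assertion that $a\in h_1$ is just the case $p=1$ of (iii) with $T=s$: $\|a\|_{h_1}=\|s(a)\|_{L^1}\le[\mathbb P(A)]^{0}=1<\infty$. The only mildly delicate point is the bookkeeping in (i)—verifying that all three operators localize to $A$—and making sure the $L^2$-orthogonality identities are applied with the correct convention $d_0(a)=a_0=0$; everything else is Hölder's inequality on the set $A$ of finite measure.
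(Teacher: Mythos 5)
Your proof is correct and follows essentially the same route as the paper's: localize all the operators to $A$ using $A\in\mathcal F_n$ and $\E_m(a)=0$ for $m\le n$, get the $L^2$ bounds from $\|a\|_{L^2}=\|S(a)\|_{L^2}=\|s(a)\|_{L^2}\le\|s(a)\|_{L^\infty}[\mathbb P(A)]^{1/2}$ (plus Doob for $M$), and then apply H\"older on $A$ to descend to $L^p$. One small caveat in (iv): the pointwise inequality $|a|\le S(a)$ is false in general and BDG would introduce an unwanted constant, but the alternative you give there (H\"older on $A$ with $\|a\|_{L^2}=\|S(a)\|_{L^2}\le[\mathbb P(A)]^{-1/2}$) is exactly the paper's argument and is the one to keep.
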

\begin{proof} We give the proof for completeness.
Note that $A\in\mathcal{F}_n$ and, for any $m\leq n$, $\mathbb{E}_m(a)=0$.
Thus, for any $m\in\mathbb{Z}_{+}$,
$\mathbb{E}_m(a)\mathbf{1}_A=\mathbb{E}_m(a)$ and $d_m(a)\mathbf{1}_A=d_m(a)$,
which yield item (i).
By item (i), for each  $p\in[1,2]$, we have
	\begin{align*}
		\|M(a)\|_{L^p} &=\|M(a)\mathbf{1}_A\|_{L^p}\leq \|M(a)\|_{L^2}
		\left[\mathbb{P}(A)\right]^{1/p-1/2}\\
		&\leq  2 \|a\|_{L^2} \left[\mathbb{P}(A)\right]^{1/p-1/2}=2 \|s(a)\|_{L^2}
		\left[\mathbb{P}(A)\right]^{1/p-1/2}\\
		&\leq  2  \|s(a)\|_{L^{\infty}} \left[\mathbb{P}(A)\right]^{1/2}
		\left[\mathbb{P}(A)\right]^{1/p-1/2}
		\leq 2\left[\mathbb{P}(A)\right]^{1/p-1},
	\end{align*}
where in the second inequality we used the Doob maximal inequality (see e.g. \cite[Theorem 2.1.3]{Lo1993}).
Item (iii) and item (iv) can be proved by the same argument as that
used in the proof of item (ii). Instead of the Doob maximal inequality,
we use the following basic facts
$$\|a\|_{L^2}=\|S(a)\|_{L^2}
=\|s(a)\|_{L^2}.$$
This finishes the proof of Lemma \ref{TaC}.
\end{proof}

Let us now turn to the  atomic decomposition, which is attributed
to Herz and found in   \cite{He1974}.
In this article, we use the following simple atomic decomposition of
$h_1$, which can be found in \cite[Theorem 2.5]{We1994} (see also \cite{W1990} and \cite[Chapter 5.3]{PSTW22}).

\begin{lemma}\label{lem-atomic-decom}
	Let $f\in h_1$. Then there exist a sequence $(a^k)_{k\in\mathbb{Z}}$
	of simple $(s,\infty)$-atoms, a sequence $(\mu_{k})_{k\in\mathbb{Z}}$
	of real numbers, and a positive constant $C$ such that, for any $n\in\mathbb{Z}_{+}$,
	$$f_n=\sum_{k\in\mathbb Z}\mu_{k}\mathbb{E}_n\left(a^k\right)
\quad \mbox{ a.s.}$$
and
$$\sum_{k\in \mathbb Z}\left|\mu_k\right|\leq C \|f\|_{h_1}.$$
\end{lemma}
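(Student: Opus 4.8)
The plan is to prove Lemma~\ref{lem-atomic-decom} by the classical stopping-time construction adapted to the conditional square function $s$, following Herz~\cite{He1974} and Weisz~\cite[Theorem~2.5]{We1994}. First I would fix $f\in h_1$ and consider, for each $k\in\mathbb Z$, the stopping time
$$\tau_k:=\inf\{n\in\mathbb Z_+:\ s_{n+1}(f)>2^k\},$$
where $s_{n}(f):=(\sum_{m=1}^{n}\mathbb E_{m-1}|d_mf|^2+|d_0f|^2)^{1/2}$ is the partial conditional square function; note that $s_{n+1}(f)$ is $\mathcal F_n$-measurable, so each $\tau_k$ is indeed a stopping time. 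Since $f_0=0$ and $\mathbb P(s(f)=\infty)=0$, the stopped martingales $f^{\tau_k}=(f_{n\wedge\tau_k})_{n\in\mathbb Z_+}$ increase to $f$ a.s.\ as $k\to\infty$ and vanish as $k\to-\infty$. The natural decomposition is the telescoping sum
$$f_n=\sum_{k\in\mathbb Z}\left(f_{n\wedge\tau_{k+1}}-f_{n\wedge\tau_k}\right)
=:\sum_{k\in\mathbb Z}\mu_k\,\mathbb E_n(a^k),$$
where one sets $\mu_k:=3\cdot 2^k\,[\mathbb P(\tau_k<\infty)]^{1/2}$ (adjusting the constant as needed) and $a^k:=\mu_k^{-1}(f^{\tau_{k+1}}-f^{\tau_k})_\infty$ on the event $\{\mu_k\neq 0\}$, and $a^k:=0$ otherwise.

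Next I would verify that each $a^k$ is (a multiple of) a simple $(s,\infty)$-atom in the sense of Definition~\ref{def-simple-atom}, with $A:=\{\tau_k<\infty\}\in\mathcal F_{?}$. There is a subtlety: $\{\tau_k<\infty\}$ need not lie in a single $\mathcal F_{n}$, which is exactly why the statement asks for \emph{simple} atoms only after passing to the finitely-supported truncation; one handles this by first assuming $f$ has finitely many nonzero differences (so all $\tau_k$ are bounded, $\{\tau_k<\infty\}=\{\tau_k\le N\}\in\mathcal F_N$), proving the bound there with a constant independent of $N$, and then passing to the limit. For the atom properties: (i) $\mathbb E_{n}(a^k)=0$ where $n$ is chosen appropriately because $d_m(f^{\tau_{k+1}}-f^{\tau_k})=d_mf\cdot\mathbf 1_{\{\tau_k<m\le\tau_{k+1}\}}$ vanishes on $\{\tau_k\ge m\}$; (ii) $\mathrm{supp}\,a^k\subset\{\tau_k<\infty\}$ for the same reason, since every difference is supported there; (iii) the size estimate $\|s(a^k)\|_{L^\infty}\le[\mathbb P(\{\tau_k<\infty\})]^{-1}$ follows from
$$s(f^{\tau_{k+1}}-f^{\tau_k})^2=\sum_{m}\mathbb E_{m-1}|d_mf|^2\,\mathbf 1_{\{\tau_k<m\le\tau_{k+1}\}}
\le s_{\tau_{k+1}+1}(f)^2\,\mathbf 1_{\{\tau_k<\infty\}}\le 2^{2(k+1)}\,\mathbf 1_{\{\tau_k<\infty\}}$$
together with the definition of $\mu_k$.

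Then I would establish $\sum_{k\in\mathbb Z}|\mu_k|\lesssim\|f\|_{h_1}$. The key observation is $\{\tau_k<\infty\}=\{s(f)>2^k\}$ (up to a null set, using that $s_{n+1}(f)\uparrow s(f)$), so
$$\sum_{k\in\mathbb Z}|\mu_k|\lesssim\sum_{k\in\mathbb Z}2^k\left[\mathbb P\left(s(f)>2^k\right)\right]^{1/2}.$$
Here I would invoke Cauchy--Schwarz in the form
$2^k[\mathbb P(s(f)>2^k)]^{1/2}\le \lambda_k^{-1/2}\cdot\lambda_k^{1/2}2^k[\mathbb P(s(f)>2^k)]$ only if a weak-type bound were available; in $h_1$ it is not, so instead the correct route is the distributional identity
$\sum_{k\in\mathbb Z}2^k\,\mathbf 1_{\{s(f)>2^k\}}\approx s(f)$ pointwise, whence
$$\sum_{k\in\mathbb Z}2^k\,\mathbb P\left(s(f)>2^k\right)=\mathbb E\left(\sum_{k}2^k\mathbf 1_{\{s(f)>2^k\}}\right)\approx\mathbb E(s(f))=\|f\|_{h_1};$$
the square root is absorbed by noting $[\mathbb P(\{\tau_k<\infty\})]^{1/2}\le 1$ is too lossy, so one actually keeps the $L^1$-normalization of atoms intact and defines $\mu_k:=3\cdot 2^k\,\mathbb P(\{\tau_k<\infty\})$ with atoms normalized by $\|s(a^k)\|_{L^\infty}\le[\mathbb P(A)]^{-1}$ rescaled accordingly --- this is exactly the standard bookkeeping in \cite[Theorem~2.5]{We1994}. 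The convergence $f_n=\sum_k\mu_k\mathbb E_n(a^k)$ a.s.\ is then immediate from the telescoping identity for fixed $n$.

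The main obstacle I anticipate is twofold: first, getting the normalization constants consistent so that the atoms genuinely satisfy Definition~\ref{def-simple-atom}(iii) while $\sum|\mu_k|$ is controlled by $\|f\|_{h_1}$ and not merely by $\|f\|_{h_1}^{1/2}$ or similar --- this forces the choice $\mu_k\sim 2^k\mathbb P(\{s(f)>2^k\})$ rather than the $L^2$-type choice, and one must double-check that $s(a^k)$ obeys the right bound under this scaling. Second, the measurability issue: $\{\tau_k<\infty\}$ lies in $\mathcal F_\infty$, not in any $\mathcal F_n$, so one must first prove the result for martingales with finitely supported differences (where everything is an honest simple atom) and then extract the general case by a limiting/density argument, checking that the atomic coefficients converge. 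Both points are routine once set up correctly but are where the argument must be handled with care. We refer the reader to \cite[Theorem~2.5]{We1994} for the full details.
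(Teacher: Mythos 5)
The paper offers no proof of Lemma \ref{lem-atomic-decom}; it only cites \cite[Theorem 2.5]{We1994}, so what matters is whether your reconstruction of the classical stopping-time argument is sound. Your overall strategy is the right one: the stopping times $\tau_k:=\inf\{n\in\mathbb Z_+:\ s_{n+1}(f)>2^k\}$, the telescoping decomposition $f_n=\sum_{k\in\mathbb Z}(f_{n\wedge\tau_{k+1}}-f_{n\wedge\tau_k})$, the identification $\{\tau_k<\infty\}=\{s(f)>2^k\}$, and the final ($L^1$-type) normalization $\mu_k\approx 2^k\,\mathbb P(\{s(f)>2^k\})$, which simultaneously yields $\|s(a^k)\|_{L^\infty}\le[\mathbb P(A)]^{-1}$ and $\sum_k|\mu_k|\lesssim\mathbb E(s(f))=\|f\|_{h_1}$. (A minor slip: the correct bound is $\sum_m\mathbb E_{m-1}|d_mf|^2\,\mathbf 1_{\{\tau_k<m\le\tau_{k+1}\}}\le s_{\tau_{k+1}}(f)^2\le 2^{2(k+1)}$; your intermediate quantity $s_{\tau_{k+1}+1}(f)$ exceeds $2^{k+1}$ on $\{\tau_{k+1}<\infty\}$ by the very definition of $\tau_{k+1}$.)

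The genuine gap is that the functions $a^k:=\mu_k^{-1}(f^{\tau_{k+1}}-f^{\tau_k})_\infty$ are \emph{not} simple $(s,\infty)$-atoms in the sense of Definition \ref{def-simple-atom}, which requires a \emph{single} integer $n$ with both $\mathbb E_n(a^k)=0$ and $\mathrm{supp}(a^k)\subset A\in\mathcal F_n$. Here $\mathbb E_n(a^k)=\mu_k^{-1}(f_{n\wedge\tau_{k+1}}-f_{n\wedge\tau_k})$ vanishes only on $\{\tau_k\ge n\}$, so no single $n$ works; and your proposed remedy does not repair this: if $f$ has finitely many nonzero differences, then indeed $\{\tau_k<\infty\}=\{\tau_k\le N\}\in\mathcal F_N$, but then $\mathbb E_N(a^k)=a^k\neq 0$. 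What you have built are Weisz's stopping-time atoms. The missing step is the further splitting $\mu_k a^k=\sum_{n\in\mathbb Z_+}\mu_{k,n}\,b^{k,n}$ with $b^{k,n}:=\frac{\mathbb P(A)}{\mathbb P(A_n)}\,a^k\mathbf 1_{A_n}$, $\mu_{k,n}:=\mu_k\frac{\mathbb P(A_n)}{\mathbb P(A)}$, where $A_n:=\{\tau_k=n\}$ and $A:=\{\tau_k<\infty\}$. Since $A_n\in\mathcal F_n$, $\mathbb E_n(a^k)=0$ on $\{\tau_k\ge n\}\supset A_n$, and $s(a^k\mathbf 1_{A_n})\le s(a^k)\mathbf 1_{A_n}$, each $b^{k,n}$ is an honest simple $(s,\infty)$-atom with respect to $n$ and $A_n$, while $\sum_n\mu_{k,n}=\mu_k$, so the coefficient bound is preserved; reindexing the doubly-indexed family by $\mathbb Z$ then closes the proof. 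Without this step the objects produced do not satisfy the definition actually used downstream (e.g.\ in Lemmas \ref{TaC} and \ref{Pi1-lem1}).
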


 We will need to use at the same time the fact that $f\in h_1$ and $f$ is an $L^2$-martingale.
 Going back to the proof, one has the following property.
 \begin{remark}\label{sup-atomic}
 	In Lemma \ref{lem-atomic-decom}, if, moreover,
 	$f$ is an $L^2$-martingale, then the series
 	$\sum_{k\in\mathbb Z}\mu_{k}\mathbb{E}_n(a^k)$ converges also in $L^2.$
\end{remark}

\section{Products and Paraproducts}\label{sect-bilinear}
In this section, we prove  Theorem \ref{thm-bilinear}.
Take $(f,g)\in H_1\times {\rm BMO}.$ We may assume without loss of generality that $g_0=0$
(if not, there is a supplementary term in $H_1,$ which is contained in $H_{\log}$).
We start from the identity: for any $n\in\mathbb{N}$,
\begin{align}\label{decom}
f_ng_n&= 	\sum_{k=1}^n\sum_{j=1}^n (d_jf)( d_kg)\\
	&=\sum_{k=1}^nf_{k-1}(d_kg)+
	\sum_{k=1}^n (d_kf)g_{k-1}+
	\sum_{k=1}^n (d_kf)(d_kg)\nonumber\\
	&=:\Pi_1(f,g)_n+\Pi_2(f,g)_n+L(f,g)_n.\nonumber
\end{align}
Note first that, since $\sup_{n\in\mathbb Z_{+}} \|d_n g\|_{L^\infty}<\infty,$
it follows that each $g_n$ for any $n\in\mathbb Z_{+}$ is bounded.
So each term of these above expressions is integrable.
We recognize in the last  term the process of bounded
variation (we will prove this). Note that $\Pi_1$ and
$\Pi_2$ are paraproducts. They are martingales because, for every $k\in\mathbb N,$
    $$\mathbb{E}_{k-1}(f_{k-1}d_kg)=0\mbox{ and }\mathbb{E}_{k-1}(d_kfg_{k-1})=0.$$
 This induces that $(f_ng_n)_{n\in\mathbb Z_{+}}  $ is a semi-martingale.
    We  first prove that $L(f, g)$ has bounded variation,
    then that the martingale $\Pi_1(f, g)$ belongs to $H_1,$
    with the continuity of the mapping $(f,g)\mapsto \Pi_1(f, g)$,
    and the same for $\Pi_2,$ with $H_{\log}$ in place of $H_1.$
    Since $H_1$ is contained in $H_{\log},$ this will allow us to
    conclude the proof of Theorem \ref{thm-bilinear}.

\subsection{The process of bounded variation}
This is given by the following proposition.
\begin{proposition}\label{pro-3}
    Let $f:=(f_n)_{n\in \mathbb{Z}_+}\in H_1$ and $g:=(g_n)_{n\in \mathbb{Z}_+}\in \mathrm{BMO}$. Then
    $$\sum_{k\in \mathbb{Z}_+} |d_kf|\,|d_kg| \in L^1.$$ In particular the
    process $L(f,g)$ converges to a function in $L^1.$
\end{proposition}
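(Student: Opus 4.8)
The plan is to prove the quantitative bound
$$\mathbb E\Big(\sum_{k\in\mathbb Z_+}|d_kf|\,|d_kg|\Big)\lesssim\|f\|_{H_1}\|g\|_{\mathrm{BMO}},$$
which at once gives $\sum_k|d_kf||d_kg|\in L^1$. For the ``in particular'' part, note that once this series lies in $L^1$, the partial sums satisfy $|L(f,g)_m-L(f,g)_n|\le\sum_{k>n}|d_kf||d_kg|$ for $m>n$, and the right-hand side decreases to $0$ a.s.\ while being dominated by the integrable function $\sum_k|d_kf||d_kg|$; by dominated convergence $(L(f,g)_n)_n$ is Cauchy in $L^1$, hence converges in $L^1$ (and a.s., to $\sum_{k\in\mathbb Z_+}d_kf\,d_kg\in L^1$). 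A fact I would use repeatedly is that $\|d_kg\|_{L^\infty}\le\|g\|_{\mathrm{BMO}}$ for every $k\in\mathbb Z_+$: this follows from $d_kg=\mathbb E_k(g-g_{k-1})$, Jensen's inequality, the definition of $\mathrm{BMO}_1$ (using the convention $g_{-1}=0$ when $k=0$), and the John--Nirenberg equivalence $\mathrm{BMO}_p=\mathrm{BMO}$.

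To estimate the main sum I would first apply the Davis decomposition (Lemma \ref{lem-Davis}): write $f=f^1+f^d$ with $f^1\in h_1$, $f^d\in h_1^d$ and $\|f^1\|_{h_1}+\|f^d\|_{h_1^d}\lesssim\|f\|_{H_1}$, so that $\sum_k|d_kf||d_kg|\le\sum_k|d_kf^1||d_kg|+\sum_k|d_kf^d||d_kg|$. The $h_1^d$ term is immediate: $\sum_k\mathbb E(|d_kf^d||d_kg|)\le\|g\|_{\mathrm{BMO}}\sum_k\|d_kf^d\|_{L^1}=\|g\|_{\mathrm{BMO}}\|f^d\|_{h_1^d}\lesssim\|g\|_{\mathrm{BMO}}\|f\|_{H_1}$.

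For the $h_1$ term I would invoke the simple atomic decomposition (Lemma \ref{lem-atomic-decom}): $f^1=\sum_j\mu_j a^j$ with $a^j$ simple $(s,\infty)$-atoms and $\sum_j|\mu_j|\lesssim\|f^1\|_{h_1}$; since $d_kf^1=\sum_j\mu_j d_k(a^j)$ a.s., Tonelli's theorem reduces the problem to the uniform estimate $\mathbb E(\sum_k|d_ka||d_kg|)\lesssim\|g\|_{\mathrm{BMO}}$ for one simple $(s,\infty)$-atom $a$, associated with some $n\in\mathbb Z_+$ and $A\in\mathcal F_n$. For such $a$ one has $d_ka=0$ when $k\le n$ and $\mathrm{supp}\,(d_ka)\subset A$ (Lemma \ref{TaC}), so that $\mathbb E(\sum_k|d_ka||d_kg|)=\mathbb E(\mathbf 1_A\sum_{k>n}|d_ka||d_kg|)$. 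Applying the Cauchy--Schwarz inequality first in $k$ and then on $\Omega$ gives
$$\mathbb E\Big(\mathbf 1_A\sum_{k>n}|d_ka||d_kg|\Big)\le\mathbb E\Big(\mathbf 1_A\,S(a)\Big(\sum_{k>n}|d_kg|^2\Big)^{1/2}\Big)\le\|S(a)\|_{L^2}\Big[\mathbb E\Big(\mathbf 1_A\sum_{k>n}|d_kg|^2\Big)\Big]^{1/2}.$$
By Lemma \ref{TaC}, $\|S(a)\|_{L^2}\le[\mathbb P(A)]^{-1/2}$. For the other factor the key computation is the identity $\mathbb E_n(\sum_{k>n}|d_kg|^2)=\mathbb E_n(|g-g_n|^2)$ (orthogonality of martingale increments, letting the upper index tend to $\infty$ and using $g\in L^2$, which holds since $\mathrm{BMO}\subset\mathrm{BMO}_2\subset L^2$), together with $\mathbb E_n(|g-g_n|^2)\lesssim\|g\|_{\mathrm{BMO}}^2$ a.s.\ (definition of $\mathrm{bmo}_2$ and $\mathrm{BMO}\subset\mathrm{bmo}_2$); since $\mathbf 1_A$ is $\mathcal F_n$-measurable this yields $\mathbb E(\mathbf 1_A\sum_{k>n}|d_kg|^2)\lesssim\|g\|_{\mathrm{BMO}}^2\,\mathbb P(A)$. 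Multiplying the two factors, the powers of $\mathbb P(A)$ cancel and $\mathbb E(\sum_k|d_ka||d_kg|)\lesssim\|g\|_{\mathrm{BMO}}$; summing against $\sum_j|\mu_j|$ closes the $h_1$ term, hence the proposition.

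The main obstacle is that the naive pointwise bound $\sum_k|d_kf||d_kg|\le S(f)\,S(g)$ is useless, since $S(g)$ need not be integrable for $g\in\mathrm{BMO}$, and a term-by-term conditional Cauchy--Schwarz in $k$ only produces an $\ell^1$-type sum $\sum_k(\mathbb E_{k-1}|d_kf^1|^2)^{1/2}$ that is not controlled by the conditional square function $s(f^1)$. The decomposition of $f^1$ into $h_1$-atoms is exactly what makes the argument work: localizing on the set $A$ lets the bounded local $L^2$-size of $S(a)$ pair against the local ``BMO energy'' $\mathbb E_n(|g-g_n|^2)$ of $g$, and the two normalizations balance precisely. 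Peeling off the $h_1^d$ part via the Davis decomposition is genuinely necessary because, for a non-regular filtration, $H_1$ itself has no atomic decomposition.
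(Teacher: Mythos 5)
Your proof is correct, but it takes a genuinely different route from the paper's. The paper disposes of this proposition in one line by citing the inequality $\sum_{k}\E(|d_kf\,d_kg|)\le\sqrt{2}\,\|f\|_{H_1}\|g\|_{\mathrm{BMO}}$, which is extracted from the classical proof of the Fefferman duality $(H_1)^{\ast}=\mathrm{BMO}$ in the martingale setting (Long's book, p.~43): there one applies Cauchy--Schwarz with the weights $S_k(f)^{\pm 1/2}$ and an Abel summation, with no decomposition of $f$ whatsoever. You instead rebuild the estimate from the Davis decomposition $f=f^1+f^d$ plus the simple $(s,\infty)$-atomic decomposition of $f^1$, handling the $h_1^d$ part by the trivial bound $\|d_kg\|_{L^\infty}\le\|g\|_{\mathrm{BMO}}$ and the atomic part by localizing on $A$ and pairing $\|S(a)\|_{L^2}\le[\mathbb P(A)]^{-1/2}$ against $\E(\mathbf 1_A\sum_{k>n}|d_kg|^2)\lesssim\|g\|_{\mathrm{BMO}}^2\,\mathbb P(A)$; all the lemmas you invoke (Lemmas \ref{lem-Davis}, \ref{lem-atomic-decom}, \ref{TaC}, the orthogonality identity $\mathbb E_n(\sum_{k>n}|d_kg|^2)=\mathbb E_n(|g-g_n|^2)$, and John--Nirenberg) are available in the paper, and each step checks out, including the implicit interchange justified by $\sum_j|\mu_j|\,M(a^j)<\infty$ a.s. What your approach buys is self-containedness and consistency with the toolkit the paper uses everywhere else (the same Davis-plus-atoms scheme proves Proposition \ref{pro-1} and Lemma \ref{Ufg}); what it costs is length and the sharp constant. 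Your final $L^1$-convergence argument via dominated convergence matches the paper's appeal to completeness of $L^1$.
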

\begin{proof}
From the proof of the duality $(H_1)^*=\mathrm{BMO}$
(see, for instance, \cite[p.\,43]{Lo1993}), one can deduce that
$$\sum_{n\in \mathbb{Z}_+}\E (|d_kfd_kg|)\leq \sqrt{2} \|f\|_{H_1} \|g\|_{\mathrm{BMO}}<\infty.$$
The desired result follows from the fact that $L^1$ is a complete space.
This finishes the proof of Proposition \ref{pro-3}.
\end{proof}

\subsection{ The paraproduct  $\Pi_1$}\label{sec-3Pi1}

This subsection aims to show the boundedness of
the bilinear operator $\Pi_1$ defined in \eqref{decom}.
In fact we prove more. That is, we have the following proposition.
\begin{proposition}\label{pro-1}
	The bilinear operator $\Pi_1$  is bounded from the product
	space $ H_1\times {\rm bmo_2}$ into the space $h_1$.
\end{proposition}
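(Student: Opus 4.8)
The plan is to prove that $\Pi_1$ maps $H_1\times {\rm bmo}_2$ boundedly into $h_1$ by first reducing to the case where the $H_1$-argument is a single simple $(s,\infty)$-atom, using the atomic decomposition of $h_1$ together with the Davis decomposition. Recall that the martingale difference of $\Pi_1(f,g)$ is $d_n(\Pi_1(f,g))=f_{n-1}\,d_n(g)$, so the conditional square function of $\Pi_1(f,g)$ is
$$s(\Pi_1(f,g))=\left(\sum_{n\in\mathbb N}\mathbb E_{n-1}\bigl(|f_{n-1}|^2|d_ng|^2\bigr)\right)^{1/2}
=\left(\sum_{n\in\mathbb N}|f_{n-1}|^2\,\mathbb E_{n-1}\bigl(|d_ng|^2\bigr)\right)^{1/2},$$
where we used that $f_{n-1}$ is $\mathcal F_{n-1}$-measurable. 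Thus $\|\Pi_1(f,g)\|_{h_1}=\|s(\Pi_1(f,g))\|_{L^1}$, and the whole point is to control the "weight" $\mathbb E_{n-1}(|d_ng|^2)$, which for $g\in{\rm bmo}_2$ is essentially bounded in a summed/telescoped sense (this is where the ${\rm bmo}_2$ norm enters, since ${\rm bmo}_2$ is precisely dual to $h_1$ by Lemma \ref{duality}(ii)).

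First I would observe that by Lemma \ref{lem-Davis} any $f\in H_1$ splits as $f=f^1+f^d$ with $f^1\in h_1$, $f^d\in h_1^d$, and controlled norms; since $\Pi_1$ is bilinear it suffices to treat $f\in h_1$ and $f\in h_1^d$ separately. For $f\in h_1^d$ (an $\ell^1$-sum of integrable jumps), the paraproduct $\Pi_1(f,g)$ is easy because $f_{n-1}$ is small in a summable way and $|d_ng|$ is uniformly bounded by $\|g\|_{{\rm bmo}^d}$, hence controlled by $\|g\|_{{\rm bmo}_2}$; more precisely one estimates $\|s(\Pi_1(f,g))\|_{L^1}$ directly against $\sum_k\|d_kf\|_{L^1}\|g\|_{{\rm bmo}_2}$ using Minkowski/triangle in $\ell^2$ and the pointwise bound on $\mathbb E_{n-1}(|d_ng|^2)^{1/2}\le\|g\|_{{\rm bmo}_2}$. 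For $f\in h_1$, I would invoke Lemma \ref{lem-atomic-decom} to write $f=\sum_k\mu_k a^k$ with $\sum_k|\mu_k|\lesssim\|f\|_{h_1}$, and by subadditivity of $\|\cdot\|_{h_1}$ on these atomic pieces (and a density/limiting argument via Remark \ref{sup-atomic} to justify moving $\Pi_1$ inside the sum) it reduces to showing $\|\Pi_1(a,g)\|_{h_1}\lesssim\|g\|_{{\rm bmo}_2}$ for every simple $(s,\infty)$-atom $a$ associated to $n\in\mathbb Z_+$ and $A\in\mathcal F_n$.

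For a fixed atom $a$ with $\mathbb E_n(a)=0$, $\operatorname{supp} a\subset A$, $\|s(a)\|_{L^\infty}\le\mathbb P(A)^{-1}$, I would note that $d_m(\Pi_1(a,g))=a_{m-1}d_mg$ vanishes for $m\le n$ (since $a_{m-1}=\mathbb E_{m-1}(a)=0$ there), so $\Pi_1(a,g)$ is supported in $A$ as well by the usual stopping-time argument (Lemma \ref{TaC}(i) applied to $a$ shows $M(a)$, hence each $a_{m-1}$, is supported in $A$). Then
$$\|s(\Pi_1(a,g))\|_{L^1}=\int_A\left(\sum_{m>n}|a_{m-1}|^2\,\mathbb E_{m-1}(|d_mg|^2)\right)^{1/2}d\mathbb P
\le\mathbb P(A)^{1/2}\left\|\sum_{m>n}|a_{m-1}|^2\,\mathbb E_{m-1}(|d_mg|^2)\right\|_{L^1}^{1/2}$$
by Cauchy–Schwarz on $A$. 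The remaining $L^1$-norm I would bound by duality against $h_1^d$ or, more directly, by writing $\mathbb E\bigl(\sum_{m>n}|a_{m-1}|^2\mathbb E_{m-1}(|d_mg|^2)\bigr)=\mathbb E\bigl(\sum_{m>n}|a_{m-1}|^2|d_mg|^2\bigr)$ and recognizing this as $\mathbb E\bigl(|\,M'(a)|^2\,(\text{quadratic variation of }g\text{ increments weighted})\bigr)$; the clean route is to use the $(h_1)^*={\rm bmo}_2$ pairing: the sequence $(|a_{m-1}|^2)_{m}$ behaves like (the square of) something with $\|M(a)\|_{L^2}^2\le\|a\|_{L^2}^2=\|s(a)\|_{L^2}^2\le\mathbb P(A)^{-1}$ by Lemma \ref{TaC}, while $\sum_{m>n}\mathbb E_{m-1}(|d_mg|^2)$ restricted suitably is bounded by $\|g\|_{{\rm bmo}_2}^2$ on the relevant set. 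Carefully, using $\mathbb E_n$ first and the ${\rm bmo}_2$ definition $\|\mathbb E_m(|g-g_m|^2)\|_{L^\infty}\le\|g\|_{{\rm bmo}_2}^2$, one gets $\mathbb E\bigl(\sum_{m>n}|a_{m-1}|^2|d_mg|^2\bigr)\lesssim\|g\|_{{\rm bmo}_2}^2\,\mathbb E(M(a)^2)\lesssim\|g\|_{{\rm bmo}_2}^2\,\mathbb P(A)^{-1}$, so altogether $\|s(\Pi_1(a,g))\|_{L^1}\lesssim\mathbb P(A)^{1/2}\cdot\mathbb P(A)^{-1/2}\|g\|_{{\rm bmo}_2}=\|g\|_{{\rm bmo}_2}$, as desired.

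\textbf{Main obstacle.} The genuinely delicate point is the core estimate $\mathbb E\bigl(\sum_{m>n}|a_{m-1}|^2|d_mg|^2\bigr)\lesssim\|g\|_{{\rm bmo}_2}^2\,\mathbb P(A)^{-1}$: one must interchange $M(a)$ (or $a_{m-1}$) and the $g$-increments correctly, peeling off conditional expectations in the right order so that the ${\rm bmo}_2$ bound on $\mathbb E_{m-1}(|d_mg|^2)$ (equivalently on $\sum_{m}\mathbb E_{m-1}(|d_mg|^2)$ after telescoping $|g-g_{m}|^2$) can be applied without losing the summability in $m$. I expect this Abel-summation/stopping-time bookkeeping — and the accompanying justification that $\Pi_1$ passes to the limit through the atomic series (Remarks \ref{sup-davis} and \ref{sup-atomic}) — to be the only real work; everything else is the triangle inequality, Cauchy–Schwarz, Doob (Lemma \ref{TaC}), and the duality $(h_1)^*={\rm bmo}_2$.
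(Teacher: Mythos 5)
Your overall architecture is exactly the paper's: the Davis decomposition $f=f^1+f^d$ (Lemma \ref{lem-Davis}), the atomic decomposition of $h_1$ (Lemma \ref{lem-atomic-decom}, with Remark \ref{sup-atomic} justifying the passage to the limit), and a separate elementary treatment of the jump part. Your atom estimate is correct but runs along a different track from the paper's Lemma \ref{Pi1-lem1}: you apply Cauchy--Schwarz on $A$ first and then prove the integrated Carleson-type bound $\mathbb{E}\bigl(\sum_{m>n}|a_{m-1}|^2|d_mg|^2\bigr)\lesssim\|g\|_{\mathrm{bmo}_2}^2\,\mathbb{E}(M(a)^2)$ by Abel summation against the telescoped quantity $\mathbb{E}_j(|g-g_j|^2)\le\|g\|_{\mathrm{bmo}_2}^2$, whereas the paper asserts the pointwise inequality $s(\Pi_1(a,g))\le M(a)\|g\|_{\mathrm{bmo}_2}$ and only then integrates. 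Your integrated version is the more robust route: the pointwise bound would need $\sum_{k>n}\mathbb{E}_{k-1}(|d_kg|^2)\le\|g\|_{\mathrm{bmo}_2}^2$ a.e., and this quantity is essentially $s(g)^2$, which $\|g\|_{\mathrm{bmo}_2}$ does not control pointwise --- it only controls it after an extra conditional expectation is applied. You correctly single out this Abel-summation/telescoping step as the crux, and it does go through.

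The genuine gap is in your treatment of $f^d\in h_1^d$. First, the claim that $|d_ng|$ is ``uniformly bounded by $\|g\|_{\mathrm{bmo}^d}$, hence controlled by $\|g\|_{\mathrm{bmo}_2}$'' is false: $\mathrm{bmo}_2$ does not embed into $\mathrm{bmo}^d$ (this is precisely the difference between $\mathrm{BMO}_2=\mathrm{bmo}_2\cap\mathrm{bmo}^d$ and $\mathrm{bmo}_2$; a single-jump martingale $g=d_1g\in L^2\setminus L^\infty$ has $\|g\|_{\mathrm{bmo}_2}=\|g\|_{L^2}<\infty$ while $\sup_n\|d_ng\|_{L^\infty}=\infty$). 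Second, and more importantly, your ``more precise'' estimate cannot be closed with the term-by-term pointwise bound $\mathbb{E}_{k-1}(|d_kg|^2)\le\|g\|_{\mathrm{bmo}_2}^2$: for a single jump $w$ at time $j$ one has $s(\Pi_1(w,g))=|w|\,[\sum_{k>j}\mathbb{E}_{k-1}(|d_kg|^2)]^{1/2}$, and bounding each of the infinitely many summands separately gives nothing. What is needed --- and what the paper's Lemma \ref{Pi1-lem2} does --- is the conditional Cauchy--Schwarz inequality at the level of $\mathbb{E}_j$:
\begin{equation*}
\mathbb{E}_j\left(s\left(\Pi_1(w,g)\right)\right)\le|w|\left[\mathbb{E}_j\left(\sum_{k>j}|d_kg|^2\right)\right]^{1/2}=|w|\left[\mathbb{E}_j\left(|g-g_j|^2\right)\right]^{1/2}\le|w|\,\|g\|_{\mathrm{bmo}_2},
\end{equation*}
after which one takes expectations and sums over $j$. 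This is the same telescoping device you invoke for the atom case, so the repair lies within your own toolkit, but as written the $h_1^d$ paragraph does not prove the required bound.
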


To show Proposition \ref{pro-1}, by the Davis decomposition in Lemma \ref{lem-Davis},
we can write $f$ as a sum of two martingales, one in $h_1$ and
the other one in $h_1^d.$ This leads us to consider separately
the cases of martingales in $h_1$ and $h_1^d.$
\begin{lemma}\label{Pi1-lem2}
    Let $f\in h_1^d$ and $g\in \mathrm{bmo}_2.$ Then the paraproduct $\Pi_1(f,g)$ is in $h_1,$ with
$$\left\|\Pi_1(f,g)\right\|_{h_1}\leq \|g\|_{{\rm bmo}_2} \left\|f\right\|_{h_1^d}.$$
\end{lemma}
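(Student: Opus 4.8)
The goal is to estimate $\|\Pi_1(f,g)\|_{h_1}=\|s(\Pi_1(f,g))\|_{L^1}$ when $f\in h_1^d$, i.e. when $f=\sum_k d_kf$ with $\sum_k\|d_kf\|_{L^1}<\infty$. Recall $d_n(\Pi_1(f,g))=f_{n-1}\,d_ng$, where $f_{n-1}=\sum_{k=0}^{n-1}d_kf$ is $\mathcal F_{n-1}$-measurable. The first step is to write out the conditional square function explicitly:
\[
s\bigl(\Pi_1(f,g)\bigr)^2=\sum_{n\ge 1}\mathbb E_{n-1}\bigl(|f_{n-1}d_ng|^2\bigr)
=\sum_{n\ge 1}|f_{n-1}|^2\,\mathbb E_{n-1}\bigl(|d_ng|^2\bigr),
\]
using that $f_{n-1}$ is $\mathcal F_{n-1}$-measurable (and $d_0(\Pi_1(f,g))=f_{-1}d_0g=0$). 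Now $\mathbb E_{n-1}(|d_ng|^2)\le\mathbb E_{n-1}(|g-g_{n-1}|^2)\le\|g\|_{\mathrm{bmo}_2}^2$ pointwise, so
\[
s\bigl(\Pi_1(f,g)\bigr)^2\le\|g\|_{\mathrm{bmo}_2}^2\sum_{n\ge 1}|f_{n-1}|^2
\le\|g\|_{\mathrm{bmo}_2}^2\Bigl(\sum_{n\ge 0}|f_n|\Bigr)^2.
\]
Hmm — but $\sum_n|f_n|$ need not be finite for an arbitrary $h_1^d$ martingale, so this crude bound is too lossy; I need to exploit the telescoping structure more carefully rather than bounding $|f_{n-1}|$ by $\sum_{k}|d_kf|$ and squaring.

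The better approach is to bound $|f_{n-1}|\le\sum_{k=0}^{n-1}|d_kf|$ and then use the triangle inequality for the $\ell^2$-norm in $n$ \emph{after} pulling the sum over $k$ outside. Precisely, set $F_k:=|d_kf|$, an $\mathcal F_k$-measurable nonnegative random variable. Then
\[
s\bigl(\Pi_1(f,g)\bigr)=\Bigl(\sum_{n\ge 1}|f_{n-1}|^2\,\mathbb E_{n-1}(|d_ng|^2)\Bigr)^{1/2}
\le\Bigl(\sum_{n\ge 1}\Bigl(\sum_{k=0}^{n-1}F_k\Bigr)^2\mathbb E_{n-1}(|d_ng|^2)\Bigr)^{1/2}.
\]
View the right-hand side as the $\ell^2(\{n:n\ge 1\})$-norm of the sequence $a_n:=(\sum_{k< n}F_k)\,(\mathbb E_{n-1}|d_ng|^2)^{1/2}$, which equals the $\ell^2$-norm of $\sum_{k\ge 0}F_k\,b_{n}^{(k)}$ where $b_n^{(k)}:=\mathbf 1_{\{n> k\}}(\mathbb E_{n-1}|d_ng|^2)^{1/2}$. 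By Minkowski's inequality for $\ell^2$,
\[
s\bigl(\Pi_1(f,g)\bigr)\le\sum_{k\ge 0}F_k\Bigl(\sum_{n> k}\mathbb E_{n-1}(|d_ng|^2)\Bigr)^{1/2}.
\]
Now comes the key point: for each fixed $k$, $\sum_{n> k}\mathbb E_{n-1}(|d_ng|^2)=\mathbb E_k\bigl(\sum_{n>k}|d_ng|^2\bigr)$ — wait, that identity needs justification via the tower property and $d_ng$ being a martingale difference; more carefully, $\mathbb E_{n-1}|d_ng|^2=\mathbb E_k\mathbb E_{n-1}|d_ng|^2$ only after taking $\mathbb E_k$ of the whole thing, so
$\sum_{n>k}\mathbb E_{n-1}(|d_ng|^2)=\mathbb E_k\bigl(\sum_{n>k}\mathbb E_{n-1}(|d_ng|^2)\bigr)$ is false in general. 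Instead I should bound $\sum_{n>k}\mathbb E_{n-1}(|d_ng|^2)\le s(g)^2$ trivially? No — I want an $L^\infty$ bound. The clean route: by the $\mathrm{bmo}_2$ condition applied at level $k$,
\[
\Bigl\|\sum_{n>k}\mathbb E_{n-1}(|d_ng|^2)\Bigr\|_{L^\infty}
=\Bigl\|\mathbb E_k\Bigl(\sum_{n>k}|d_ng|^2\Bigr)\Bigr\|_{L^\infty}
\le\bigl\|\mathbb E_k(|g-g_k|^2)\bigr\|_{L^\infty}\le\|g\|_{\mathrm{bmo}_2}^2,
\]
where the first equality uses $\mathbb E_k(\sum_{n>k}\mathbb E_{n-1}|d_ng|^2)=\sum_{n>k}\mathbb E_k(|d_ng|^2)=\mathbb E_k(\sum_{n>k}|d_ng|^2)$ together with the fact that $\sum_{n>k}\mathbb E_{n-1}|d_ng|^2$ is the limit of its own conditional expectations (monotone convergence) — this is the one routine measure-theoretic point to nail down. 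Granting it,
\[
s\bigl(\Pi_1(f,g)\bigr)\le\|g\|_{\mathrm{bmo}_2}\sum_{k\ge 0}|d_kf|,
\]
and taking $L^1$-norms gives $\|\Pi_1(f,g)\|_{h_1}\le\|g\|_{\mathrm{bmo}_2}\sum_k\|d_kf\|_{L^1}=\|g\|_{\mathrm{bmo}_2}\|f\|_{h_1^d}$, as claimed.

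The main obstacle, then, is not any hard inequality but rather organizing the double sum correctly: the naive bound $|f_{n-1}|\le\sum_k|d_kf|$ loses too much, and one must instead apply Minkowski's inequality in $\ell^2_n$ with the summation index $k$ held outside, so that for each $k$ the remaining factor is an $\mathcal F_k$-conditional tail sum of $\mathbb E_{n-1}(|d_ng|^2)$ that the $\mathrm{bmo}_2$-norm controls uniformly. I would present the argument in exactly that order: (1) identify $d_n(\Pi_1(f,g))=f_{n-1}d_ng$ and write $s(\Pi_1(f,g))$; (2) substitute $|f_{n-1}|\le\sum_{k<n}|d_kf|$ and apply Minkowski's inequality for $\ell^2$; (3) for each fixed $k$, bound the $\ell^2_{n>k}$-tail of $(\mathbb E_{n-1}|d_ng|^2)^{1/2}$ in $L^\infty$ by $\|g\|_{\mathrm{bmo}_2}$ via the tower property; (4) integrate. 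I should double-check the edge case $k=n-1=0$, which contributes $f_{-1}d_0g=0$, and the harmless question of whether the series defining $\Pi_1(f,g)$ converges in $h_1$, which follows from the same estimate applied to tails $\sum_{k\ge K}|d_kf|$.
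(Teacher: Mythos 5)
Your reduction via Minkowski's inequality in $\ell^2_n$ is fine and is essentially the paper's own first step (the paper writes $f=\sum_k d_k f$ and treats each single-jump term $\Pi_1(d_kf,g)$ separately, using $s(\Pi_1(d_kf,g))=|d_kf|\,Y_k^{1/2}$ with $Y_k:=\sum_{n>k}\E_{n-1}(|d_ng|^2)$, which is exactly what your step (2) produces). The gap is in your step (3): the asserted identity
$\|\sum_{n>k}\E_{n-1}(|d_ng|^2)\|_{L^\infty}=\|\E_k(\sum_{n>k}|d_ng|^2)\|_{L^\infty}$
is false, and with it the claimed $L^\infty$ bound $Y_k\le\|g\|_{{\rm bmo}_2}^2$ a.s. One always has $\|\E_k(X)\|_{L^\infty}\le\|X\|_{L^\infty}$, but the reverse fails here: take the dyadic filtration on $[0,1)$ with Lebesgue measure, $I_n:=[0,2^{-n})$, and $d_{n+1}g:=\tfrac12(\mathbf 1_{I_{n+1}}-\mathbf 1_{I_n\setminus I_{n+1}})$ (so $g\sim\tfrac12\log_2(1/x)$). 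Then $\E_n(|d_{n+1}g|^2)=\tfrac14\mathbf 1_{I_n}$, so $\E_j(|g-g_j|^2)=\tfrac12\mathbf 1_{I_j}$ and $\|g\|_{{\rm bmo}_2}^2=\tfrac12$, yet $Y_0=s(g)^2=\tfrac14\sum_{n\ge0}\mathbf 1_{I_n}$ equals $(m+1)/4$ on $I_m\setminus I_{m+1}$ and is unbounded. (Your "limit of its own conditional expectations" remark only yields $\|X\|_{L^\infty}=\lim_m\|\E_m(X)\|_{L^\infty}$ as $m\to\infty$, i.e.\ conditioning on ever \emph{finer} $\sigma$-algebras; it says nothing about $\E_k$ for the fixed coarse level $k$.) Consequently your pointwise inequality $s(\Pi_1(f,g))\le\|g\|_{{\rm bmo}_2}\sum_k|d_kf|$ is also false: with $f\equiv 1$ it would say $s(g)\le\|g\|_{{\rm bmo}_2}$ a.e.

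The lemma is nevertheless true, and the repair is exactly the device the paper uses: do not bound $Y_k$ in $L^\infty$; instead bound $\E(|d_kf|\,Y_k^{1/2})$ directly by inserting $\E_k$ and using the conditional Cauchy--Schwarz inequality. Since $|d_kf|$ is $\mathcal F_k$-measurable,
\begin{equation*}
\E\left(|d_kf|\,Y_k^{1/2}\right)=\E\left(|d_kf|\,\E_k\left(Y_k^{1/2}\right)\right)
\le\E\left(|d_kf|\left[\E_k(Y_k)\right]^{1/2}\right),
\end{equation*}
and now the tower property gives $\E_k(Y_k)=\sum_{n>k}\E_k(|d_ng|^2)=\E_k(|g-g_k|^2)\le\|g\|_{{\rm bmo}_2}^2$ pointwise, so $\E(|d_kf|\,Y_k^{1/2})\le\|g\|_{{\rm bmo}_2}\,\E(|d_kf|)$; summing over $k$ finishes the proof. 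In short: the ${\rm bmo}_2$ condition controls $\E_k(Y_k)$, not $Y_k$ itself, and the whole point of the paper's argument is to smuggle in that extra $\E_k$ before invoking it.
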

\begin{proof}
Since $h_1$ is a Banach space, it is sufficient to consider separately each term of the product,
that is, each $\Pi_1(d_n(f), g)=d_n(f)\sum_{k\geq n+1} d_k(g).$ But, for any $n\in\mathbb Z_{+}$,
$$s\left(\Pi_1(d_n(f), g)\right)
=|d_n(f)|\left[\sum_{k=n+1}^{\infty} E_{k-1}\left(
\left[d_k(g)\right]^2\right)\right]^{1/2}.$$
We use the Schwarz inequality for the conditional
expectation related to $\mathcal F_n$ and write that, for any $n\in\mathbb Z_{+}$,
\begin{align*}
	\E_n \left(s\left(\Pi_1(d_n(f), g)\right)\right)
	&\leq |d_n(f)|\left[\E_n \left(\sum_{k=n+1}^{\infty}
	\E_{k-1}\left(\left[d_k(g)\right]^2\right)\right)\right]^{1/2}\\
	&=|d_n(f)|\left[\E_n\left( \sum_{k=n+1}^{\infty}
	\left[d_k(g)\right]^2\right)\right]^{1/2}.
\end{align*}
The second factor is bounded by the ${\rm bmo}_2$ norm of $g$, so that, for any $n\in\mathbb Z_{+}$,
$$\E_n \left(s\left(\Pi_1(d_n(f), g)\right)\right)
\leq |d_n(f)|\,\|g\|_{{\rm bmo}_2}.$$ We conclude by taking the
expectation of both sides, then the sum in $n\in\mathbb Z_{+}$.
This finishes the proof of Lemma \ref{Pi1-lem2}.
\end{proof}

In order to prove the proposition for martingales in  $h_1,$ we prove
 the following lemma which also gives an explicit constant, but for atoms.

\begin{lemma}\label{Pi1-lem1}
For any simple $(s,\infty)$-atom $a$ and any $g\in \mathrm{bmo}_2$,
	$$\|\Pi_1(a,g)\|_{h_1}\leq 2 \|g\|_{\mathrm{bmo}_2}.$$
\end{lemma}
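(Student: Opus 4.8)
The plan is to compute the conditional square function of $\Pi_1(a,g)$ directly and then integrate, exploiting the support and size conditions of a simple $(s,\infty)$-atom. Recall that $d_n(\Pi_1(a,g)) = a_{n-1}\, d_n(g)$, so by definition
$$
s(\Pi_1(a,g))^2 = \sum_{n\in\mathbb N} \mathbb E_{n-1}\left(|a_{n-1}|^2 |d_n(g)|^2\right) + |a_0\, d_0(g)|^2.
$$
Since $a$ is $(s,\infty)$-atom with respect to some $m\in\mathbb Z_+$ and $A\in\mathcal F_m$, we have $a_{n-1}=\mathbb E_{n-1}(a)=0$ for $n-1\le m$, so the sum effectively starts at $n=m+1$; in particular the $|a_0 d_0(g)|^2$ term vanishes (as $a_0=0$). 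For each $n\ge m+1$, $a_{n-1}$ is $\mathcal F_{n-1}$-measurable, hence pulls out of $\mathbb E_{n-1}$, giving
$$
s(\Pi_1(a,g))^2 = \sum_{n\ge m+1} |a_{n-1}|^2\, \mathbb E_{n-1}\left(|d_n(g)|^2\right).
$$

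The next step is to bound $\|s(\Pi_1(a,g))\|_{L^1}$. First note that $a_{n-1}\mathbf 1_A = a_{n-1}$ for all $n$ (as in Lemma \ref{TaC}(i)), so $s(\Pi_1(a,g))$ is supported in $A$. I would then split: on one hand use $|a_{n-1}|\le M(a)$, and on the other hand control the sum $\sum_{n\ge m+1}\mathbb E_{n-1}(|d_n(g)|^2)$ by $\|g\|_{\mathrm{bmo}_2}^2$. More precisely, write
$$
s(\Pi_1(a,g)) \le M(a)\left[\sum_{n\ge m+1}\mathbb E_{n-1}\left(|d_n(g)|^2\right)\right]^{1/2} \cdot (\text{something measurable})?
$$
— but this factorization is not quite legitimate pointwise since $|a_{n-1}|$ varies with $n$. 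Instead I would use the Cauchy–Schwarz inequality in $L^2$: $\|s(\Pi_1(a,g))\|_{L^1} \le \|s(\Pi_1(a,g))\mathbf 1_A\|_{L^2}\,[\mathbb P(A)]^{1/2}$, and then estimate the $L^2$ norm. For the $L^2$ estimate, integrate term by term and use the telescoping/tower property: $\mathbb E(|a_{n-1}|^2\mathbb E_{n-1}(|d_n g|^2)) = \mathbb E(\mathbb E_{n-1}(|a_{n-1}|^2 |d_n g|^2))$, and bound $\sum_{n\ge m+1}|d_n g|^2 \le \mathbb E_m(\sum_{n\ge m+1}|d_n g|^2)$ pointwise is false — rather I would first fix the index and use that $\mathbb E_{n-1}(\sum_{k\ge n}|d_k g|^2) = \mathbb E_{n-1}(|g-g_{n-1}|^2) \le \|g\|_{\mathrm{bmo}_2}^2$. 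A cleaner route: for fixed realization, $\sum_{n\ge m+1}|a_{n-1}|^2 \mathbb E_{n-1}(|d_n g|^2)$ — apply Abel summation or simply bound $|a_{n-1}|^2 \le \sup_k |a_k|^2 = M(a)^2$ to get $s(\Pi_1(a,g))^2 \le M(a)^2 \sum_{n\ge m+1}\mathbb E_{n-1}(|d_n g|^2)$, and here the tail sum is \emph{not} pointwise bounded; so I must instead keep it inside an expectation. The honest argument is: $\|s(\Pi_1(a,g))\|_{L^2}^2 = \sum_{n\ge m+1}\mathbb E(|a_{n-1}|^2\, \mathbb E_{n-1}(|d_n g|^2))$, and since $a$ is an atom one has (via Lemma \ref{TaC} applied to $h_1$-atoms and the square-function identity) good $L^2$ control of $M(a)$; combining $\mathbb E(|a_{n-1}|^2\mathbb E_{n-1}(|d_n g|^2)) \le \|g\|_{\mathrm{bmo}_2}^2\,\mathbb E(|a_{n-1}|^2 \mathbf 1_{\{d_n g\ne 0\}})$ is still too lossy. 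The efficient bookkeeping is to \emph{not} expand but to use the Davis-type argument already used in Lemma \ref{Pi1-lem2}: write $a=\sum$ of its differences? No — instead mimic Lemma \ref{Pi1-lem2}'s use of the conditional Schwarz inequality but now summed against the maximal function $M(a)$, then invoke $\|M(a)\|_{L^2}\le 2\|s(a)\|_{L^2}\le 2[\mathbb P(A)]^{1/2}\|s(a)\|_{L^\infty}\le 2[\mathbb P(A)]^{-1/2}$ from Lemma \ref{TaC}, together with $\|s(\Pi_1(a,g))\|_{L^1}\le [\mathbb P(A)]^{1/2}\|s(\Pi_1(a,g))\|_{L^2} \le [\mathbb P(A)]^{1/2}\|g\|_{\mathrm{bmo}_2}\|M(a)\|_{L^2}$, which delivers the constant $2\|g\|_{\mathrm{bmo}_2}$.

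The main obstacle I anticipate is justifying the inequality $\|s(\Pi_1(a,g))\|_{L^2} \le \|g\|_{\mathrm{bmo}_2}\,\|M(a)\|_{L^2}$ cleanly: one needs to interchange the sum over $n$ and the conditional expectations and absorb the tail $\sum_{k\ge n}|d_k g|^2$ into the single quantity $|g-g_{n-1}|^2$, whose conditional expectation $\mathbb E_{n-1}$ is bounded by $\|g\|_{\mathrm{bmo}_2}^2$ uniformly in $n$. The key identity making this work is $\sum_{k\ge n}\mathbb E_{k-1}(|d_k g|^2) = \mathbb E_{n-1}\big(\sum_{k\ge n}|d_k g|^2\big)$ plus the orthogonality $\mathbb E_{n-1}(\sum_{k\ge n}|d_k g|^2) = \mathbb E_{n-1}(|g-g_{n-1}|^2)$, after which $|a_{n-1}|$ (being $\mathcal F_{n-1}$-measurable and bounded by $M(a)$) can be pulled through and the resulting sum over $n$ collapses because $a_{n-1}$ is eventually constant equal to $a$ on each "stopping" region. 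Once that is set up the final integration is routine: restrict to $A$, apply Cauchy–Schwarz to pass from $L^1$ to $L^2$, and insert the atom estimate $\|M(a)\|_{L^2}\le 2[\mathbb P(A)]^{-1/2}$.
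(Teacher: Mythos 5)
Your skeleton is the right one and matches the paper's: compute $s(\Pi_1(a,g))^2=\sum_{k\ge m+1}|a_{k-1}|^2\,\E_{k-1}(|d_kg|^2)$, note the support in $A$, pass from $L^1$ to $L^2$ by Cauchy--Schwarz against $\mathbf 1_A$, and finish with $\|M(a)\|_{L^2}\le 2[\mathbb P(A)]^{-1/2}$ from Lemma \ref{TaC}. You also correctly diagnose the two traps: $|a_{k-1}|$ cannot be factored out of the sum over $k$, and the tail $\sum_{k>n}\E_{k-1}(|d_kg|^2)$ is \emph{not} pointwise bounded by $\|g\|_{\mathrm{bmo}_2}^2$ (only its $\E_n$-conditional expectation is). But the inequality on which everything now rests, $\|s(\Pi_1(a,g))\|_{L^2}\le\|g\|_{\mathrm{bmo}_2}\,\|M(a)\|_{L^2}$, is never actually established. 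The justification you sketch leans on the ``key identity'' $\sum_{k\ge n}\E_{k-1}(|d_kg|^2)=\E_{n-1}\big(\sum_{k\ge n}|d_kg|^2\big)$, which is false as an identity of random variables (the two sides merely have the same $\E_{n-1}$-conditional expectation), and on the assertion that the sum over $n$ ``collapses'' because $a_{n-1}$ is eventually constant, which is not an argument. If instead you pull $|a_{n-1}|\le M(a)$ through, you are back to needing exactly the pointwise tail bound you already ruled out; so the proposal circles at the crucial step.

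The missing idea is a summation by parts against the \emph{running maximum}, i.e.\ a martingale Carleson embedding. Put $\phi_j:=\max_{i\le j}|a_i|^2$, so that $|a_{k-1}|^2\le\phi_{k-1}$ and each increment $\phi_j-\phi_{j-1}\ge 0$ is $\mathcal F_j$-measurable. Writing $\phi_{k-1}=\sum_{j\le k-1}(\phi_j-\phi_{j-1})$ and interchanging the (nonnegative) sums,
$$\E\sum_{k}|a_{k-1}|^2\,\E_{k-1}\left(|d_kg|^2\right)\le\sum_{j}\E\left[(\phi_j-\phi_{j-1})\,\E_j\left(\sum_{k\ge j+1}|d_kg|^2\right)\right]\le\|g\|_{\mathrm{bmo}_2}^2\,\E\left(M(a)^2\right),$$
where inserting $\E_j$ is legitimate precisely because the increment is nonnegative and $\mathcal F_j$-measurable, the inner conditional expectation equals $\E_j(|g-g_j|^2)\le\|g\|_{\mathrm{bmo}_2}^2$, and the increments telescope to $M(a)^2$. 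This is your target $L^2$ inequality, and it yields the constant $2$. For the record, the paper's own write-up asserts the pointwise bound $s(\Pi_1(a,g))\le M(a)\|g\|_{\mathrm{bmo}_2}$ and then integrates over $A$; the reservation you voice about such a pointwise bound is legitimate, and the embedding argument above is what substantiates the $L^2$ estimate in either version.
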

\begin{proof}
Let $g\in \mathrm{bmo}_2$. Since $a$ is a simple $(s,\infty)$-atom,
it follows that there exist an $n\in\mathbb{Z}_{+}$ and
an $A\in \mathcal{F}_n$ such that   $\mathbb{E}_n(a)=0$ and $\mathrm{supp}\,(a)\subset A.$
So $a_k=\mathbb{E}_k(a)=0$ for any $k\in\{0,\ldots,n\}.$ Then
	\begin{align*}
		\Pi_1(a,g)&=\sum_{k\in \mathbb{Z}_+} a_{k-1} d_kg
		=\sum_{k= n+1}^{\infty} a_{k-1}  d_kg,
	\end{align*}
which, combined with the definitions of both $s$ and $\mathrm{bmo}_2,$
further implies that
	\begin{align*}
		s\left(\Pi_1(a,g)\right)
		&= \left[ \sum_{k= n+1}^{\infty}\mathbb{E}_{k-1}
		\left(|a_{k-1}|^2 |d_kg|^2\right)\right]^{1/2} \\
		&=\left[ \sum_{k= n+1}^{\infty}|a_{k-1}|^2
		\mathbb{E}_{k-1}\left(|d_kg|^2\right)\right]^{1/2}
		\leq  M(a)\|g\|_{\mathrm{bmo}_2}.
	\end{align*}
From the above argument and both (i) and (ii) of Lemma \ref{TaC}, we infer that
	$$ \left\|\Pi_1(a,g)\right\|_{h_1}\leq \|M(a)\|_{L^1}  \|g\|_{\mathrm{bmo}_2}
	\leq \|M(a)\|_{L^2} \mathbb P(A)^{1/2} \|g\|_{\mathrm{bmo}_2} \leq 2\|g\|_{\mathrm{bmo}_2},$$
	which completes the proof of Lemma \ref{Pi1-lem1}.
    \end{proof}

We are now ready to prove Proposition \ref{pro-1}.
\begin{proof}[Proof of Proposition \ref{pro-1}]
Take $(f,g)\in H_1\times {\rm BMO}$.
According to Lemma \ref{lem-Davis},
 there exists a decomposition $f=f^1+f^d$ such that $f^1\in h_1$
 and $f^d\in h_1^d$ with
	$$\left\|f^1\right\|_{h_1} \lesssim  \|f\|_{H_1}
	\mbox{ and } \left\|f^d\right\|_{h_1^d} \lesssim \|f\|_{H_1}.$$
By the atomic decomposition of  $h_1$ (Lemma \ref{lem-atomic-decom}), we have
	$$f^1=\sum_{k\in \mathbb{Z}}\mu_k a^k\ \mbox{a.s.}
	\mbox{ and }\sum_{k\in \mathbb{Z}}\left|\mu_k\right|
	\lesssim \left\|f^1\right\|_{h_1},$$
		where $(a^k)_{k\in \mathbb{Z}}$  is a sequence  of simple $(s,\infty)$-atoms.
	 Combining this and Lemmas \ref{Pi1-lem2} and \ref{Pi1-lem1}, we find that
	\begin{align*}
	\left	\|\Pi_1(f,g)\right\|_{h_1}
	&\leq \left\|\Pi_1\left(f^1,g\right)\right\|_{h_1}
	+\left\|\Pi_1\left(f^d,g\right)\right\|_{h_1}\\
		&\leq \sum_{k\in \mathbb{Z}}\left|\mu_k\right|\cdot
		\left\|\Pi_1\left(a^k,g\right)\right\|_{h_1}+
		\left\|\Pi_1\left(f^d,g\right)\right\|_{h_1}\\
		&\lesssim \left\|f^1\right\|_{h_1}\|g\|_{\mathrm{bmo_2}}
		+\left\|f^d\right\|_{h_1^d}\|g\|_{\mathrm{bmo_2}}
		\lesssim \|f\|_{H_1} \|g\|_{\mathrm{bmo_2}}.
\end{align*}
	This finishes the proof of Proposition \ref{pro-1}.
\end{proof}

\begin{remark}
We point out here that Proposition \ref{pro-1} can be
deduced from \cite[Corollary 6]{CL1992}
which was proved via stopping time argument.
Here, we provide a different proof of  Proposition \ref{pro-1}
by using the atomic decomposition of martingale Hardy spaces.
Moreover, our argument here leads us to establish the endpoint estimate of commutators
(see Section \ref{sect-commutator} below) via using the atomic decomposition.
\end{remark}

\subsection{Boundedness of the operator $\Pi_2$}

The goal of this subsection is to show that the bilinear operator $\Pi_2$
defined in \eqref{decom} is bounded from $H_1\times {\rm BMO}$
into $H_{\log}$, namely the following result.
\begin{proposition}\label{pro-2}
	The bilinear operator $\Pi_2$ is a bounded operator from the product
	space $H_1\times {\rm BMO}$ into the space $H_{\log}$.
\end{proposition}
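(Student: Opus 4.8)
The plan is to reduce the boundedness of $\Pi_2\colon H_1\times\mathrm{BMO}\to H_{\log}$ to an estimate for a single atom or a single ``jump'' martingale, exactly in the spirit of the proof of Proposition \ref{pro-1}. First I would use the Davis decomposition (Lemma \ref{lem-Davis}) to split $f=f^1+f^d$ with $f^1\in h_1$ and $f^d\in h_1^d$, both controlled by $\|f\|_{H_1}$; by Lemma \ref{mar-ine}(ii) it is enough to estimate the $h_{\log}$-quasinorm of $\Pi_2(f,g)$, and since $d_n(\Pi_2(f,g))=g_{n-1}d_n(f)$ is linear in the martingale differences of $f$, the two pieces can be treated separately. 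For $f^d\in h_1^d$ one reduces to a single jump martingale $h$ with $d_kh$ supported on one index $k=n$: then $\Pi_2(h,g)$ has differences $g_{m-1}d_m(h)$, which vanish for $m\le n$, and a direct computation of $s(\Pi_2(h,g))$ together with the John--Nirenberg inequality (so that $\mathrm{BMO}$ controls the relevant exponential moments of $g$) should give $\|\Pi_2(h,g)\|_{h_{\log}}\lesssim \|g\|_{\mathrm{BMO}}\|d_nh\|_{L^1}$; summing in $n$ recovers the $h_1^d$ bound. For $f^1\in h_1$ one invokes the simple atomic decomposition (Lemma \ref{lem-atomic-decom}), $f^1=\sum_k\mu_k a^k$ with $\sum_k|\mu_k|\lesssim\|f^1\|_{h_1}$, and it suffices to prove a uniform estimate $\|\Pi_2(a,g)\|_{H_{\log}}\lesssim\|g\|_{\mathrm{BMO}}$ for a simple $(s,\infty)$-atom $a$. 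Here one must be careful that $H_{\log}$ is only a quasi-normed space and the $\ell^1$-summation of the atomic pieces must be justified at the level of the $H_{\log}$ quasinorm (this is where a convexity/subadditivity property of the Luxemburg functional for $\Phi$, or passage through $L^{\log}$ on the maximal or square function, is used).

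The core of the argument is the single-atom estimate. Let $a$ be a simple $(s,\infty)$-atom associated with $n\in\mathbb Z_+$ and $A\in\mathcal F_n$, so $a_k=0$ for $k\le n$ and $\|s(a)\|_{L^\infty}\le\mathbb P(A)^{-1}$. Then $\Pi_2(a,g)=\sum_{k>n}g_{k-1}d_k(a)$, and one computes the conditional square function
\begin{align*}
s\left(\Pi_2(a,g)\right)^2
=\sum_{k=n+1}^\infty\mathbb E_{k-1}\left(|g_{k-1}|^2|d_k(a)|^2\right)
=\sum_{k=n+1}^\infty|g_{k-1}|^2\mathbb E_{k-1}\left(|d_k(a)|^2\right).
\end{align*}
Splitting $g_{k-1}=(g_{k-1}-g_n)+g_n$ and using $\mathrm{supp}\,(a)\subset A$ with $A\in\mathcal F_n$, one gets a pointwise bound roughly of the form $s(\Pi_2(a,g))\lesssim \mathbf 1_A\left(|g_n|+g^\ast\right)s(a)$, where $g^\ast$ is an appropriate maximal quantity (e.g. $\sup_{k>n}|g_{k-1}-g_n|$) that is controlled in every $L^p$, uniformly, by $\|g\|_{\mathrm{BMO}}$ thanks to John--Nirenberg. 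The plan is then to estimate $\|s(\Pi_2(a,g))\|_{L^{\log}}$ by the standard device for products: on the ``good'' set where the $\mathrm{BMO}$-type factor is comparable to its average one uses the $L^1$ bound $\|s(a)\|_{L^1}\le 1$ (from Lemma \ref{TaC}(iii)); on the ``bad'' set where $|g_n|+g^\ast$ is large, one exploits that $\mathbb P$ of that set decays exponentially (John--Nirenberg) while $s(a)\in L^\infty$, so the contribution to the Orlicz integral is summable. Concretely, with $\lambda\approx\|g\|_{\mathrm{BMO}}$ one checks $\int_\Omega \Phi\!\left(s(\Pi_2(a,g))/\lambda\right)d\mathbb P\lesssim 1$ by integrating over dyadic level sets of the $\mathrm{BMO}$ factor, using $\Phi(t)\le t$ and $\Phi(t)\lesssim t/\log t$ for large $t$ to absorb the logarithmic loss against the exponential measure decay. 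This yields $\|\Pi_2(a,g)\|_{h_{\log}}=\|s(\Pi_2(a,g))\|_{L^{\log}}\lesssim\|g\|_{\mathrm{BMO}}$, and then $\|\Pi_2(a,g)\|_{H_{\log}}\lesssim\|g\|_{\mathrm{BMO}}$ by Lemma \ref{mar-ine}(ii).

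To assemble the global bound, write $\Pi_2(f,g)=\sum_k\mu_k\Pi_2(a^k,g)+\Pi_2(f^d,g)$ and use that the Luxemburg functional associated with $\Phi$ satisfies a quasi-triangle inequality of the form $\|\sum_k h^k\|_{L^{\log}}\lesssim \sum_k\|h^k\|_{L^{\log}}$ when the $\|h^k\|_{L^{\log}}$ are summable (this follows since $\Phi$ is concave and $\Phi(t)/t$ is decreasing, so $t\mapsto\Phi(t)$ is subadditive up to normalization); combined with the uniform atom estimate and $\sum_k|\mu_k|\lesssim\|f\|_{H_1}$ one obtains $\|\Pi_2(f^1,g)\|_{H_{\log}}\lesssim\|f\|_{H_1}\|g\|_{\mathrm{BMO}}$. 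Adding the $h_1^d$-piece gives the proposition. I expect the main obstacle to be the single-atom Orlicz estimate: unlike the $h_1$-valued bound for $\Pi_1$, here one genuinely loses by a logarithm when passing from $L^1$ to $L^{\log}$, so the proof must quantitatively balance the exponential John--Nirenberg decay of the level sets of $g$ against the growth of $\Phi$, and one must take care that all constants are independent of the atom $a$ (in particular of $n$ and $\mathbb P(A)$) and that the normalization $\lambda\approx\|g\|_{\mathrm{BMO}}$ really makes the Orlicz integral bounded by $1$. A secondary technical point is the rigorous justification of the $\ell^1$-type summation of atoms in the quasi-normed space $H_{\log}$, which is cleanest to phrase through the subadditivity of $\Phi$ on the level of the square (or maximal) function rather than through the quasinorm directly.
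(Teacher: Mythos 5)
There is a genuine gap, and it occurs precisely at the point where your strategy diverges from the natural one. Your plan runs everything through the \emph{conditional} square function $s$ and the space $h_{\log}$, via the Davis decomposition. This fails on the $h_1^d$ component. For a single jump martingale $h$ with $d_kh=0$ except at $k=n$, the martingale $\Pi_2(h,g)$ is again a jump with difference $g_{n-1}d_nh$, so $s\left(\Pi_2(h,g)\right)=|g_{n-1}|\left[\mathbb{E}_{n-1}\left(|d_nh|^2\right)\right]^{1/2}$. The factor $\left[\mathbb{E}_{n-1}(|d_nh|^2)\right]^{1/2}$ is \emph{not} controlled in $L^1$ (nor in $L^{\log}$) by $\|d_nh\|_{L^1}$ when the filtration is not regular: take $\mathcal F_{n-1}$ trivial on a cell and $d_nh=N\mathbf 1_E-N^{-1}$ with $\mathbb P(E)=N^{-2}$, so that $\|d_nh\|_{L^1}\approx 2/N$ while $\left[\mathbb{E}_{n-1}(|d_nh|^2)\right]^{1/2}\approx 1$. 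This is exactly the reason $h_1^d\not\subset h_1$ in general, so your claimed bound $\|\Pi_2(h,g)\|_{h_{\log}}\lesssim\|g\|_{\mathrm{BMO}}\|d_nh\|_{L^1}$ is false. A second, smaller problem is the assembly step: the Luxemburg functional of $\Phi(t)=t/\log(e+t)$ is not countably subadditive, i.e. $\|\sum_k h^k\|_{L^{\log}}\lesssim\sum_k\|h^k\|_{L^{\log}}$ fails with a uniform constant (test $h^k=N\mathbf 1_{E_k}$ with $K$ disjoint sets of measure $(NK)^{-1}$ and let $\log K/\log N\to\infty$); concavity of $\Phi$ gives $\Phi(at)\geq a\Phi(t)$ for $a\leq 1$, which is the wrong direction for this purpose. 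So the $\ell^1$-summation of atomic pieces in $H_{\log}$ cannot be justified the way you propose.

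Both difficulties disappear if you drop the Davis/atomic reduction entirely and work with the unconditional square function, which is what the paper does: since $d_n(\Pi_2(f,g))=g_{n-1}d_nf$, one has the pointwise bound $S\left(\Pi_2(f,g)\right)\leq M(g)\,S(f)$ for \emph{every} $f\in H_1$ at once. John--Nirenberg (Lemma \ref{JN}) puts $M(g)$ in the exponential class with $\|M(g)\|_{\exp L}\lesssim\|g\|_{\mathrm{BMO}}$, and the generalized H\"older inequality $\|\phi\psi\|_{L^{\log}}\lesssim\|\phi\|_{L^1}\|\psi\|_{\exp L}$ (Lemma \ref{key-lem}, proved from the elementary inequality $st/\log(e+st)\leq t+e^s$) finishes the proof in one line. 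Your level-set computation for a single atom is essentially a hand-made version of this H\"older inequality localized to $A$, and that part would go through; but it is the global pointwise domination by $M(g)S(f)$, not an atom-by-atom argument, that makes the estimate close without any splitting of $f$.
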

\begin{proof}
The proof is straightforward. We can assume as before that $g_0=0.$
We do not need to consider separately $h_1$ and $h^d$ here. We write that
\begin{equation} \label{MS}
   S\left(\Pi_2\left(f, g\right)\right)
=\left[\sum_{n\in\mathbb N} |g_{n-1}|^2 d_n(f)^2
\right]^{\frac 12}\leq M(g) S(f).
\end{equation}
The present proposition follows then from the next two lemmas
which are well known.
The first one, which is an adaptation of John--Nirenberg's inequality
and may be found in \cite[p.\,131]{Lo1993}, says that $M(g)$ belongs
to the exponential class. Let us recall that a function $\psi$
belongs to the {\it exponential class} $\exp L$ if there exists
a positive constant $\alpha$ such that $\exp(\alpha \psi) $ is integrable.
The exponential class $\exp L$ is a Banach space and we can
take as the {\it Luxemburg norm} the quantity
$$\|\psi\|_{\exp L}
:=\inf\left\{\lambda\in(0,\infty):\ \E\left(\exp(\psi/\lambda)\right)\leq 2\right\}.$$
\begin{lemma}\label{JN}
	Let $g$ be a martingale in ${\rm BMO},$ with $g_0=0.$ Then $Mg $ is in
	the exponential class and there exists a positive constant $C$, independent of $g$, such that
$$\|Mg\|_{\exp L}\leq C \|g\|_{\rm BMO}.$$
\end{lemma}
We then conclude the proof of Proposition \ref{pro-2}
by using the following generalized H\"older inequality
which may be found for instance in \cite[Lemma 3.2]{VT} (see also \cite{BPRS2020}).
We give a short proof for completeness. We only write this lemma for both
functions on $\Omega$ and the Orlicz functions that we have in mind,
but it is valid in a general context.
\begin{lemma}\label{key-lem}
Let $\phi$ be a function in $ L^1$, and $\psi$ a function in the exponential class.
Then the  product $\phi \psi$ belongs to $L^{\log}$.
Moreover, there exists a positive constant $C$ such that
$$
	\|\phi \psi \|_{L^{\log}}\leq C\|\phi\|_{L^1}\|\psi\|_{\exp L}.
$$
    The positive constant $C$ does not depend on $\phi, \psi.$
\end{lemma}
\begin{proof} By homogeneity in the two factors of the product,
	we can assume that both norms are $1.$ Moreover, as in the
	proof of H\"older's inequality, this inequality is obtained
	from an elementary inequality. Here, we claim that,
	for any $s,t\in(0,\infty),$
$$\frac{st}{\log(e+st)}\leq t+e^s.$$
Indeed, this is certainly true for $s$ or $t$ less than $1.$
Assume that $1<s<\log(e+t).$ Then the left hand side is bounded by $t.$
Finally, if $e^s>t+e,$ the left hand side is bounded by $e^s.$
To conclude the proof of the present lemma, we replace $s$ and $t$,
respectively, by $\phi (x)$ and $\psi(x)$, and then integrate in $x$.
This finishes the proof of Lemma \ref{key-lem}.
\end{proof}

Now the conclusion of Proposition \ref{pro-2} is direct:
just replace $\psi$ by $M(g),$ which is in the exponential
class by Lemma \ref{JN}, and $\phi$ by $S(f).$
This then finishes the proof of Proposition \ref{pro-2}.
\end{proof}

\begin{remark}
We realize that the proof of Proposition \ref{pro-2} is identical to
\cite[Section 3.4]{BXZZ} even if the bibliography to which we refer is different.
In fact both rely on a strong property of {\rm BMO} functions.
The authors of  \cite {BXZZ} cited Garcia's book \cite{Ga1973} for
the fact that $M(g)$ for any $g\in$ BMO is still in {\rm BMO}, while we use directly
Long's book \cite{Lo1993}, in which John--Nirenberg's
inequality was directly given for $M(g)$ with $g\in$ BMO.
\end{remark}

\subsection{Proof of Theorem \ref{thm-bilinear}}\label{sec-pf-bd}

We  can now prove Theorem \ref{thm-bilinear} with the help of
Propositions  \ref{pro-3}, \ref{pro-1}, and \ref{pro-2}.
\begin{proof}[Proof of Theorem \ref{thm-bilinear}]
	Let $(f,g)\in H_1\times {\rm BMO}.$ Clearly, for any $n\in\mathbb Z_{+},$
	$$f_ng_n=\Pi_1(f,g)_n+\Pi_2(f,g)_n+L(f,g)_n.$$
	Then, according to Proposition \ref{pro-3}, $L$ is bounded from the product
	space $H_1\times {\rm BMO}$ into the space $L^{1}$.
		Let $G:=\Pi_1+\Pi_2$.
	Observe that $H_1\subset H_{\log}$ and $h_1\subset H_1$.
	A combination of  this observation with Propositions \ref{pro-1} and
	\ref{pro-2} yields that $G$ is bounded from $H_1\times {\rm BMO}$ into   $H_{\log}$.
	This finishes the proof of Theorem \ref{thm-bilinear}.
\end{proof}

Recall that the space $H_1$ can be as well defined as the space of
martingales $f$ such that $Mf$ belongs to $L^1,$ that is,
we can replace the martingale square operator $S$ by the
Doob maximal operator $M$. This is no more the case for
the space  $H_{\log}$  and we define
 $H^M_{\log}$ as the space of martingales $f$ for which $Mf$ is in $L^{\log}.$
 We have nevertheless the following statement.
 \begin{theorem}\label{thm-Doob}
Theorem \ref{thm-bilinear} holds true when the space
  $H_{\rm log}$ is replaced by
the space  $H^M_{\log}$.
\end{theorem}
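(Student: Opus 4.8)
The plan is to revisit the three-term decomposition $f\cdot g=\Pi_1(f,g)+\Pi_2(f,g)+L(f,g)$ established in \eqref{decom} and check that the same bounds hold when $H_{\log}$ is replaced by $H^M_{\log}$. Since $L(f,g)$ lands in $\mathcal{BV}$ (hence in $L^1$) by Proposition \ref{pro-3}, nothing changes for that term. For the paraproduct part $G=\Pi_1+\Pi_2$, it suffices to show that $G$ is bounded from $H_1\times\mathrm{BMO}$ into $H^M_{\log}$, i.e.\ that $M(G(f,g))\in L^{\log}$ with the appropriate norm control. The natural route is to bound $M(\Pi_1(f,g))$ and $M(\Pi_2(f,g))$ separately, since $M$ is sublinear.

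First I would handle $\Pi_2$. Recall $d_n(\Pi_2(f,g))=g_{n-1}d_n(f)$, so for any $m$, $\Pi_2(f,g)_m=\sum_{n=1}^m g_{n-1}d_n(f)$, and since $(g_n)$ is a fixed bounded sequence with $|g_{n-1}|\le M(g)$ pointwise, a crude estimate gives $|\Pi_2(f,g)_m|\le M(g)\sum_{n=1}^m|d_n(f)|$, which is not summable in general. Instead I would argue via a stopping-time / good-$\lambda$ style bound, or more directly: write $\Pi_2(f,g)_m$ and use Abel summation together with the fact that $M(g)$ is in the exponential class (Lemma \ref{JN}). The cleanest approach is probably to dominate $M(\Pi_2(f,g))\le C\,M(g)\,M(f)$ pointwise: indeed $\Pi_2(f,g)$ is the martingale transform of $f$ by the predictable bounded-by-$M(g)$ multiplier $(g_{n-1})_n$, so $|\Pi_2(f,g)_m|\le \sum_{n=1}^m|g_{n-1}||d_n f|$ does not directly work, but the maximal function of a predictable-multiplier transform is controlled by $M(g)M(f)$ up to a constant through a standard argument (splitting at the level set of $M(g)$). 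Then Lemma \ref{key-lem} applied with $\phi=M(f)\in L^1$ (by Lemma \ref{davis-ine}) and $\psi=M(g)\in\exp L$ gives $M(\Pi_2(f,g))\in L^{\log}$ with $\|M(\Pi_2(f,g))\|_{L^{\log}}\lesssim\|f\|_{H_1}\|g\|_{\mathrm{BMO}}$, i.e.\ $\Pi_2(f,g)\in H^M_{\log}$.

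For $\Pi_1$, the situation is easier because Proposition \ref{pro-1} already shows $\Pi_1(f,g)\in h_1$ with $\|\Pi_1(f,g)\|_{h_1}\lesssim\|f\|_{H_1}\|g\|_{\mathrm{bmo}_2}\le\|f\|_{H_1}\|g\|_{\mathrm{BMO}}$, and by Lemma \ref{mar-ine}(i) this embeds continuously into $H_1$; then $H_1\subset H^M_{\log}$ since $M(f)\in L^1\subset L^{\log}$ with continuous embedding (using Lemma \ref{davis-ine} and $\|\cdot\|_{L^{\log}}\lesssim\|\cdot\|_{L^1}$). Combining the two estimates and the triangle inequality in the quasi-norm of $H^M_{\log}$ (which is a quasi-norm, so up to a fixed constant) yields $G(f,g)\in H^M_{\log}$ with the claimed bound, and Theorem \ref{thm-bilinear} with $H_{\log}$ replaced by $H^M_{\log}$ follows.

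The main obstacle is the pointwise domination $M(\Pi_2(f,g))\lesssim M(g)M(f)$: the naive bound $|g_{n-1}|\le M(g)$ inside the sum $\sum|d_nf|$ is far too lossy, and one must instead exploit cancellation, either by a stopping-time decomposition of $\{M(g)>\lambda\}$ and the fact that on its complement $\Pi_2(f,g)$ is a martingale transform by a multiplier bounded by $\lambda$ (so that its maximal function is $\le\lambda M(f)$ there), or by invoking an appropriate weighted/Fefferman--Stein type inequality for martingale transforms. Once that pointwise (or distributional) inequality is in hand, Lemma \ref{key-lem} does the rest mechanically, so essentially all the work is in making the $\Pi_2$ maximal estimate rigorous while keeping the constant independent of $f$ and $g$.
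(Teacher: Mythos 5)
Your treatment of $L(f,g)$ and of $\Pi_1$ is fine and agrees with the paper (whose proof of Theorem \ref{thm-Doob} notes that only the $\Pi_2$ estimate needs modification). The gap is exactly in the step you flag as the main obstacle: the pointwise domination $M(\Pi_2(f,g))\leq C\,M(f)\,M(g)$ with a constant independent of $f$ and $g$ is false, and no stopping-time or good-$\lambda$ device restricting to level sets of $M(g)$ can produce it, because even predictable multipliers taking only the values $0$ and $-1$ already defeat such a bound. Concretely, on the dyadic filtration of $[0,1)$ take $f_m=g_m:=\sum_{n=1}^{m}(-1)^n r_n$ for $m\leq N$, where $(r_n)_n$ are the Rademacher functions. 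On the atom $A$ of $\mathcal F_N$ where $r_1=\dots=r_N\equiv 1$ one has $f_m=g_m\in\{-1,0\}$, hence $M(f)=M(g)=1$ on $A$; but there $g_{n-1}\,d_n(f)=g_{n-1}(-1)^n$ equals $-1$ for $n$ even and $0$ for $n$ odd, so $\Pi_2(f,g)_m=-\lfloor m/2\rfloor$ on $A$ and $M(\Pi_2(f,g))\geq \lfloor N/2\rfloor$ on a set of positive measure. Thus the constant in your proposed inequality must grow at least like $N$, and Lemma \ref{key-lem} cannot be applied to $M(f)M(g)$ alone.

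There are two correct repairs. The paper's route is to run the Davis decomposition $f=f^1+f^d$ of Lemma \ref{lem-Davis} \emph{before} estimating $\Pi_2$: for $f^d\in h_1^d$ the crude bound $M(\Pi_2(f^d,g))\leq M(g)\sum_{n}|d_n(f^d)|$ suffices, since $\sum_{n}|d_n(f^d)|\in L^1$ with norm $\|f^d\|_{h_1^d}$ and Lemma \ref{key-lem} applies; for $f^1\in h_1$ one uses the valid pointwise bound $s(\Pi_2(f^1,g))\leq M(g)\,s(f^1)$, so that $\Pi_2(f^1,g)\in h_{\log}$ by Lemma \ref{key-lem}, and then invokes the embedding $h_{\log}\subset H^M_{\log}$; the quasi-triangle inequality in $H^M_{\log}$ finishes. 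Alternatively, a repair closer to your intent uses \eqref{decom} itself: since $\Pi_2(f,g)_m=f_mg_m-\Pi_1(f,g)_m-L(f,g)_m$, one has
$$M\left(\Pi_2(f,g)\right)\leq M(f)M(g)+M\left(\Pi_1(f,g)\right)+\sum_{n\in\Z_+}|d_nf|\,|d_ng|;$$
the first term lies in $L^{\log}$ by Lemmas \ref{JN} and \ref{key-lem}, the second is in $L^1$ because $\Pi_1(f,g)\in h_1\subset H_1$ (Proposition \ref{pro-1}), and the third is in $L^1$ by Proposition \ref{pro-3}. Either way one obtains $\|\Pi_2(f,g)\|_{H^M_{\log}}\lesssim\|f\|_{H_1}\|g\|_{\rm BMO}$; your argument as written does not.
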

\begin{proof}
Only Proposition \ref{pro-2} deserves to be modified. We
observe that the previous proof extends directly to $f\in h_1^d$ because
$$M\left(\Pi_2\left(f,g\right)\right)
\leq M(g)\sum_{n\in\mathbb Z_{+}} \left|d_n(f)\right|.$$
As for  $f\in h_1,$ we notice that
$$s\left(\Pi_2\left(f,g\right)\right)\leq M(g)\,s(f),$$
so that $\Pi_2(f,g)$ is in $h_{\log}$.
But this last space is a subspace of $H^M_{\log}$ (see,
for instance, \cite[(2.5)]{MNS2012}). This allows us to conclude also for
any $f\in h_1.$ To finish the proof of the present theorem, we use
the Davis decomposition in Lemma \ref{lem-Davis} for $f\in H_1$, write $f=f^1+f^d$ and
use the fact that $H_{\log}^M$ is a quasi-Banach space, so that
    $$\left\|\Pi_2(f, g)\right\|_{H_{\log}^M}
    \lesssim \left\|\Pi_2(f^1, g)\right\|_{H_{\log}^M}+
    \left\|\Pi_2(f^d, g)\right\|_{H_{\log}^M} .$$
    The remainder of the proof is straightforward.
    This finishes the proof of Theorem \ref{thm-Doob}.
\end{proof}

\begin{remark}
To show that the two Hardy spaces $H_{\log}^M$ and $H_{\log}$ do not coincide
in general, one can apply a similar argument to the one
used in the proof of \cite[Proposition 2.16]{We1994}.
\end{remark}

Simplifications occur under assumptions on the filtration.
Recall that a filtration $(\mathcal{F}_n)_{n\in\mathbb{Z}_{+}}$
is said to be \emph{regular} if there exists a positive
constant $C_{\rm reg}$ such that, for any $n\in\mathbb{N}$ and $A\in\mathcal{F}_n$,
there exists a set $B\in\mathcal{F}_{n-1}$ such that $A\subset B$ and
$$
	\mathbb{P}(B)\leq C_{\rm reg}\mathbb{P}(A).
$$
Equivalently  (see, for instance, \cite[p.\,265]{Lo1993}),
for any $\mathcal F_n$ measurable non-negative function $f$,
$$f\leq C_{\rm reg}\E_{n-1}(f).$$
We will provide  concrete examples of martingales in Examples \ref{Nrf}
and \ref{Nrdf} below, which include both regular and non-regular martingales.

If the filtration $(\mathcal{F}_n)_{n\in\mathbb{Z}_{+}}$ is regular,
we have $s(f)\lesssim S(f),$ so that
 there exists a constant $C\in[1,\infty)$ such that, for any $f\in h_1$,
\begin{align}\label{hH}
	C^{-1}	\|f\|_{h_1}\leq \|f\|_{H_{1}}\leq C \|f\|_{h_{1}}
\end{align}
and, for any $f\in h_{\log}$,
\begin{align}\label{hHlog}
	C^{-1}	\|f\|_{h_{\log}}\leq \|f\|_{H_{\log}}\leq C \|f\|_{h_{\log}}.
\end{align}
See, for instance, \cite[Corollary 2.23]{We1994} and \cite[Theorem 2.5]{MNS2012}.

\begin{remark}\label{rem-bmo}
	Let $p,q\in[1,\infty)$.
If the relevant $(\mathcal{F}_n)_{n\in\mathbb{Z}_{+}}$ is regular,
	then all spaces ${\rm BMO}_p$ and ${\rm bmo}_q$ are equivalent;
	see \cite[Corollary 2.51]{We1994} for more details.
\end{remark}

Using Theorem \ref{thm-bilinear}, \eqref{hH}, \eqref{hHlog}, and Remark \ref{rem-bmo},
we obtain the following bilinear decomposition.
\begin{corollary}\label{coro-bmo}
If the filtration $(\mathcal{F}_n)_{n\in\mathbb{Z}_{+}}$ is regular,
then Theorem \ref{thm-bilinear} holds true when the spaces $H_1$,
${\rm BMO}$, and $H_{\rm log}$ therein are replaced, respectively, by
the spaces $h_1$,
${\rm bmo}$, and $h_{\rm log}$.
\end{corollary}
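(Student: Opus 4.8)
The plan is to derive Corollary~\ref{coro-bmo} directly from Theorem~\ref{thm-bilinear} by transporting all the relevant function spaces to their ``conditional'' counterparts, which is legitimate precisely because the filtration is regular. First I would recall from \eqref{hH}, \eqref{hHlog}, and Remark~\ref{rem-bmo} that, under regularity, one has the norm equivalences $\|\cdot\|_{H_1}\approx\|\cdot\|_{h_1}$ on $h_1$, $\|\cdot\|_{H_{\log}}\approx\|\cdot\|_{h_{\log}}$ on $h_{\log}$, and $\mathrm{BMO}={\rm bmo}$ with equivalent norms; moreover, by Lemma~\ref{lem-Davis} together with \eqref{hH} (which forces $f^d\in h_1$ as well, since $h_1^d\subset h_1$ in the regular case), every $f\in H_1$ already lies in $h_1$ with $\|f\|_{h_1}\approx\|f\|_{H_1}$, so in fact $H_1=h_1$ as sets with equivalent norms. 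The same Davis-decomposition reasoning, applied in $H_{\log}$, gives $H_{\log}=h_{\log}$ with equivalent norms. Hence the two sides of the decomposition live in the conditional spaces automatically.

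Next I would simply restate the decomposition $f\cdot g=L(f,g)+G(f,g)$ from Theorem~\ref{thm-bilinear}, keeping the operators $L$ and $G=\Pi_1+\Pi_2$ exactly as before, and check that each of the three mapping properties upgrades. For $L$: Proposition~\ref{pro-3} shows $L:H_1\times\mathrm{BMO}\to\mathcal{BV}$, and since $h_1\hookrightarrow H_1$ and ${\rm bmo}\hookrightarrow\mathrm{BMO}$ continuously (trivially, or via Remark~\ref{rem-bmo}), the restriction gives $L:h_1\times{\rm bmo}\to\mathcal{BV}$ with the same boundedness; here no regularity is even needed, though it is harmless to invoke it. For $G$: Theorem~\ref{thm-bilinear} gives $G:H_1\times\mathrm{BMO}\to H_{\log}$ bounded, and composing with the continuous inclusions $h_1\hookrightarrow H_1$, ${\rm bmo}\hookrightarrow\mathrm{BMO}$ on the source and the equivalence $H_{\log}=h_{\log}$ (with comparable norms, from \eqref{hHlog} and the Davis argument above) on the target yields $G:h_1\times{\rm bmo}\to h_{\log}$ bounded. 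The formula $L(f,g)_n=\sum_{k=0}^n d_k(f)d_k(g)$ and the paraproduct identities for $\Pi_1,\Pi_2$ are unchanged, so the statement of Theorem~\ref{thm-bilinear} holds verbatim with $H_1,\mathrm{BMO},H_{\log}$ replaced by $h_1,{\rm bmo},h_{\log}$.

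The only point deserving care, and the main obstacle, is the claim that the target space $H_{\log}$ may be replaced by $h_{\log}$ \emph{without changing the space}, i.e.\ that under regularity $H_{\log}=h_{\log}$ with equivalent quasi-norms. One direction, $\|f\|_{H_{\log}}\lesssim\|f\|_{h_{\log}}$, is Lemma~\ref{mar-ine}(ii) and holds with no hypothesis. The reverse, $\|f\|_{h_{\log}}\lesssim\|f\|_{H_{\log}}$, is exactly \eqref{hHlog}, which is the regular-filtration statement cited from \cite[Corollary 2.23]{We1994} and \cite[Theorem 2.5]{MNS2012}; one invokes $s(f)\lesssim S(f)$ pointwise, valid because for an $\mathcal F_n$-measurable nonnegative $h$ one has $h\le C_{\rm reg}\mathbb E_{n-1}(h)$, whence $\mathbb E_{n-1}(|d_nf|^2)\gtrsim |d_nf|^2$ and $s(f)^2\gtrsim S(f)^2$ up to the $|d_0f|^2$ term. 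Granting this, the corollary is a one-line consequence: the map $(f,g)\mapsto(L(f,g),G(f,g))$ from Theorem~\ref{thm-bilinear} is, restricted to $h_1\times{\rm bmo}$ and with target $\mathcal{BV}\times h_{\log}$, still bounded, by the chain of continuous inclusions and norm equivalences just listed. I would therefore write the proof as: ``By Lemma~\ref{lem-Davis}, \eqref{hH}, \eqref{hHlog}, and Remark~\ref{rem-bmo}, under regularity the spaces $h_1$, ${\rm bmo}$, and $h_{\log}$ coincide with $H_1$, $\mathrm{BMO}$, and $H_{\log}$, respectively, up to equivalent norms; the conclusion is then immediate from Theorem~\ref{thm-bilinear}.''
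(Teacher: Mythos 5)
Your proposal is correct and follows essentially the same route as the paper, which proves the corollary in one line by combining Theorem \ref{thm-bilinear} with the norm equivalences \eqref{hH} and \eqref{hHlog} and the identification $\mathrm{BMO}={\rm bmo}$ from Remark \ref{rem-bmo} under regularity. The extra detail you supply (the pointwise bound $s(f)\lesssim S(f)$ giving $H_{\log}\subset h_{\log}$, and the identification $H_1=h_1$) is exactly the content implicitly packaged in those citations, so nothing is missing.
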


 As we can see, in the proof of Theorem \ref{thm-bilinear}, Lemma \ref{key-lem}
 plays an important role in the estimation of the  operator $\Pi_2$.
It is well known that  John--Nirenberg inequality  is  not valid in general
for ${\rm bmo}_p$ in place of ${\rm BMO}.$  A counter-example is given in
\cite[Example 2.17]{We1994}. This is why we could not replace in general
the space  ${\rm BMO }$ by ${\rm bmo_2}$ in Theorem \ref{thm-bilinear}.

\subsection{Density and terminal values}

In defining  the product of martingales, one was tempted to
define it as the product of terminal values. But, as we have seen in
the introduction, the product $f_\infty g_\infty,$ which is well defined a.s.,
is not integrable in general. Martingales in $H_{\log}$ have no terminal value,
either. But one can use the density to give a sense at formulas,
using the following well-known lemma (see, for instance, \cite[p.\,42]{Lo1993}).
\begin{lemma}\label{density}
	The space of $L^2$-martingales is dense in $H_1.$
    \end{lemma}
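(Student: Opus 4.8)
The plan is to prove that $L^2$-martingales are dense in $H_1$ by a standard truncation-in-$n$ argument. Given $f=(f_n)_{n\in\mathbb Z_+}\in H_1$ and $\varepsilon\in(0,\infty)$, I would first show that the truncated martingales $f^{(N)}:=(f_{\min\{n,N\}})_{n\in\mathbb Z_+}$, which are obviously $L^2$-bounded (each $f^{(N)}$ has only finitely many nonzero differences, each lying in $L^1$ but in fact in $L^2$ after a further truncation; see below), approximate $f$ in the $H_1$-norm. Indeed, $S(f-f^{(N)})=\bigl(\sum_{n>N}|d_nf|^2\bigr)^{1/2}\to 0$ almost surely as $N\to\infty$, and since $S(f-f^{(N)})\leq S(f)\in L^1$, the dominated convergence theorem gives $\|f-f^{(N)}\|_{H_1}=\|S(f-f^{(N)})\|_{L^1}\to 0$.

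The remaining point is that each $f^{(N)}$ can itself be approximated in $H_1$ by a genuine $L^2$-martingale, since $f^{(N)}=\mathbb E_\bullet(f_N)$ with $f_N\in L^1$ only. To handle this I would truncate in amplitude: set $g^{(N,R)}:=\mathbb E_\bullet\bigl(f_N\mathbf 1_{\{|f_N|\leq R\}}\bigr)$, which is an $L^\infty$-martingale, hence an $L^2$-martingale. Then $f^{(N)}-g^{(N,R)}=\mathbb E_\bullet\bigl(f_N\mathbf 1_{\{|f_N|>R\}}\bigr)$, and by the Davis inequality (Lemma \ref{davis-ine}) together with Doob's maximal inequality in $L^1$ applied to the nonnegative submartingale $|\mathbb E_\bullet(f_N\mathbf 1_{\{|f_N|>R\}})|$, one gets $\|f^{(N)}-g^{(N,R)}\|_{H_1}\lesssim \|M(\mathbb E_\bullet(f_N\mathbf 1_{\{|f_N|>R\}}))\|_{L^1}$; actually it is cleaner to use that for a closed martingale the $H_1$-quasinorm is controlled by $\|f_N\mathbf 1_{\{|f_N|>R\}}\|_{L^1}$ via the $L\log L$–type bound, or simply to invoke $\|\mathbb E_\bullet(h)\|_{H_1}\approx\|M(h)\|_{L^1}$ and the weak-$(1,1)$ maximal inequality to deduce $\|f^{(N)}-g^{(N,R)}\|_{H_1}\to 0$ as $R\to\infty$ by dominated convergence on $\Omega$. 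Combining the two truncations and the triangle inequality yields, for suitable $N$ and $R$, an $L^2$-martingale within $\varepsilon$ of $f$ in $H_1$.

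Alternatively, and perhaps more economically, I would just cite the atomic machinery already available: by Lemma \ref{lem-Davis} write $f=f^1+f^d$ with $f^1\in h_1$ and $f^d\in h_1^d$; for $f^d$ the partial sums $\sum_{k\le N}d_k(f^d)$ are finite sums of integrable jumps, which after amplitude truncation are $L^2$ and converge to $f^d$ in $h_1^d\hookrightarrow H_1$; for $f^1$ use the simple $(s,\infty)$-atomic decomposition of Lemma \ref{lem-atomic-decom}, note that each simple atom is bounded on a set of finite measure hence in $L^2$, and the finite partial sums of $\sum_k\mu_k a^k$ converge in $h_1\hookrightarrow H_1$. Either route works; I would present the first (direct truncation) as the main proof since it is self-contained.

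The main obstacle is a minor one: ensuring that a finitely-supported (in $n$) martingale generated by an $L^1$ terminal value is close in $H_1$ — not merely in $L^1$ — to an $L^2$-martingale. This is precisely where one needs the equivalence $\|f\|_{H_1}\approx\|Mf\|_{L^1}$ (Lemma \ref{davis-ine}) and a uniform integrability / dominated convergence argument on the maximal function of the tail $f_N\mathbf 1_{\{|f_N|>R\}}$, rather than a naive $L^1$ estimate. Everything else is routine bookkeeping with the dominated convergence theorem against the majorant $S(f)\in L^1$.
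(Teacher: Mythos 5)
The paper does not actually prove this lemma; it simply cites \cite[p.\,42]{Lo1993}, so your proposal is measured against the standard argument rather than an in-paper one. Your overall skeleton (truncate in $n$, then truncate in amplitude) is the right one, and the first step is fine: $S(f-f^{(N)})\le S(f)\in L^1$ tends to $0$ a.s., so dominated convergence gives $\|f-f^{(N)}\|_{H_1}\to 0$.

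The gap is in the second step, which you correctly identify as the crux but then justify with two arguments that do not work as stated. The $L\log L$--type bound controls $\|M(\mathbb E_\bullet(h))\|_{L^1}$ by $\|h\|_{L\log L}$, not by $\|h\|_{L^1}$, and the tail $h_R:=f_N\mathbf 1_{\{|f_N|>R\}}$ need not lie in $L\log L$; the weak-$(1,1)$ maximal inequality gives only convergence in measure of $M(\mathbb E_\bullet(h_R))$, and the natural majorant $M(\mathbb E_\bullet(|f_N|))$ for a dominated convergence argument is in general \emph{not} integrable when $f_N$ is merely in $L^1$ --- this failure is exactly why $H_1\subsetneq L^1$. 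What rescues the step, and what you never state, is that the difference martingale $f^{(N)}-g^{(N,R)}=\mathbb E_\bullet(h_R)$ is stopped at time $N$, hence has at most $N+1$ nonzero differences; therefore
$S\bigl(f^{(N)}-g^{(N,R)}\bigr)\le\sum_{n=0}^{N}\bigl|d_n\mathbb E_\bullet(h_R)\bigr|\le 2\sum_{n=0}^{N}\mathbb E_n\bigl(|h_R|\bigr)$,
whose $L^1$-norm is at most $2(N+1)\|h_R\|_{L^1}\to 0$ as $R\to\infty$ for fixed $N$. With this one line inserted, your main route is complete and needs neither Lemma \ref{davis-ine} nor any maximal inequality. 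Your alternative route via Lemma \ref{lem-Davis} and Lemma \ref{lem-atomic-decom} also works (simple $(s,\infty)$-atoms are indeed in $L^2$, and $h_1\hookrightarrow H_1$ by Lemma \ref{mar-ine}), but there too ``amplitude truncation'' of the jumps $d_k(f^d)$ must be followed by re-centering, i.e.\ replacing $d_k\mathbf 1_{\{|d_k|\le R\}}$ by $d_k\mathbf 1_{\{|d_k|\le R\}}-\mathbb E_{k-1}\bigl(d_k\mathbf 1_{\{|d_k|\le R\}}\bigr)$, since truncation alone destroys the martingale property; the resulting error in $h_1^d$ is still $O\bigl(\sum_k\|d_k\mathbf 1_{\{|d_k|>R\}}\|_{L^1}\bigr)$, so the conclusion stands.
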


    Now, assuming that $f$ is an $L^2$-martingale and
    $g$ is a ${\rm BMO}$ martingale, the function $f_\infty g_\infty$ is
    in $L^{p}$ for any $p\in(0,2)$ and $(\E_n(f_\infty g_\infty))_{n\in\mathbb Z_+}$
    is an $L^1$-martingale. We can, as before, write
    $$f\cdot g= L(f, g)+ G(f,g).$$
   Since $L(f, g)$ and $\Pi_1(f,g)$ have a terminal value in $L^1,$
   it follows that the same is valid for  $\Pi_2(f,g).$ We deduce immediately from \eqref{MS}
   that $\Pi_2(f,g)$ is an $L^p$-martingale for any $p\in(0,2)$ and,
   in particular, an $L^1$-martingale.
    The previous equality can as well be written
    \begin{equation}\label{equ:Pi}
        f_\infty g_\infty=(f\cdot g)_\infty
        = \Pi_1(f,g)_\infty+\Pi_2(f, g)_\infty+L(f, g)_\infty.
    \end{equation}
 We write as well $\Pi_j(f, g)$ for any $j\in \{1,2\}$
 in place of their terminal value when it makes sense.
Theorem  \ref{thm-bilinear} leads then to the following one
which deals with functions instead of martingales. Here,
as was proposed in Remark \ref{ffinf},  we identify an $L^1$
function $f$ with the martingale $(\E_n(f))_{n\in\mathbb Z_{+}}$
and say that it is in $\rm BMO$ (resp.  $H_1$ or $ H_{\log}$)
if this martingale is in $\rm BMO$ (resp. $ H_1$ or $ H_{\log}$).
With these symbols, we deduce from Theorem  \ref{thm-bilinear}
the following statement.
    \begin{corollary} \label{th: bilinear-limit}
    	There exist two bilinear operators and a positive
    	constant $C $ such that
    	$\mathbf L:\ L^2\times \rm BMO\to L^{1}$ and
    	$\mathbf G:\  L^2\times \rm BMO\to L^{3/2}$
    	such that, for any $\varphi\in L^2$ and $\psi\in \rm BMO,$
    $$\varphi \psi=\mathbf L(\varphi, \psi)+\mathbf G(\varphi, \psi)$$
 with  $$\|\mathbf L(\varphi, \psi)\|_{L^1}
 \leq C \|\varphi\|_{H_1}\|\psi\|_{\rm BMO}
\ \mathrm{and} \ \|\mathbf G(\varphi, \psi)\|_{H_{\rm log}}
 \leq C \|\varphi\|_{H_1}\|\psi\|_{\rm BMO}.$$
    \end{corollary}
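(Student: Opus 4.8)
The plan is to deduce Corollary~\ref{th: bilinear-limit} from Theorem~\ref{thm-bilinear} together with the density and terminal-value discussion preceding the statement. First I would fix $\varphi\in L^2$ and $\psi\in\mathrm{BMO}$ and identify them with the martingales $f:=(\E_n(\varphi))_{n\in\mathbb Z_+}$ and $g:=(\E_n(\psi))_{n\in\mathbb Z_+}$; since $\varphi\in L^2\subset L^1$, the martingale $f$ is an $L^2$-martingale with terminal value $\varphi$, and $g$ is a $\mathrm{BMO}$ martingale. By Lemma~\ref{davis-ine} (Davis' inequality) we have $\|f\|_{H_1}\approx\|\varphi\|_{L^1}<\infty$, so $f\in H_1$, and Theorem~\ref{thm-bilinear} applies: $f\cdot g=L(f,g)+G(f,g)$ with $L(f,g)\in\mathcal{BV}$ (hence, by Proposition~\ref{pro-3}, with terminal value in $L^1$) and $G(f,g)\in H_{\log}$, with the asserted norm bounds $\|L(f,g)\|_{\mathcal BV}+\|G(f,g)\|_{H_{\log}}\lesssim\|f\|_{H_1}\|g\|_{\mathrm{BMO}}\approx\|\varphi\|_{H_1}\|\psi\|_{\mathrm{BMO}}$.

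Next I would pass to terminal values. Because $f$ is an $L^2$-martingale and $g$ is in $\mathrm{BMO}\subset\bigcap_{p<\infty}L^p$, H\"older's inequality shows $f_\infty g_\infty=\varphi\psi\in L^p$ for every $p\in(0,2)$, so $(\E_n(\varphi\psi))_{n\in\mathbb Z_+}$ is a genuine $L^1$-martingale and, as noted in the excerpt, $\Pi_1(f,g)$, $\Pi_2(f,g)$ and $L(f,g)$ all have terminal values in $L^1$; identity \eqref{equ:Pi} then gives $\varphi\psi=\Pi_1(f,g)_\infty+\Pi_2(f,g)_\infty+L(f,g)_\infty$ a.e. I would then \emph{define}
$$\mathbf L(\varphi,\psi):=L(f,g)_\infty,\qquad \mathbf G(\varphi,\psi):=G(f,g)_\infty=\Pi_1(f,g)_\infty+\Pi_2(f,g)_\infty,$$
and note that bilinearity of $\mathbf L$ and $\mathbf G$ is inherited from that of $L$, $\Pi_1$, $\Pi_2$ (which is clear from their explicit martingale-difference formulas) together with the linearity of $\varphi\mapsto(\E_n\varphi)_n$ and of taking terminal values. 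The bound $\|\mathbf L(\varphi,\psi)\|_{L^1}=\|L(f,g)_\infty\|_{L^1}\le\|L(f,g)\|_{\mathcal BV}\lesssim\|\varphi\|_{H_1}\|\psi\|_{\mathrm{BMO}}$ follows directly, and similarly $\|\mathbf G(\varphi,\psi)\|_{H_{\log}}=\|G(f,g)\|_{H_{\log}}\lesssim\|\varphi\|_{H_1}\|\psi\|_{\mathrm{BMO}}$, using here the convention that the $H_{\log}$ ``norm'' of the function $\mathbf G(\varphi,\psi)$ means the $H_{\log}$ norm of the associated martingale $G(f,g)$ (which is legitimate since $\mathbf G(\varphi,\psi)$ has a terminal value, as just shown).

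The only remaining point is the target space $L^{3/2}$ claimed for $\mathbf G$: I would verify that $\mathbf G(\varphi,\psi)=\varphi\psi-\mathbf L(\varphi,\psi)$ lies in $L^{3/2}$ with $\|\mathbf G(\varphi,\psi)\|_{L^{3/2}}\lesssim\|\varphi\|_{H_1}\|\psi\|_{\mathrm{BMO}}$. Since $\mathbf L(\varphi,\psi)\in L^1$ this is slightly weaker than controlling $\varphi\psi$ in $L^{3/2}$, but in fact one gets it cleanly from \eqref{MS}: $S(\Pi_2(f,g))\le M(g)S(f)$, and by \eqref{equ:Pi} the analogous estimate (or Proposition~\ref{pro-1} giving $\Pi_1(f,g)\in h_1\subset H_1$) lets one write $G(f,g)$ as a martingale whose maximal/square function is dominated by $M(g)\cdot(\text{something in }L^2)$; then H\"older with $M(g)\in\bigcap_{p<\infty}L^p$ (Lemma~\ref{JN}) against an $L^2$ factor gives membership in $L^p$ for every $p<2$, in particular $L^{3/2}$, with the stated norm control after invoking Lemma~\ref{davis-ine} again. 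The main obstacle — really the only place requiring care — is this last $L^{3/2}$ bound: one must make sure the $L^2$ factor (coming from $\varphi$, or from $S(f)$) is estimated by $\|\varphi\|_{H_1}$ rather than $\|\varphi\|_{L^2}$, which forces one to route the argument through $\Pi_1$ and $\Pi_2$ separately (Propositions~\ref{pro-1} and \ref{pro-2}) as in the proof of Theorem~\ref{thm-bilinear}, rather than bounding $\varphi\psi$ directly; everything else is bookkeeping about terminal values and the identification of functions with martingales set up in Remark~\ref{ffinf}.
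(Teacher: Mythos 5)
Your overall route is the one the paper intends: identify $\varphi,\psi$ with the martingales $f=(\E_n\varphi)_n$, $g=(\E_n\psi)_n$, apply Theorem \ref{thm-bilinear}, pass to terminal values via \eqref{equ:Pi}, and set $\mathbf L(\varphi,\psi):=L(f,g)_\infty$, $\mathbf G(\varphi,\psi):=\Pi_1(f,g)_\infty+\Pi_2(f,g)_\infty$. Two steps, however, are wrong as written. First, your justification that $f\in H_1$ is false: Davis' inequality (Lemma \ref{davis-ine}) gives $\|f\|_{H_1}\approx\|M(f)\|_{L^1}$, not $\|f\|_{H_1}\approx\|\varphi\|_{L^1}$; the latter would assert $H_1=L^1$, and $\|M(f)\|_{L^1}$ is not controlled by $\|\varphi\|_{L^1}$. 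The correct (and elementary) route is $\|f\|_{H_1}=\|S(f)\|_{L^1}\leq\|S(f)\|_{L^2}=\|\varphi\|_{L^2}<\infty$, which is what underlies Lemma \ref{density}.

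Second, your last paragraph sets out to prove $\|\mathbf G(\varphi,\psi)\|_{L^{3/2}}\lesssim\|\varphi\|_{H_1}\|\psi\|_{\rm BMO}$. The corollary does not claim this, and it is false: by density of $L^2$ in $H_1$ such a bound would upgrade the target space $H_{\log}$ in Theorem \ref{thm-bilinear} to $L^{3/2}$, contradicting the sharpness discussion (Lemma \ref{lem-sharpNS} and the surrounding remarks). The only quantitative bound asserted for $\mathbf G$ is in the $H_{\log}$ norm; the arrow $\mathbf G:\ L^2\times{\rm BMO}\to L^{3/2}$ is a qualitative membership statement whose implicit constant may (and must) depend on $\|\varphi\|_{L^2}$. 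With that reading, the point is easy and is exactly the mechanism you sketch before going astray: $S(\Pi_2(f,g))\leq M(g)S(f)$ by \eqref{MS} and $S(\Pi_1(f,g))\leq M(f)S(g)$, with $S(f),M(f)\in L^2$ (norm $\|\varphi\|_{L^2}$) and $M(g),S(g)\in\bigcap_{p<\infty}L^p$ by Lemma \ref{JN}, so H\"older and the Burkholder--Davis--Gundy inequalities place both paraproducts in $L^p$ for every $p\in(1,2)$, in particular in $L^{3/2}$. The "main obstacle" you identify, namely replacing $\|\varphi\|_{L^2}$ by $\|\varphi\|_{H_1}$ in this $L^{3/2}$ estimate, is therefore an artifact of misreading the statement, and the argument you propose to overcome it cannot succeed. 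The remainder of your write-up (bilinearity, the $L^1$ and $H_{\log}$ bounds inherited from Propositions \ref{pro-3}, \ref{pro-1}, and \ref{pro-2}) is correct and matches the paper.
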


\subsection{Multipliers}

 By duality, Theorem  \ref{thm-bilinear} leads to a theorem on pointwise
 multipliers of $\rm BMO,$ which we define now.
\begin{definition}
	For $X$ being a normed space of $\mathcal{F}$-measurable functions,
	an $\mathcal{F}$-measurable function $g$ is
	called a {\it pointwise multiplier on $X$}
	if the pointwise product $fg$ belongs to $X$
	for any $f\in X$ and if there exists some positive constant $C,$ independent of $f,$
	such that $$\|fg\|_X\leq C\|f\|_{X}.$$ We denote by ${\rm PWM}(X)$
	the set of all pointwise multipliers on $X.$
	For any $g\in \mathrm{PWM}(X)$, its norm is defined by setting
	$$\|g\|_{\mathrm{PWM}(X)}
	:=\sup_{f\in X,\, \|f\|_X\neq 0} \frac{\|fg\|_X}{\|f\|_{X}}.$$
\end{definition}

Applying Theorem \ref{thm-bilinear}, we obtain the following corollary.
\begin{corollary}\label{cor-bi}
Bounded functions in $(H_{\log})^*$ are pointwise multipliers of $\rm BMO.$
In particular, for regular filtrations, bounded functions in $( {h}_{\log})^*$
are pointwise multipliers of $\rm BMO.$
\end{corollary}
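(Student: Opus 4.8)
The plan is to deduce Corollary~\ref{cor-bi} from Theorem~\ref{thm-bilinear} by a duality argument. Let $g$ be a bounded function that also defines a continuous linear functional on $H_{\log}$, i.e. $g\in L^\infty\cap (H_{\log})^*$. We want to show that $g\in{\rm PWM}({\rm BMO})$, that is, for every $\beta\in{\rm BMO}$ the pointwise product $g\beta$ lies in ${\rm BMO}=(H_1)^*$ with norm controlled by $\|g\|_{L^\infty}+\|g\|_{(H_{\log})^*}$ times $\|\beta\|_{\rm BMO}$. The natural way to test membership in $(H_1)^*$ is to pair $g\beta$ against an arbitrary $f\in H_1$ and bound the pairing by $C\|f\|_{H_1}$.

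The key step is the following computation of the pairing. For $f\in H_1$ and $\beta\in{\rm BMO}$, apply Theorem~\ref{thm-bilinear} to the pair $(f,\beta)$ to write $f\cdot\beta = L(f,\beta)+G(f,\beta)$, with $L(f,\beta)$ converging in $L^1$ to a function whose $L^1$-norm is $\lesssim\|f\|_{H_1}\|\beta\|_{\rm BMO}$ (Proposition~\ref{pro-3}) and $G(f,\beta)\in H_{\log}$ with $\|G(f,\beta)\|_{H_{\log}}\lesssim\|f\|_{H_1}\|\beta\|_{\rm BMO}$. Morally, pairing $g\beta$ with $f$ equals pairing $g$ with the ``product'' $f\beta$, and this splits as
$$
\langle g\beta, f\rangle = \langle g, L(f,\beta)\rangle + \langle g, G(f,\beta)\rangle.
$$
The first term is estimated by $\|g\|_{L^\infty}\|L(f,\beta)\|_{L^1}\lesssim \|g\|_{L^\infty}\|f\|_{H_1}\|\beta\|_{\rm BMO}$; the second by $\|g\|_{(H_{\log})^*}\|G(f,\beta)\|_{H_{\log}}\lesssim \|g\|_{(H_{\log})^*}\|f\|_{H_1}\|\beta\|_{\rm BMO}$. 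Adding these gives $|\langle g\beta,f\rangle|\lesssim(\|g\|_{L^\infty}+\|g\|_{(H_{\log})^*})\|\beta\|_{\rm BMO}\|f\|_{H_1}$, whence $g\beta\in(H_1)^*={\rm BMO}$ with the desired norm bound, so $g\in{\rm PWM}({\rm BMO})$. The ``in particular'' statement for regular filtrations then follows because, by \eqref{hHlog} and Lemma~\ref{mar-ine}(ii), $H_{\log}$ and $h_{\log}$ coincide with equivalent quasi-norms in the regular case, so $(h_{\log})^*=(H_{\log})^*$.

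The main obstacle — and the point that needs genuine care rather than routine manipulation — is making rigorous sense of the identity $\langle g\beta,f\rangle = \langle g, f\cdot\beta\rangle$ and of the bracket $\langle g, G(f,\beta)\rangle$, since $f\cdot\beta$ is only a semi-martingale, $f_\infty\beta_\infty$ need not be integrable, and elements of $H_{\log}$ need not have terminal values. The clean route is to use the density of $L^2$-martingales in $H_1$ (Lemma~\ref{density}): first prove the estimate for $f$ an $L^2$-martingale, where $f_\infty\in L^p$ for $p<2$, $f_\infty\beta_\infty$ is genuinely integrable, and by \eqref{equ:Pi} one has the honest pointwise identity $g\beta f_\infty = gL(f,\beta)_\infty + g\,\Pi_1(f,\beta)_\infty+g\,\Pi_2(f,\beta)_\infty$ with all terms in $L^1$; integrate, identify the right-hand side with $\langle g,L(f,\beta)\rangle+\langle g,G(f,\beta)\rangle$ using that $g\in(H_{\log})^*$ acts on the $H_{\log}$-martingale $G(f,\beta)$ via its $L^2$-approximants, and obtain the bound. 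Then extend to general $f\in H_1$ by density, using continuity of $L$ and $G$ from Theorem~\ref{thm-bilinear} together with $g\in L^\infty\cap(H_{\log})^*$ to pass to the limit. A slightly different but equivalent bookkeeping, closer to the paper's earlier discussion around \eqref{equ:Pi}, is to define the action of $g$ on the product directly through these limits; either way the density lemma is what removes the integrability obstruction.
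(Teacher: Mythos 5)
Your argument is correct and is essentially the paper's own proof: both test $b\beta$ (your $g\beta$) against $f\in H_1$, split the pairing via the decomposition $f\cdot\beta=L(f,\beta)+G(f,\beta)$ of Theorem \ref{thm-bilinear}, estimate the two pieces by $\|g\|_{L^\infty}\|L(f,\beta)\|_{L^1}$ and $\|g\|_{(H_{\log})^*}\|G(f,\beta)\|_{H_{\log}}$ respectively, and handle the integrability issue by first taking $f$ an $L^2$-martingale and then using the density of $L^2$ in $H_1$. The regular-filtration addendum is also handled the same way, via the coincidence of $H_{\log}$ and $h_{\log}$.
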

\begin{proof}
Let us first notice that, since $H_1$ is contained in $H_{\log},$
its dual space identifies with a subspace of $\rm BMO,$
and the dual of $L^1+H_{\log}$ identifies with
$L^{\infty}\cap (H_{\log})^*.$ Moreover,
if $b\in L^{\infty}\cap (H_{\log})^*$ and $f\in L^2,$
the duality is given by the scalar product in $L^2,$ and
$$|\E(bf)|\leq \|b\| \, \|f\|_{L^1+H_{\log}}.$$
Let $g\in \rm BMO.$ For any $f\in L^2,$ the product $bgf$ is in $L^{p}$ for any $p\in(0,2)$,
and using Theorem \ref{thm-bilinear}, we have
$$ |\E(bgf)|\lesssim  \|f\|_{H_{1}}\|g\|_{\rm BMO}.$$
Since $L^2$ is dense in $H_1,$ this means that $bg$ identifies with
a $\rm BMO$ function, that is, $b$ is a pointwise multiplier of $\rm BMO$.
The last statement on regular filtrations is a consequence of
Lemma \ref{duality} and \eqref{hHlog}.
This finishes the proof of Corollary \ref{cor-bi}.
\end{proof}

For the case of regular martingales a direct proof can be deduced from
the work \cite{NS2014} of Nakai and Sadasue. They also give a converse
in the particular case when all $\sigma$-algebras $\mathcal F_n$ are
generated by atoms. Recall that, for any $n\in\mathbb{Z}_{+},$ a
set $B\in\mathcal{F}_n$ is called an
\textit{atom} of $\mathcal{F}_n$ if there exists no subset $A\subset B$ with
$A\in\mathcal{F}_n$ satisfying $\mathbb{P}(A)<\mathbb{P}(B)$.
The {\it martingale Campanato space} ${\rm bmo}_{\log}$ is defined
to be the set of all the martingales $f\in L^2$ such that
$$\|f\|_{{\rm bmo}_{\log}}:=\sup_{n\in\mathbb Z_{+}}\sup_{A\in \mathcal{F}_n}
\frac{1}{\phi(\mathbb{P}(A))}\left[\frac{1}{\mathbb{P}(A)}
\int_A\left|f-\mathbb{E}_n(f)\right|^2\,d\mathbb{P}\right]^{\frac12}<\infty,$$
where $\phi(r):=\frac{1}{r\Phi^{-1}(1/r)}$ for any $r\in(0,\infty)$
and $\Phi$ is a concave function which is equivalent to
the function $r\mapsto\frac{r}{\log(e+r)}$ for any $r\in(0,\infty)$.
It was proved in \cite[Theorem 2.10]{MNS2012} that
$(h_{\log})^{\ast}={\rm bmo}_{\log}$ with equivalent norms.

In \cite[Corollary 1.5]{NS2014}, Nakai and Sadasue  identified the
 pointwise multipliers of martingale Campanato spaces, which,
 in our case, gives the following statement.

\begin{lemma}\label{lem-sharpNS}
Assume that the filtration $(\mathcal{F}_n)_{n\in\mathbb{Z}_{+}}$ is regular and, moreover,
every $\sigma$-algebra $\mathcal{F}_n$  for any $n\in \mathbb{N}$ is
generated by a countable collection of atoms
and $\mathcal{F}_0=\{\Omega,\emptyset\}$.
Then
$${\rm PWM}\left({\rm bmo}_{1}\right)={\rm bmo}_{\log}\cap L^{\infty}$$
with equivalent norms.
\end{lemma}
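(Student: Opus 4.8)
The plan is to deduce Lemma~\ref{lem-sharpNS} from the general description of pointwise multipliers on martingale Campanato spaces established by Nakai and Sadasue in \cite[Corollary 1.5]{NS2014}, by recognizing both ${\rm bmo}_1$ and ${\rm bmo}_{\log}$ as instances of such spaces. Recall that, for $p\in[1,\infty)$ and an admissible nonnegative function $\phi$ on $(0,\infty)$, the martingale Campanato space $\mathcal{L}_{p,\phi}$ consists of all $f\in L^p$ with
$$\|f\|_{\mathcal{L}_{p,\phi}}:=\sup_{n\in\mathbb{Z}_{+}}\sup_{A\in\mathcal{F}_n}\frac{1}{\phi(\mathbb{P}(A))}\left[\frac{1}{\mathbb{P}(A)}\int_A\left|f-\mathbb{E}_n(f)\right|^p\,d\mathbb{P}\right]^{\frac1p}<\infty.$$
First I would observe that ${\rm bmo}_{\log}=\mathcal{L}_{2,\phi}$ with $\phi(r)=\frac{1}{r\Phi^{-1}(1/r)}$ (this is precisely the definition recalled above), while ${\rm bmo}_1=\mathcal{L}_{1,\phi_0}$ with the constant weight $\phi_0\equiv1$; the latter is immediate because, for any nonnegative $\mathcal{F}_n$-measurable $g$, one has $\|g\|_{L^\infty}=\sup_{A\in\mathcal{F}_n}\frac{1}{\mathbb{P}(A)}\int_A g\,d\mathbb{P}$, which we apply to $g=\mathbb{E}_n(|f-f_n|)$. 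The hypotheses that each $\mathcal{F}_n$ with $n\in\mathbb{N}$ is generated by a countable family of atoms and that $\mathcal{F}_0=\{\Omega,\emptyset\}$ are the standing assumptions under which \cite[Corollary 1.5]{NS2014} is formulated.

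Next I would apply \cite[Corollary 1.5]{NS2014} to $\mathcal{L}_{1,\phi_0}$. Since $\phi_0\equiv1$ falls in the ``${\rm BMO}$ regime'' (where $\int_0^1\phi_0(t)\,\frac{dt}{t}=\infty$), that result identifies ${\rm PWM}(\mathcal{L}_{1,\phi_0})$ with $\mathcal{L}_{1,\widetilde\phi}\cap L^\infty$, where the multiplier weight is, up to equivalence, $\widetilde\phi(r)\approx\big(\log(e+1/r)\big)^{-1}$. Because $\Phi$ is equivalent to the concave function $r\mapsto\frac{r}{\log(e+r)}$, one has $\Phi^{-1}(s)\approx s\log(e+s)$ and hence $\frac{1}{r\Phi^{-1}(1/r)}\approx\big(\log(e+1/r)\big)^{-1}\approx\widetilde\phi(r)$; thus $\mathcal{L}_{1,\widetilde\phi}=\mathcal{L}_{1,\phi}$ with $\phi(r)=\frac{1}{r\Phi^{-1}(1/r)}$, with equivalent norms. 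At this stage ${\rm PWM}({\rm bmo}_1)=\mathcal{L}_{1,\phi}\cap L^\infty$.

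It then remains to upgrade the exponent, that is, to show $\mathcal{L}_{1,\phi}=\mathcal{L}_{2,\phi}={\rm bmo}_{\log}$ with equivalent norms, and this is the only place where the regularity of $(\mathcal{F}_n)_{n\in\mathbb{Z}_{+}}$ is genuinely used: regularity yields a John--Nirenberg-type self-improvement for Campanato seminorms that permits replacing the exponent $1$ by any $p\in[1,\infty)$ at the cost of a multiplicative constant, in the spirit of Remark~\ref{rem-bmo} and \cite[Corollary 2.51]{We1994} (see also \cite{NS2017}). Combining the three identifications gives ${\rm PWM}({\rm bmo}_1)={\rm bmo}_{\log}\cap L^\infty$ with equivalent norms. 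I expect the main obstacle to be a matter of bookkeeping rather than of analysis: one must check that the admissibility and doubling conditions imposed in \cite[Corollary 1.5]{NS2014} hold for $\phi_0$ and for $\phi$, and that the weight produced by the general multiplier theorem is indeed pointwise comparable to $r\mapsto\frac{1}{r\Phi^{-1}(1/r)}$; beyond that, no input other than \cite{NS2014} and the regularity of the filtration is needed.
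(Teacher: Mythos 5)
Your proposal is correct and follows exactly the route the paper takes: the paper derives this lemma solely as a specialization of \cite[Corollary 1.5]{NS2014}, and your write-up simply makes explicit the bookkeeping (identifying ${\rm bmo}_1$ and ${\rm bmo}_{\log}$ as Campanato spaces, matching the multiplier weight to $r\mapsto 1/(r\Phi^{-1}(1/r))$, and using regularity for the exponent upgrade) that the paper leaves implicit.
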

This means that, in some sense  and in this particular case,
Theorem \ref{thm-bilinear} is the best possible: the dual statement is the best possible.

\section{Endpoint estimates of commutators in the martingale setting}\label{sect-commutator}
In this section, we apply the bilinear decomposition established  in the previous section to
investigate the  endpoint estimate of commutators in the martingale setting.
We also provide the proofs of both Theorems \ref{bdT} and \ref{cmT}.
We first define the class of operators for which we will study commutators.

\subsection{A class of operators}
This subsection is devoted to proving Theorem \ref{bdT}.
We first introduce the class $\mathcal{K}_q$ of sublinear operators.
Recall that we call the
 {\it martingale jump} a function $g\in L^1$ for which there exists an $n\in\mathbb{Z}_+$
 such that $g$ is $\mathcal{F}_n$ measurable and $\mathbb E_{n-1}(g)=0$.
For any $ q\in[1,\infty)$, the {\it space} $L^{q,\infty}$ is defined to be the set of
all the measurable functions $f$ on $\Omega$ such that
$$\|f\|_{L^{q,\infty}}:=\sup_{t>0}t
\left[\mathbb{P}\left(\{x\in\Omega:\ |f(x)|>t\}\right)\right]^{\frac1q}<\infty.$$

\begin{definition}\label{def-K}
	Let $ q\in[1,\infty)$.	Denote by $\mathcal{K}_q$ the set of all the
	continuous sublinear operators $T$ on $L^2$ satisfying
	\begin{enumerate}[{\rm (i)}]
		\item $T$ is bounded from $H_1$ to $L^q$;
		\item $T$ is bounded from  $L^1$ to $L^{q,\infty}$;
		\item  if $a$ is a simple $(s,\infty)$-atom with respect to some
		$n\in\mathbb{Z}_+$, then, for any $b\in \mathrm{BMO}$,
		\begin{equation}\label{AsT}
			\|(b-b_{n-1})T(a)\|_{L^q}\leq C \|b\|_{\mathrm{BMO}}
		\end{equation}
 and
		\begin{equation}\label{comT}
\left\|[T,b_{n-1}](a)\right\|_{L^q} \leq C \|b\|_{\mathrm{BMO}};
	\end{equation}	
		\item if $g$ is a martingale jump with respect to some $n\in\mathbb{Z}_+$,
		then, for any $b\in \mathrm{BMO}$,
		\begin{equation}\label{AsTd}
			\|(b-b_{n-1})T(g)\|_{L^q}\leq C\|g\|_{L^1} \|b\|_{\mathrm{BMO}}
		\end{equation}
 and
		\begin{equation}\label{comTd}
			\left\|[T,b_{n-1}](g)\right\|_{L^q} \leq C \|g\|_{L^1}\|b\|_{\mathrm{BMO}},
	\end{equation}	
		where $C$ is a positive constant independent of $a$, $g$, and $b$.
	\end{enumerate}
	Denote by $\mathcal{K}_H$ the set of all the  $T\in \mathcal{K}_1$
	such that $T(f)\in L^1$ if and only if $f\in H_1$.
\end{definition}
Recall that an operator $T$ is said to be {\it sublinear}
if, for any functions $f, g$ and any scalars $\alpha, \beta,$
one has
$$|T(\alpha f + \beta g)| \leq |\alpha|\,|T f| +|\beta|\,|T g|.$$
It follows in particular that, for any functions $f,g,$
$$
    \big||Tf|-|Tg|\big|\leq |T(f-g)|.
$$
\begin{remark}\label{rem-s-ast}
	According to \eqref{hH}, if the filtration $(\mathcal{F}_n)_{n\in \mathbb{Z}_+}$ is regular,
	then the martingale space $H_1$ also has the atomic decomposition.
	Thus, in regular case, we do not need \eqref{AsTd} and \eqref{comTd}
	in Definition \ref{def-K}. Moreover, in this case,
	we can use simple $\infty$-atoms instead of simple
	$(s,\infty)$-atoms; see \cite[Corollary 2.23]{We1994}
	and \cite[Theorem 2.5]{We1994}.
	Thus, in regular case, to show \eqref{AsT} and \eqref{comT},
	it suffices to prove that \eqref{AsT} and \eqref{comT} hold
	true for any simple $\infty$-atom $a$ with respect
	to some $n\in\mathbb{Z}_+$.

 If moreover $\sigma$-algebras $\mathcal F_n$ are atomic,
 we do not need the assumptions \eqref{comT} and \eqref{comTd}
 because, in this case, $b_{n-1}$ is a constant on the support of an atom.
 This explains why Ky does not need these assumptions
 in the classical case \cite{K2013}.
\end{remark}
\begin{remark}\label{rem-boundedL2}
	Comparing with the definition of Ky in \cite{K2013}, we assume that $T$ is an
	already bounded operator on $L^2.$ It is not a problem for applications, for
	which it is always satisfied. This assumption has been added to be able to give
	a meaning to commutators on a dense subset. Ky \cite{K2013}
	uses finite atomic decompositions, which have not been developed
	in the context of martingales. In the opposite direction,
	we allow $q$ to give other values, not just $q=1$ as in \cite{K2013},
	 and hence we can also treat of fractional integral operators.
\end{remark}

Next, we show  that both the  Doob maximal operator $M$ and the square function $S$ are in $\mathcal{K}_1$.	
In Section \ref{exampleK}, we   provide more examples of sublinear operators that are in $\mathcal{K}_q$.
\begin{example}\label{pro-TMS}
	Let $T$ be the   Doob maximal operator $M$ or the square function $S$. Then $T\in \mathcal{K}_1$.
\end{example}
\begin{proof}
Definition \ref{def-K}(i) with $q=1$ is a consequence of the definition
of $H_1$ for $S$ and of Lemma \ref{lem-Davis} for $M$.
Both $M$ and $S$ are of weak type $(1,1)$ and hence satisfy Definition \ref{def-K}(ii)
with $q=1$; see \cite[Theorems 2.1.1 and 2.1.2]{Lo1993}.
To prove that b they satisfy both (iii) and (iv) of Definition \ref{def-K},
we first show that \eqref{AsT} and \eqref{AsTd} hold true for $T=M.$
The other case can be proved by a similar argument.
	Let $a$  be a simple $(s,\infty)$-atom  with respect to some  $A\in \mathcal{F}_n$
	with $n\in\mathbb Z_{+}$.
	By Lemma \ref{TaC}(i), we find that $\mathrm{supp}\,(M(a))\subset A$.	
	From both the Jensen inequality for conditional expectations and
	the H\"{o}lder inequality, we deduce that, for any $n\in\mathbb Z_+$ ($b_{-1}:=0$ for convenience),
	\begin{align*}	
		\left\|\left(b-b_{n-1}\right) M(a)\right\|_{L^1}
		&=\int_{A} |b-b_{n-1}|\cdot M(a) \,d\mathbb{P}\\
		&=\int_{A}\mathbb{E}_n\left( |b-b_{n-1}| \cdot M(a) \right) \,d\mathbb{P}\\
		&\leq \int_{A}\left[\mathbb{E}_n\left( |b-b_{n-1}|^2\right)\right]^{1/2}
		\cdot\left[\mathbb{E}_n \left(M(a)^2\right)\right]^{1/2} \,d\mathbb{P}\\
		&\leq \|b\|_{\mathrm{BMO}} \,
		\left\|\left[\mathbb{E}_n \left(M(a)^2\right)\right]^{1/2}\right\|_{L^2} \|\mathbf{1}_A\|_{L^2}.
	\end{align*}
	Using Lemma \ref{TaC}(ii),
	we obtain, for any $n\in\mathbb Z_+$,
	\begin{align*}
		\left\|\left[\mathbb{E}_n \left(M(a)^2\right)\right]^{1/2}\right\|_{L^2}
		= \| M(a)\|_{L^2}
		\leq 2\left[\mathbb{P}(A)\right]^{-1/2}.
	\end{align*}
	Thus, we have
	$$\left\|\left(b-b_{n-1}\right) M(a)\right\|_{L^1}\leq 2\|b\|_{\mathrm{BMO}}.$$
	This shows \eqref{AsT} holds true for $M$.
	
	Now, we assume that $g$ is a martingale jump with respect to some $n\in\mathbb{Z}_+$.
	Then $g$ is $\mathcal{F}_n$-measurable and $\mathbb E_{n-1}(g)=0$
($\E_{-1}(g):=0$ for convenience), so, $M(g)=|g|.$ Thus,
	we find that
	\begin{align*}	\|(b-b_{n-1}) M(g)\|_{L^1}&=\int_{\Omega} |b-b_{n-1}|\cdot |g| \,d\mathbb{P}\\
		&=\int_{\Omega} |g|\cdot\mathbb{E}_n( |b-b_{n-1}|   ) \,d\mathbb{P}\\
		&\leq \|b\|_{\mathrm{BMO}} \int_{\Omega}|g|\, d\mathbb{P}
		\leq \|g\|_{L^1}\|b\|_{\mathrm{BMO}}.
	\end{align*}
Thus, \eqref{AsTd} holds true for $M$.

 Finally, let us prove that \eqref{comT} and \eqref{comTd} also hold
 true for $M$. It is easily seen that,
for any $n\in\mathbb N,$ $M(b_{n-1}f)=b_{n-1}M(f),$
 when the function $f$ is such that $d_k(f)=0$ for any $k\in\{0,\ldots,n-1\}.$
 The two operators, $M$ and multiplication  by $b_{n-1}$, commute
 for those functions, which allows to conclude for these two properties.
	We then conclude from the above argument  that
	Definition \ref{def-K} with $q=1$ holds
	true for the Doob maximal operator $M$. The proof for $S$ is similar.
	This finishes the proof of Examples \ref{pro-TMS}.
\end{proof}

\subsection{Proof of Theorem \ref{bdT} and Operator $U$}
\begin{proof}[Proof of Theorem \ref{bdT}]
Let us now define  the commutator $[T,b]$ of both the sublinear operator
$T\in \mathcal K_q$ and $b\in \mathrm{BMO}.$ It is well defined  on $L^2$
by setting, for any $f\in L^2$ and $x\in \Omega,$
$$[T,b](f)(x):=T(bf-b(x)f)(x).$$
Each separated term (in the linear case) does not make sense for a function in
$H_1.$ But the function $[T,b](f)$ is well defined for $L^2$ as a function in
$L^p$ for any $p\in[1, 2).$ It will be defined on $H_1$ by continuity from the
dense space $L^2.$ We first need to find a priori estimates.

So we first prove the present theorem for any $f\in L^2.$
Once we have proved the adequate a priori estimate,
it extends automatically to $H_1.$

We first consider the linear case and use the paraproduct decomposition \eqref{equ:Pi}, so that
$$[T,b](f)=T(\Pi_1( f, b))+T(\Pi_2(f,b))+T(L(f,b))-b T(f).$$
Using Proposition \ref{pro-1} and Definition \ref{def-K}(ii),
we already know that $f\mapsto T(\Pi_1( f, b))$ extends into a
bounded operator from $H_1$ to $L^q$ with $q\in[1,\infty)$. It remains to consider
the other term, which may be written, for a general sublinear
operator $T\in \mathcal K_q$ with $q\in[1,\infty)$, as
\begin{align}\label{def-U}
	U(f,b)(x)=	T(\Pi_2(f,b)-b(x)f)(x).
\end{align}
In fact, as before, this quantity is not well defined at this moment in all
generality, but makes sense when $f$ is in $L^2.$ We will develop a priori
estimates on the dense subset of $L^2$ functions, so that $U$ is defined by
continuity. The main result for $U$ is the following.
\begin{lemma}\label{Ufg}
	Let $q\in[1,\infty)$ and $U$ be the same as in \eqref{def-U} with $T\in \mathcal{K}_q$.
	Then $U$ extends into a bounded operator from $H_1\times \mathrm{BMO}$ into $L^q$.
\end{lemma}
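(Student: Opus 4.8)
The plan is to establish the a priori estimate $\|U(f,b)\|_{L^q}\lesssim\|f\|_{H_1}\|b\|_{\rm BMO}$ on the dense class of $L^2$-martingales and then extend $U$ by continuity, just as $U$ was defined. For $f\in L^2$ the function $\Pi_2(f,b)-b(x)f$ lies in $L^p$ for every $p<2$ (by \eqref{MS} together with $M(b)\in\exp L$), so $U(f,b)$ is well-defined, is (sub)bilinear, and $f\mapsto U(f,b)$ is continuous on $L^2$; hence it suffices to prove the estimate for the two building blocks produced by the Davis decomposition (Lemma \ref{lem-Davis}, Remark \ref{sup-davis}) and the atomic decomposition of $h_1$ (Lemma \ref{lem-atomic-decom}, Remark \ref{sup-atomic}): namely $\|U(a,b)\|_{L^q}\lesssim\|b\|_{\rm BMO}$ for every simple $(s,\infty)$-atom $a$, and $\|U(g,b)\|_{L^q}\lesssim\|g\|_{L^1}\|b\|_{\rm BMO}$ for every martingale jump $g$. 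Summing against $\sum_k|\mu_k|\lesssim\|f^1\|_{h_1}\lesssim\|f\|_{H_1}$ and $\|f^d\|_{h_1^d}\lesssim\|f\|_{H_1}$ then yields the lemma.

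Both blocks rest on one algebraic identity. If $c$ is either an atom w.r.t. $(n,A)$ or a jump w.r.t. $n$, then $d_k(\Pi_2(c,b))=b_{k-1}d_k(c)$ vanishes for $k\le n$, so inserting $b_{k-1}=b_{n-1}+(b_{k-1}-b_{n-1})$ and using $c=\sum_k d_k(c)$ gives, for the terminal values and any fixed $x$,
\[\Pi_2(c,b)-b(x)\,c=\big(b_{n-1}-b_{n-1}(x)\big)c+\big(b_{n-1}(x)-b(x)\big)c+\Pi_2\big(c,\,b-b_{n-1}\big).\]
Applying the sublinear $T$ and evaluating at $x$: the first summand becomes $[T,b_{n-1}](c)(x)$, controlled in $L^q$ by \eqref{comT} (atoms) or \eqref{comTd} (jumps); the second becomes $-\big(b(x)-b_{n-1}(x)\big)T(c)(x)$, controlled by \eqref{AsT} or \eqref{AsTd}; so only the paraproduct $T(\Pi_2(c,b-b_{n-1}))$ remains. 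For a jump $c=g$ this term is \emph{zero}, since $d_k(g)=0$ for $k\ne n$ and the lone difference of $\Pi_2(g,b-b_{n-1})$ at $k=n$ is $(b-b_{n-1})_{n-1}d_n(g)=0$. Hence $\|U(g,b)\|_{L^q}\lesssim\|g\|_{L^1}\|b\|_{\rm BMO}$, which disposes of the entire $h_1^d$-part $f^d$ of $f$.

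It remains to bound $T(\Pi_2(a,b-b_{n-1}))$ for a simple $(s,\infty)$-atom $a$ w.r.t. $(n,A)$, and this I expect to be the crux. With $\beta:=b-b_{n-1}$ (a ${\rm BMO}$-martingale, $\|\beta\|_{\rm BMO}\lesssim\|b\|_{\rm BMO}$, $d_k(\beta)=d_k(b)$ for $k\ge n$), the martingale $\Pi_2(a,\beta)$ has difference sequence $\beta_{k-1}d_k(a)$, is supported in $A$, and satisfies $s(\Pi_2(a,\beta))\le M(\beta)\,s(a)\le[\mathbb{P}(A)]^{-1}M(\beta)\mathbf{1}_A$; one wants $\|\Pi_2(a,\beta)\|_{H_1}\lesssim\|b\|_{\rm BMO}$, after which Definition \ref{def-K}(i) closes the estimate. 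For a single-jump atom this is immediate, since $\Pi_2(a,\beta)=d_n(b)\,a$ with $\|d_n(b)\|_{L^\infty}\le\|b\|_{\rm BMO}$ (because ${\rm BMO}\subset{\rm bmo}^d$), so $\Pi_2(a,\beta)$ is a fixed multiple of a simple $(s,\infty)$-atom. In the general multi-jump case, however, the only obvious estimate, $\|\Pi_2(a,\beta)\|_{h_1}\le[\mathbb{P}(A)]^{-1}\int_A M(\beta)\lesssim\|b\|_{\rm BMO}\log(e/\mathbb{P}(A))$, loses a logarithmic factor that cannot survive the summation over an atomic decomposition.

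To remove this logarithm I would not estimate $\Pi_2(a,\beta)$ in $H_1$ directly but exploit the exponential integrability of $M(\beta)$ furnished by the John--Nirenberg inequality (Lemma \ref{JN}): split $\beta$ by a stopping time at a height $\lambda\sim\log(e/\mathbb{P}(A))$, so that the truncated martingale is bounded (hence $\Pi_2(a,\cdot)$ applied to it produces a controlled family of uniformly normalized $(s,\infty)$-atoms, handled by Definition \ref{def-K}(i)), while the tail of $\beta$ is supported on a set of measure $\lesssim e^{-c\lambda/\|b\|_{\rm BMO}}$, small enough at this height to absorb the factor $[\mathbb{P}(A)]^{-1}$. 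Making the exponential gain genuinely beat the logarithmic loss in this balancing is the main technical hurdle; once $\|U(a,b)\|_{L^q}\lesssim\|b\|_{\rm BMO}$ holds uniformly over atoms and $\|U(g,b)\|_{L^q}\lesssim\|g\|_{L^1}\|b\|_{\rm BMO}$ over jumps, summing over the atomic decomposition of $f^1$, adding the contribution of $f^d$, and invoking density finishes the proof of Lemma \ref{Ufg}.
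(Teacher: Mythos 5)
Your overall architecture matches the paper's: Davis decomposition plus the atomic decomposition of $h_1$ reduce the lemma to the two block estimates, the algebraic identity $\Pi_2(c,b)-b(x)c=[b_{n-1}-b_{n-1}(x)]c+[b_{n-1}(x)-b(x)]c+\Pi_2(c,b-b_{n-1})$ is exactly the paper's \eqref{new}, the first two terms are handled by \eqref{comT}/\eqref{comTd} and \eqref{AsT}/\eqref{AsTd} as you say, and your observation that $\Pi_2(g,b-b_{n-1})=0$ for a martingale jump $g$ disposes of the $h_1^d$ part exactly as in the paper. The problem is that you leave the crux unproved: you need $\|\Pi_2(a,b-b_{n-1})\|_{H_1}\lesssim\|b\|_{\mathrm{BMO}}$ for a general simple $(s,\infty)$-atom, you correctly note that the naive bound via $M(b-b_{n-1})\,s(a)$ in $L^1\times L^\infty$ loses a factor $\log(e/\mathbb{P}(A))$, and your proposed repair (a stopping-time truncation of $b$ at height $\sim\log(e/\mathbb{P}(A))$ balanced against John--Nirenberg decay) is only sketched, with the decisive balancing step explicitly flagged as an unresolved ``technical hurdle.'' As written, the proof is therefore incomplete precisely at the step the whole lemma hinges on.

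The missing ingredient is the paper's Lemma \ref{g-gn}, which needs no John--Nirenberg input and no stopping time. Write
$S(\Pi_2(a,b-b_{n-1}))=(\sum_{k\geq n+1}|b_{k-1}-b_{n-1}|^2|d_ka|^2)^{1/2}$ and split $b_{k-1}-b_{n-1}=(b_{k-1}-b)+(b-b_{n-1})$. For the first piece, H\"older against $\mathbf{1}_A$ gives the $L^2$ norm of $(\sum_k|b-b_{k-1}|^2|d_ka|^2)^{1/2}$ times $[\mathbb{P}(A)]^{1/2}$; since $|d_ka|^2$ is $\mathcal{F}_k$-measurable, $\int|b-b_{k-1}|^2|d_ka|^2=\int\mathbb{E}_k(|b-b_{k-1}|^2)|d_ka|^2\leq\|b\|_{\mathrm{BMO}}^2\|S(a)\|_{L^2}^2\leq\|b\|_{\mathrm{BMO}}^2[\mathbb{P}(A)]^{-1}$, and the measure factors cancel exactly. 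For the second piece, $\||b-b_{n-1}|S(a)\mathbf{1}_A\|_{L^1}\leq\||b-b_{n-1}|\mathbf{1}_A\|_{L^2}\|S(a)\|_{L^2}\leq\|b\|_{\mathrm{BMO}}[\mathbb{P}(A)]^{1/2}\cdot[\mathbb{P}(A)]^{-1/2}$. Both terms are $\leq\|b\|_{\mathrm{BMO}}$ with no logarithm, and Definition \ref{def-K}(i) then closes the atom estimate. So the log you were fighting is an artifact of estimating through the maximal function of $b$ in $L^1\times L^\infty$; working with the square function in $L^2\times L^2$ on $A$, together with the pointwise bound $\mathbb{E}_k(|b-b_{k-1}|^2)\leq\|b\|_{\mathrm{BMO}}^2$, removes it outright.
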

If we take this lemma for granted,
the proof of Theorem \ref{bdT} follows at once for $T$ linear.
Whenever $T$ is only sublinear, we let
$$R(f,b):=|U(f,b)|+|T(\Pi_1(f,b))|.$$
Then it is easy to show that
$$|T(\Pi_3(f,b))|-R(f,b)\leq |[T,b](f)|\leq R(f,b)+|T(\Pi_3(f,b))|$$
and then conclude the desired conclusion in the same way as the linear case.
This finishes the proof of Theorem \ref{bdT}.
\end{proof}

We now show Lemma \ref{Ufg} whose proof needs a series of lemmas.
  We begin with the following result.
\begin{lemma}\label{g-gn}
	Assume that $a$ is a simple $(s,\infty)$-atom with respect to
	some $n\in \mathbb{Z}_+$ and assume $A\in \mathcal{F}_n$.
	If $b\in \mathrm{BMO}$, then
	$$\left\|\Pi_2\left(a,b-b_{n-1}\right)\right\|_{H_1}\leq 2\|b\|_{\mathrm{BMO}}.$$
\end{lemma}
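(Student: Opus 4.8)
The plan is to compute the martingale differences of $\Pi_2(a,b-b_{n-1})$ explicitly, reduce the $H_1$-norm to an integral over the supporting set $A$, and then estimate it by the Cauchy--Schwarz inequality together with a conditional Doob maximal inequality. Throughout, I write $g:=b-b_{n-1}$ for the martingale with terminal value $b_\infty-b_{n-1}$.

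First I would record the elementary structural facts. Since $a$ is a simple $(s,\infty)$-atom with respect to $n$, we have $a_k=\mathbb E_k(a)=0$, hence $d_k(a)=0$, for all $k\le n$; and since $A\in\mathcal F_n\subset\mathcal F_k$, each $a_k=\mathbb E_k(a\mathbf{1}_A)=\mathbf{1}_A\mathbb E_k(a)$ is supported in $A$, so $S(a)$ is supported in $A$. Using the defining property $d_k(\Pi_2(f,h))=h_{k-1}\,d_k(f)$ and the fact that $g_{k-1}=\mathbb E_{k-1}(b_\infty-b_{n-1})=b_{k-1}-b_{n-1}$ for $k-1\ge n-1$, we obtain $d_k(\Pi_2(a,g))=(b_{k-1}-b_{n-1})\,d_k(a)$ for $k\ge n+1$ and $0$ otherwise. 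In particular $S(\Pi_2(a,g))$ is supported in $A$, so $\|\Pi_2(a,g)\|_{H_1}=\int_A S(\Pi_2(a,g))\,d\mathbb P$.

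Next I would use the pointwise bound
$$S(\Pi_2(a,g))=\Big(\sum_{k\ge n+1}(b_{k-1}-b_{n-1})^2\,(d_k a)^2\Big)^{1/2}\le M(b-b_{n-1})\,S(a),$$
which holds because, for $k\ge n+1$, $|b_{k-1}-b_{n-1}|\le\sup_{j\ge n}|b_j-b_{n-1}|=M(b-b_{n-1})$ (the identity being valid since $\mathbb E_m(b_\infty-b_{n-1})$ equals $b_m-b_{n-1}$ for $m\ge n-1$ and $0$ for $m<n-1$). The Cauchy--Schwarz inequality on $A$, together with Lemma \ref{TaC}(iii), which gives $\|S(a)\|_{L^2}\le[\mathbb P(A)]^{-1/2}$, then reduces the whole problem to the single estimate
$$\int_A M(b-b_{n-1})^2\,d\mathbb P\le 4\|b\|_{\mathrm{BMO}}^2\,\mathbb P(A).$$

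The heart of the matter is this last inequality, and the only mild subtlety is the index mismatch: $A\in\mathcal F_n$ while $b_{n-1}\in\mathcal F_{n-1}$. I would dissolve it by applying the conditional $L^2$ Doob maximal inequality with respect to $\mathcal F_n$ to the martingale $(b_k-b_{n-1})_{k\ge n}$ (that this martingale does not start at $0$ at time $n$ is irrelevant for Doob): for every $N\ge n$,
$$\mathbb E_n\Big[\big(\max_{n\le k\le N}|b_k-b_{n-1}|\big)^2\Big]\le 4\,\mathbb E_n\big[|b_N-b_{n-1}|^2\big]\le 4\,\mathbb E_n\big[|b_\infty-b_{n-1}|^2\big]\le 4\|b\|_{\mathrm{BMO}}^2,$$
where the middle step is conditional Jensen applied to $b_N=\mathbb E_N(b_\infty)$ and the last is the definition of the (John--Nirenberg-equivalent) $\mathrm{BMO}_2$-norm. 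Letting $N\to\infty$ by conditional monotone convergence yields $\mathbb E_n[(M(b-b_{n-1}))^2]\le 4\|b\|_{\mathrm{BMO}}^2$ almost surely; and since $\mathbf{1}_A$ is $\mathcal F_n$-measurable, $\int_A M(b-b_{n-1})^2\,d\mathbb P=\mathbb E[\mathbf{1}_A\,\mathbb E_n((M(b-b_{n-1}))^2)]\le 4\|b\|_{\mathrm{BMO}}^2\,\mathbb P(A)$. Combining the three displayed estimates gives $\|\Pi_2(a,b-b_{n-1})\|_{H_1}\le[\mathbb P(A)]^{-1/2}\cdot 2\|b\|_{\mathrm{BMO}}\,[\mathbb P(A)]^{1/2}=2\|b\|_{\mathrm{BMO}}$, as claimed. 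The main (and essentially only) obstacle is routing the constant cleanly through the $\mathcal F_n$-versus-$\mathcal F_{n-1}$ discrepancy; everything else is bookkeeping with martingale differences.
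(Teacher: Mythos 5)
Your proof is correct and reaches the sharp constant $2$, but by a genuinely different route at the key step. The paper splits $b_{k-1}-b_{n-1}=(b_{k-1}-b)+(b-b_{n-1})$ inside the square function and estimates the two resulting terms separately: the first via the identity $\int_{\Omega}|b-b_{k-1}|^2|d_ka|^2\,d\mathbb P=\int_{\Omega}\mathbb E_k\left(|b-b_{k-1}|^2\right)|d_ka|^2\,d\mathbb P\le\|b\|_{\mathrm{BMO}}^2\|S(a)\|_{L^2}^2$, the second via a single conditional expectation at level $n$; each piece contributes $\|b\|_{\mathrm{BMO}}$, whence $1+1=2$. You instead dominate all the coefficients at once by the Doob maximal function $M(b-b_{n-1})$ and invoke the conditional $L^2$ Doob inequality to get $\mathbb E_n\left[M(b-b_{n-1})^2\right]\le4\|b\|_{\mathrm{BMO}}^2$, whence $\sqrt 4=2$. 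Your argument is in fact the exact mirror of the paper's proof of Lemma \ref{Pi1-lem1} for $\Pi_1$, where the bound $s(\Pi_1(a,g))\le M(a)\|g\|_{\mathrm{bmo}_2}$ puts the Doob maximal function on the atom rather than on $b$. What the paper's version buys is that it avoids the conditional form of Doob's inequality entirely; what yours buys is a single clean pointwise domination instead of a two-term decomposition. The one place you should add a line is the conditional Doob inequality itself: it is standard, but for your purposes it suffices to apply the ordinary $L^2$ Doob inequality to the martingale $(\mathbf 1_A(b_k-b_{n-1}))_{k\ge n}$, which yields directly $\int_A M(b-b_{n-1})^2\,d\mathbb P\le 4\sup_N\int_A|b_N-b_{n-1}|^2\,d\mathbb P\le4\|b\|_{\mathrm{BMO}}^2\,\mathbb P(A)$, the only form you use. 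All the remaining bookkeeping (the formula for $d_k(\Pi_2(a,b-b_{n-1}))$, the support of $S(\Pi_2(a,b-b_{n-1}))$ in $A$, the $\mathcal F_n$-versus-$\mathcal F_{n-1}$ discrepancy, and $\|S(a)\|_{L^2}\le[\mathbb P(A)]^{-1/2}$ from Lemma \ref{TaC}) is handled correctly.
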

\begin{proof}
By the assumption on $a$, we find that $d_ka=0$ for any
$ k\in\{0,\ldots,n\}$. Thus, we have
	\begin{align*}
		&\|\Pi_2(a,b-b_{n-1})\|_{H_1}\\
		&\quad= \left\| \sum_{k\geq n+1}(b_{k-1}-b_{n-1})d_ka\right\|_{H_1}
		=\left\| \left(\sum_{k\geq n+1}|b_{k-1}-b_{n-1}|^2|d_ka|^2\right)^{1/2}\right\|_{L^1}\\
		&\quad\leq  \left\| \left(\sum_{k\geq n+1}|b_{k-1}-b|^2|d_ka|^2\right)^{1/2}
		\right\|_{L^1} +\left\| \left(\sum_{k\geq n+1}|b-b_{n-1}|^2|d_ka|^2\right)^{1/2}\right\|_{L^1}\\
		&\quad=: \mathrm{I}_1+ \mathrm{I}_2.
	\end{align*}
	We   first estimate $\mathrm{I}_1$.
	Note that $\mathrm{supp}\,(d_ka)\subset A$ for each $k\geq n+1$. Thus,
	by the H\"older inequality, we obtain
	\begin{align*}
		\mathrm{I}_1 \leq \left\| \left(\sum_{k\geq n+1}|b_{k-1}-b|^2|d_ka|^2\right)^{1/2}
		\right\|_{L^2}  \|\mathbf{1}_A\|_{L^2} .
	\end{align*}
	On the other hand, by both the definition of $\mathrm{BMO}$
	and Lemma \ref{TaC}(iv), we have
	\begin{align*}
		&\left\| \left(\sum_{k\geq n+1}|b_{k-1}-b|^2|d_ka|^2\right)^{1/2}
		\right\|_{L^2}^2\\
		&\quad= \sum_{k\geq n+1} \int_{\Omega}|b-b_{k-1}|^2 |d_ka|^2 \,d\mathbb{P}\\
		&\quad=\sum_{k\geq n+1} \int_{\Omega}\mathbb{E}_{k}
		\left(|b-b_{k-1}|^2\right) |d_ka|^2 \,d\mathbb{P}\\
		&\quad\leq \|b\|_{\mathrm{BMO}}^2 \|S(a)\|_{L^2}^2
		\leq \|b\|_{\mathrm{BMO}}^2  \left[\mathbb{P}(A)\right]^{-1}.
	\end{align*}
	Therefore,
	$\mathrm{I}_1\leq \|b\|_{\mathrm{BMO}}.$
	
Next, we estimate $\mathrm{I}_2$. Since $\mathrm{supp}\,(S(a))\subset A$
(see Lemma \ref{TaC}) and $A\in\mathcal{F}_n$,
it follows from both the H\"older inequality
	and Lemma \ref{TaC}(iii) that
	\begin{align*}
		&\left\| \left(\sum_{k\geq n+1}|b-b_{n-1}|^2|d_ka|^2\right)^{1/2}\right\|_{L^1}\\
		&\quad\leq \left\|\, |b-b_{n-1}|S(a)\mathbf{1}_A\right\|_{L^1}
		\leq \left\|\, |b-b_{n-1}|\mathbf{1}_A\right\|_{L^2}\left\| S(a)\right\|_{L^2}\\
		&\quad\leq \left[\int_{\Omega}\mathbb{E}_n \left(|b-b_{n-1}|^2\right)
		\mathbf{1}_A\,d\mathbb{P}\right]^{\frac12}
		\left\| S(a)\right\|_{L^2}\leq \|b\|_{\mathrm{BMO}}.
	\end{align*}
	This establishes the  desired inequality and
	hence finishes the proof of Lemma \ref{g-gn}.
\end{proof}
We come back to the operator $U.$
\begin{lemma}\label{Uag}
	Let $q\in[1,\infty)$ and $T\in \mathcal{K}_q$. Then there exists a positive constant $C$ such that
	\begin{enumerate}[{\rm (i)}]
		\item for any simple $(s,\infty)$-atom and any $b\in \mathrm{BMO}$,
		$$\|U(a,b)\|_{L^q}\leq C \|b\|_{\mathrm{BMO}};$$
		\item for any martingale jump $g$ and any $b\in \mathrm{BMO}$,
		$$\|U(g,b)\|_{L^q}\leq C \|g\|_{L^1}\|b\|_{\mathrm{BMO}}.$$
	\end{enumerate}
\end{lemma}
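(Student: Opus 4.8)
The plan is to reduce both estimates to the defining hypotheses \eqref{AsT}--\eqref{comTd} of the class $\mathcal{K}_q$ and to Lemma \ref{g-gn}, after re-expanding the paraproduct $\Pi_2$ around the level $n$ attached to the atom (resp.\ the jump). First I would record the key algebraic identity. Let $a$ be a simple $(s,\infty)$-atom with respect to $n\in\mathbb{Z}_+$ and $A\in\mathcal{F}_n$, so that $d_k(a)=0$ for every $k\in\{0,\dots,n\}$. Since $b_{n-1}$ is $\mathcal{F}_{n-1}$-measurable, a direct comparison of martingale differences shows that $d_k(b_{n-1}a)=b_{n-1}d_k(a)$ for every $k$, whence
$$\Pi_2(a,b)=b_{n-1}\,a+\Pi_2(a,b-b_{n-1}),$$
the last summand being precisely the martingale with differences $(b_{k-1}-b_{n-1})d_k(a)$, $k\ge n+1$, appearing in Lemma \ref{g-gn}. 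Plugging this into \eqref{def-U} gives
$$U(a,b)(x)=T\big((b_{n-1}-b(x))\,a+\Pi_2(a,b-b_{n-1})\big)(x).$$

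Next I would split $b_{n-1}-b(x)=(b_{n-1}-b_{n-1}(x))+(b_{n-1}(x)-b(x))$ and invoke that $T$ is sublinear and positively homogeneous. By definition $T\big((b_{n-1}-b_{n-1}(x))a\big)(x)=[T,b_{n-1}](a)(x)$; the quantity $T\big((b_{n-1}(x)-b(x))a\big)(x)$ has absolute value $|(b-b_{n-1})(x)|\,|T(a)(x)|$; and sublinearity detaches the term containing $\Pi_2(a,b-b_{n-1})$. This yields the pointwise bound
$$|U(a,b)(x)|\le \big|[T,b_{n-1}](a)(x)\big|+\big|(b-b_{n-1})(x)\big|\,\big|T(a)(x)\big|+\big|T(\Pi_2(a,b-b_{n-1}))(x)\big|.$$
Taking $L^q$ norms and the triangle inequality, the first summand is $\lesssim\|b\|_{\mathrm{BMO}}$ by \eqref{comT}, the second is $\lesssim\|b\|_{\mathrm{BMO}}$ by \eqref{AsT}, and the third is $\lesssim\|\Pi_2(a,b-b_{n-1})\|_{H_1}\lesssim\|b\|_{\mathrm{BMO}}$ by Definition \ref{def-K}(i) combined with Lemma \ref{g-gn}; this gives (i). For (ii), if $g$ is a martingale jump with respect to $n$, then $d_k(g)=0$ for $k\ne n$ and $d_n(g)=g$, hence $\Pi_2(g,b)=b_{n-1}g$ and the analogue of the $\Pi_2(a,b-b_{n-1})$ term vanishes identically. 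The same splitting now gives
$$|U(g,b)(x)|\le\big|[T,b_{n-1}](g)(x)\big|+\big|(b-b_{n-1})(x)\big|\,\big|T(g)(x)\big|,$$
and \eqref{comTd} together with \eqref{AsTd} produces $\|U(g,b)\|_{L^q}\lesssim\|g\|_{L^1}\|b\|_{\mathrm{BMO}}$.

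The main obstacle is bookkeeping rather than analysis: one must check carefully the identity $\Pi_2(a,b)=b_{n-1}a+\Pi_2(a,b-b_{n-1})$ at the level of martingale differences, using crucially both $d_k(a)=0$ for $k\le n$ and the $\mathcal{F}_{n-1}$-measurability of $b_{n-1}$, and one must make sure that every quantity above is a legitimate a priori estimate. For the latter it suffices to work with $a\in L^2$ (true by Lemma \ref{TaC}(iv)) and with $\Pi_2(a,b-b_{n-1})\in H_1$, since $T$ is already defined and continuous on $L^2$ and bounded from $H_1$ to $L^q$ by hypothesis, so that $U$ extends by density as announced after \eqref{def-U}. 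Apart from these points, nothing beyond \eqref{AsT}--\eqref{comTd} and Lemma \ref{g-gn} is required.
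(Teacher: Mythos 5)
Your argument is correct and coincides with the paper's own proof: the same rewriting of $U(a,b)$ via $\Pi_2(a,b_{n-1})=b_{n-1}a$, the same three-term split $(b_{n-1}-b_{n-1}(x))a+(b_{n-1}(x)-b(x))a+\Pi_2(a,b-b_{n-1})$ handled by \eqref{comT}, \eqref{AsT}, and Lemma \ref{g-gn} together with the $H_1\to L^q$ boundedness of $T$, and the same observation that $\Pi_2(g,b-b_{n-1})=0$ for a jump so that \eqref{comTd} and \eqref{AsTd} finish part (ii). No gaps.
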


\begin{proof}
	We first show (i).
	Without loss of generality, we may assume that $a$ is a simple $(s,\infty)$-atom
	with respect to some $n\in\mathbb{Z}_+$ and $A\in \mathcal{F}_n$.
	Observe that $\Pi_2(a,b_{n-1})=ab_{n-1}$.
	By this observation, we rewrite $U(a,b)$ as that, for any $x\in \Omega$,
	\begin{align}\label{new}
		U(a,b) (x)&=T\left(\Pi_2(a,b-b_{n-1})
		+[b_{n-1}-b_{n-1}(x)]a+[b_{n-1}(x)-b(x)]a\right)
		\end{align}
	Since $T\in \mathcal{K}_q$, it follows from Definition \ref{def-K}
	that $T$ is bounded from $H_1$ to $L^q$.
	Using this, \eqref{AsT}, \eqref{comT}, and Lemma \ref{g-gn}, we then obtain
	\begin{align*}
		\|U(a,b)\|_{L^q}&\leq \|T(\Pi_2(a,b-b_{n-1}))\|_{L^q}	
		+ 	\|\,[T,b_{n-1}](a)\,\|_{L^q} \\
		&\quad+	\|(b-b_{n-1})T(a)\|_{L^q}
		\\
		&\lesssim  \|b\|_{\mathrm{BMO}}+  \|\Pi_2(a,b-b_{n-1})\|_{H_1}
		\lesssim \|b\|_{\mathrm{BMO}}.
	\end{align*}
	This proves (i).
	
	For any martingale jump $g$ with respect to some $n\in\mathbb{Z}_+$,
	we have
	$$\Pi_2(g, b-b_{n-1})=\sum_{k\in \mathbb{Z}_+} d_kg \mathbb{E}_{k-1}(b-b_{n-1})
	=g \mathbb{E}_{n-1}(b-b_{n-1})=0$$
	and $\Pi_2(g,b_{n-1})=gb_{n-1}$. Using \eqref{new}, \eqref{comTd},
	and \eqref{AsTd}, we then obtain (ii).
	This finishes the proof of Lemma \ref{Uag}.
\end{proof}

We finally prove Lemma \ref{Ufg}.
\begin{proof}[Proof of Lemma \ref{Ufg}.]
	Take $(f,b)\in H_1\times \mathrm{BMO}$.	
	By Lemma \ref{lem-Davis}, we find that there exists a decomposition
	$f=f^1+f^d$ such that \eqref{davis-dec} holds true.
Moreover, by Remark \ref{sup-davis}, we can assume that both $f^1$ and $f^d$ are $L^2$-martingales.
From Lemma \ref{lem-atomic-decom}, we infer that
	there exist a sequence $(a^k)_{k\in\mathbb{Z}}$ of simple
	$(s,\infty)$-atoms and a sequence $(\mu_k)_{k\in\mathbb{Z}}$ of
	real numbers  such that
	$$f^1=\sum_{k\in\mathbb{Z}}\mu_ka^k\ \mbox{a.s.}$$
Moreover, using Remark \ref{sup-atomic}, we can assume that the sum is convergent in $L^2$ and
	$$U\left(f^1, b\right)=\lim_{n\to \infty}U\left(\sum_{k=-n }^n\mu_ka^k, b\right).$$
Similarly, we also have
 $$U\left(f^d, b\right)
 =\lim_{n\to \infty}U\left(\sum_{k=-n }^nd_k(f^d), b\right).$$
 To prove that the limit defines a bounded sublinear operator on $H_1,$
 it is sufficient to show its  uniform boundedness
 when both $f^1$ and $f^d$ are replaced by finite sums.
	In this case, combining the previous equalities, Lemma \ref{Uag},
	and \eqref{davis-dec}, we  obtain
	\begin{align*}
		\|U(f,b)\|_{L^q}&\leq \left\|U\left(f^1,b\right)\right\|_{L^q}
		+\left\|U\left(f^d,b\right)\right\|_{L^q}
		\\
		&\leq \sum_{k\in\mathbb Z} \left|\mu_k\right|
		\left\|U\left(a^k,b\right)\right\|_{L^q} +\sum_{n\in\mathbb Z_{+}}
		\left\|U\left(d_n(f^d),b\right)\right\|_{L^q}\\
		&\lesssim  \left\|f^1\right\|_{h_1} \|b\|_{\mathrm{BMO}}
		+ \sum_{n\in\mathbb Z_{+}}\left\|d_n(f^d)\right\|_{L^1} \|b\|_{\mathrm{BMO}} \\
		&\approx \left(\left\|f^1\right\|_{h_1}
		+\left\|f^d\right\|_{h_1^d}\right) \|b\|_{\mathrm{BMO}}
		\lesssim \|f\|_{H_1} \|b\|_{\mathrm{BMO}}.
	\end{align*}
	This finishes the proof of Lemma \ref{Ufg}.
\end{proof}

We now give two corollaries. The first one is another way to write
Theorem \ref{bdT}. The other is a direct consequence.
\begin{corollary}\label{th:equiv}
	Let $q\in[1,\infty)$, $T\in \mathcal{K}_q$, and $b\in \mathrm{BMO}$.
	Then $[T, b](f)$ is in $L^1$ if and only if $T(L(b, f))$ is in $L^1.$
	\end{corollary}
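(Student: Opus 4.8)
The plan is to obtain this equivalence as an immediate consequence of Theorem \ref{bdT}. First I would note that, by its defining formula $L(f,g)_n=\sum_{k=0}^n d_k(f)d_k(g)$, the bilinear operator $L$ is symmetric, so that $L(b,f)=L(f,b)$; it therefore suffices to treat $T(L(f,b))$. For a fixed pair $(f,b)\in H_1\times\mathrm{BMO}$, Theorem \ref{bdT} supplies a bounded subbilinear operator $R\colon H_1\times\mathrm{BMO}\to L^q$ together with the two-sided pointwise bound
$$|T(L(f,b))|-R(f,b)\leq|[T,b](f)|\leq R(f,b)+|T(L(f,b))|.$$

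The key step is the observation that $R(f,b)$ always lies in $L^1$. Indeed, since we work on a probability space and $q\in[1,\infty)$, Hölder's inequality gives $\|h\|_{L^1}\leq\|h\|_{L^q}$ for every $h\in L^q$, so that $\|R(f,b)\|_{L^1}\leq\|R(f,b)\|_{L^q}\lesssim\|f\|_{H_1}\|b\|_{\mathrm{BMO}}<\infty$. With this in hand the two implications are one line each: if $T(L(f,b))\in L^1$, then the right-hand inequality dominates $|[T,b](f)|$ pointwise by the $L^1$ function $R(f,b)+|T(L(f,b))|$, whence $[T,b](f)\in L^1$; conversely, if $[T,b](f)\in L^1$, then the left-hand inequality gives $|T(L(f,b))|\leq|[T,b](f)|+R(f,b)$, which is again in $L^1$, whence $T(L(f,b))\in L^1$. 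This closes the argument. (In the linear case one may shortcut this, since then $R(f,b)=[T,b](f)-T(L(f,b))$ is an exact identity, but the sublinear bound subsumes it.)

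I expect the only delicate point — rather than a genuine obstacle — to be checking a priori that the two objects $[T,b](f)$ and $T(L(f,b))$ are well-defined measurable functions when $f\in H_1$, so that membership in $L^1$ is a meaningful question. For $T(L(f,b))$ this follows from Proposition \ref{pro-3}, which gives $L(f,b)$ an $L^1$ terminal value, together with Definition \ref{def-K}(ii), which defines $T$ on $L^1$ with values in $L^{q,\infty}$; for $[T,b](f)$ it follows from the extension by density from $L^2$ to $H_1$ carried out in the proof of Theorem \ref{bdT} (concretely, via the boundedness of $U$ in Lemma \ref{Ufg} and Proposition \ref{pro-1}). Once this is granted, the proof reduces to the two displayed implications above.
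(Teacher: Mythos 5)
Your proposal is correct and follows exactly the route the paper intends: the paper gives no separate proof, simply declaring the corollary to be ``another way to write Theorem \ref{bdT}'', and your argument (symmetry of $L$, the two-sided pointwise bound, and $R(f,b)\in L^q\subset L^1$ on a probability space) is precisely the spelled-out version of that. The remarks on well-definedness via density from $L^2$ are a welcome extra but not a departure from the paper's approach.
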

\begin{corollary}\label{weak}
	Let $q\in[1,\infty)$, $T\in \mathcal{K}_q$, and $b\in \mathrm{BMO}$.
	Then there exists a positive constant $C$ such that, for any $f\in H_1$,
	$$\|[T,b](f)\|_{L^{q,\infty}} \leq C \|f\|_{H_1}. $$
\end{corollary}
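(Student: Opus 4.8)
The plan is to derive Corollary \ref{weak} directly from the subbilinear decomposition in Theorem \ref{bdT}, exactly as the two preceding corollaries are deduced. Recall that Theorem \ref{bdT} provides a bounded subbilinear operator $R:\ H_1\times \mathrm{BMO}\to L^q$ such that, for any $(f,b)\in H_1\times\mathrm{BMO}$,
$$|[T,b](f)|\leq R(f,b)+|T(L(f,b))|.$$
Thus it suffices to control the $L^{q,\infty}$ quasi-norm of each of the two terms on the right-hand side by $C\|f\|_{H_1}$ (with $b$ fixed, so $\|b\|_{\mathrm{BMO}}$ is absorbed into $C$). The first term is harmless: since $R$ is bounded into $L^q$ and $L^q\hookrightarrow L^{q,\infty}$, we get $\|R(f,b)\|_{L^{q,\infty}}\lesssim \|R(f,b)\|_{L^q}\lesssim \|f\|_{H_1}\|b\|_{\mathrm{BMO}}$. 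So the whole point is the second term $T(L(f,b))$, which a priori lies only in $L^1$ (by Proposition \ref{pro-3}, $L(f,b)\in L^1$ with $\|L(f,b)\|_{L^1}\lesssim\|f\|_{H_1}\|b\|_{\mathrm{BMO}}$), and $T$ need not map $L^1$ into $L^q$.

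First I would use Definition \ref{def-K}(ii): every $T\in\mathcal{K}_q$ is bounded from $L^1$ to $L^{q,\infty}$. Hence
$$\|T(L(f,b))\|_{L^{q,\infty}}\lesssim \|L(f,b)\|_{L^1}\lesssim \|f\|_{H_1}\,\|b\|_{\mathrm{BMO}},$$
where the last inequality is precisely Proposition \ref{pro-3}. Combining the two estimates and using the quasi-triangle inequality for $\|\cdot\|_{L^{q,\infty}}$ (which costs only a fixed multiplicative constant depending on $q$) gives
$$\|[T,b](f)\|_{L^{q,\infty}}\leq C_q\bigl(\|R(f,b)\|_{L^{q,\infty}}+\|T(L(f,b))\|_{L^{q,\infty}}\bigr)\leq C\|f\|_{H_1},$$
which is the claimed bound, with $C$ depending on $q$, $T$, and $\|b\|_{\mathrm{BMO}}$.

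There is essentially no serious obstacle here; the corollary is a packaging of already-established facts. The only minor points to be careful about are: (a) the pointwise inequality $|[T,b](f)|\leq R(f,b)+|T(L(f,b))|$ is a priori proved on the dense subspace $L^2\subset H_1$, so one first obtains the $L^{q,\infty}$ bound for $f\in L^2$ and then extends it to all $f\in H_1$ by density, using that $L^{q,\infty}$ convergence follows from $L^q$ convergence of $R(f,b)$ plus $L^{q,\infty}$ convergence of $T(L(f,b))$ (the latter from $L^1$-continuity of $f\mapsto L(f,b)$ and of $T$ from $L^1$ to $L^{q,\infty}$); and (b) $L^{q,\infty}$ is only a quasi-normed space, so one tracks the constant in the quasi-triangle inequality, but this is a fixed constant and does not affect the conclusion. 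I would therefore write the proof in three short lines: invoke Theorem \ref{bdT}, bound $R(f,b)$ via $L^q\hookrightarrow L^{q,\infty}$, and bound $T(L(f,b))$ via Definition \ref{def-K}(ii) together with Proposition \ref{pro-3}.
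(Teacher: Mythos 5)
Your proof is correct and follows exactly the route the paper takes: invoke Theorem \ref{bdT}, bound $T(L(f,b))$ using the weak $(1,q)$ estimate from Definition \ref{def-K}(ii) together with the $L^1$ bound on $L(f,b)$ from Proposition \ref{pro-3}, and absorb $R(f,b)$ via the embedding $L^q\hookrightarrow L^{q,\infty}$. The paper states this in one line; you have merely filled in the same details.
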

\begin{proof}
Just use Theorem \ref{bdT}, the fact that $L(f,b)$ is in $L^1$,
and the weak $L^q$ estimate for $T.$
This finishes the proof of Corollary \ref{weak}.
\end{proof}

Let us write the effect of Theorem \ref{bdT} on the maximal operator.
Recall that $\mathcal{K}_H$ is the set of all the  $T\in \mathcal{K}_1$
such that $T(f)\in L^1$ if and only if $f\in H_1$. According to Example
\ref{pro-TMS}, we find that the Doob maximal function $M\in \mathcal{K}_H$.
One can check that
$$[M,b](f)= \sup_{n\in \mathbb{Z}_+}\left|[\mathbb{E}_n,b](f)\right|.$$
By Theorem \ref{bdT}, we immediately obtain the following result.
\begin{corollary}\label{bdEn}
For any  $(f,b)\in H_1\times \mathrm{BMO}$,
\begin{align*}
\sup_{n\in \mathbb{Z}_+}|\mathbb{E}_n(L(f,b))|-R(f,b)
&\leq\sup_{n\in \mathbb{Z}_+}\left|[\mathbb{E}_n,b](f)\right|\\
&\leq \sup_{n\in \mathbb{Z}_+}|\mathbb{E}_n(L(f,b))|+R(f,b),
\end{align*}
	where $L$ is the same as in \eqref{decom} and
	$R:\ H_1\times \mathrm{BMO}\to L^1 $ is a bounded bilinear operator.
\end{corollary}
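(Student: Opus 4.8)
The plan is to recognize that this corollary is nothing more than the specialization of Theorem \ref{bdT} to the operator $T=M$, the Doob maximal operator, combined with the observation that $M\in\mathcal{K}_1$ (indeed $M\in\mathcal{K}_H$) from Example \ref{pro-TMS}. So the first step is to justify the pointwise identity $[M,b](f)=\sup_{n\in\mathbb{Z}_+}|[\mathbb{E}_n,b](f)|$, which has already been asserted in the text: unwinding the definition $[M,b](f)(x)=M(bf-b(x)f)(x)=\sup_{n}|\mathbb{E}_n(bf-b(x)f)(x)|=\sup_n|\mathbb{E}_n(bf)(x)-b(x)\mathbb{E}_n(f)(x)|=\sup_n|[\mathbb{E}_n,b](f)(x)|$, using that $\mathbb{E}_n$ is linear and that $b(x)$ is a scalar once $x$ is fixed. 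This needs only a line.

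Next I would apply Theorem \ref{bdT} with $T=M$ and $q=1$. Since $M\in\mathcal{K}_1$, Theorem \ref{bdT} supplies a bounded subbilinear operator $R\colon H_1\times\mathrm{BMO}\to L^1$ such that, for all $(f,b)\in H_1\times\mathrm{BMO}$,
\begin{equation*}
|M(L(f,b))|-R(f,b)\leq |[M,b](f)|\leq R(f,b)+|M(L(f,b))|.
\end{equation*}
Then I would substitute the two identities $[M,b](f)=\sup_{n\in\mathbb{Z}_+}|[\mathbb{E}_n,b](f)|$ and $M(L(f,b))=\sup_{n\in\mathbb{Z}_+}|\mathbb{E}_n(L(f,b))|$ (the latter being the definition of the Doob maximal operator applied to the martingale $L(f,b)$, which by Proposition \ref{pro-3} is a genuine $L^1$-martingale). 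Since $|[M,b](f)|=[M,b](f)\geq 0$ and likewise $|M(L(f,b))|=M(L(f,b))$, the absolute values are harmless, and the displayed chain of inequalities in the statement follows at once. The boundedness of $R$ from $H_1\times\mathrm{BMO}$ into $L^1$ is exactly what Theorem \ref{bdT} delivers, so nothing further is required.

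There is essentially no obstacle here: the whole content is packaged in Theorem \ref{bdT} and Example \ref{pro-TMS}, and the only thing to be careful about is the justification of the two maximal-function identities and the remark that $L(f,b)$ is an honest $L^1$-martingale (so that writing $M(L(f,b))=\sup_n|\mathbb{E}_n(L(f,b))|$ makes sense pointwise a.s.). If one wants to be scrupulous, one should also recall from Remark \ref{rem-s-ast} or Example \ref{pro-TMS} that $M$ verifies hypotheses (iii) and (iv) of Definition \ref{def-K}, which was already checked there, so the invocation of Theorem \ref{bdT} is legitimate. The proof is therefore a two-sentence deduction.
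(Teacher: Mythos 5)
Your proposal is correct and follows exactly the route the paper takes: the paper likewise observes that $M\in\mathcal{K}_H\subset\mathcal{K}_1$ by Example \ref{pro-TMS}, records the identity $[M,b](f)=\sup_{n\in\mathbb{Z}_+}|[\mathbb{E}_n,b](f)|$, and then reads off the conclusion from Theorem \ref{bdT} with $T=M$ and $q=1$, using $M(L(f,b))=\sup_{n\in\mathbb{Z}_+}|\mathbb{E}_n(L(f,b))|$. Your extra care about the two maximal-function identities and about $L(f,b)$ being a genuine $L^1$-martingale is a welcome (if implicit in the paper) bonus.
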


\subsection{Martingale Hardy space $H_1^b$ and the proof of Theorem \ref{cmT}}
In this subsection, we introduce the martingale Hardy space $H_1^b$ by borrowing
some ideas from Ky \cite{K2013} in harmonic analysis.
We also prove Theorem \ref{cmT} in this subsection.

\begin{definition}\label{def-h1b}
	Let $b\in \mathrm{BMO}$.
The {\it martingale Hardy space $H_1^b$} is defined to be
the set of all the martingales $f$ such that
	$$\|f\|_{H_1^b}:=\|f\|_{H_1}\|b\|_{\mathrm{BMO}}
	+\left\|\sup_{n\in\mathbb{\mathbb{Z}}_{+}}
	\left|\left[\mathbb{E}_n,b\right](f)\right|\right\|_{L^1}<\infty.$$
\end{definition}

We establish the following characterizations of the space $H_1^b$.
\begin{theorem}\label{4ec}
	Let $b\in \mathrm{BMO}$ be non-constant. Then the following assertions are equivalent:
	\begin{enumerate}[{\rm (i)}]
		\item $f\in H_1^b;$
		\item $L(f,b)\in H_1;$
		\item $[T, b](f)\in L^1$ with $T\in \mathcal{K}_H$.
	\end{enumerate}
	Furthermore, if   one of the above conclusions holds true, then
	\begin{align*}
	\|f\|_{H_1^b}
	&= \|f\|_{H_1}\|b\|_{\mathrm{BMO}}
	+\left\|\sup_{n\in \mathbb{Z}_+}|[\mathbb{E}_n,b](f)|\right\|_{L^1}\\
		&\approx  \|f\|_{H_1}\|b\|_{\mathrm{BMO}}+\|L(f,b)\|_{H_1}\\
	&\approx  \|f\|_{H_1}\|b\|_{\mathrm{BMO}}+\|[T,b](f)\|_{L^1},
	\end{align*}
where the positive equivalence constants are independent of both  $f$ and $b$.
\end{theorem}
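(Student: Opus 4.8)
\textbf{Proof proposal for Theorem \ref{4ec}.}
The plan is to establish the cycle of implications (i) $\Rightarrow$ (ii) $\Rightarrow$ (iii) $\Rightarrow$ (i), tracking the norm equivalences along the way, and then to note that (iii) for \emph{some} $T\in\mathcal{K}_H$ is equivalent to (iii) for the Doob maximal operator $M$, which by Example \ref{pro-TMS} belongs to $\mathcal{K}_H$. The backbone of the argument is the subbilinear decomposition of Theorem \ref{bdT}: for any $T\in\mathcal{K}_q$ there is a bounded $R\colon H_1\times\mathrm{BMO}\to L^q$ with
$$|T(L(f,b))|-R(f,b)\le |[T,b](f)|\le |T(L(f,b))|+R(f,b).$$
Applied to $T=M$ (for which $q=1$) this already gives, pointwise,
$$\bigl|\,\sup_{n}|\mathbb{E}_n(L(f,b))|-\sup_n|[\mathbb{E}_n,b](f)|\,\bigr|\le R(f,b),$$
since $[M,b](f)=\sup_n|[\mathbb{E}_n,b](f)|$ and $M(L(f,b))=\sup_n|\mathbb{E}_n(L(f,b))|$; this is precisely Corollary \ref{bdEn}. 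Because $R(f,b)\in L^1$ with $\|R(f,b)\|_{L^1}\lesssim\|f\|_{H_1}\|b\|_{\mathrm{BMO}}$, the two quantities $\|\sup_n|[\mathbb{E}_n,b](f)|\|_{L^1}$ and $\|M(L(f,b))\|_{L^1}$ differ by at most a multiple of $\|f\|_{H_1}\|b\|_{\mathrm{BMO}}$. By the Davis inequality (Lemma \ref{davis-ine}), $\|M(L(f,b))\|_{L^1}\approx\|L(f,b)\|_{H_1}$, so one of these is finite iff the other is, and adding $\|f\|_{H_1}\|b\|_{\mathrm{BMO}}$ to both sides yields
$$\|f\|_{H_1^b}\approx \|f\|_{H_1}\|b\|_{\mathrm{BMO}}+\|L(f,b)\|_{H_1}.$$
This simultaneously proves (i) $\Leftrightarrow$ (ii) and the first norm equivalence in the displayed chain.

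Next I would prove (ii) $\Rightarrow$ (iii) and (iii) $\Rightarrow$ (ii) for a general $T\in\mathcal{K}_H$, again reading off Theorem \ref{bdT}. If $L(f,b)\in H_1$, then (by Definition \ref{def-K}(i)) $T(L(f,b))\in L^q\subset L^1$ when $q=1$; combined with $R(f,b)\in L^1$ and the right-hand inequality in Theorem \ref{bdT}, we get $[T,b](f)\in L^1$ with $\|[T,b](f)\|_{L^1}\lesssim \|L(f,b)\|_{H_1}+\|f\|_{H_1}\|b\|_{\mathrm{BMO}}$. Conversely, if $[T,b](f)\in L^1$, the left-hand inequality gives $|T(L(f,b))|\le |[T,b](f)|+R(f,b)\in L^1$, so $T(L(f,b))\in L^1$; since $T\in\mathcal{K}_H$, this is equivalent to $L(f,b)\in H_1$, and the bound $\|L(f,b)\|_{H_1}\approx\|M(L(f,b))\|_{L^1}\lesssim\|[T,b](f)\|_{L^1}+\|f\|_{H_1}\|b\|_{\mathrm{BMO}}$ follows by combining with Corollary \ref{bdEn} (or directly by taking $T=M$). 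Here the hypothesis that $b$ is non-constant is used to ensure $L(f,b)$ is not identically trivial and, more importantly, guarantees $\|b\|_{\mathrm{BMO}}>0$ so that the term $\|f\|_{H_1}\|b\|_{\mathrm{BMO}}$ genuinely controls $\|f\|_{H_1}$; this is what lets the estimates absorb the ``error'' term $R(f,b)$ back into an $H_1^b$-type quantity. Assembling these with the first part gives the full chain of equivalences and all three displayed norm comparisons.

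The main obstacle I anticipate is not in the inequalities themselves — those are essentially a bookkeeping exercise with Theorem \ref{bdT}, Lemma \ref{davis-ine}, and Definition \ref{def-K} — but rather in the handling of \emph{a priori} finiteness and the density/continuity issues: $[T,b](f)$ and $T(L(f,b))$ are only defined a priori for $f\in L^2$ (as emphasized in the proof of Theorem \ref{bdT}), so one must be careful that, for a general $f\in H_1^b$ or with $L(f,b)\in H_1$, the relevant objects are the continuous extensions and that the pointwise inequalities of Theorem \ref{bdT} persist after passing to the limit. The clean way around this is to run all estimates first on $L^2$-martingales (dense in $H_1$ by Lemma \ref{density}), obtaining the norm equivalences with constants independent of $f$, and then extend by continuity; for the equivalence statements one checks that each of conditions (i), (ii), (iii) is stable under the relevant approximation. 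A secondary point to get right is that $\mathcal{K}_H$ is defined by the \emph{two-sided} implication ``$T(f)\in L^1 \Leftrightarrow f\in H_1$'', which is exactly what is needed to push (iii) back to (ii); for $T$ only in $\mathcal{K}_1$ one gets (ii) $\Rightarrow$ (iii) but not the converse, so the statement (iii) is deliberately phrased with $T\in\mathcal{K}_H$.
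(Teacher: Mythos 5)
Your proposal is correct and follows essentially the same route as the paper's own proof: (i) $\Leftrightarrow$ (ii) via Corollary \ref{bdEn} together with the Davis inequality (Lemma \ref{davis-ine}), and (ii) $\Leftrightarrow$ (iii) via Theorem \ref{bdT} combined with the defining property of $\mathcal{K}_H$, with the norm equivalences read off from the same estimates. The extra care you take about a priori finiteness on the dense subspace of $L^2$-martingales is consistent with how the paper sets up Theorem \ref{bdT} and does not change the argument.
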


\begin{proof}
	(i) $\Longleftrightarrow$ (ii).
	By Corollary \ref{bdEn}, we find that
	 $\sup_{n\in\mathbb{Z}_{+}}|[\mathbb{E}_n,b](f)|\in L^1$
	 if and only if
$$\sup_{n\in\mathbb{Z}_{+}}|\mathbb{E}_n(L(f,b))|\in L^1,$$
	 which, combined with Lemma \ref{davis-ine}, further implies that
	 $\sup_{n\in\mathbb{Z}_{+}}|[\mathbb{E}_n,b](f)|\in L^1$
	 if and only if $L(f,b)\in H_1$.
	Thus, we have
	\begin{align*}
	\|f\|_{H_1^b}&= \|f\|_{H_1}\|b\|_{\mathrm{BMO}}
	+\left\|\sup_{n\in \mathbb{Z}_+}|[\mathbb{E}_n,b](f)|\right\|_{L^1}	\\
	&\approx \|f\|_{H_1}\|b\|_{\mathrm{BMO}}+\|L(f,b)\|_{H_1},
	\end{align*}
which further implies the equivalence between items (i) and (ii).
	
(ii) $\Longleftrightarrow$ (iii). From Theorem \ref{bdT}, we deduce that
$[T,b](f)\in L^1$ if and only if $T(L(f,b))\in L^1$.
Since $T\in \mathcal{K}_H$, it follows that $T(f)\in L^1$
if and only if $f\in H_1$; see Definition \ref{def-K}.
Thus, $[T,b](f)\in L^1$ if and only if $L(f,b)\in H_1$. Moreover,
	\begin{align*}
	\|f\|_{H_1}\|b\|_{\mathrm{BMO}}+\|L(f,b)\|_{H_1}
	\approx\|f\|_{H_1}\|b\|_{\mathrm{BMO}}+\|[T,b](f)\|_{L^1},
	\end{align*}
which further implies the equivalence between items (ii) and (iii),
and hence completes the proof of Theorem \ref{4ec}.
\end{proof}

Now, we show Theorem \ref{cmT}.

\begin{proof}[Proof of Theorem \ref{cmT}]
	Let $f\in H_1^b$. Then, by Theorem \ref{4ec}, we find that $L(f,b)\in H_1$.	
	Since $T\in \mathcal{K}_q$, from Definition \ref{def-K},
	we infer that $T$ is bounded from $H_1$ to $L^q$.
	Now, using Theorem \ref{bdT}, we conclude that
	\begin{align*}
	\|[T,b](f)\|_{L^q}&\leq  		
	\|T(L(f,b))\|_{L^q}+\|R(f,b)\|_{L^q}\\
	&\lesssim\| L(f,b)\|_{H_1}+\|f\|_{H_1}\|b\|_{\mathrm{BMO}}
	\lesssim \|f\|_{H_1^b},
	\end{align*}
which completes the proof of Theorem \ref{cmT}.
\end{proof}

\begin{remark}\label{rem-commutator}
	Let $b\in \mathrm{BMO}$. We point out that $\mathcal{Y}:=H_1^b$ is the
	largest subspace of $H_1$ such that, for any $T\in \mathcal{K}_H$, the
	commutator $[T,b]$ is bounded from $\mathcal{Y}$ to $L^1$. Indeed,  if
	$\mathcal{Y}$ is a subspace of $H_1$ such that  $[T,b]$ is bounded from
	$\mathcal{Y}$ to $L^1$, then any element $f\in \mathcal{Y}$ justifies
	that $[T,b](f)\in L^1$. Hence, Theorem \ref{4ec} implies $f\in H_1^b$,
	which means $\mathcal{Y}\subset H_1^b$.
\end{remark}

\subsection{Examples of class $\mathcal{K}_q$}\label{exampleK}

As we already stated in Example \ref{pro-TMS}, both the Doob maximal function and the square function
belong to $\mathcal{K}_H \subset \mathcal{K}_1$ defined in Definition  \ref{def-K}.
In this subsection, we give
typical operators that are in $\mathcal{K}_q$.

\subsubsection{Martingale transforms}
 Martingale transforms were first introduced by Burkholder \cite{Bu1966}.
 Nowadays, martingale transforms have proven a very powerful tool not
only in probabilistic situation but also in harmonic analysis;
see, for instance, \cite{Ba2010} and its references. Recently, commutators of
martingale transforms for non-regular martingales were studied in \cite{Tr2013}.
In this subsection, we show that every martingale transform belongs to $\mathcal{K}_1$.
Consequently, we can apply both Corollary \ref{weak} and Theorem \ref{cmT} to
study the endpoint estimate  of  commutators of martingale transforms.

Let $\varepsilon:=(\varepsilon_k)_{k\in\mathbb{Z}_{+}}$ be an adapted measurable process
(that is, $\varepsilon_k$ is $\mathcal{F}_{k}$-measurable for each $k\in\mathbb{Z}_{+}$)
with $$\sup_{k\in\mathbb{Z}_{+}}\|\varepsilon_k\|_{L^{\infty}}\leq 1.$$
We let $\varepsilon_{-1}=0$ for convenience.
The {\it  martingale transform} of the martingale $f$ related to
$\varepsilon$ is the martingale $T_{\varepsilon}(f)$
defined by $d_k(T_\varepsilon f):= \varepsilon_{k-1}d_k f$ for any $k\in\mathbb Z_{+}$. Since
$S(T_\varepsilon f)\leq S(f),$ it follows immediately that $T_\varepsilon$
is bounded on $L^p$ for any $p\in(1,\infty)$
and also bounded on $H_1$. In all these cases it can be
identified with the mapping that is induced on terminal values: with now
$f$ a function in $L^p$ with $p\in(1,\infty)$ or in $H_1,$ the function
$T_\varepsilon f$ is defined by setting
$$T_{\varepsilon} (f):= \sum_{k\in\mathbb{Z}_{+}} \varepsilon_{k-1} d_k f.$$
We still speak of the martingale transform.

We also define the {\it maximal martingale transform} $M\circ T_\varepsilon$ by
$$M\circ T_\varepsilon f
:=\sup_{n\in\mathbb{Z}_{+}}\left|\sum_{k=0}^n
\varepsilon_{k-1}d_kf\right|.$$

Recall that the martingale transform shares the following properties
(see \cite{Bu1966, Lo1993}):
\begin{enumerate}[{\rm (i)}]
	\item for any $f\in L^2$, $\|T_{\varepsilon} (f)\|_{L^2}
	\leq \|M\circ T_\varepsilon(f)\|_{L^2} \leq C\|f\|_{L^2}$;
	\item for any $f\in H_1$, $\|T_{\varepsilon} (f)\|_{L^1}
	\leq \|M\circ T_\varepsilon(f)\|_{L^1}\leq C \|f\|_{H_1}$;
	\item for any $f\in L^1$, $\|T_{\varepsilon}(f)\|_{L^{1,\infty}}
	\leq  \|M\circ T_\varepsilon(f)\|_{L^{1,\infty}}
	\leq C\|f\|_{L^1}$,
\end{enumerate}
here $C$ is a positive constant independent of $f$.

\begin{proposition}\label{suppTa}
	Let $T_{\varepsilon}$ be the martingale transform  and  $M\circ T_{\varepsilon}$ be
	the maximal martingale transform as above. Then both $T_{\varepsilon}$ and
	$M\circ T_{\varepsilon}$ are all in $\mathcal{K}_1$.
\end{proposition}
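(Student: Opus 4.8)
The goal is to verify that both $T_\varepsilon$ and $M\circ T_\varepsilon$ satisfy the four conditions in Definition \ref{def-K} with $q=1$. Conditions (i) and (ii) are immediate from the listed properties (ii) and (iii) of the martingale transform, so the real work is in establishing the atom and jump estimates \eqref{AsT}, \eqref{comT}, \eqref{AsTd}, and \eqref{comTd}. The plan is to exploit the crucial structural feature of martingale transforms: they commute with conditioning on $\mathcal{F}_{n-1}$-measurable functions. Precisely, if $h$ is a martingale with $d_k(h)=0$ for all $k\in\{0,\dots,n-1\}$ and $\beta$ is $\mathcal{F}_{n-1}$-measurable, then $d_k(\beta h)=\beta\, d_k(h)$ for all $k$, so $T_\varepsilon(\beta h)=\beta\, T_\varepsilon(h)$; the same identity holds for $M\circ T_\varepsilon$ since the supremum pulls out the $\mathcal{F}_{n-1}$-measurable factor. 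This immediately gives $[T_\varepsilon,b_{n-1}](a)=0$ and $[M\circ T_\varepsilon,b_{n-1}](a)=0$ for any atom $a$ with respect to $n$, and likewise $[T_\varepsilon,b_{n-1}](g)=0=[M\circ T_\varepsilon,b_{n-1}](g)$ for a martingale jump $g$ with respect to $n$ (here using $\mathbb{E}_{n-1}(g)=0$, so $g$ itself has vanishing differences below level $n$). Hence \eqref{comT} and \eqref{comTd} hold trivially, exactly as in Example \ref{pro-TMS} for $M$ and $S$.

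It remains to prove \eqref{AsT} and \eqref{AsTd}. For \eqref{AsT}, take a simple $(s,\infty)$-atom $a$ with respect to $n\in\mathbb{Z}_+$ and $A\in\mathcal{F}_n$. The plan is to mimic the argument used for $M$ and $S$ in Example \ref{pro-TMS}: since $d_k(T_\varepsilon a)=\varepsilon_{k-1}d_k(a)$ vanishes for $k\le n$ and $\mathrm{supp}\,(d_k a)\subset A$ for all $k$, one checks $\mathrm{supp}\,(T_\varepsilon a)\subset A$ and $\mathrm{supp}\,(M\circ T_\varepsilon a)\subset A$; I would also record that $S(T_\varepsilon a)\le S(a)$, whence by Lemma \ref{TaC}(iii) (and Doob's inequality for the maximal version, via property (i) above) $\|T_\varepsilon a\|_{L^2}\le\|a\|_{L^2}\le[\mathbb{P}(A)]^{-1/2}$ and $\|M\circ T_\varepsilon a\|_{L^2}\lesssim[\mathbb{P}(A)]^{-1/2}$. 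Then, writing $T$ for either $T_\varepsilon$ or $M\circ T_\varepsilon$,
\begin{align*}
\|(b-b_{n-1})T(a)\|_{L^1}
&=\int_A \mathbb{E}_n\!\left(|b-b_{n-1}|\,|T(a)|\right)d\mathbb{P}
\le\int_A\left[\mathbb{E}_n\!\left(|b-b_{n-1}|^2\right)\right]^{1/2}\!\left[\mathbb{E}_n\!\left(|T(a)|^2\right)\right]^{1/2}d\mathbb{P}\\
&\le\|b\|_{\mathrm{BMO}}\,\big\|[\mathbb{E}_n(|T(a)|^2)]^{1/2}\big\|_{L^2}\|\mathbf{1}_A\|_{L^2}
=\|b\|_{\mathrm{BMO}}\,\|T(a)\|_{L^2}\,[\mathbb{P}(A)]^{1/2}\lesssim\|b\|_{\mathrm{BMO}},
\end{align*}
using the John--Nirenberg-type control $\|\mathbb{E}_n(|b-b_{n-1}|^2)\|_{L^\infty}\le\|b\|_{\mathrm{BMO}}^2$ and the fact that $|T(a)|$, being supported in $A\in\mathcal{F}_n$, satisfies $|T(a)|=\mathbf{1}_A\,|T(a)|$. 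This is \eqref{AsT}.

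For \eqref{AsTd}, let $g$ be a martingale jump with respect to $n$, so $g$ is $\mathcal{F}_n$-measurable with $\mathbb{E}_{n-1}(g)=0$; thus $T_\varepsilon(g)=\varepsilon_{n-1}g$ and $M\circ T_\varepsilon(g)=|\varepsilon_{n-1}||g|\le|g|$, both $\mathcal{F}_n$-measurable and pointwise bounded by $|g|$. Then, with $T$ either operator,
\begin{align*}
\|(b-b_{n-1})T(g)\|_{L^1}
\le\int_\Omega|g|\,|b-b_{n-1}|\,d\mathbb{P}
=\int_\Omega|g|\,\mathbb{E}_n\!\left(|b-b_{n-1}|\right)d\mathbb{P}
\le\|b\|_{\mathrm{BMO}}\,\|g\|_{L^1},
\end{align*}
which is \eqref{AsTd}. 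Assembling these verifications establishes $T_\varepsilon\in\mathcal{K}_1$ and $M\circ T_\varepsilon\in\mathcal{K}_1$. The only genuinely delicate point is the commutation identity $T(\beta h)=\beta T(h)$ for $\mathcal{F}_{n-1}$-measurable $\beta$ and martingales $h$ with differences vanishing below level $n$ — once that is stated cleanly, the commutator conditions collapse to zero and everything else is a routine repetition of the Cauchy--Schwarz/BMO argument already used for $M$ and $S$; I do not anticipate a serious obstacle, though one must be a little careful that the maximal operator $M\circ T_\varepsilon$ is genuinely sublinear and that the support and $L^2$-bounds survive the extra supremum.
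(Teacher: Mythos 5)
Your proposal is correct and follows essentially the same route as the paper: support containment in $A$ plus the $L^2$ bound and the conditional Cauchy--Schwarz/BMO argument recycled from Example \ref{pro-TMS} for \eqref{AsT}, the pointwise bound $|T_\varepsilon(g)|\le|g|$ and $M\circ T_\varepsilon(g)\le|g|$ for \eqref{AsTd}, and the commutation identity $T_\varepsilon(b_{n-1}h)=b_{n-1}T_\varepsilon(h)$ (for $h$ with vanishing differences below level $n$) to make \eqref{comT} and \eqref{comTd} collapse. The paper merely states these steps more tersely; your write-up fills in the same details it leaves to the reader.
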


\begin{proof}
Let us show that  \eqref{AsT} and \eqref{AsTd} hold true for both $T_{\varepsilon}$
and $M\circ T_{\varepsilon}$. It is not hard to check that the supports of both $T_{\varepsilon}(a)$
and $M\circ T_{\varepsilon}(a)$ are contained in $A$ whenever  $a$ is a simple
$(s,\infty)$-atom with respect to some $n\in\mathbb{Z}_{+}$ and $A\in \mathcal{F}_n$.
Besides, if $g$ is a martingale jump with respect to some $n\in\mathbb{Z}_{+}$, then we have
$$|T_{\varepsilon}(g)|\leq |g|\	\mbox{ and }\
M\circ T_{\varepsilon} (g)\leq |g|.$$
Note that both $T_{\varepsilon}$ and $M\circ T_{\varepsilon}$ are bounded on $L^2$.
Applying the same argument as that used in the proof of Example \ref{pro-TMS},
we obtain both \eqref{AsT} and \eqref{AsTd} hold true for both $T_{\varepsilon}$
and $M\circ T_{\varepsilon}$. Finally, the two last properties
\eqref{comT} and \eqref{comTd} for both $T_{\varepsilon}$ and $M\circ T_{\varepsilon}$ are
consequences of commutation properties: with the previous symbols,
$T_\varepsilon ( b_{n-1}a)=b_{n-1}T_\varepsilon (a) $
and $T_\varepsilon ( b_{n-1}g)=b_{n-1}T_\varepsilon (g);$
the same holds true for $M\circ  T_{\varepsilon}.$
This finishes the proof of Proposition \ref{suppTa}.
\end{proof}

The following result is a direct consequence of both Corollary \ref{weak} and Theorem \ref{cmT}.
\begin{corollary} 	
	Let $T_{\varepsilon}$ be the martingale transform as above.
	Let	$b\in \mathrm{BMO}$ be a non-constant function.  Then
	\begin{enumerate}[{\rm (i)}]
		\item the commutator $[T_{\varepsilon},b]$ is bounded from $H_1$ to $L^{1,\infty};$
		\item the commutator $[T_{\varepsilon},b]$ is bounded from $H_1^b$ to $L^{1}$.
	\end{enumerate}
\end{corollary}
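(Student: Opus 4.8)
The plan is to deduce the statement directly from the two general endpoint results already proved, applied with $q=1$.

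First I would invoke Proposition \ref{suppTa}, which asserts that the martingale transform $T_\varepsilon$ belongs to the class $\mathcal{K}_1$. This is precisely the hypothesis required by both Corollary \ref{weak} and Theorem \ref{cmT} (with the choice $q=1$), so once it is in hand the corollary is immediate. For item (i), I would apply Corollary \ref{weak} with $T=T_\varepsilon$ and $q=1$: this provides a positive constant $C$ such that $\|[T_\varepsilon,b](f)\|_{L^{1,\infty}}\le C\|f\|_{H_1}$ for every $f\in H_1$, which is exactly the claimed weak-type boundedness from $H_1$ to $L^{1,\infty}$. For item (ii), I would apply Theorem \ref{cmT} with $T=T_\varepsilon$ and $q=1$, obtaining $\|[T_\varepsilon,b](f)\|_{L^1}\le C\|f\|_{H_1^b}$ for every $f\in H_1^b$, that is, boundedness of $[T_\varepsilon,b]$ from $H_1^b$ to $L^1$. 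The assumption that $b$ be non-constant is used only to guarantee that $H_1^b$ is a genuine (possibly proper) subspace of $H_1$ and that the conclusion is not vacuous; it plays no role in the estimates themselves.

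Since each step is a direct citation, there is no real obstacle here: the substantive content has already been carried out in Proposition \ref{suppTa}, where membership in $\mathcal{K}_1$ follows from $S(T_\varepsilon f)\le S(f)$, from $|T_\varepsilon g|\le |g|$ and $M\circ T_\varepsilon g\le |g|$ for martingale jumps $g$, and from the commutation identity $T_\varepsilon(b_{n-1}h)=b_{n-1}T_\varepsilon(h)$ for martingales $h$ whose differences vanish up to level $n-1$, which together yield the atom and jump estimates \eqref{AsT}--\eqref{comTd}. Thus the only thing worth spelling out in the write-up is the precise invocation of $q=1$ in Corollary \ref{weak} and Theorem \ref{cmT}.
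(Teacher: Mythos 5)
Your proposal is correct and follows exactly the paper's route: the paper also obtains this corollary as a direct consequence of Proposition \ref{suppTa} (membership of $T_{\varepsilon}$ in $\mathcal{K}_1$) combined with Corollary \ref{weak} for item (i) and Theorem \ref{cmT} for item (ii), both with $q=1$. Nothing further is needed.
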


Before leaving this subsection, we will prove an
analogue of the examples of functions in $H_1^b$
given in the classical case. We need a supplementary definition.
\begin{definition}
Let $b\in \mathrm{BMO}$ be a non-constant function.
The atom $a$ with respect to $n\in\mathbb Z_{+}$ is
called a {\it $(b, \infty)$-atom} if it satisfies the additional property
$$\E_n(ba)=0.$$
Moreover, we denote $\mathcal H_1^b$ the space of all the functions $f$ in $H_1$ such that
$$f=\sum_{j\in\mathbb Z} \mu_j a^j\ \mbox{a.s. and } \sum_{j\in\mathbb Z}\left|\mu_j\right|<\infty,$$
where $(a^j)_{j\in\mathbb Z}$ are $(b, \infty)$-atoms.
    \end{definition}
\begin{proposition}\label{perez}
	For	$b\in \mathrm{BMO}$ a non-constant function, the space $\mathcal H_1^b$ is contained in $H_1^b.$
    \end{proposition}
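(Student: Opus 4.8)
The plan is to show that every $(b,\infty)$-atom $a$ (say, with respect to $n\in\mathbb Z_+$ and $A\in\mathcal F_n$) lies in $H_1^b$ with a norm bound independent of $a$, and then to pass to general $f\in\mathcal H_1^b$ by summing over the atoms, using that $H_1^b$ is (at least as far as the relevant quasi-norm goes) subadditive. By Theorem \ref{4ec}, membership of $a$ in $H_1^b$ is equivalent to $L(a,b)\in H_1$, together with the quantitative estimate $\|a\|_{H_1^b}\approx \|a\|_{H_1}\|b\|_{\mathrm{BMO}}+\|L(a,b)\|_{H_1}$. Since $a$ is an $(s,\infty)$-atom, we already know $\|a\|_{H_1}\lesssim 1$ by Lemma \ref{TaC}, so the whole matter reduces to estimating $\|L(a,b)\|_{H_1}$, where $L(a,b)_m=\sum_{k=0}^m d_k(a)\,d_k(b)$.

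First I would use the vanishing condition $\mathbb E_n(ba)=0$. Because $a$ is an atom with respect to $n$, we have $d_k(a)=0$ for $k\le n$, hence $a_{n-1}=a_n=0$ and $\mathbb E_n(ba)=0$ forces the key identity that the "constant part'' of $b$ on $A$ contributes nothing: writing $b-b_{n-1}$ in place of $b$ is harmless since $d_k(b-b_{n-1})=d_k(b)$ for $k\ge n+1$. The real point is that $L(a,b)$ can be rewritten, via the product formula $f\cdot g=\Pi_1(f,g)+\Pi_2(f,g)+L(f,g)$ from \eqref{decom}, as $a\cdot b-\Pi_1(a,b)-\Pi_2(a,b)$. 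The condition $\mathbb E_n(ba)=0$ says precisely that the semi-martingale $a\cdot b=(a_mb_m)_{m}$ has terminal value $ab$ with $\mathbb E_n(ab)=0$, i.e. $ab$ is (after the obvious truncation) a multiple of an $H_1$-atom-like object: indeed $ab$ is supported in $A\in\mathcal F_n$, has $\mathbb E_n(ab)=0$, and $\|ab\|_{L^1}\le\|a\|_{L^2}\|b\mathbf 1_A\|_{L^2}\lesssim \mathbb P(A)^{-1/2}\cdot\|b\|_{\mathrm{BMO}}\mathbb P(A)^{1/2}=\|b\|_{\mathrm{BMO}}$ by John--Nirenberg. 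So $a\cdot b$, viewed as a martingale via $(\mathbb E_m(ab))_m$, is controlled in $H_1$ (one can bound its maximal function, or use that an $L^1$ function supported in an atom with vanishing conditional expectation, times a bounded quantity, is in $H_1$ after an atomic decomposition of $ab$ at level $n$). Then $\|L(a,b)\|_{H_1}\le\|a\cdot b\|_{H_1}+\|\Pi_1(a,b)\|_{H_1}+\|\Pi_2(a,b)\|_{H_1}$, and the last two are handled exactly by Proposition \ref{pro-1} (which gives $\Pi_1(a,b)\in h_1\subset H_1$ with norm $\lesssim\|b\|_{\mathrm{BMO}}$) and Lemma \ref{g-gn} together with $\Pi_2(a,b_{n-1})=ab_{n-1}$ and the algebra identity $\Pi_2(a,b)=\Pi_2(a,b-b_{n-1})+ab_{n-1}$ — but $ab_{n-1}$ must be reabsorbed, so it is cleaner to keep everything in terms of $b-b_{n-1}$ from the start and note $L(a,b)=L(a,b-b_{n-1})$, $\Pi_i(a,b)=\Pi_i(a,b-b_{n-1})+$ (something that cancels against $a\cdot b_{n-1}$). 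Carrying the bookkeeping through, one gets $\|L(a,b)\|_{H_1}\lesssim\|b\|_{\mathrm{BMO}}$ uniformly in $a$, hence $\|a\|_{H_1^b}\lesssim\|b\|_{\mathrm{BMO}}$.

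For the final step, given $f=\sum_{j}\mu_j a^j$ a.s. with $\sum_j|\mu_j|<\infty$ and each $a^j$ a $(b,\infty)$-atom, I would argue that $L$ is linear in its first argument and continuous $H_1\times\mathrm{BMO}\to\mathcal BV\subset L^1$, and that the map $h\mapsto \sup_n|\mathbb E_n(L(h,b))|$ controls the $H_1^b$-seminorm via Corollary \ref{bdEn}/Theorem \ref{4ec}; then $\|f\|_{H_1^b}\lesssim\|f\|_{H_1}\|b\|_{\mathrm{BMO}}+\sum_j|\mu_j|\,\|L(a^j,b)\|_{H_1}\lesssim(1+\sum_j|\mu_j|)\|b\|_{\mathrm{BMO}}<\infty$, using $\|f\|_{H_1}\le\sum_j|\mu_j|$. (One should be slightly careful that the series $\sum_j\mu_j L(a^j,b)$ converges in $H_1$ to $L(f,b)$; this follows from completeness of $H_1$ and the uniform atomic bound, together with $L^1$-convergence of $\sum\mu_j a^j\to f$ and $L^1$-continuity of $L$ to identify the limit.) I expect the main obstacle to be the careful handling of the non-vanishing term $a\,b_{n-1}$ when one replaces $b$ by $b-b_{n-1}$ inside the paraproducts — i.e. verifying that $a\cdot b-\Pi_1(a,b)-\Pi_2(a,b)$ really equals $L(a,b)$ and that each reassembled piece is genuinely in $H_1$ with the right norm — since $b_{n-1}$ is only $\mathcal F_{n-1}$-measurable (not constant) when $\mathcal F_{n-1}$ is non-atomic, so the classical shortcut "$b_{n-1}$ is constant on the atom'' is unavailable and one must lean on the $(s,\infty)$-atom estimates of Lemma \ref{TaC} and the John--Nirenberg control of $\|b\mathbf 1_A\|_{L^2}$ instead.
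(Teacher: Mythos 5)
Your overall strategy (reduce to a single $(b,\infty)$-atom $a$, replace $b$ by $b-b_{n-1}$, split $L(a,b)=a(b-b_{n-1})-\Pi_1(a,b-b_{n-1})-\Pi_2(a,b-b_{n-1})$ at the level of terminal values, and invoke Proposition \ref{pro-1} and Lemma \ref{g-gn} for the paraproducts) is a legitimate variant of the paper's argument, which instead routes through the vector-valued martingale transform $\Gamma f=\sum_n (d_nf)e_n$ and Corollary \ref{th:equiv} to reduce everything to showing that $(b-b_{n-1})a$ generates a uniformly bounded $H_1$-martingale. Both routes converge on the same critical estimate, and your version has the advantage of avoiding the vector-valued machinery that the paper only ``takes for granted.'' The summation step at the end (Cauchy in $H_1$ by the uniform atomic bound, identification of the limit via $L^1$-continuity of $L$) matches the paper's and is fine.

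The genuine gap is in the one estimate that carries all the weight: you assert that the martingale $(\mathbb{E}_m(ab))_m$ is controlled in $H_1$ because $ab$ is supported in $A\in\mathcal F_n$, satisfies $\mathbb E_n(ab)=0$, and has $\|ab\|_{L^1}\lesssim\|b\|_{\mathrm{BMO}}$. This is false as a general principle: an $L^1$ function supported in a set of $\mathcal F_n$ with vanishing $n$-th conditional expectation need not belong to $H_1$ (take $A=\Omega$, $n=0$: the claim would say every mean-zero $L^1$ function is in $H_1$). Support plus cancellation plus an $L^1$ bound is not an atomic condition; one needs a size condition in $L^p$ for some $p>1$ (or in $L^\infty$, or on $s$). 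This is exactly where the paper works harder: in Lemma \ref{lem-h1b} it uses H\"older together with the John--Nirenberg bound $\mathbb E(\mathbf 1_A|b-b_{n-1}|^{2p})\lesssim \|b\|_{\mathrm{BMO}}^{2p}\,\mathbb P(A)$ to get $\|(b-b_{n-1})a\|_{L^p}\lesssim[\mathbb P(A)]^{1/p-1}$ for some $p\in(1,2)$, and only then concludes via Doob's inequality that $M((b-b_{n-1})a)$ is supported in $A$ and uniformly bounded in $L^1$, hence $(b-b_{n-1})a\in H_1$ uniformly. Your parenthetical escape routes (``bound its maximal function,'' ``atomic decomposition of $ab$ at level $n$'') do not supply this missing $L^p$, $p>1$, estimate. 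A secondary but related error: the inequality $\|b\mathbf 1_A\|_{L^2}\lesssim\|b\|_{\mathrm{BMO}}\,[\mathbb P(A)]^{1/2}$ you use for the $L^1$ bound is itself false --- only $\|(b-b_{n-1})\mathbf 1_A\|_{L^2}$ is so controlled --- and the $ab_{n-1}$ bookkeeping you flag as ``the main obstacle'' is not actually resolved in your write-up; it disappears cleanly only if you note $L(a,b)=L(a,b-b_{n-1})$ (valid since $d_k(a)=0$ for $k\le n$ and $d_k(b-b_{n-1})=d_k(b)$ for $k\ge n+1$) and then never reintroduce $b$ itself. With the $L^p$ estimate supplied as in the paper's Lemma \ref{lem-h1b}, your decomposition closes and gives a correct proof.
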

\begin{proof}
	We will only sketch the proof because it asks
	for variants of the previous ones. The first ingredient
	is the fact that we can as well consider martingale transforms
	with values in a Hilbert space (see, for instance, \cite{He1974}).
	Consider in particular the martingale transform $\Gamma, $ with
	values in $\ell^2,$ given by $\Gamma f=\sum_{n\in\mathbb N} (d_nf)e_n,$ where $e_n$
	is the canonical basis of $\ell^2.  $
Since $\|\Gamma f\|_{\ell^2}=S(f),$ it follows that
$H_1$-martingales are characterized among
$L^1$-martingales  by the fact that $\Gamma f$ is in $L^1(\Omega, \ell^2).$
We will take for granted that the previous theorems are valid for vector valued theorems,
so that it is sufficient to prove that $\Gamma(L(f, b))$ belongs to $ L^1(\Omega, \ell^2)$
whenever $f$ belongs to $\mathcal H_1^b$. We start by proving it for a $(b, \infty)$-atom.
\begin{lemma}\label{lem-h1b}
Let	$b\in \mathrm{BMO}$ be a non-constant function and $a$
be a $(b, \infty)$-atom. Then $\Gamma(L(a, b))$ is in
$L^1(\Omega, \ell^2).$ Moreover, its $L^1(\Omega, \ell^2)$
norm is bounded by a uniform constant.
    \end{lemma}
\begin{proof}
 Assume that the $(b, \infty)$-atom $a$ is related to some
 $n\in\mathbb Z_{+}$. We deduce from Corollary \ref{th:equiv}
 that it is sufficient to show that $[\Gamma,b](a)$
 is in $L^1(\Omega, \ell^2)$ with uniform norm. It is
 even sufficient to show the same for $[\Gamma,b-b_{n-1}](a)$
  because of Property \eqref{comT}, or for $\Gamma((b-b_{n-1})a)$
  because of Property \eqref{AsT}. But this is a consequence
  of the fact that $(b-b_{n-1})a$ is in $H_1$
  with a uniformly bounded norm. Indeed, since
  it is a $(b, \infty)$-atom,
  $$\E_n((b-b_{n-1})a)=\E_n(ba)-b_{n-1}\E_n( a)=0.$$ Moreover,
  by the H\"older inequality, we obtain
 $$\E \left(\left|(b-b_{n-1})a\right|^p\right)
 \leq \left[\E \left({\mathbf{1}}_A |b-b_{n-1}|\right)^{2p}
 \right]^{\frac 12}
 \left[\E \left(|a|^{2p}\right)
 \right]^{\frac 12}\lesssim \left[P(A)\right]^{1-p},$$
where we used the fact that
$$\E \left({\mathbf{1}}_A |b-b_{n-1}|^{2p}\right)
\leq \E \left({\mathbf{1}}_A\right)\left[\sup_{n\in\mathbb N}\E_n(|b-b_{n-1}|)\right]^{2p}.$$
So $M((b-b_{n-1})a)$ is supported in $A$ and
hence is in $L^1$ with uniformly bounded norm.
This implies that $(b-b_{n-1})a$ is in $H_1$
with uniformly bounded norm, which is what we wanted to prove.
This finishes the proof of Lemma \ref{lem-h1b}.
 \end{proof}
 Let us come back to the proof of Proposition \ref{perez}.
 Let $f=\sum_{j\in\mathbb Z} \mu_j a^j$ a.s.
 with $\sum_{j\in\mathbb Z}|\mu_j|<\infty,$
where $(a^j)_{j\in\mathbb Z}$ are $(b, \infty)$-atoms. The sequence
$(\sum_{j=1}^N \mu_j\Gamma(L( a^j, b)))_{N\in\mathbb N}$ is a Cauchy sequence in $H_1,$
which converges to its limit in $L^{1, \infty}(\Omega, \ell^2), $
that is, $\Gamma(L(f, b)).$ So $L(f, b)$ is in $H_1$ and hence $f$
is in $H_1^{b}.$ This finishes the proof of Proposition \ref{perez}.
\end{proof}

\subsubsection{ Fractional integrals in the martingale setting}
In the martingale setting, both the fractional integral operator and its commutator
were first investigated in \cite{CO1985}. We also refer the reader to Nakai et al.
\cite{ANS2020} for some recent developments on this topic. Nowadays, it is
well known that the martingale fractional integral operator can be viewed
as a discrete model of the Riesz potential in harmonic analysis. In this subsection,
we show that  the fractional integral operator $I_{\alpha}$ belongs to $\mathcal{K}_{\frac{1}{1-\alpha}}$
provided $\alpha\in(0,1)$. Based on this, we can investigate the
endpoint estimate of commutators of  the fractional integral operator $I_{\alpha}$.
Here, for convenience, we only consider $(\Omega,\mathcal F,\mathbb P;
	(\mathcal{F}_n)_{n\in \mathbb{Z}_+})$  given as below.
	
	\begin{example}\label{Nrf}
		Let the number sequence $(p_k)_{k\in \mathbb{Z}_{+}}\subset \mathbb{N}$
		be such that $p_k\geq2$ for each $k\in \mathbb{Z}_{+}$. 	
		For any $n\in\mathbb{Z}_{+}$, let $P_n:=\prod_{k=0}^n p_k$,
		\begin{equation}\label{FnP}
		\mathcal{F}_n:=\sigma\{[kP_n^{-1},(k+1)P_n^{-1}):\  k\in\{0,\ldots, P_n-1\}\},
		\end{equation}
		and
		$$\mathcal{F}:=\sigma\left(\bigcup_{n\in\mathbb{Z}_+}\mathcal{F}_n\right).$$
		Let us equip the measurable space $([0,1),\mathcal{F})$ with the Lebesgue measure $\nu$.
		Then it is not hard to show that the filtration $(\mathcal{F}_n)_{n\in \mathbb{Z}_+}$
		is not regular whenever  $\sup_{k\in \mathbb{Z}_+}p_k=\infty$.
	\end{example}
	
	For each $n\in \mathbb{Z}_+$, denote by $A(\mathcal{F}_n)$ the set of all atoms in $\mathcal{F}_n$.
	For any $\alpha\in(0,\infty)$, any $n\in\mathbb{Z}_{+}$, and any martingale $f\in L^1$,
	the {\it fractional integrals}, $I_{\alpha,n}$ and $I_{\alpha}$, of $f$ are defined, respectively, by setting
\begin{align}\label{fd}
I_{\alpha,n}(f):=\sum_{k=0}^n \beta_{k}^{\alpha} d_kf\ \mbox{ and }\
	I_{\alpha}(f):=\sum_{k\in \mathbb{Z}_+} \beta_{k}^{\alpha} d_kf,
\end{align}
	where, for any $ k\in \mathbb{Z}_+$,
	$$\beta_k: =\sum_{B\in A(\mathcal{F}_k)} \mathbb{P}(B)\mathbf{1}_B$$
	and  $\beta_{-1}:=\beta_0$.
	We refer to \cite{RW2016,SY2022} for more details on the fractional integral $I_{\alpha}.$
	It is clear that, for any $ k\in \mathbb{Z}_+$, $\beta_k=\frac1{P_k}$. Hence,
	for any $n\in\mathbb{Z}_{+}$,
	$$I_{\alpha,n}(f):=\sum_{k=0}^n P_k^{-\alpha} d_kf\ \mbox{ and }\
	I_{\alpha}(f)=\sum_{k\in \mathbb{Z}_+} P_k^{-\alpha} d_kf.$$

	Note that Example \ref{Nrf} is a special case of the filtration
	studied in \cite{RW2016,SY2022}. Thus, from \cite[Theorem 2.11]{RW2016} and
	\cite[Lemma 1.2 and Theorem 0.1]{SY2022},
we infer that, for any $\alpha\in(0,1)$,
there exists a positive constant $C_{(\alpha)}$, depending only on $\alpha$, such that
\begin{enumerate}[{\rm (i)}]
	\item for any $f\in H_1(0,1)$, $\|I_{\alpha}(f)\|_{L^{\frac{1}{1-\alpha}}(0,1)}
	\leq C_{(\alpha)}\|f\|_{H_1(0,1)}$;
	\item for any $f\in L^1(0,1)$,
	$\|I_{\alpha}(f)\|_{L^{\frac{1}{1-\alpha},\infty}(0,1)} \leq C_{(\alpha)}\|f\|_{L^1(0,1)}$;
	\item for any $f\in L^p(0,1)$,
	$\|I_{\alpha}(f)\|_{L^{q}(0,1)} \leq C_{(\alpha)}\|f\|_{L^p(0,1)}$,
	where $p\in (1,\infty)$ and $1/p-1/q=\alpha$.
\end{enumerate}

\begin{proposition}\label{pro-fi}
	Let $\alpha\in(0,1)$. Then the fractional integral operator $I_{\alpha}$
	belongs to $\mathcal{K}_{\frac{1}{1-\alpha}}$.
\end{proposition}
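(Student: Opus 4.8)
The plan is to verify, one by one, the four conditions of Definition \ref{def-K} for $T=I_\alpha$ with the exponent $q=\frac{1}{1-\alpha}$, following the scheme already used for martingale transforms in Proposition \ref{suppTa}. First I would record the structural facts. Since every atom $B$ has $\mathbb P(B)\le 1$, all the coefficients $\beta_{k-1}^{\alpha}$ lie in $[0,1]$, so $I_\alpha$ is a martingale transform in the sense of Section \ref{exampleK}; in particular $|d_k(I_\alpha f)|\le |d_k f|$, hence $S(I_\alpha f)\le S(f)$, so $I_\alpha$ is bounded on $L^2$ and is a legitimate continuous linear operator there. Conditions (i) and (ii) of Definition \ref{def-K} are then precisely the $H_1\to L^{1/(1-\alpha)}$ and $L^1\to L^{1/(1-\alpha),\infty}$ bounds for $I_\alpha$ quoted from \cite{RW2016} just above.

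For the commutator conditions \eqref{comT} and \eqref{comTd} I would argue, exactly as for $T_\varepsilon$ in Proposition \ref{suppTa}, that if $h$ is a martingale with $d_k h=0$ for all $k\le n-1$ (which holds both for a simple $(s,\infty)$-atom and for a martingale jump with respect to $n$), then $d_k(b_{n-1}h)=b_{n-1}d_k h$ for every $k$, so $I_\alpha(b_{n-1}h)=b_{n-1}I_\alpha(h)$ and hence $[I_\alpha,b_{n-1}](h)=0$; this disposes of \eqref{comT} and \eqref{comTd} with constant $0$. For the atomic condition \eqref{AsT}, let $a$ be a simple $(s,\infty)$-atom with respect to $n$ and $A\in\mathcal F_n$; then $\mathrm{supp}\,I_\alpha(a)\subset A$, and I would fix an exponent $p\in(1,\min\{2,1/\alpha\})$ with conjugate $p'$, apply the H\"older inequality on $A$ with the pair $(p',r)$ determined by $\frac1p-\frac1r=\alpha$, use the $L^p\to L^r$ boundedness of $I_\alpha$ from \cite{RW2016} together with Lemma \ref{TaC}(iv) to get $\|I_\alpha(a)\|_{L^r}\lesssim \|a\|_{L^p}\le \mathbb P(A)^{1/p-1}$, and use the John--Nirenberg inequality to get $\|(b-b_{n-1})\mathbf 1_A\|_{L^{p'}}=[\int_A \mathbb E_n(|b-b_{n-1}|^{p'})\,d\mathbb P]^{1/p'}\lesssim \|b\|_{\mathrm{BMO}}\mathbb P(A)^{1/p'}$. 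Multiplying these two bounds, the powers $\mathbb P(A)^{1/p'}$ and $\mathbb P(A)^{1/p-1}$ cancel and \eqref{AsT} follows.

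I expect the bulk of the work, and the main obstacle, to be the martingale-jump condition \eqref{AsTd}. For a martingale jump $g$ with respect to $n$ one has $d_k g=0$ except at $k=n$, so $I_\alpha(g)=\beta_{n-1}^{\alpha}g$; conditioning on $\mathcal F_n$ and using $\mathbb E_n(|b-b_{n-1}|^q)\lesssim \|b\|_{\mathrm{BMO}}^q$ (John--Nirenberg) reduces the matter to the inequality $\mathbb E\big(\beta_{n-1}^{\alpha q}|g|^q\big)\lesssim \|g\|_{L^1}^q$, i.e., since $\alpha q=q-1$, to $\mathbb E\big(\beta_{n-1}^{q-1}|g|^q\big)\lesssim \|g\|_{L^1}^q$. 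This is the step that genuinely uses the explicit filtration of Example \ref{Nrf} --- that $\beta_{n-1}=P_{n-1}^{-1}$ on each atom of $\mathcal F_{n-1}$ and that $g$, being $\mathcal F_n$-measurable, is constant on the $P_n^{-1}$-sized atoms of $\mathcal F_n$ --- rather than any general atomic decomposition of $H_1$, which is unavailable when the filtration is non-regular. I would organize the estimate atom-by-atom over $\mathcal F_{n-1}$, bounding the $L^q$-mass of $g$ on each parent atom in terms of its $L^1$-mass and then summing, the exponent bookkeeping hinging once more on the identity $q(1-\alpha)=1$. Once \eqref{AsTd} is established, all of Definition \ref{def-K} holds for $I_\alpha$, and Proposition \ref{pro-fi} follows.
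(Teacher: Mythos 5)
Your handling of Definition \ref{def-K}(i) and (ii) (quoting the bounds from \cite{RW2016}), of the commutation identities \eqref{comT} and \eqref{comTd} (via $I_\alpha(b_{n-1}h)=b_{n-1}I_\alpha(h)$ when $d_kh=0$ for $k\le n-1$), and of the atomic estimate \eqref{AsT} is correct and coincides with the paper's proof up to a reparametrization of the H\"older exponents: the paper takes $r\in(\alpha,1)$ with $r-\alpha\in(0,1/2)$, which is your $p=1/(1-(r-\alpha))\in(1,2)$. Your reduction of \eqref{AsTd} to $\mathbb{E}\big(\beta_{n-1}^{q-1}|g|^q\big)\lesssim\|g\|_{L^1}^q$ by conditioning on $\mathcal F_n$ and using John--Nirenberg is also exactly the paper's step. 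So you have correctly isolated the crux.

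The gap is in how you propose to finish \eqref{AsTd}. Grouping by the atoms $B$ of $\mathcal{F}_{n-1}$ (on each of which $\beta_{n-1}=\mathbb{P}(B)$) and ``bounding the $L^q$-mass of $g$ on each parent atom in terms of its $L^1$-mass'' amounts to the per-atom estimate $\mathbb{P}(B)^{q-1}\int_B|g|^q\lesssim\big(\int_B|g|\big)^q$, which is the \emph{reverse} of H\"older's inequality and fails. Concretely, if $B$ has $p_n$ children $B_1,\dots,B_{p_n}$ in $\mathcal{F}_n$ and $g=\mathbf{1}_{B_1}-\mathbf{1}_{B_2}$ (a legitimate martingale jump, since $\mathbb{E}_{n-1}(g)=0$), then the ratio of the left to the right side equals $(p_n/2)^{q-1}$, which is unbounded precisely in the non-regular situation $\sup_kp_k=\infty$ that the jump part of the Davis decomposition is there to handle. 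The identity $q(1-\alpha)=1$ makes the powers of $\mathbb{P}(B)$ cancel, but it cannot absorb the concentration factor $p_n^{q-1}$. The clean elementary computation lives one level down: for $\mathcal{F}_n$-measurable $g$ one gets $\|\beta_n^{\alpha}g\|_{L^q}\le\|g\|_{L^1}$ from $\ell^1\hookrightarrow\ell^q$ over the atoms of $\mathcal{F}_n$, but the operator carries the weight $\beta_{n-1}$, and $\beta_{n-1}/\beta_n=p_n$ on each such atom. The paper does not attempt a direct computation here: it closes this step by quoting the inequality $\|\beta_{n-1}^{\alpha}g\|_{L^q}\lesssim\|g\|_{L^1}$ from \cite[Lemma 2.3(i)]{RW2016}. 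As written, your sketch of the decisive step does not go through; you need either that cited lemma or an extra hypothesis (such as $\sup_kp_k<\infty$) under which the factor $p_n^{q-1}$ is harmless.
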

\begin{proof}
	According to the above argument, it remains to show that \eqref{AsT}-\eqref{comTd}
	hold true for $I_{\alpha}$ with $q:=\frac{1}{1-\alpha}$.
	We first assume that  $a$ is a simple $(s,\infty)$-atom  with respect
	to some $n\in \mathbb{Z}_+$ and $A\in \mathcal{F}_n$.
	Take $r\in (\alpha, 1)$ such that $r-\alpha\in(0,{1}/{2})$.
	Note that $\mathrm{supp}\,(I_{\alpha}(a))\subset A$
	because $I_{\alpha}$ is still a martingale transform.
	Then the H\"{o}lder inequality gives us
	\begin{align*}
	\left\|(b-b_{n-1})I_{\alpha}(a)\right\|_{L^{\frac{1}{1-\alpha}}(0,1)}
	&\leq 	 \left\|(b-b_{n-1})\mathbf{1}_A\right\|_{L^{\frac{1}{r-\alpha}}(0,1)} 	
	\left\|I_{\alpha}(a)\right\|_{L^{\frac{1}{1-r}}(0,1)}\\
	&=:\mathrm{I}_1 \times \mathrm{I}_2.
	\end{align*}
	Since $A\in \mathcal{F}_n$, it follows that
	\begin{align*}
	\mathrm{I}_1^{\frac{1}{r-\alpha}}
	=\int_A \mathbb{E}_{n}\left(|b-b_{n-1}|^{\frac{1}{r-\alpha}}\right)
	d\nu\lesssim \|g\|_{\mathrm{BMO}(0,1)}^{\frac{1}{r-\alpha}}  \nu(A).
	\end{align*}
	On the other hand, for $\mathrm{I}_2$, by the boundedness of
	$I_{\alpha}$ from $L^{\frac{1}{1-(r-\alpha)}}(0,1)$
	to $L^{\frac{1}{1-r}}(0,1)$, we obtain
	\begin{align*}
	\mathrm{I}_2 \lesssim \|a\|_{L^{\frac{1}{1-(r-\alpha)}}(0,1)}
	\lesssim \left[\nu(A)\right]^{-(r-\alpha)},
	\end{align*}
	where the second inequality follows from  Lemma \ref{TaC}(iv)
	because $1<\frac{1}{1-(r-\alpha)}<2$.
	Combining the estimates of both $\mathrm{I}_1$ and $\mathrm{I}_2$,
	we then conclude that
	$$\left\|(b-b_{n-1})I_{\alpha}(a)\right\|_{L^{\frac{1}{1-\alpha}}(0,1)}
	\lesssim \|g\|_{\mathrm{BMO}(0,1)}.$$
	This shows \eqref{AsT}.
	
	In the remainder of the present proof, assume that $a$ is a martingale
	jump with respect to some $n\in\mathbb{Z}_{+}$. Then
	$$\left|I_{\alpha}(a)\right|\leq P_n^{-\alpha} |a|.$$
	Since $a$ is measurable with respect to $\mathcal{F}_n$,
	it follows that
	\begin{align*}	
	&\left\|(b-b_{n-1}) I_{\alpha}(a)\right\|_{L^{\frac{1}{1-\alpha}}(0,1)}^{\frac{1}{1-\alpha}}\\
	&\quad\leq \int_{0}^1 \left(|b-b_{n-1}|\cdot P_n^{-\alpha}|a|
	\right)^{\frac{1}{1-\alpha}} d\nu\\
	&\quad=\int_{\Omega}\left(P_n^{-\alpha} |a|\right)^{\frac{1}{1-\alpha}}
	\cdot\mathbb{E}_n\left( \left|b-b_{n-1}\right|^{\frac{1}{1-\alpha}}  \right ) d\nu\\
	&\quad\leq \|g\|_{\mathrm{BMO}(0,1)}^{\frac{1}{1-\alpha}} \left\|P_n^{-\alpha}a
	\right\|_{L^{\frac{1}{1-\alpha}}(0,1)}^{\frac{1}{1-\alpha}}
	\overset{(R)}{\lesssim}\|b\|_{\mathrm{BMO}(0,1)}^{
		\frac{1}{1-\alpha}} \|a\|_{L^1(0,1)}^{\frac{1}{1-\alpha}},
	\end{align*}
	where the inequality $(R)$ is due to \cite[Lemma 2.3(i)]{RW2016}.
	Thus, \eqref{AsTd} holds true.
	Finally, $I_\alpha$ is a martingale transform and satisfies the same commutation properties
	 which lead to
	\eqref{comT} and \eqref{comTd}.
	Thus, $I_{\alpha}\in \mathcal{K}_{\frac{1}{1-\alpha}}$.
	This finishes the proof of Proposition \ref{pro-fi}.
\end{proof}

\begin{remark}\label{stolyarov}
The authors would like to thank Dmitriy Stolyarov who indicated to us 
that the estimates (i)--(iii) ahead of  Proposition \ref{pro-fi} 
are not valid when $\beta_k$ is replaced by $\beta_{k-1}$ in 
the definition of the fractional integral \eqref{fd} and who attracted  
our attention to his joint article \cite{SY2022}.
\end{remark}

Since  $I_{\alpha}\in \mathcal{K}_{\frac{1}{1-\alpha}}$,
the following conclusion directly follows from both Corollary \ref{weak} and Theorem \ref{cmT}.
\begin{corollary}
Let $\alpha\in(0,1)$ and $I_{\alpha}$ be the fractional integral operator
as above.  Let	$b\in \mathrm{BMO}(0,1)$ be non-constant.  Then
\begin{enumerate}[{\rm (i)}]
	\item the commutator $[I_{\alpha},b]$ is bounded
	from $H_1(0,1)$ to $L^{\frac{1}{1-\alpha},\infty}(0,1)$;
	\item the commutator $[I_{\alpha},b]$ is bounded
	from $H_1^b(0,1)$ to $L^{{\frac{1}{1-\alpha}}}(0,1)$.
\end{enumerate}
\end{corollary}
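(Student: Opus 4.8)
The plan is to simply assemble the machinery already in place. By Proposition \ref{pro-fi}, under the standing assumptions on $(\Omega,\mathcal F,\mathbb P;(\mathcal F_n)_{n\in\Z_+})$ from Example \ref{Nrf}, the fractional integral operator $I_\alpha$ belongs to the class $\mathcal K_q$ with $q:=\frac{1}{1-\alpha}$. Thus $I_\alpha$ is a continuous sublinear operator on $L^2$ satisfying all of (i) through (iv) in Definition \ref{def-K} with this value of $q$, and in particular it is bounded from $H_1(0,1)$ to $L^{\frac{1}{1-\alpha}}(0,1)$ and from $L^1(0,1)$ to $L^{\frac{1}{1-\alpha},\infty}(0,1)$.

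For part (i), I would invoke Corollary \ref{weak} directly with $T:=I_\alpha$ and $q:=\frac{1}{1-\alpha}$: since $I_\alpha\in\mathcal K_{\frac{1}{1-\alpha}}$ and $b\in\mathrm{BMO}(0,1)$, there is a positive constant $C$ such that for every $f\in H_1(0,1)$,
\[
\big\|[I_\alpha,b](f)\big\|_{L^{\frac{1}{1-\alpha},\infty}(0,1)}\le C\,\|f\|_{H_1(0,1)},
\]
which is exactly the asserted weak endpoint boundedness. (The non-constancy of $b$ is not needed here; it is only recorded for consistency with part (ii), where the space $H_1^b$ is nontrivial precisely when $b$ is non-constant, cf. Theorem \ref{4ec}.)

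For part (ii), I would apply Theorem \ref{cmT} with $T:=I_\alpha$, $q:=\frac{1}{1-\alpha}$ and $b\in\mathrm{BMO}(0,1)$ non-constant: since $I_\alpha\in\mathcal K_{\frac{1}{1-\alpha}}$, the theorem yields a positive constant $C$ such that for every $f\in H_1^b(0,1)$,
\[
\big\|[I_\alpha,b](f)\big\|_{L^{\frac{1}{1-\alpha}}(0,1)}\le C\,\|f\|_{H_1^b(0,1)},
\]
i.e. the strong endpoint boundedness from $H_1^b(0,1)$ to $L^{\frac{1}{1-\alpha}}(0,1)$. Unwinding, this is because Theorem \ref{4ec} guarantees $L(f,b)\in H_1(0,1)$ whenever $f\in H_1^b(0,1)$, Definition \ref{def-K}(i) gives $\|I_\alpha(L(f,b))\|_{L^{\frac{1}{1-\alpha}}}\lesssim\|L(f,b)\|_{H_1}$, and Theorem \ref{bdT} controls the remainder term $R(f,b)$ by $\|f\|_{H_1}\|b\|_{\mathrm{BMO}}$, so both pieces are dominated by $\|f\|_{H_1^b(0,1)}$.

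There is essentially no obstacle here: the entire content is the verification $I_\alpha\in\mathcal K_{\frac{1}{1-\alpha}}$, which was carried out in Proposition \ref{pro-fi}, and the nontrivial analytic work was already done in proving Theorems \ref{bdT}, \ref{cmT} and \ref{4ec}. The only point deserving a word is that the filtration in Example \ref{Nrf} is in general non-regular, so one genuinely needs the $\mathcal K_q$ framework (including properties \eqref{AsTd} and \eqref{comTd} for martingale jumps) rather than a purely atomic argument; but that subtlety is internal to Proposition \ref{pro-fi}, not to the present corollary. Hence the proof is just: cite Proposition \ref{pro-fi}, then cite Corollary \ref{weak} for (i) and Theorem \ref{cmT} for (ii).
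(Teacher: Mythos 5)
Your proof is correct and follows exactly the paper's route: the paper likewise derives the corollary directly from Proposition \ref{pro-fi} (establishing $I_\alpha\in\mathcal K_{\frac{1}{1-\alpha}}$) combined with Corollary \ref{weak} for part (i) and Theorem \ref{cmT} for part (ii). Your additional unwinding of Theorem \ref{cmT} via Theorem \ref{4ec} and Theorem \ref{bdT} is accurate but not needed beyond the citations.
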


\section{Applications to harmonic analysis}\label{sect-application}

In this section, we aim to apply the bilinear decomposition of commutators
established in previous sections to some problems of harmonic analysis.

\subsection{Dyadic Hilbert transform beyond doubling measures}\label{subsect-hilbert}

Recall that the boundedness of both the dyadic Hilbert transform and its
adjoint associated with Borel measures were first characterized by L\'opez-S\'anchez et al. \cite{LLP2014}.
Motivated by this,  we devote this subsection to studying the commutator of
the dyadic Hilbert transform and its adjoint beyond doubling measures.

Here, we work with $([0,1), \mathcal{F}; (\mathcal{F}_n)_{n\in \mathbb{Z}_+})$,
which corresponds to  Example \ref{Nrf} with $p_k=2$ for each $k\in\mathbb{Z}_+$ therein.
However, we equip the measurable space $([0,1), \mathcal{F})$ with a probability measure
$\mu$ which is not necessarily the Lebesgue measure. In this subsection, the related Lebesgue space,
martingale Hardy space, and BMO space are denoted,
respectively, by $L^p(\mu)$, $H_1(\mu)$, and $\mathrm{BMO}(\mu)$.

In what follows, for each $n\in \mathbb{Z}_+$, denote by $A(\mathcal{F}_n)$
the set of all the dyadic intervals in $\mathcal{F}_n$.
Let $A(\mathcal{F})=\bigcup_{n\in\mathbb{Z}_+}A(\mathcal{F}_n)$.
Given  $I\in A(\mathcal{F})$, we write $I_-$ and $ I_+$, respectively,
for the left and the right dyadic children of $I$.
For any $I\in A(\mathcal{F})$, let
$$ m(I) :=\frac{\mu(I_-)\mu(I_+)}{\mu(I)}
\quad \mbox{and}\quad  h_I:= \sqrt{m(I)}
\left[\frac{\mathbf{1}_{I_-}}{\mu(I_-)}-\frac{\mathbf{1}_{I_+}}{\mu(I_+)}\right].$$
The following lemma is straightforward; see \cite[(2.3)]{LLP2014}.
\begin{lemma}\label{meas}
    For any $n\in\mathbb Z_{+}$ and any dyadic interval $I\in \mathcal{F}_n$,
    the function $h_I$ is $\mathcal{F}_{n+1}$-measurable
    and $\mathbb{E}_{n}(h_I)=0$. Moreover
\begin{equation}\label{hI1}
	\|h_I\|_{L^1(\mu)}=2\sqrt{m(I)}\ \mbox{and}\
	\|h_I\|_{L^{\infty}(\mu)}\approx \frac{1}{\sqrt{m(I)}}
\end{equation}
with the positive equivalence constants independent of $I$.
\end{lemma}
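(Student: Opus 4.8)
The plan is to verify the four assertions one by one, directly from the definitions of $h_I$, $m(I)$, and the conditional expectation on the dyadic filtration, under the standing (implicit) assumption that $\mu(I_-)>0$ and $\mu(I_+)>0$, so that both $m(I)$ and $h_I$ are well defined.

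First I would note that $h_I$ is a linear combination of $\mathbf{1}_{I_-}$ and $\mathbf{1}_{I_+}$, and since $I_-,I_+\in A(\mathcal F_{n+1})$, the function $h_I$ is constant on every atom of $\mathcal F_{n+1}$, hence $\mathcal F_{n+1}$-measurable. For the vanishing of $\mathbb E_n(h_I)$, I would use that $h_I$ is supported in $I\in A(\mathcal F_n)$, so $\mathbb E_n(h_I)=0$ off $I$, while on the atom $I$ one has $\mathbb E_n(h_I)=\mu(I)^{-1}\int_I h_I\,d\mu=\sqrt{m(I)}\,\mu(I)^{-1}\big(\tfrac{\mu(I_-)}{\mu(I_-)}-\tfrac{\mu(I_+)}{\mu(I_+)}\big)=0$.

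For the $L^1$ identity I would simply integrate: $\|h_I\|_{L^1(\mu)}=\sqrt{m(I)}\big(\int_{I_-}\mu(I_-)^{-1}\,d\mu+\int_{I_+}\mu(I_+)^{-1}\,d\mu\big)=2\sqrt{m(I)}$. For the $L^\infty$ estimate, observe that $h_I$ takes only the two nonzero values $+\sqrt{m(I)}/\mu(I_-)$ on $I_-$ and $-\sqrt{m(I)}/\mu(I_+)$ on $I_+$, so $\|h_I\|_{L^\infty(\mu)}=\sqrt{m(I)}/\min\{\mu(I_-),\mu(I_+)\}$. Using $m(I)=\mu(I_-)\mu(I_+)/(\mu(I_-)+\mu(I_+))$ and assuming without loss of generality (by the symmetry $I_-\leftrightarrow I_+$, which flips the sign of $h_I$ only) that $\mu(I_-)\le\mu(I_+)$, a one-line computation gives $\sqrt{m(I)}\,\|h_I\|_{L^\infty(\mu)}=\mu(I_+)/(\mu(I_-)+\mu(I_+))\in[\tfrac12,1]$, hence $\tfrac12\,m(I)^{-1/2}\le\|h_I\|_{L^\infty(\mu)}\le m(I)^{-1/2}$, i.e. the claimed equivalence with absolute constants.

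There is no real obstacle here; the lemma is purely computational. The only points deserving a word of care are the convention in the degenerate case where a child of $I$ is $\mu$-null (which one either excludes by hypothesis on $\mu$ or handles by setting $h_I:=0$ there) and reading the conditional expectation $\mathbb E_n$ correctly as the $\mu$-average over the atom $I$ of $\mathcal F_n$.
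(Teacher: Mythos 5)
Your proof is correct and complete; the paper itself gives no argument for this lemma, simply calling it straightforward and citing \cite[(2.3)]{LLP2014}, and your direct computation (in particular the clean identity $\sqrt{m(I)}\,\|h_I\|_{L^{\infty}(\mu)}=\mu(I_+)/(\mu(I_-)+\mu(I_+))\in[\tfrac12,1]$ after normalizing $\mu(I_-)\le\mu(I_+)$) is exactly the intended verification. Your remark about the degenerate case of a $\mu$-null child is a sensible point of care that the paper leaves implicit.
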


The probability measure $\mu$ is said to be {\it $m$-increasing} (resp. {\it $m$-decreasing})
if there exists a positive constant $C$ such that, for any $I\in A(\mathcal{F})$,
\begin{equation}\label{mi}
	m(I)\leq C m\left(\widehat{I}\right)\
	\left[\mbox{resp.} \ Cm(I)\geq m\left(\widehat{I}\right)\right].
\end{equation}
Here and thereafter, the symbol $\widehat{I}$ stands for the {\it dyadic parent} of $I$.

The next example is taken from \cite[Section 4.1]{LLP2014} and
proves that the dyadic filtration may be non-regular
with respect to $\mu.$
\begin{example}[Non-regular dyadic filtration]\label{Nrdf}
	Let $(I_k)_{k\in\mathbb{Z}_+}$ be a decreasing sequence of dyadic intervals,
	for instance,  $I_k=[0,2^{-k})$ for any $k\in\mathbb{Z}_+$.
	Let $(I_k^b)_{k\in\mathbb{N}}$ be its dyadic brother.
	Let $(\alpha_k)_{k\in \mathbb{N}}$ and $(\beta_k)_{k\in \mathbb{N}}$ be such that
	$\alpha_1:=1/2$, $\alpha_k:=1-2^{-k^2}$ for any $k\geq2$,  and $\beta_k:=1-\alpha_k$
	for any $k\in \mathbb{N}$. Define $\mu$ recursively by setting $\mu(I_0):=1$ and,
	for each $k\in \mathbb{N}$,
$$
		\mu(I_k):=\alpha_k  \mu\left(I_{k-1}\right)\
		\mbox{and}\  \mu\left(I_k^b\right):=\beta_k  \mu\left(I_{k-1}\right).
$$
	For any $k\in \mathbb{N}$ and any dyadic interval $J\subset I_k^b$, define
	$$\mu(J):= \frac{\nu(J)}{\nu(I_{k}^b)} \mu\left(I_k^b\right),$$
	where $\nu$ denotes the usual Lebesgue measure on $[0,1)$.
	It was shown  in \cite[p.\,72]{LLP2014} that $\mu$ is $m$-increasing.
	However, $\mu$ is non-dyadically doubling.
	Hence, the filtration $(\mathcal{F}_n)_{n\in \mathbb{Z}_+}$
	equipped with the measure $\mu$ is not regular.
\end{example}

Following \cite[p.\,50]{LLP2014}, the {\it dyadic Hilbert transform} is defined by setting,
for any measurable function $f$ on $[0,1)$ and any $x\in [0,1),$
\begin{equation}\label{expr-Hd}
	H_{\mathscr{D}}(f)(x):=\sum_{k\in \mathbb{N}}\sum_{I\in A(\mathcal{F}_k)}
	\delta(I) \langle f, h_{\widehat{I}}  \rangle  h_I(x),
\end{equation}
where $\delta(I):=1$ if $I:=( \widehat{I} )_-$,
$\delta(I):=-1$ if $I:=(\widehat{I})_+$, and $\langle f, h_{\widehat{I}} \rangle:=\E(fh_{\widehat{I}})$.
The main result of this subsection is as follows.
\begin{proposition}\label{DHT}
	Let $\mu$ be an $m$-increasing probability measure  on $[0,1)$.
	Then the dyadic Hilbert transform  $H_{\mathscr{D}}$  belongs to  $\mathcal{K}_1$.
\end{proposition}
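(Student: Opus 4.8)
The plan is to verify that $H_{\mathscr{D}}$ satisfies all the conditions (i)--(iv) of Definition \ref{def-K} with $q=1$. Conditions (i) and (ii)---boundedness from $H_1(\mu)$ to $L^1(\mu)$ and from $L^1(\mu)$ to $L^{1,\infty}(\mu)$---are exactly the content of the characterization of L\'opez-S\'anchez et al.\ \cite{LLP2014} under the $m$-increasing hypothesis, so I would simply invoke that result. The bulk of the work is to establish the atom/jump estimates \eqref{AsT}--\eqref{comTd}. The key structural observation is that, just as with martingale transforms and fractional integrals, $H_{\mathscr{D}}$ commutes with multiplication by an $\mathcal F_{n-1}$-measurable function when applied to inputs whose first $n$ martingale differences vanish: if $d_k(f)=0$ for $k\in\{0,\dots,n-1\}$, then only Haar functions $h_I$ with $I\in A(\mathcal F_k)$, $k\ge n-1$, enter \eqref{expr-Hd}, and on such scales $b_{n-1}$ is essentially frozen. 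This reduces \eqref{comT} and \eqref{comTd} to \eqref{AsT} and \eqref{AsTd}, exactly as in Proposition \ref{suppTa} and Proposition \ref{pro-fi}.

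So the real content is \eqref{AsT} and \eqref{AsTd}. For \eqref{AsT}, let $a$ be a simple $(s,\infty)$-atom with respect to $n\in\mathbb Z_+$ and $A\in\mathcal F_n$. First I would check that $\mathrm{supp}\,(H_{\mathscr D}(a))\subset A$: since $\mathbb E_k(a)=0$ for $k\le n$, the coefficients $\langle a, h_{\widehat I}\rangle$ vanish unless $\widehat I\subset A$ (up to null sets), hence every $h_I$ appearing is supported in $A$. Then, by the H\"older inequality with a well-chosen pair of exponents (say $\frac{1}{1-\varepsilon}$ and $\frac1\varepsilon$ for small $\varepsilon>0$),
\begin{align*}
\|(b-b_{n-1})H_{\mathscr D}(a)\|_{L^1(\mu)}
&=\|(b-b_{n-1})\mathbf 1_A\, H_{\mathscr D}(a)\|_{L^1(\mu)}\\
&\le \|(b-b_{n-1})\mathbf 1_A\|_{L^{1/\varepsilon}(\mu)}\,\|H_{\mathscr D}(a)\|_{L^{1/(1-\varepsilon)}(\mu)}.
\end{align*}
The first factor is controlled by $\|b\|_{\mathrm{BMO}(\mu)}\,[\mu(A)]^{\varepsilon}$ via the John--Nirenberg inequality (the filtration here is atomic, so this is clean). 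The second factor is controlled by $\|a\|_{L^{1/(1-\varepsilon)}(\mu)}$ using $L^p(\mu)$-boundedness of $H_{\mathscr D}$ for $p\in(1,\infty)$ (again from \cite{LLP2014} together with $m$-increasing, or by interpolation between $H_1\to L^1$ and $L^2\to L^2$), and then by $[\mu(A)]^{-\varepsilon}$ via Lemma \ref{TaC}(iv); the powers of $\mu(A)$ cancel. For \eqref{AsTd}, if $g$ is a martingale jump with respect to $n$, then $g$ is $\mathcal F_n$-measurable with $\mathbb E_{n-1}(g)=0$, so in \eqref{expr-Hd} only the single scale $k=n$ with $\widehat I\in A(\mathcal F_{n-1})$ survives, giving a pointwise bound $|H_{\mathscr D}(g)|\lesssim$ a Haar-type rearrangement of $|g|$ on $\mathcal F_n$-atoms; combined with $\mathbb E_n(|b-b_{n-1}|)\le\|b\|_{\mathrm{BMO}(\mu)}$ this yields \eqref{AsTd}, paralleling the jump estimate in Example \ref{pro-TMS}.

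The main obstacle I anticipate is the $L^p(\mu)$-boundedness of $H_{\mathscr D}$ for $p\ne 2$, which is what powers the H\"older argument for \eqref{AsT}; \cite{LLP2014} characterizes $H_1$--$L^1$ and weak $(1,1)$ boundedness under $m$-increasing, and one needs to be careful that the strong $L^p(\mu)$ bounds for $p\in(1,2)$ are genuinely available (either stated there, or obtained by Marcinkiewicz interpolation between the weak $(1,1)$ bound and the $L^2$ bound, the latter being immediate since $H_{\mathscr D}$ is a unitary-type reshuffling of the Haar system). A secondary point is handling $H_{\mathscr D}^{*}$ (the adjoint) in parallel, whose kernel differs only by the placement of the sign $\delta(I)$ and which satisfies the same estimates by the same arguments. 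Once these are in place, conditions (i)--(iv) hold and $H_{\mathscr D}\in\mathcal K_1$, so Corollary \ref{weak} and Theorem \ref{cmT} then give the endpoint estimates of the commutator $[H_{\mathscr D},b]$ from $H_1(\mu)$ to $L^{1,\infty}(\mu)$ and from $H_1^b(\mu)$ to $L^1(\mu)$.
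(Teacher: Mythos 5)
Your architecture is the same as the paper's: quote the $L^2$ and weak $(1,1)$ bounds from \cite{LLP2014}, reduce \eqref{comT} and \eqref{comTd} to \eqref{AsT} and \eqref{AsTd} via the commutation $H_{\mathscr{D}}(b_{n-1}a)=b_{n-1}H_{\mathscr{D}}(a)$ (the paper's Remark \ref{H-dgen}), and prove \eqref{AsT} from the support property $\mathrm{supp}\,(H_{\mathscr{D}}(a))\subset A$ plus H\"older; your $L^{1/\varepsilon}$--$L^{1/(1-\varepsilon)}$ splitting is a harmless variant of the paper's $L^2$--$L^2$ argument based on Lemma \ref{HDsa}(ii), and your support verification is correct.

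The gap is in \eqref{AsTd}, which is exactly where $H_{\mathscr{D}}$ stops behaving like $M$, $S$ or a martingale transform, and where the $m$-increasing hypothesis is actually consumed. For a jump $g$ at level $n$ one has $H_{\mathscr{D}}(g)=\sum_{I\in A(\mathcal{F}_n)}\delta(I)\langle g,h_{\widehat{I}}\rangle h_I$ (your "only scale $k=n$ survives" is right), but this is a genuine shift, not a rearrangement of $|g|$: the only available bound is $\|\langle g,h_{\widehat{I}}\rangle h_I\|_{L^1(\mu)}\le\|h_{\widehat{I}}\|_{L^{\infty}(\mu)}\|h_I\|_{L^1(\mu)}\int_{\widehat{I}}|g|\,d\mu\approx\sqrt{m(I)/m(\widehat{I})}\int_{\widehat{I}}|g|\,d\mu$, and summability requires $m(I)\lesssim m(\widehat{I})$, i.e.\ \eqref{mi}; without it even $\|H_{\mathscr{D}}(g)\|_{L^1(\mu)}\lesssim\|g\|_{L^1(\mu)}$ fails (this is the paper's Lemma \ref{HDwad}, and you never invoke the $m$-increasing condition at this point). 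Second, even granting that $L^1$ bound, "combining with $\mathbb{E}_n(|b-b_{n-1}|)\le\|b\|_{\mathrm{BMO}(\mu)}$" does not run as in Example \ref{pro-TMS}: there $M(g)=|g|$ is $\mathcal{F}_n$-measurable, whereas $H_{\mathscr{D}}(g)$ is only $\mathcal{F}_{n+1}$-measurable and lives on the parents $\widehat{I}$, not on $\mathrm{supp}(g)$, so one must condition on $\mathcal{F}_{n+1}$, and $\mathbb{E}_{n+1}(|b-b_{n-1}|)$ is not controlled by $\|b\|_{\mathrm{BMO}}$. The paper repairs this by splitting $b-b_{n-1}=(b-b_n)+(b_n-b_{n-1})$ and paying the second piece with $\|b_n-b_{n-1}\|_{L^{\infty}}\lesssim\|b\|_{\mathrm{bmo}^d}\lesssim\|b\|_{\mathrm{BMO}}$ together with Lemma \ref{HDwad}. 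Both defects are repairable, but as written the jump estimate is asserted rather than proved. Two smaller points: the $H_1(\mu)\to L^1(\mu)$ bound of Definition \ref{def-K}(i) is not simply quotable from \cite{LLP2014} for the square-function $H_1$ used here, since the filtration is non-regular; the paper derives it (Proposition \ref{HDh1L1}) from the Davis and atomic decompositions plus the same jump lemma. And your closing claim that $H_{\mathscr{D}}^{*}$ satisfies the same estimates under $m$-increasing is off: the adjoint requires $m$-decreasing.
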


To show Proposition \ref{DHT}, we need several lemmas.
The following result is due to \cite[(3.4)]{LLP2014} and \cite[Theorem 2.5]{LLP2014}.
\begin{lemma}\label{thm-HD}
	Let $\mu$ be an $m$-increasing probability measure  on $[0,1)$. Then
	\begin{enumerate}[{\rm (i)}]
		\item $H_{\mathscr{D}}$ is bounded on $L^2(\mu)$ and, moreover, for any $f\in L^2(\mu),$
		$\|H_{\mathscr{D}}(f)\|_{L^2(\mu)}\leq 2\|f\|_{L^2(\mu)};$
		\item   $H_{\mathscr{D}}$ is bounded from $L^1(\mu)$ to $L^{1,\infty}(\mu)$.
	\end{enumerate}
\end{lemma}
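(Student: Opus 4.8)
The plan is to prove the two assertions by entirely different mechanisms: (i) from the orthonormality of the $\mu$-Haar system $\{h_I\}$, and (ii) from a Calder\'on--Zygmund decomposition of Gundy type in which the dyadic structure of $H_{\mathscr D}$ does most of the work. I would first reindex the sum \eqref{expr-Hd} by dyadic parents: writing $K:=\widehat I$ and recalling $\delta(K_-)=1$, $\delta(K_+)=-1$, one gets
\begin{equation*}
	H_{\mathscr D}(f)=\sum_{K\in A(\mathcal F)}\langle f,h_K\rangle\,(h_{K_-}-h_{K_+}).
\end{equation*}
A direct computation, using the mean-zero property and the nesting structure in Lemma \ref{meas}, shows that $\{h_I\}_{I\in A(\mathcal F)}$ is an orthonormal system in $L^2(\mu)$. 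Since a left child is never a right child and the maps $K\mapsto K_\pm$ are injective, $\langle h_{K_-}-h_{K_+},h_{K'_-}-h_{K'_+}\rangle_{L^2(\mu)}$ equals $2$ when $K=K'$ and $0$ otherwise. Hence, by the Bessel inequality,
\begin{equation*}
	\|H_{\mathscr D}(f)\|_{L^2(\mu)}^2=2\sum_{K\in A(\mathcal F)}|\langle f,h_K\rangle|^2\leq 2\|f\|_{L^2(\mu)}^2,
\end{equation*}
which gives (i), indeed with the better constant $\sqrt 2\leq 2$. Note that the $m$-increasing hypothesis is not needed for this part.

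For (ii), fix $\lambda>0$ and $f\in L^1(\mu)$ and run the dyadic decomposition at height $\lambda$: let $\{Q_j\}_j$ be the maximal dyadic intervals with $\frac{1}{\mu(Q_j)}\int_{Q_j}|f|\,d\mu>\lambda$, put $\Omega_\lambda:=\bigcup_jQ_j$, and split $f=g+b+\ell$, where $g$ is the good part with $\|g\|_{L^2(\mu)}^2\lesssim\lambda\|f\|_{L^1(\mu)}$, where $b=\sum_jb_j$ collects the $\mu$-mean-zero pieces $b_j$ supported on $Q_j$, and where $\ell$ isolates the large martingale jumps across the stopping intervals, which form an $\ell^1$-summable family with $\sum_n\|d_n\ell\|_{L^1(\mu)}\lesssim\|f\|_{L^1(\mu)}$. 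For $g$ I would invoke (i) together with the Chebyshev inequality. For $b$ I would exploit the localization intrinsic to the dyadic shift: since $b_j$ is supported on $Q_j$ with $\mu$-mean zero, its $\mu$-Haar expansion involves only $h_K$ with $K\subseteq Q_j$, so each term $h_{K_-}-h_{K_+}$ of $H_{\mathscr D}(b_j)$ is supported in $K\subseteq Q_j$; hence $H_{\mathscr D}(b)$ is supported on $\Omega_\lambda$, and its contribution is dominated by $\mu(\Omega_\lambda)=\sum_j\mu(Q_j)\leq\lambda^{-1}\|f\|_{L^1(\mu)}$.

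The genuinely non-doubling difficulty, and the only place where the $m$-increasing hypothesis \eqref{mi} enters, is the jump part $\ell$: there the ratios $\mu(\widehat{Q_j})/\mu(Q_j)$ are unbounded and no $L^2$ estimate survives, so the plan is to control $H_{\mathscr D}(\ell)$ directly in $L^1(\mu)$. The decisive computation is that, by \eqref{hI1} and \eqref{mi},
\begin{equation*}
	\|H_{\mathscr D}(h_K)\|_{L^1(\mu)}=\|h_{K_-}-h_{K_+}\|_{L^1(\mu)}=2\sqrt{m(K_-)}+2\sqrt{m(K_+)}\lesssim\sqrt{m(K)}\approx\|h_K\|_{L^1(\mu)}.
\end{equation*}
Because both the family $\{h_K\}_{K\in A(\mathcal F_{n-1})}$ and the shifted pairs $\{h_{K_-}-h_{K_+}\}_{K\in A(\mathcal F_{n-1})}$ have pairwise disjoint supports at each fixed level, this upgrades to $\|H_{\mathscr D}(d_n\ell)\|_{L^1(\mu)}\lesssim\|d_n\ell\|_{L^1(\mu)}$ for every $n$, and summing yields $\|H_{\mathscr D}(\ell)\|_{L^1(\mu)}\lesssim\sum_n\|d_n\ell\|_{L^1(\mu)}\lesssim\|f\|_{L^1(\mu)}$; a final application of Chebyshev closes this case. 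Assembling the three contributions gives $\mu(\{|H_{\mathscr D}(f)|>\lambda\})\lesssim\lambda^{-1}\|f\|_{L^1(\mu)}$, which is (ii). I expect the main obstacle to be the bookkeeping of the Gundy-type decomposition in the non-doubling regime: namely, arranging $\ell$ so that $\sum_n\|d_n\ell\|_{L^1(\mu)}\lesssim\|f\|_{L^1(\mu)}$ while keeping $g$ and $b$ with the stated properties, and recognizing that the displayed $m$-increasing bound is precisely what transfers this $L^1$ control through $H_{\mathscr D}$.
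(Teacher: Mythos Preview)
Your argument is sound, but note that the paper does not actually prove this lemma: it is quoted verbatim from \cite[(3.4) and Theorem 2.5]{LLP2014}, so there is no ``paper's own proof'' to compare against. What you have written is essentially a self-contained reconstruction of the L\'opez-S\'anchez--Martell--Parcet argument, recast in martingale language.

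Part (i) is correct and your observation that the constant is in fact $\sqrt 2$ is accurate; the $m$-increasing assumption is indeed irrelevant here. Part (ii) is also correct as a sketch: the three-piece Gundy decomposition $f=g+b+\ell$ with $\|g\|_{L^\infty}\lesssim\lambda$, $b$ mean-zero on stopping cubes, and $\sum_n\|d_n\ell\|_{L^1}\lesssim\|f\|_{L^1}$ can be realized concretely by stopping at $\tau=\inf\{n:|f_n|>\lambda\}$ and taking $g=f^{\tau-1}$, $b=f-f^\tau$, $\ell=d_\tau f$; the required $L^1$ bound on $\ell$ follows from optional stopping together with the weak-type $(1,1)$ for the dyadic maximal function, which holds for arbitrary $\mu$. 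Your localization of $H_{\mathscr D}(b)$ inside $\Omega_\lambda$ and the Chebyshev handling of $g$ are standard and correct.

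The interesting observation is that your treatment of the jump piece---namely, that $m$-increasing gives $\|H_{\mathscr D}(h_K)\|_{L^1(\mu)}\lesssim\|h_K\|_{L^1(\mu)}$ and hence $\|H_{\mathscr D}(d_n\ell)\|_{L^1(\mu)}\lesssim\|d_n\ell\|_{L^1(\mu)}$ via disjointness of supports at a fixed level---is precisely the content of the paper's later Lemma~\ref{HDwad}, which the authors prove independently in order to establish the $H_1(\mu)\to L^1(\mu)$ boundedness in Proposition~\ref{HDh1L1}. So you have, in effect, anticipated and absorbed that lemma into the weak-type argument; this is a clean and economical way to organize things, and it makes transparent exactly where the $m$-increasing hypothesis enters.
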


Similarly to the proofs of both (i) and (ii) of Lemma \ref{TaC}, using Lemma \ref{thm-HD}(i),
we can show the following conclusions; we omit the details.
\begin{lemma}\label{HDsa}
	Let $\mu$ be an $m$-increasing Borel measure  on $[0,1)$.
Assume that $a$ is a simple $(s,\infty)$-atom with respect to some  $n\in \mathbb{Z}_+$
and some dyadic interval $Q\in A(\mathcal{F}_n)$.
	Then
	\begin{enumerate}[{\rm (i)}]
		\item $\mathrm{supp}\,(H_{\mathscr{D}}(a))\subset Q$;
		\item $\|H_{\mathscr{D}}(a)\|_{L^p(\mu)}\leq 2\left[\mu(Q)\right]^{1/p-1}$ for any $ p\in[1,2]$.
	\end{enumerate}
\end{lemma}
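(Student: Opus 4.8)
The plan is to mimic exactly the argument given earlier for Lemma \ref{TaC}(i) and (ii), replacing the Doob maximal operator there by $H_{\mathscr{D}}$ and invoking Lemma \ref{thm-HD}(i) as the $L^2$-boundedness input. Throughout, let $a$ be a simple $(s,\infty)$-atom with respect to $n\in\mathbb{Z}_+$ and $Q\in A(\mathcal{F}_n)$, so that $\mathbb{E}_n(a)=0$, $\mathrm{supp}\,(a)\subset Q$, and $\|s(a)\|_{L^\infty(\mu)}\leq[\mu(Q)]^{-1}$.

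For item (i), the key observation is that $H_{\mathscr{D}}$ preserves the locality of the atom. From the expression \eqref{expr-Hd}, $H_{\mathscr{D}}(a)$ is a sum of terms $\delta(I)\langle a, h_{\widehat{I}}\rangle h_I$, and $\langle a, h_{\widehat{I}}\rangle=\mathbb{E}(a\,h_{\widehat{I}})$. I would argue that whenever $\widehat{I}$ is not contained in $Q$, this coefficient vanishes: if $\widehat{I}\in A(\mathcal{F}_k)$ with $k\leq n-1$, then $h_{\widehat{I}}$ is $\mathcal{F}_{k+1}$-measurable with $k+1\leq n$, and since $\mathbb{E}_n(a)=0$ one gets $\mathbb{E}(a\,h_{\widehat{I}})=\mathbb{E}(h_{\widehat{I}}\mathbb{E}_n(a))=0$; whereas if $\widehat{I}$ is disjoint from $Q$, then $h_{\widehat{I}}$ is supported off $\mathrm{supp}\,(a)\subset Q$, so the pairing is again $0$. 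Hence only intervals $I$ with $\widehat{I}\subset Q$ (so $I\subset Q$) contribute, and each such $h_I$ is supported in $Q$; therefore $\mathrm{supp}\,(H_{\mathscr{D}}(a))\subset Q$.

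For item (ii), I would run the same interpolation-by-H\"older argument used for $M$ in Lemma \ref{TaC}(ii). Fix $p\in[1,2]$. Using item (i) to restrict to $Q$, then H\"older's inequality with exponents $2/p$ and its conjugate, then Lemma \ref{thm-HD}(i), and finally the $L^2$-identity $\|a\|_{L^2(\mu)}=\|s(a)\|_{L^2(\mu)}\leq\|s(a)\|_{L^\infty(\mu)}[\mu(Q)]^{1/2}\leq[\mu(Q)]^{-1/2}$, I would write
\begin{align*}
\|H_{\mathscr{D}}(a)\|_{L^p(\mu)}
&=\|H_{\mathscr{D}}(a)\mathbf{1}_Q\|_{L^p(\mu)}
\leq\|H_{\mathscr{D}}(a)\|_{L^2(\mu)}[\mu(Q)]^{1/p-1/2}\\
&\leq 2\|a\|_{L^2(\mu)}[\mu(Q)]^{1/p-1/2}
\leq 2[\mu(Q)]^{-1/2}[\mu(Q)]^{1/p-1/2}
=2[\mu(Q)]^{1/p-1},
\end{align*}
which is the claimed bound.

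The only genuinely new ingredient compared to Lemma \ref{TaC} is the support claim in item (i), since there the support statement followed from the elementary identity $d_m(a)\mathbf{1}_A=d_m(a)$, whereas here one must exploit both the cancellation $\mathbb{E}_n(a)=0$ and the localization $\mathrm{supp}\,(a)\subset Q$ against the structure of the Haar-type functions $h_{\widehat{I}}$ in \eqref{expr-Hd}. I expect verifying this vanishing of the coefficients $\langle a, h_{\widehat{I}}\rangle$ for $\widehat{I}\not\subset Q$ to be the main (though routine) obstacle; once it is in hand, item (ii) is a verbatim repetition of the earlier H\"older-plus-$L^2$-boundedness computation, with Lemma \ref{thm-HD}(i) supplying the factor $2$.
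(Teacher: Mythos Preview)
Your proposal is correct and follows precisely the approach indicated by the paper, which simply states that the conclusions follow ``similarly to the proofs of both (i) and (ii) of Lemma \ref{TaC}, using Lemma \ref{thm-HD}(i)'' and omits the details. Your explicit verification of the vanishing of the Haar coefficients $\langle a, h_{\widehat{I}}\rangle$ for $\widehat{I}\not\subset Q$ (via the cancellation $\mathbb{E}_n(a)=0$ when $\widehat{I}\in A(\mathcal{F}_k)$ with $k\le n-1$, and via disjoint supports otherwise) is exactly the routine check the paper leaves to the reader, and your H\"older-plus-$L^2$-boundedness computation for item (ii) is the verbatim analogue of the argument in Lemma \ref{TaC}.
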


\begin{lemma}\label{HDwad}
	Let $\mu$ be an $m$-increasing Borel measure on $[0,1)$.
	Then there exists a positive constant $C$ such that, for any $w\in L_d^1(\mu)$,
	$$\|H_{\mathscr{D}}(w)\|_{L^1(\mu)}\leq C\|w\|_{L_d^1(\mu)}.$$
\end{lemma}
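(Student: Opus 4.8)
The plan is to linearize and reduce the estimate to a single martingale jump. Since $H_{\mathscr D}$ is linear, $L^1(\mu)$ is complete, and every $w\in L_d^1(\mu)$ decomposes as $w=\sum_k d_k w$ with $\|w\|_{L_d^1(\mu)}=\sum_k\|d_kw\|_{L^1(\mu)}$, it is enough to produce a constant $C$ (depending only on the constant in \eqref{mi}) such that $\|H_{\mathscr D}(w)\|_{L^1(\mu)}\le C\|w\|_{L^1(\mu)}$ whenever $w$ is a martingale jump with respect to some $n$, that is, $w$ is $\mathcal{F}_n$-measurable with $\mathbb{E}_{n-1}(w)=0$ (the case $n=0$ is trivial, since then $w$ is constant and $H_{\mathscr D}(w)=0$). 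The general inequality follows by applying this to each $d_kw$ and summing; the only point needing care is the limiting argument, discussed at the end.

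For such a $w$, I would first expand it in the Haar system one level down: since $w$ lies in $L^2(\mathcal{F}_n)\ominus L^2(\mathcal{F}_{n-1})$, we may write $w=\sum_{J\in A(\mathcal{F}_{n-1})}c_Jh_J$ with $c_J:=\langle w,h_J\rangle$, the functions $h_J$ being $L^2(\mu)$-normalized by Lemma \ref{meas}. Substituting into the defining formula \eqref{expr-Hd} and using the orthonormality of the Haar system — which forces $\langle w,h_{\widehat I}\rangle$ to vanish unless $\widehat I\in A(\mathcal{F}_{n-1})$, that is, unless $I\in A(\mathcal{F}_n)$ — together with $\delta(J_-)=1$ and $\delta(J_+)=-1$, one obtains the identity
$$H_{\mathscr D}(w)=\sum_{J\in A(\mathcal{F}_{n-1})}c_J\left(h_{J_-}-h_{J_+}\right).$$

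The crucial structural feature now is disjointness of supports: the intervals $J\in A(\mathcal{F}_{n-1})$ are pairwise disjoint, $h_J$ is supported in $J$, and $h_{J_-}$, $h_{J_+}$ are supported in the two halves $J_-$ and $J_+$ of $J$. Hence both the expansion of $w$ and that of $H_{\mathscr D}(w)$ consist of functions with pairwise disjoint supports, so
$$\|w\|_{L^1(\mu)}=\sum_{J}|c_J|\,\|h_J\|_{L^1(\mu)},\qquad \|H_{\mathscr D}(w)\|_{L^1(\mu)}=\sum_{J}|c_J|\left(\|h_{J_-}\|_{L^1(\mu)}+\|h_{J_+}\|_{L^1(\mu)}\right).$$
By \eqref{hI1} one has $\|h_I\|_{L^1(\mu)}=2\sqrt{m(I)}$, so the whole estimate reduces to the pointwise bound $\sqrt{m(J_-)}+\sqrt{m(J_+)}\le 2\sqrt{C}\,\sqrt{m(J)}$; since $\widehat{J_-}=\widehat{J_+}=J$, this is exactly the $m$-increasing hypothesis \eqref{mi}. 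This yields $\|H_{\mathscr D}(w)\|_{L^1(\mu)}\le 2\sqrt{C}\,\|w\|_{L^1(\mu)}$, with $C$ the constant from \eqref{mi}.

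I do not foresee a genuine difficulty; the main thing to handle with care is passing from jumps to an arbitrary $w\in L_d^1(\mu)$. For that I would apply the jump bound to the partial sums $\sum_{k=0}^N d_kw$: these converge to $w$ in $L^1(\mu)$, while their $H_{\mathscr D}$-images form a Cauchy sequence in $L^1(\mu)$ by the jump estimate, hence converge in $L^1(\mu)$; on the other hand, by the boundedness $H_{\mathscr D}\colon L^1(\mu)\to L^{1,\infty}(\mu)$ of Lemma \ref{thm-HD}(ii), $H_{\mathscr D}\big(\sum_{k=0}^N d_kw\big)\to H_{\mathscr D}(w)$ in $L^{1,\infty}(\mu)$. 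Comparing the two limits identifies the $L^1(\mu)$-limit with $H_{\mathscr D}(w)$ and transfers the bound $\|H_{\mathscr D}(w)\|_{L^1(\mu)}\le 2\sqrt{C}\,\|w\|_{L_d^1(\mu)}$ to the limit, completing the proof.
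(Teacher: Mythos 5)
Your proof is correct and follows essentially the same route as the paper: both reduce to a single martingale jump, observe that only the coefficients $\langle w,h_{\widehat I}\rangle$ with $I\in A(\mathcal F_n)$ survive so that $H_{\mathscr D}(w)$ lives on one generation, and then combine \eqref{hI1} with the $m$-increasing condition \eqref{mi}. Your use of the exact disjoint-support identities for $\|w\|_{L^1(\mu)}$ and $\|H_{\mathscr D}(w)\|_{L^1(\mu)}$ (where the paper instead bounds $|\langle w,h_{\widehat I}\rangle|\le\|h_{\widehat I}\|_{L^\infty(\mu)}\int_{\widehat I}|w|\,d\mu$) and your explicit limiting argument are only cosmetic refinements of the same proof.
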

\begin{proof}
Without loss of generality, we may assume that  $w$ is a martingale jump
with respect to some $n\in \mathbb{Z}_+$. Let us prove that
$\langle w, h_{\widehat{I}} \rangle:=\E(w h_{\widehat{I}})$
is $0$ except when $I$ belongs to  $A(\mathcal{F}_n)$. Indeed,
if $I\in A(\mathcal{F}_k)$ with some $k\in\{0,\ldots,n-1\},$ since
	$h_{\widehat{I}}$ is $\mathcal{F}_{k}$-measurable,
	it follows that
 $$\E_n(w h_{\widehat{I}} )=h_{\widehat{I}}\E_n w=0.$$ 	
 Next, if $I\in A(\mathcal{F}_k)$ with $k\in\{n+1,n+2,\ldots\},$ then
 $$\mathbb{E}_{k-1}(w h_{\widehat{I}})
 = w\mathbb{E}_{k-1}(h_{\widehat{I}})=0,$$
where we used  Lemma \ref{meas} for the last inequality.
Thus, $\E(w h_{\widehat{I}})=0$ when $I$ does not belong to  $A(\mathcal{F}_n)$.

Form \eqref{expr-Hd}, it follows that
\begin{equation}\label{H-d}
	H_{\mathscr{D}}(w) =\sum_{I\in A(\mathcal{F}_n)}
	\delta(I) \langle w, h_{\widehat{I}}  \rangle  h_I.
\end{equation}	
By the above equality, \eqref{hI1} and \eqref{mi}, we conclude that
\begin{align*}
	\left\|H_{\mathscr{D}}(w)\right\|_{L^1(\mu)}
	&\leq  \sum_{I\in A(\mathcal{F}_n)} \left|\langle w, h_{\widehat{I}}
	\rangle \right| \left\|h_{I}\right\|_{L^1(\mu)} \\
	&\leq \sum_{I\in A(\mathcal{F}_n)}\left\|h_{\widehat{I}}\right\|_{L^{\infty}(\mu)}
	\left\|h_{I}\right\|_{L^1(\mu)}\int_{\widehat{I}}|w|\,d\mu \\
	&\lesssim\sum_{I\in A(\mathcal{F}_n)}\frac{\sqrt{m(I)}}{\sqrt{m(\widehat{I})}}
	\int_{\widehat{I}}|w|\,d\mu\\
    &\lesssim \sum_{I\in A(\mathcal{F}_n)}
	\int_{\widehat{I}}|w|\,d\mu\lesssim \|w\|_{L^1},
\end{align*}
	This finishes the proof of Lemma \ref{HDwad}.
\end{proof}
\begin{remark}\label{H-dgen}
	It follows from \eqref{H-d} that, for a general martingale $f$,
$$
	H_{\mathscr{D}}(f) =\sum_{n\in\mathbb Z_{+}}
	\sum_{I\in A(\mathcal{F}_n)}  \delta(I) \langle d_n(f), h_{\widehat{I}}  \rangle  h_I.
$$
Moreover, if $a$ is an atom or a jump
with respect to $n$ and if $g$ is $A(\mathcal{F}_{n-1})$-measurable,
then $H_{\mathscr{D}}(ga)=  gH_{\mathscr{D}}(a)$.
\end{remark}

\begin{proposition}\label{HDh1L1}
	Let $\mu$ be an $m$-increasing Borel measure  on $[0,1)$.
	Then the dyadic Hilbert transform $H_{\mathscr{D}}$ is bounded from $H_1(\mu)$ to $L^1(\mu)$.
\end{proposition}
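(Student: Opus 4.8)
The plan is to prove Proposition \ref{HDh1L1} by combining the Davis decomposition with the atomic decomposition of $h_1$, exactly in the spirit of the proof of Proposition \ref{pro-1} (and of Example \ref{pro-TMS}), so that the whole problem reduces to checking boundedness on a single atom and on a single jump. First I would take $f\in H_1(\mu)$ and apply Lemma \ref{lem-Davis} to write $f=f^1+f^d$ with $f^1\in h_1(\mu)$, $f^d\in h_1^d(\mu)$, and $\|f^1\|_{h_1(\mu)}+\|f^d\|_{h_1^d(\mu)}\lesssim\|f\|_{H_1(\mu)}$. Since $H_{\mathscr D}$ is sublinear (indeed linear and bounded on $L^2(\mu)$ by Lemma \ref{thm-HD}(i)), it suffices to bound $\|H_{\mathscr D}(f^1)\|_{L^1(\mu)}$ and $\|H_{\mathscr D}(f^d)\|_{L^1(\mu)}$ separately.

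For the $h_1^d$ part, Lemma \ref{HDwad} already gives $\|H_{\mathscr D}(w)\|_{L^1(\mu)}\lesssim\|w\|_{L^1_d(\mu)}$ for a jump $w$; summing over the jumps $d_n(f^d)$ of $f^d$ and using that $L^1(\mu)$ is complete yields $\|H_{\mathscr D}(f^d)\|_{L^1(\mu)}\lesssim\|f^d\|_{h_1^d(\mu)}\lesssim\|f\|_{H_1(\mu)}$. (Here one should be a little careful that $H_{\mathscr D}(f^d)=\sum_n H_{\mathscr D}(d_n(f^d))$, which follows from the expansion in Remark \ref{H-dgen}.) For the $h_1$ part, I would invoke Lemma \ref{lem-atomic-decom} to write $f^1=\sum_{k\in\mathbb Z}\mu_k a^k$ a.s.\ with $\sum_k|\mu_k|\lesssim\|f^1\|_{h_1(\mu)}$, where each $a^k$ is a simple $(s,\infty)$-atom with respect to some $n_k\in\mathbb Z_+$ and some dyadic $Q^k\in A(\mathcal F_{n_k})$. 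Then Lemma \ref{HDsa} with $p=1$ gives $\|H_{\mathscr D}(a^k)\|_{L^1(\mu)}\leq 2[\mu(Q^k)]^{0}=2$, so $\|H_{\mathscr D}(f^1)\|_{L^1(\mu)}\leq\sum_k|\mu_k|\,\|H_{\mathscr D}(a^k)\|_{L^1(\mu)}\lesssim\|f^1\|_{h_1(\mu)}\lesssim\|f\|_{H_1(\mu)}$. Adding the two estimates finishes the proof.

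The one point that needs real care — and which I expect to be the main obstacle — is the justification of interchanging $H_{\mathscr D}$ with the (infinite) atomic/jump sums, i.e.\ that $H_{\mathscr D}(f^1)=\sum_k\mu_k H_{\mathscr D}(a^k)$ and similarly for $f^d$. As in the proof of Lemma \ref{Ufg}, the clean way to handle this is to first reduce (using Remark \ref{sup-davis} and Remark \ref{sup-atomic}) to the case where $f^1$ and $f^d$ are $L^2$-martingales and the sums converge in $L^2(\mu)$; then $H_{\mathscr D}$, being $L^2(\mu)$-bounded by Lemma \ref{thm-HD}(i), commutes with the limit, and the uniform $L^1(\mu)$-bound on the partial sums (furnished by Lemmas \ref{HDsa} and \ref{HDwad}) lets one pass to the limit and conclude that the full operator is bounded from $H_1(\mu)$ to $L^1(\mu)$ by density of $L^2$-martingales in $H_1$ (Lemma \ref{density}). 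Apart from this density/convergence bookkeeping, the argument is routine given the lemmas already established.
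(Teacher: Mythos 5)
Your proof is correct and follows essentially the same route as the paper: Davis decomposition into $f^1+f^d$, the atomic decomposition of $h_1$ for the first piece combined with Lemma \ref{HDsa} (with $p=1$), and Lemma \ref{HDwad} applied jump by jump for the second piece. The convergence/density bookkeeping you flag (reducing to $L^2$-martingales via Remarks \ref{sup-davis} and \ref{sup-atomic} and using the $L^2$-boundedness of $H_{\mathscr D}$) is a reasonable extra precaution that the paper leaves implicit, but it does not change the argument.
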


\begin{proof}
	Let $f\in H_1(\mu)$. Then the desired assertion follows from a combination of
	Lemmas \ref{davis-dec}, \ref{lem-atomic-decom},  \ref{HDsa}, and \ref{HDwad}.
	Indeed, by these lemmas, we obtain
	\begin{align*}
		\|H_{\mathscr{D}}(f)\|_{L^1(\mu)}
		&\leq 	\left\|H_{\mathscr{D}}\left(f^1\right)\right\|_{L^1(\mu)}
		+	 \left\|H_{\mathscr{D}}\left(f^d\right)\right\|_{L^1(\mu)}\\
		&\leq  \sum_{k\in \mathbb{Z}} \left|\mu_k\right|  	\left\|H_{\mathscr{D}}
		\left(a^k\right)\right\|_{L^1(\mu)} +\sum_{n\in \mathbb{Z}_{+}} 	
		\left\|H_{\mathscr{D}}\left(d_n\left(f^d\right)\right)\right\|_{L^1(\mu)} \\
		&\lesssim \left\|f^1\right\|_{h_1(\mu)}+ \left\|f^d\right\|_{h_1^d(\mu)}\lesssim \|f\|_{H_1(\mu)},
	\end{align*}
	where $(a^k)_{k\in \mathbb{Z}}$ is  a  sequence of simple $(s,\infty)$-atoms.
	This finishes the proof of Proposition \ref{HDh1L1}.
\end{proof}

Now, we are ready to show Proposition \ref{DHT}.

\begin{proof}[Proof of Proposition \ref{DHT}]
The second assertion of Remark \ref{H-dgen} implies \eqref{comT}
and \eqref{comTd}. So, with the help of both Lemma \ref{thm-HD}
and Proposition \ref{HDh1L1}, by Definition \ref{def-K} with $q=1$,
to complete the proof of present proposition,
it suffices to show that both \eqref{AsT}  with $q=1$ and \eqref{AsTd}  with $q=1$ hold true
for $H_{\mathscr{D}}$.  Using Lemma \ref{HDsa} and repeating the argument used
in the proof of Example \ref{pro-TMS}, we find that  \eqref{AsT} with $q=1$
holds true for $H_{\mathscr{D}}$. Now, assume that $b\in \mathrm{BMO}(\mu)$ and
$w\in L_d^1(\mu)$ is a martingale jump with respect to some $n\in \mathbb{Z}_+$.
	Then
	\begin{align*}	
		&\|(b-b_{n-1})H_{\mathscr{D}}(w)\|_{L^1(\mu)} \\
		&\quad\leq 	\|(b-b_{n})H_{\mathscr{D}}(w)\|_{L^1(\mu)}
		+	 \|(b_n-b_{n-1})H_{\mathscr{D}}(w)\|_{L^1(\mu)}\\
		&\quad=:\mathrm{I}_1+ \mathrm{I}_2.
	\end{align*}
	By  Lemma \ref{HDwad}, we have
$$\mathrm{I}_2\leq \|b_n-b_{n-1}\|_{L^{\infty}(\mu)} \|H_{\mathscr{D}}(w)\|_{L^1(\mu)}
\lesssim \|w\|_{L^1}\|b\|_{\mathrm{bmo}^d(\mu)}
\lesssim\|w\|_{L^1} \|b\|_{\mathrm{BMO}(\mu)}.$$
	Note that $H_{\mathscr{D}}(w)$, which is given by \eqref{H-d},
	is $\mathcal{F}_{n+1}$-measurable. Therefore,
	\begin{align*}
		\mathrm{I}_1&=    \int_{0}^1 \left[\mathbb{E}_{n+1}\left(|b-b_{n}|\right)\right]
		\times \left|H_{\mathscr{D}}(w)\right|\,d\mu\\
		&\leq \|b\|_{\mathrm{BMO}(\mu)} \left\|H_{\mathscr{D}}(w)\right\|_{L^1(\mu)}
		\lesssim \|w\|_{L^1}\|b\|_{\mathrm{BMO}(\mu)}.
	\end{align*}
Finally, we conclude from the estimates of both $\mathrm{I}_1$ and $\mathrm{I}_2$ that
	$$\left\|(b-b_{n-1})H_{\mathscr{D}}(w)\right\|_{L^1(\mu)}
	\lesssim \|w\|_{L^1}\|b\|_{\mathrm{BMO}(\mu)}.$$
Therefore, \eqref{AsTd} with $q=1$ also holds true for $H_{\mathscr{D}}$.
This finishes the proof of Proposition \ref{DHT}.
\end{proof}

Applying Proposition \ref{DHT}, the following corollary can be deduced
directly from both Corollary \ref{weak} and Theorem \ref{cmT}.
\begin{corollary}
	Let $\mu$ be an $m$-increasing Borel measure  on $[0,1)$
	and let $b\in \mathrm{BMO}(\mu)$ be non-constant. Then
	\begin{enumerate}[{\rm (i)}]
		\item the commutator $[H_{\mathscr{D}},b]$ is
		bounded from $H_1(\mu)$ to $L^{1,\infty}(\mu)$;
		\item the commutator $[H_{\mathscr{D}},b]$ is
		bounded from $H_1^b(\mu)$ to $L^{1}(\mu)$.
	\end{enumerate}
\end{corollary}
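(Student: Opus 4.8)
The plan is to read this corollary off the general machinery already assembled, together with the one genuinely new input of the present subsection. That input is Proposition \ref{DHT}: under the hypothesis that $\mu$ is $m$-increasing, the dyadic Hilbert transform $H_{\mathscr{D}}$ belongs to the class $\mathcal{K}_1$ of Definition \ref{def-K} (that is, $\mathcal{K}_q$ with $q:=1$). This is the only step in which the structure of $\mu$ enters, in particular its possible failure to be dyadically doubling as in Example \ref{Nrdf}; it is obtained by checking conditions (i)--(iv) of Definition \ref{def-K} for $H_{\mathscr{D}}$, using the $L^2(\mu)$-bound and the weak $(1,1)$-bound of L\'opez-S\'anchez et al. (Lemma \ref{thm-HD}), the $H_1(\mu)\to L^1(\mu)$-bound of Proposition \ref{HDh1L1}, the support and size estimates for atoms and for jumps in Lemmas \ref{HDsa} and \ref{HDwad}, and the commutation identity $H_{\mathscr{D}}(ga)=gH_{\mathscr{D}}(a)$ valid for $\mathcal{F}_{n-1}$-measurable $g$ (Remark \ref{H-dgen}), which supplies the commutator conditions \eqref{comT} and \eqref{comTd}.

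Granting $H_{\mathscr{D}}\in\mathcal{K}_1$, part (i) is immediate from Corollary \ref{weak} applied with $T:=H_{\mathscr{D}}$ and $q:=1$: there is a constant $C$, depending only on $b$ and on the $m$-increasing constant of $\mu$, such that $\|[H_{\mathscr{D}},b](f)\|_{L^{1,\infty}(\mu)}\leq C\|f\|_{H_1(\mu)}$ for every $f\in H_1(\mu)$. Part (ii) is Theorem \ref{cmT} applied with the same $T$ and $q$ and the given $b$: $\|[H_{\mathscr{D}},b](f)\|_{L^1(\mu)}\leq C\|f\|_{H_1^b(\mu)}$ for every $f\in H_1^b(\mu)$, where $H_1^b(\mu)$ is the martingale Hardy space of Definition \ref{def-h1b} attached to $H_1(\mu)$ and $\mathrm{BMO}(\mu)$. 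The assumption that $b$ is non-constant serves only to make $H_1^b(\mu)$ the genuinely largest subspace of $H_1(\mu)$ on which $[H_{\mathscr{D}},b]$ maps into $L^1(\mu)$, in the sense of Theorem \ref{4ec} and Remark \ref{rem-commutator}; for constant $b$ the commutator vanishes and the statement is vacuous.

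Consequently there is no real obstacle at the level of the corollary itself: it is a two-line deduction once Proposition \ref{DHT} is in hand. The point one should actually worry about — and the substance hidden behind this corollary — is the verification of the weighted estimates \eqref{AsT} and especially \eqref{AsTd} for $H_{\mathscr{D}}$: for a martingale jump $w$ one writes $b-b_{n-1}=(b-b_n)+(b_n-b_{n-1})$, estimates the first piece using that $H_{\mathscr{D}}(w)$ is $\mathcal{F}_{n+1}$-measurable together with the John--Nirenberg inequality for $\mathrm{BMO}(\mu)$, and the second piece by Lemma \ref{HDwad} together with $\|b_n-b_{n-1}\|_{L^{\infty}(\mu)}\lesssim\|b\|_{\mathrm{BMO}(\mu)}$; but this is precisely the content of Proposition \ref{DHT}, which we are entitled to invoke. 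Thus the proof of the corollary reduces to the two citations above.
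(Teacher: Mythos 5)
Your proof is correct and is exactly the paper's own argument: the corollary is deduced by combining Proposition \ref{DHT} (which places $H_{\mathscr{D}}$ in $\mathcal{K}_1$) with Corollary \ref{weak} for part (i) and Theorem \ref{cmT} for part (ii). Your additional remarks on where the real work lies (the verification of \eqref{AsT} and \eqref{AsTd} inside Proposition \ref{DHT}) are accurate but not needed for the corollary itself.
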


\begin{remark}
Denote by $H_{\mathscr{D}}^*$ the adjoint operator of
the dyadic Hilbert transform $H_{\mathscr{D}}$. 	
Let $\mu$ be an $m$-decreasing Borel measure  on $[0,1)$
and let $b\in \mathrm{BMO}(\mu)$ be non-constant. Similarly to the above corollary,
we can show that the commutator $[H_{\mathscr{D}}^*,b]$ is bounded from $H_1(\mu)$
to $L^{1,\infty}(\mu)$ and also from $H_1^b(\mu)$ to $L^{1}(\mu)$; we omit the details.
Besides,  one may also concern the $L^p$-boundedness, with $p\in(1,\infty)$, of
both the commutators $[H_{\mathscr{D}},b]$ and $[H_{\mathscr{D}}^*,b]$.
To limit the length of this article, we will not push this question here.
\end{remark}

\subsection{Ces\`{a}ro means of Walsh--Fourier series}\label{CWF}

In this subsection, we apply both Theorems \ref{bdT} and \ref{cmT} to study the commutator
of  the maximal operator of  Ces\`{a}ro means
of Walsh--Fourier series of functions in $L^1[0,1)$.
Throughout this subsection,  we work with
$([0,1), \mathcal{F}, \nu; (\mathcal{F}_n)_{n\in \mathbb{Z}_+})$,
where $\nu$ is the Lebesgue measure and each $\mathcal{F}_n$ with $n\in \mathbb{Z}_+$
is the same as in \eqref{FnP}  with $p_k=2$ for any $k\in \mathbb{Z}_+$.
Then the filtration $(\mathcal{F}_n)_{n\in \mathbb{Z}_+})$ is regular.

The following basic symbols in the Walsh--Fourier
analysis are taken from \cite{SWS1990,We2002}.
Every point $t\in [0,1)$ can be written as follows:
$$t=\sum_{k\in \mathbb{Z}_+} t_k 2^{-k-1}\mbox{ with } t_k\in\{0,1\} \mbox{ for any }k\in \mathbb{Z}_+.$$
If there are two different forms for the same $t$,
we choose the one for which $\lim_{k\to\infty}t_k=0$.
The {\it Rademacher functions} $(r_n)_{n\in \mathbb{Z}_+}$ on $[0,1)$
are defined by setting, for any $n\in \mathbb{Z}_+$,
$$r_n(t):= \exp(\pi i t_k), \ \forall\,t\in [0,1).$$
The product system generated by the Rademacher functions is the
Walsh system $(w_n)_{n\in \mathbb{Z}_+}$:
for any $n\in \mathbb{Z}_+$ with
$n=\sum_{k\in \mathbb{Z}_+} n_k 2^k$ ($n_k\in\{0,1\}$ for any $k\in \mathbb{Z}_+$),
\begin{equation*}
	w_{n}:=\prod_{k=0}^{\infty}r_k^{n_k}.
\end{equation*}
The {\it dyadic addition} $\oplus$ on $[0,1)$ is defined in the
following way: for any $t,s\in [0,1)$ with
$$t=\sum_{k\in \mathbb{Z}_+} t_k 2^{-k-1}\mbox{ and } s
=\sum_{k\in \mathbb{Z}_+} s_k2^{-k-1},$$
where $t_k,s_k\in\{0,1\}$ for any $k\in \mathbb{Z}_+,$ let
$$t\oplus s:=\sum_{k\in \mathbb{Z}_+} |t_k-s_k| 2^{-k-1}.$$
For any $t\in [0,1)$ and $I\in \mathcal{F}$, let
$$I\oplus t:=\{x\oplus t: x\in I\}.$$
By Theorem 4 in \cite[p.\,13]{SWS1990}, we find that
\begin{align}\label{lt}
	\nu(I\oplus t)=\nu(I), \ \forall\,t\in [0,1),
\end{align}
and, moreover, if $I=[0,2^{-n})$ for some $n\in \mathbb{N}$ and
$t=\sum_{k\in \mathbb{Z}}t_k 2^{-k-1}\in [\ell 2^{-n}, (\ell+1)2^{-n})$
for some $\ell\in \mathbb{Z}_+$, then
\begin{equation}\label{oplus-set}
	I\oplus t = \begin{cases}
		[\ell 2^{-n}, (\ell+1)2^{-n}), & \bar{t}\notin I, \\
		[\ell 2^{-n}, (\ell+1)2^{-n}], & \bar{t}\in I,
	\end{cases}
\end{equation}
where $\bar{t}:=\sum_{k\in \mathbb{Z}_+} \bar{t}_k2^{-k-1}$ with
\[
\bar{t}_k=\begin{cases}
	0, &  k\in\{0,\ldots,n-1\},\\
	|1-t_k|, & k\in\{n,n+1,\ldots\}.
\end{cases}\]

\begin{remark}
	Denote by $\mathbf{Z}_{2}$ the discrete cyclic group of order $2$,
	namely the set $\{0,1\}$ with the discrete topology and modulo $2$ addition.
	The {\it dyadic group} $G$  is then defined as the product
	$$G:=\prod_{k=0}^{\infty}\mathbf{Z}_{2},$$
	equipped with the product topology. We refer the reader to \cite[Chapter 1.3]{SWS1990}
	for the details  that the dyadic group $G$ can be identified with the
	interval $[0,1)$. Particularly, the Walsh functions on $[0,1)$
	can be viewed as the characters of the dyadic group $G$.
\end{remark}
For any $f \in L^1[0,1)$, the {\it $n$-th  Walsh--Fourier coefficient}
of $f$ is defined by setting, for any $n\in\mathbb{Z}_{+}$,
$$
\widehat {f}(n) := \int_0^1 f(t) w_{n}(t)\,d\nu(t).
$$
The {\it Walsh--Dirichlet kernels} $(D_n)_{n\in\mathbb N}$
are defined by setting, for any $n\in\mathbb N,$
$
D_n := \sum_{k=0}^{n-1} w_k
$
which satisfies
\begin{equation}\label{e5}
	D_{2^n}(x) = \left\{
	\begin{array}{ll}
		2^n &\ \hbox{if $x \in [0,2^{-n})$,} \\
		0 &\ \hbox{if $x \in [2^{-n},1)$.}
	\end{array}
	\right.
\end{equation}
For any $n\in\mathbb{N}$, denote by $S_{n}(f)$ the
{\it $n$-th partial sum} of the Walsh--Fourier series
of  $f\in L^1[0,1)$, that is, for any $x\in [0,1),$
$$
	S_{n}(f)(x) := \sum_{k=0}^{n-1} \widehat {f}(k)w_{k}(x)=f\ast D_n(x)
	=\int_0^1 f(t) D_{n}(x\oplus t)\, d\nu(t).
$$
For any $n\in \mathbb{Z}_+$, $f\in L^1[0,1)$, and $Q\in \mathcal F_n$, let
$$f_Q:=\frac{1}{\nu(Q)}\int_Qf\,d\nu.$$
Then, by the definition of the condition expectation, we obtain, for any $n\in \mathbb{Z}_+$,
$$\mathbb E_n(f)=\sum_{Q\in A(\mathcal{F}_n)}f_Q\mathbf{1}_Q.$$
From \eqref{e5}, it is easy to deduce that,
for any $n\in\mathbb N$ and $x\in [0,1)$,
$$
	S_{2^n}(f)(x)=\int_0^1 f(t) D_{2^n}(x\oplus t) \,d\nu(t)=\mathbb{E}_n(f)(x).
$$

The {\it Ces\`{a}ro means} $(	\sigma_{n})_{n\in\mathbb{N}}$
of $f\in L^1[0,1)$ are defined by setting,
for any $n\in\mathbb N$ and $x\in [0,1),$
$$
	\sigma_{n}(f)(x) :=f\ast K_n(x)=\int_0^1 f(t) K_n(x \oplus t)  \,d\nu(t),
$$
where $K_n$ denotes the {\it Walsh--Fej\'er kernel}  defined by setting
$$
K_n := \frac{1}{n} \sum_{k=1}^n D_n.
$$

Since the filtration $(\mathcal{F}_n)_{n\in \mathbb{Z}_+})$ is regular,
it follows from \cite[p.\,189]{NS2017} that
there exists a positive constant $C$ satisfying, for any $b\in \mathrm{BMO}[0,1)$,
$$\frac1C\|b\|_{\mathrm{BMO}[0,1)}\le \sup_{n\in\mathbb Z_{+}}
\sup_{Q\in A(\mathcal{F}_n)}\frac{1}{\nu(Q)}\int_Q\left|b-b_{Q}\right|\,d\nu
\le C\|b\|_{\mathrm{BMO}[0,1)}.$$

We first prove the following technical lemma.
\begin{lemma}\label{lem-cesa}
Let the maximal operator $\sigma:=\sup_{n\in\mathbb N}\sigma_n.$
Then there exists a positive constant $C$ such that,
for any $b\in \mathrm{BMO}[0,1)$ and any simple $\infty$-atom $a$
with respect to some $n\in \mathbb{Z}_+$ and some $Q\in A(\mathcal{F}_n)$,
$$\left\|\left(b-b_{\widehat{Q}}\right)\sigma(a)\right\|_{L^1[0,1)}
\leq C\|b\|_{\mathrm{BMO}[0,1)},$$
where $\widehat{Q}$ denotes the dyadic parent of $Q$.
\end{lemma}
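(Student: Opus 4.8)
The plan is to split the integral at the dyadic parent $\widehat Q$; since $b_{\widehat Q}$ is a constant, one has
$$\left\|\left(b-b_{\widehat Q}\right)\sigma(a)\right\|_{L^1[0,1)}
=\int_{\widehat Q}\left|b-b_{\widehat Q}\right|\sigma(a)\,d\nu+\int_{[0,1)\setminus\widehat Q}\left|b-b_{\widehat Q}\right|\sigma(a)\,d\nu=:\mathrm J_1+\mathrm J_2 .$$
For $\mathrm J_1$ I would apply the H\"older inequality with exponents $2,2$. The maximal operator $\sigma$ of Walsh--Fej\'er means is bounded on $L^2[0,1)$ (indeed on $L^p[0,1)$ for every $p\in(1,\infty]$; see \cite{SWS1990, We2002}), and $a$ is a simple $\infty$-atom supported in $Q$ with $\|a\|_{L^{\infty}[0,1)}\le[\nu(Q)]^{-1}$, so $\|\sigma(a)\|_{L^2[0,1)}\lesssim\|a\|_{L^2[0,1)}\le[\nu(Q)]^{-1/2}$. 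On the other hand, the averaged description of $\mathrm{BMO}[0,1)$ recalled right before the lemma, together with the John--Nirenberg inequality, gives $\|(b-b_{\widehat Q})\mathbf 1_{\widehat Q}\|_{L^2[0,1)}\lesssim\|b\|_{\mathrm{BMO}[0,1)}[\nu(\widehat Q)]^{1/2}$. Since $\nu(\widehat Q)=2\nu(Q)$, multiplying these yields $\mathrm J_1\lesssim\|b\|_{\mathrm{BMO}[0,1)}$.

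For $\mathrm J_2$ I would decompose $[0,1)\setminus\widehat Q$ into the dyadic shells $R_k:=Q_k\setminus Q_{k-1}$, $k\ge1$, where $Q_0:=\widehat Q$ and $Q_k$ is the $k$-th dyadic ancestor of $\widehat Q$, so that $\nu(R_k)\le\nu(Q_k)=2^k\nu(\widehat Q)$. A telescoping over $Q_{k-1}\subset\cdots\subset Q_0$ using the averaged $\mathrm{BMO}$ estimate gives $|b_{Q_k}-b_{\widehat Q}|\lesssim k\|b\|_{\mathrm{BMO}[0,1)}$, while $\int_{R_k}|b-b_{Q_k}|\,d\nu\le\int_{Q_k}|b-b_{Q_k}|\,d\nu\lesssim\nu(Q_k)\|b\|_{\mathrm{BMO}[0,1)}$; hence
$$\int_{R_k}\left|b-b_{\widehat Q}\right|\sigma(a)\,d\nu\le\|\sigma(a)\|_{L^{\infty}(R_k)}\int_{R_k}\left(\left|b-b_{Q_k}\right|+\left|b_{Q_k}-b_{\widehat Q}\right|\right)d\nu\lesssim(k+1)\,\nu(Q_k)\,\|b\|_{\mathrm{BMO}[0,1)}\,\|\sigma(a)\|_{L^{\infty}(R_k)} .$$
Thus the proof reduces to a pointwise bound $\|\sigma(a)\|_{L^{\infty}(R_k)}\lesssim 2^{-2k}[\nu(Q)]^{-1}$ (in fact $2^{-(1+\delta)k}[\nu(Q)]^{-1}$ for any $\delta>0$ would do): then $\int_{R_k}|b-b_{\widehat Q}|\sigma(a)\,d\nu\lesssim(k+1)2^{-k}\|b\|_{\mathrm{BMO}[0,1)}$ and summation over $k\ge1$ gives $\mathrm J_2\lesssim\|b\|_{\mathrm{BMO}[0,1)}$.

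To obtain that pointwise bound I would exploit the cancellation $\int_0^1 a\,d\nu=0$ (which holds since $\mathbb E_n(a)=0$): fixing $\tau\in Q$, for $x\in R_k$,
$$\sigma(a)(x)=\sup_{m\in\mathbb N}\left|\int_Q a(t)\left[K_m(x\oplus t)-K_m(x\oplus\tau)\right]d\nu(t)\right|\le\|a\|_{L^{\infty}}\,\nu(Q)\sup_{m\in\mathbb N}\ \sup_{t\in Q}\left|K_m(x\oplus t)-K_m(x\oplus\tau)\right| ,$$
and then invoke the known localization and dyadic smoothness estimates for the Walsh--Fej\'er kernels $K_m$ (obtained from the block structure of the Dirichlet kernels together with \eqref{e5}; see \cite{SWS1990, We2002}): by \eqref{lt} and \eqref{oplus-set}, as $t$ runs over $Q$ the point $x\oplus t$ stays in a single dyadic interval of length $\nu(Q)$ contained in $[2^k\nu(Q),\,2^{k+1}\nu(Q))$, and on such an interval those estimates force $\sup_{m,t}|K_m(x\oplus t)-K_m(x\oplus\tau)|\lesssim 2^{-2k}[\nu(Q)]^{-1}$. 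This is the crux and the main obstacle: since $K_m$ is not smooth, the required gain must be extracted from the arithmetic/dyadic structure of the Walsh system (a Paley-type splitting of $D_m$), and it must be genuinely geometric so as to overcome both the volume growth $\nu(Q_k)\sim2^k\nu(Q)$ and the logarithmic $\mathrm{BMO}$-factor $k+1$ --- recall $\sup_m\int_0^1|K_m|\,d\nu=\infty$, so no crude bound on $K_m$ suffices and the atom's cancellation is indispensable. The remaining ingredients --- the H\"older/John--Nirenberg step for $\mathrm J_1$, the shell decomposition, and the $\mathrm{BMO}$ telescoping --- are routine.
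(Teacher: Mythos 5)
Your treatment of $\mathrm J_1$ is fine (the paper uses the $L^\infty$-boundedness of $\sigma$ instead of $L^2$-boundedness plus John--Nirenberg, but both work), and the shell decomposition with $\mathrm{BMO}$ telescoping is the right framework for the ``smooth'' part of $\sigma(a)$ away from $Q$. The gap is exactly where you locate the crux: the pointwise bound $\|\sigma(a)\|_{L^{\infty}(R_k)}\lesssim 2^{-2k}[\nu(Q)]^{-1}$ is false, and no bound of the form $2^{-(1+\delta)k}[\nu(Q)]^{-1}$ holds either. The Walsh--Fej\'er kernel does not decay away from $0$ like the classical Fej\'er kernel; it has spikes at the dyadic rationals. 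Quantitatively, for $Q=[0,2^{-n})$ the sharp estimate (due to Weisz, and quoted in the paper's proof) for an atom $a$ on $Q$ contains, besides terms of size $\approx k\,2^{k-n}$ on the shell $[2^{-k},2^{-k+1})$, a spike term of height $\approx 2^{k}$ supported on the interval $[2^{-k},2^{-k}\oplus 2^{-n})$ of length $\nu(Q)=2^{-n}$ inside that shell. In your indexing ($R_\ell=[2^{-n+\ell},2^{-n+\ell+1})$, i.e.\ $\ell=n-k$) this means $\|\sigma(a)\|_{L^{\infty}(R_\ell)}\approx 2^{n-\ell}$, while your claimed bound is $2^{n-2\ell}$; the two differ by a factor $2^{\ell}$. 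The atom's cancellation is already built into Weisz's estimate --- it produces the localization of the spike to a set of measure $\nu(Q)$ and the decay of the smooth part, but it does not remove the spikes.

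Moreover the strategy itself, not just the exponent, must change: with the correct value $\|\sigma(a)\|_{L^{\infty}(R_\ell)}\approx 2^{n-\ell}$, your shell estimate gives
$\|\sigma(a)\|_{L^{\infty}(R_\ell)}\int_{R_\ell}|b-b_{\widehat Q}|\,d\nu\approx 2^{n-\ell}\cdot(\ell+1)2^{\ell-n}\|b\|_{\mathrm{BMO}[0,1)}\approx(\ell+1)\|b\|_{\mathrm{BMO}[0,1)}$
per shell, and the sum over $\ell$ diverges like $n^{2}$. To beat this one has to use that the spike lives on a dyadic interval $I$ of length $\nu(Q)$, pair it with the oscillation of $b$ over that tiny interval, and telescope from $I$ up to $\widehat Q$, which costs only a factor $n-k$; since $\sum_{k}2^{k-n}(n-k)<\infty$, the spikes then contribute a bounded amount. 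This is precisely what the paper's terms $\mathsf B_2$ and $\mathsf B_3$ do, starting from the explicit Weisz upper bound for $\sigma(a)$ rather than from a uniform decay estimate on shells. (A minor side point: your parenthetical claim that $\sup_m\int_0^1|K_m|\,d\nu=\infty$ is incorrect --- the Walsh--Fej\'er kernels satisfy $\sup_m\|K_m\|_{L^1[0,1)}\le 2$; it is the Dirichlet kernels whose $L^1$ norms grow logarithmically.)
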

\begin{proof}
Noting that the Lebesgue measure $\nu$ is translation invariant with respect to
the dyadic addition (see \eqref{lt} and also \cite[p.\,238]{We1996-AM}).
According to \eqref{oplus-set}, we find that,
for any $Q\in A(\mathcal F_n)$ with
$Q=[\ell 2^{-n}, (\ell+1)2^{-n})$ for some $\ell\in \mathbb{Z}_+$,
$Q=[0,2^{-n})\oplus t$ for some fixed $t\in Q$.
So, without loss of generality, we may assume that
$Q=[0,2^{-n})$.
We first write
\begin{align*}
	\left\|\left(b-b_{\widehat{Q}}\right)\sigma(a)\right\|_{L^1[0,1)}
	&= \int_Q \left|b-b_{\widehat{Q}}\right|\sigma(a) \,d\nu
	+ \int_{[0,1)\setminus Q}	\cdots \\
	&=:\mathrm{I}_1+\mathrm{I}_2.
\end{align*}
Since $\sigma$ is bounded on $L^{\infty}[0,1)$ (this can be easily
deduced from Theorem 3.4 in \cite{We2002})
and $a$ is a simple $\infty$-atom, it follows that
\begin{align}\label{cei}
	\mathrm{I}_1
	&\leq \|\sigma(a)\|_{L^{\infty}[0,1)}
	\int_Q \left|b-b_{\widehat{Q}}\right| \,d\nu\\
	&\lesssim\|a\|_{L^{\infty}[0,1)} 2^{-n+1} \|b\|_{\mathrm{BMO}[0,1)}
	\lesssim\|b\|_{\mathrm{BMO}[0,1)}.\nonumber
\end{align}

For the term $\mathrm{I}_2$, it was proved in  \cite[p.\,238]{We1996-AM} that,
for any $x\in [0,1)\setminus Q$,
\begin{align*}
	\sigma(a)(x)&\lesssim \sum_{j=0}^{n-1}\sum_{i=j}^{n-1}\left\{2^{i-n}
	\mathbf{1}_{[2^{-n},2^{-i})}(x)+2^{i-n}
	\mathbf{1}_{[2^{-j-1},2^{-j-1}\oplus 2^{-i})}(x)\right\}\\
	&\quad+2^{n} \sum_{j=0}^{n-1}2^{j}\sum_{i=n}^{\infty}
	2^{-i}\mathbf{1}_{[2^{-j-1},2^{-j-1}\oplus 2^{-n})}(x).
\end{align*}
Thus, we have
\begin{align*}
	\mathrm{I}_2&=\sum_{k=1}^n \int_{2^{-k}}^{2^{-k+1}}
	\left|b-b_{\widehat{Q}}\right|\sigma(a) \,d\nu\\
	&\leq\sum_{k=1}^n \sum_{j=0}^{n-1}
	\sum_{i=j}^{k-1} 2^{i-n} \int_{2^{-k}}^{2^{-k+1}}
	\left|b-b_{\widehat{Q}}\right| \,d\nu \\
	&\quad+
	\sum_{k=1}^n \sum_{j=0}^{n-1}\sum_{i=j}^{n-1}2^{i-n}
	\int_{2^{-k}}^{2^{-k+1}} \left|b-b_{\widehat{Q}}\right|
	\mathbf{1}_{[2^{-j-1},2^{-j-1}\oplus 2^{-i})}\,d\nu\\
	&\quad+ \sum_{k=1}^n 2^{n} 2^{k-1}\sum_{i=n}^{\infty}2^{-i}
	 \int_{2^{-k}}^{2^{-k+1}}
	\left|b-b_{\widehat{Q}}\right| \mathbf{1}_{[2^{-k},2^{-k}
		\oplus 2^{-n})}\,d\nu\\
	&=:\mathsf{B}_1+\mathsf{B}_2+\mathsf{B}_3.
\end{align*}
We recall that, if $Q\in A(\mathcal{F}_n) $ and $Q'\in A(\mathcal{F}_k)$
with $k\in \{0,\ldots,n\}$ such that $Q\subset Q'$, then
\begin{align}\label{basic}
	|f_{Q'}-f_Q|\lesssim (n-k)\|f\|_{\mathrm{BMO}[0,1)}.
\end{align}
This elementary inequality will be frequently used in the sequel.

To estimate $\mathsf{B}_1$, let  $Q_{k}:=[0,2^{-k})$ for every $1\leq k\leq n$.
Using \eqref{basic}, we obtain
\begin{align*}
	\mathsf{B}_1&\leq 2^{-n}\sum_{k=1}^n k2^k \int_{2^{-k}}^{2^{-k+1}}
	\left|b-b_{Q_{k-1}}+b_{Q_{k-1}}-b_{\widehat{Q}}\right| \,d\nu\\
	&\lesssim2^{-n}\sum_{k=1}^n k2^k \int_{2^{-k}}^{2^{-k+1}}
	\left|b-b_{Q_{k-1}}\right|\,d\nu+ 2^{-n}
	\sum_{k=1}^n k2^k 2^{-k}\left|b_{Q_{k-1}}-b_{\widehat{Q}}\right|\\
	&\lesssim 2^{-n}\left[\sum_{k=1}^n k
	+\sum_{k=1}^n k (n-k)\right]\|b\|_{\mathrm{BMO}[0,1)}
	\lesssim\|b\|_{\mathrm{BMO}[0,1)}.
\end{align*}

Now we estimate $\mathsf{B}_2$.
For any $ i,j\in\{1,\ldots,n\}$, let
$I_{i,j}:=[2^{-j},2^{-j}\oplus 2^{-i})=[0,2^{-i})\oplus 2^{-j}$.
It is clear that $\nu(I_{i,j})=2^{-i}$ and
$I_{i,j}\in A(\mathcal{F}_i)$ for each choice of $i,j$;
see \eqref{lt}. According to \eqref{oplus-set}, we have
\[I_{i,j+1}=\begin{cases}
	[0,2^{-i}), & i=j,\\
	[2^{-j-1}, 2^{-i}+2^{-j-1}), & i\geq j+1.
\end{cases}\]
Hence, by this, we have
\begin{align*}
	\mathsf{B}_2&= \sum_{k=1}^n \sum_{j=0}^{n-1}\sum_{i=j+1}^{n-1}2^{i-n}
	\int_{2^{-k}}^{2^{-k+1}} \left|b-b_{\widehat{Q}}\right|
	\mathbf{1}_{[2^{-j-1},2^{-j-1}+ 2^{-i})}\,d\nu\\
	&\quad\quad+\sum_{k=1}^n \sum_{j=0}^{n-1}2^{j-n} \int_{2^{-k}}^{2^{-k+1}}
	\left|b-b_{\widehat{Q}}\right| \mathbf{1}_{[0, 2^{-j})}\,d\nu\\
	&=:\mathsf{B}_{2,1}+\mathsf{B}_{2,2}.
\end{align*}
Observe that,
for any $i\geq j+1$, $I_{i,k}\subset [2^{-k}, 2^{-k+1})$  and
$I_{i,j}\cap [2^{-k},2^{-k+1})=\emptyset$ whenever $j\neq k$.
 Using this and \eqref{basic}, we obtain
\begin{align*}
	\mathsf{B}_{2,1}&=\sum_{k=1}^n \sum_{i=k}^{n-1}2^{i-n} \int_{I_{i,k}}
	\left|b-b_{I_{i,k}}+b_{I_{i,k}}-b_{Q_{k-1}}
	+b_{Q_{k-1}}-b_{\widehat{Q}}\right| \,d\nu\\
	&\leq \sum_{k=1}^n \sum_{i=k}^{n-1}2^{i-n} \int_{I_{i,k}} \left|b-b_{I_{i,k}}\right|
	\,d\nu +\sum_{k=1}^n \sum_{i=k}^{n-1}2^{i-n}2^{-i}
	\left|b_{I_{i,k}}-b_{Q_{k-1}}\right|\\
	&\quad+\sum_{k=1}^n \sum_{i=k}^{n-1}2^{i-n}2^{-i}
	\left|b_{Q_{k-1}}-b_{\widehat{Q}}\right|\\
	&\lesssim 2^{-n}\left[\sum_{k=1}^n
	\sum_{i=k}^{n-1}+\sum_{k=1}^n \sum_{i=k}^{n-1}(i-k)
	+\sum_{k=1}^n \sum_{i=k}^{n-1}(n-k)\right]\|b\|_{\mathrm{BMO}[0,1)}\\
	&\lesssim\|b\|_{\mathrm{BMO}[0,1)}.
\end{align*}
Next, by \eqref{basic}, we find that
\begin{align*}
	\mathsf{B}_{2,2}&\leq \sum_{k=1}^n
	\sum_{j=0}^{k-1}2^{j+1-n} \int_{2^{-k}}^{2^{-k+1}}
	\left|b-b_{Q_{k-1}}+b_{Q_{k-1}}-b_{\widehat{Q}}\right| \,d\nu\\
	&\lesssim\sum_{k=1}^{n}\sum_{j=0}^{k-1}2^{j+1-n}
	\left[2^{-k+1} +2^{-k}(n-k)\right]\|b\|_{\mathrm{BMO}[0,1)}\\
	&\lesssim\|b\|_{\mathrm{BMO}[0,1)}.
\end{align*}

Finally, for $\mathsf{B}_3$, it follows from both
the fact $I_{n,k}\subset Q_{k-1}$ and \eqref{basic} that
\begin{align*}
	\mathsf{B}_3
	&\leq 	\sum_{k=1}^n 2^k  \int_{I_{n,k}}
	\left|b-b_{I_{n,k}}+b_{I_{n,k}}-b_{Q_{k-1}}
	+b_{Q_{k-1}}-b_{\widehat{Q}}\right| \,d\nu\\
	&\lesssim \sum_{k=1}^n 2^k \int_{I_{n,k}}
	\left|b-b_{I_{n,k}}\right| \,d\nu
	+ \sum_{k=1}^n 2^k 2^{-n}\left|b_{I_{n,k}}-b_{Q_{k-1}}\right| \\
	&\quad+\sum_{k=1}^n 2^k 2^{-n}
	\left|b_{Q_{k-1}}-b_{\widehat{Q}}\right|\\
	&\lesssim\left[\sum_{k=1}^n2^{k}2^{-n}
	+ \sum_{k=1}^n2^{k}2^{-n}(n-k)\right]\|b\|_{\mathrm{BMO}[0,1)}\\
&	\lesssim \|b\|_{\mathrm{BMO}[0,1)}.
\end{align*}
Thus, we conclude that $\mathrm{I}_2\lesssim \|b\|_{\mathrm{BMO}[0,1)},$
which, together with \eqref{cei}, further implies the desired assertion.
This finishes the proof of Lemma \ref{lem-cesa}.
\end{proof}

Applying the above lemma, we obtain the endpoint estimate of
the commutator $[\sigma,b]$ with $b\in \mathrm{BMO}[0,1)$.

\begin{proposition}\label{pro-cesa}
Theorems \ref{bdT} and \ref{cmT} when $T=\sigma$ and $q=1$ hold true.
\end{proposition}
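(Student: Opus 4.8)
The plan is to reduce everything to showing that the maximal operator $\sigma:=\sup_{n\in\mathbb N}\sigma_n$ belongs to the class $\mathcal{K}_1$ of Definition \ref{def-K}; once this is in hand, Theorems \ref{bdT} and \ref{cmT} with $q=1$ and $T=\sigma$ follow at once from the general theory. Since in this subsection the filtration $(\mathcal{F}_n)_{n\in\mathbb{Z}_+}$ is regular and every $\mathcal{F}_n$ is atomic (generated by dyadic intervals), Remark \ref{rem-s-ast} grants several simplifications: the martingale jump conditions \eqref{AsTd} and \eqref{comTd} are not needed, the commutation condition \eqref{comT} is not needed, and in \eqref{AsT} one may work with simple $\infty$-atoms in place of simple $(s,\infty)$-atoms. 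Moreover $\sigma$ is sublinear (it is a supremum of moduli of the linear operators $\sigma_n$) and bounded on $L^2$ (indeed $\sigma$ is bounded on $L^p[0,1)$ for every $p\in(1,\infty]$). So it remains to verify only: (i) $\sigma$ is bounded from $H_1$ to $L^1$; (ii) $\sigma$ is bounded from $L^1$ to $L^{1,\infty}$; and (iii) the estimate \eqref{AsT} with $q=1$ for simple $\infty$-atoms.

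Items (i) and (ii) are classical facts in Walsh--Fourier analysis: the maximal operator of the Ces\`{a}ro (Fej\'er) means is bounded from the dyadic Hardy space $H_1[0,1)$ to $L^1[0,1)$ and is of weak type $(1,1)$; see, for instance, \cite{We2002} and the references therein. Together with the $L^2$-boundedness and sublinearity noted above, this disposes of everything in Definition \ref{def-K} except \eqref{AsT}.

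For \eqref{AsT}, fix $b\in\mathrm{BMO}[0,1)$ and a simple $\infty$-atom $a$ with respect to some $n\in\mathbb{Z}_+$ and some $Q\in A(\mathcal{F}_n)$, and let $\widehat{Q}$ be the dyadic parent of $Q$. The subtle point is that $b_{n-1}=\mathbb{E}_{n-1}(b)$ coincides on $\mathrm{supp}(a)\subset Q$ with the constant $b_{\widehat{Q}}$, but \eqref{AsT} is an integral estimate over all of $[0,1)$, so $b_{n-1}$ cannot simply be replaced by $b_{\widehat{Q}}$. Instead I would split $b-b_{n-1}=(b-b_{\widehat{Q}})+(b_{\widehat{Q}}-b_{n-1})$ and invoke Lemma \ref{lem-cesa} twice. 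First, Lemma \ref{lem-cesa} gives directly $\|(b-b_{\widehat{Q}})\sigma(a)\|_{L^1}\lesssim\|b\|_{\mathrm{BMO}}$. Second, $b_{n-1}=\mathbb{E}_{n-1}(b)$ is again a $\mathrm{BMO}$ martingale with $\|b_{n-1}\|_{\mathrm{BMO}}\leq\|b\|_{\mathrm{BMO}}$ (conditional expectation contracts the $\mathrm{BMO}$ norm), and the average of $b_{n-1}$ over $\widehat{Q}$ equals $b_{\widehat{Q}}$; hence Lemma \ref{lem-cesa} applied with $b$ replaced by $b_{n-1}$ yields $\|(b_{\widehat{Q}}-b_{n-1})\sigma(a)\|_{L^1}\lesssim\|b_{n-1}\|_{\mathrm{BMO}}\leq\|b\|_{\mathrm{BMO}}$. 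Adding the two gives \eqref{AsT}. (If one prefers to check \eqref{comT} explicitly rather than cite Remark \ref{rem-s-ast}: since $b_{n-1}$ is constant, equal to $b_{\widehat{Q}}$, on $\mathrm{supp}(a)$ and $\sigma$ is positively homogeneous, one has $[\sigma,b_{n-1}](a)=|b_{n-1}-b_{\widehat{Q}}|\,\sigma(a)$ pointwise, whose $L^1$ norm is bounded by exactly the second estimate above.) This establishes $\sigma\in\mathcal{K}_1$, and the proposition follows.

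The main obstacle I anticipate is precisely this transition from $b-b_{\widehat{Q}}$ (which is what the technical Lemma \ref{lem-cesa} controls) to $b-b_{n-1}$ (which is what membership in $\mathcal{K}_1$ demands): the two functions differ off $\mathrm{supp}(a)$, and the remedy is the observation that $b_{n-1}$ is itself a $\mathrm{BMO}$ function whose average over $\widehat{Q}$ is $b_{\widehat{Q}}$, so that Lemma \ref{lem-cesa} can simply be reused. Everything else is either classical (the mapping properties (i)--(ii)) or already supplied by the general framework (Theorems \ref{bdT} and \ref{cmT}), so no further hard work is expected.
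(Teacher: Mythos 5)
Your argument is correct, but it takes a different route from the paper's. You establish full membership $\sigma\in\mathcal{K}_1$ by verifying \eqref{AsT} (and \eqref{comT}) directly: you split $b-b_{n-1}=(b-b_{\widehat{Q}})+(b_{\widehat{Q}}-b_{n-1})$ and apply Lemma \ref{lem-cesa} twice, the second time to the $\mathrm{BMO}$ function $\mathbb{E}_{n-1}(b)$, using that conditional expectation contracts the $\mathrm{BMO}$ norm and that the average of $b_{n-1}$ over $\widehat{Q}$ is $b_{\widehat{Q}}$; your explicit reduction of $[\sigma,b_{n-1}](a)$ to $|b_{n-1}-b_{\widehat{Q}}|\,\sigma(a)$ via positive homogeneity is also sound, and is in fact needed, since the justification in Remark \ref{rem-s-ast} ("$b_{n-1}$ is constant on the support of an atom") is only conclusive when $T(a)$ is itself supported in $Q$, which fails for $\sigma$. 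The paper instead does not prove $\sigma\in\mathcal{K}_1$ at all: it observes that \eqref{AsT} and \eqref{comT} enter the proofs of Theorems \ref{bdT} and \ref{cmT} only through the single estimate $\|U(a,b)\|_{L^1}\lesssim\|b\|_{\mathrm{BMO}}$, and verifies that estimate directly by writing $U(a,b)=U(a,b-b_{\widehat{Q}})$, bounding it by $|\sigma(\Pi_2(a,b-b_{\widehat{Q}}))|+|b-b_{\widehat{Q}}|\sigma(a)$, handling the paraproduct term with the identity $\Pi_2(a,b-b_{\widehat{Q}})=\Pi_2(a,b-b_{n-1})$ together with Lemma \ref{g-gn} and the $H_1\to L^1$ boundedness of $\sigma$, and the second term with Lemma \ref{lem-cesa}. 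Your version buys the stronger statement $\sigma\in\mathcal{K}_1$ (so all corollaries of that class apply verbatim) at the modest extra cost of the $\mathrm{BMO}$-contraction observation; the paper's version avoids that observation but requires reopening the proofs of the general theorems to isolate exactly where the atom conditions are used. Both rest on the same technical Lemma \ref{lem-cesa}.
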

\begin{proof}
The following two inequalities can be found in Corollary 2 of \cite[p. 265]{SWS1990}
and \cite[Corollary 2.3]{MS2020} (see also \cite{We1996-AM}):
\begin{enumerate}[{\rm(i)}]
	\item for any $f\in L^1[0,1)$,
	$\|\sigma(f)\|_{L^{1,\infty}[0,1)}\leq C \|f\|_{L^1[0,1)}$,
	where $C$ is a positive constant independent of $f$;
	\item for any $f\in H_1[0,1)$,
	$\frac1C\|f\|_{H_1[0,1)} \leq\|\sigma(f)\|_{L^1[0,1)}
	\leq C\|f\|_{H_1[0,1)}$, where $C$ is a positive constant independent of $f$.
\end{enumerate}
Observe that, in the proofs of both Theorems \ref{bdT} and \ref{cmT},
\eqref{AsT} and \eqref{comT} are only used to show the following inequality:
for any simple $\infty$-atom $a$ and for any $b\in \mathrm{BMO}[0,1)$,
\begin{align}\label{cesa-u}
	\left\|U(a,b)\right\|_{L^1[0,1)}\lesssim \|b\|_{\mathrm{BMO}[0,1)},
\end{align}
where, for any $x\in[0,1)$,
 $$U(a,b)(x):=\sigma\left(\Pi_2(a,b)-b(x)a\right)(x).$$
Note that the filtration $(\mathcal{F}_n)_{n\in \mathbb{Z}_+}$
in this section is regular.
Thus, to prove the present proposition,
it is sufficient to show \eqref{cesa-u}.

For any $b\in \mathrm{BMO}[0,1)$ and any simple $\infty$-atom $a$
respect to some $n\in \mathbb{Z}_+$ and some $Q\in A(\mathcal{F}_n)$,
we obtain, for any $x\in[0,1)$,
\begin{align}\label{uce}
\left|U(a,b)(x)\right|&=
\left|U\left(a,b-b_{\widehat{Q}}\right)(x)\right|\\
&\leq \left|\sigma\left(\Pi_2\left(a,b-b_{\widehat{Q}}\right)
\right)(x)\right|+\left|b(x)-b_{\widehat{Q}}
\right|\sigma(a)(x).\nonumber
\end{align}
Since $\mathrm{supp}\,(a)\subset Q$, it follows that
$$\Pi_2\left(a,b-b_{\widehat{Q}}\right)=\Pi_2\left(a,b-b_{n-1}\right),$$
which, together with both the boundedness of $\sigma$ from $H_1[0,1)$
to $L^1[0,1)$ and Lemma \ref{g-gn}, further implies that
$$\left\|\sigma\left(\Pi_2\left(a,b-b_{\widehat{Q}}\right)
\right)\right\|_{L^1[0,1)}
\lesssim \left\|\Pi_2\left(a,b-b_{\widehat{Q}}\right)
\right\|_{L^1[0,1)}\lesssim \|b\|_{\mathrm{BMO}[0,1)}.$$
Combining this, \eqref{uce}, and Lemma \ref{lem-cesa},
we conclude that
\eqref{cesa-u} holds true.
This finishes the proof of Proposition \ref{pro-cesa}.
\end{proof}

The following conclusions follow  directly from both Proposition \ref{pro-cesa}
and Remark \ref{rem-commutator}; we omit the details.
\begin{corollary}
Let $b\in \mathrm{BMO}[0,1)$ be non-constant.  Then
	\begin{enumerate}[{\rm (i)}]
		\item the commutator $[\sigma,b]$ is bounded from $H_1[0,1)$ to $L^{1,\infty}[0,1)$;
		\item $\mathcal{Y}:=H_1^b[0,1)$ is the largest subspace of $H_1[0,1)$ such that
		the commutator $[\sigma,b]$ is bounded from $\mathcal{Y}$ to $L^1[0,1)$.
	\end{enumerate}
\end{corollary}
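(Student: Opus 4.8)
The plan is to read off both items from Proposition~\ref{pro-cesa}, which makes Theorems~\ref{bdT} and~\ref{cmT} available with $T=\sigma$ and $q=1$, together with two mapping properties of $\sigma$ already isolated in the proof of that proposition: $\sigma$ is of weak type $(1,1)$ on $[0,1)$, and $\|h\|_{H_1[0,1)}\approx\|\sigma(h)\|_{L^1[0,1)}$, so that $\sigma$ characterizes $H_1[0,1)$ among $L^1[0,1)$-martingales. Since the filtration of this subsection is regular, Remark~\ref{rem-s-ast} tells us these are the data needed to let $\sigma$ play, for our purposes, the role of an operator of class $\mathcal{K}_H$.

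For (i) I would just invoke Corollary~\ref{weak} with $T=\sigma$ and $q=1$; its proof uses only Theorem~\ref{bdT} for $\sigma$, the bound $\|L(f,b)\|_{L^1[0,1)}\lesssim\|f\|_{H_1[0,1)}\|b\|_{\mathrm{BMO}[0,1)}$ from Proposition~\ref{pro-3}, and the weak type $(1,1)$ of $\sigma$, all of which are now in hand. Concretely, for $(f,b)\in H_1[0,1)\times\mathrm{BMO}[0,1)$, Theorem~\ref{bdT} gives a bounded subbilinear operator $R$ with $|[\sigma,b](f)|\le R(f,b)+|\sigma(L(f,b))|$; then $\|R(f,b)\|_{L^1[0,1)}\lesssim\|f\|_{H_1[0,1)}\|b\|_{\mathrm{BMO}[0,1)}$ and $\|\sigma(L(f,b))\|_{L^{1,\infty}[0,1)}\lesssim\|L(f,b)\|_{L^1[0,1)}\lesssim\|f\|_{H_1[0,1)}\|b\|_{\mathrm{BMO}[0,1)}$, and the embedding $L^1[0,1)\hookrightarrow L^{1,\infty}[0,1)$ together with the quasi-triangle inequality of $\|\cdot\|_{L^{1,\infty}[0,1)}$ yields (i).

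For (ii) I would separate boundedness from maximality. Boundedness of $[\sigma,b]$ from $H_1^b[0,1)$ to $L^1[0,1)$ is exactly Theorem~\ref{cmT} with $T=\sigma$ and $q=1$, granted by Proposition~\ref{pro-cesa}. For maximality, let $\mathcal{Y}\subset H_1[0,1)$ be any subspace on which $[\sigma,b]$ is bounded into $L^1[0,1)$ and take $f\in\mathcal{Y}$, so $[\sigma,b](f)\in L^1[0,1)$. Since Theorem~\ref{bdT} holds for $\sigma$, so does its reformulation Corollary~\ref{th:equiv}, hence $\sigma(L(f,b))\in L^1[0,1)$; the Fej\'er characterization of $H_1[0,1)$ then forces $L(f,b)\in H_1[0,1)$; and the equivalence (i)$\Longleftrightarrow$(ii) of Theorem~\ref{4ec}, which needs only that $b$ is non-constant, gives $f\in H_1^b[0,1)$. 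Thus $\mathcal{Y}\subset H_1^b[0,1)$, which is the asserted maximality. This is the argument of Remark~\ref{rem-commutator} with the class $\mathcal{K}_H$ replaced by the single operator $\sigma$.

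The one step that is not a verbatim appeal to an earlier result is the implication ``$\sigma(h)\in L^1[0,1)\Rightarrow h\in H_1[0,1)$'' for an $L^1[0,1)$-martingale $h$ used in the maximality argument. I expect no real obstacle: this is the (known) Fej\'er-maximal characterization of the Walsh martingale Hardy space quoted in the proof of Proposition~\ref{pro-cesa}, and it is precisely what makes $\sigma$ behave, for the purposes of Theorem~\ref{4ec}, like an element of $\mathcal{K}_H$. Everything else is bookkeeping built on Corollary~\ref{weak}, Theorems~\ref{cmT} and~\ref{4ec}, Corollary~\ref{th:equiv}, and Proposition~\ref{pro-3}.
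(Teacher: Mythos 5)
Your proposal is correct and follows essentially the same route as the paper, which derives both items directly from Proposition \ref{pro-cesa} (Theorems \ref{bdT} and \ref{cmT} for $T=\sigma$, $q=1$) together with the arguments of Corollary \ref{weak} and Remark \ref{rem-commutator}; you simply spell out the details the paper omits, including the one genuinely needed extra input, namely the Fej\'er-maximal characterization $\|f\|_{H_1[0,1)}\approx\|\sigma(f)\|_{L^1[0,1)}$ already quoted in the proof of Proposition \ref{pro-cesa}, which lets $\sigma$ play the role of an operator in $\mathcal{K}_H$ for the maximality step.
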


\noindent\textbf{Acknowledgement}.
We thank Odysseas Bakas, Zhendong Xu, Yujia Zhai,
and Hao Zhang for personal communication on this subject and for  sending to us  a preliminary version of \cite{BXZZ}. We also thank Quanhua Xu for his interest in the subject and for having indicated to us that Odysseas Bakas, Zhendong Xu, Yujia Zhai,
and Hao Zhang worked on related problems, which led us to send them a first draft of the content of this article.

Thanks go also to Dmitriy Stolyarov who indicated to us that  
we cited wrongly some results on the fractional integral 
(see Remark \ref{stolyarov}).

\bigskip

\noindent Aline Bonami

\smallskip

\noindent  Institut Denis Poisson, UMR CNRS 7013, University of Orl\'eans,
45067 Orl\'eans cedex 2, France

\smallskip

\noindent {\it E-mail}: \texttt{aline.bonami@univ-orleans.fr} (A. Bonami)

\bigskip

\noindent  Yong Jiao, Guangheng Xie and Dejian Zhou

\smallskip

\noindent  School of Mathematics and Statistics, HNP-LAMA,
Central South University, Changsha 410083, The People's Republic of China

\smallskip

\noindent {\it E-mails}: \texttt{jiaoyong@csu.edu.cn} (Y. Jiao)

\noindent\phantom{{\it E-mails}:} \texttt{xieguangheng@csu.edu.cn} (G. Xie)

\noindent\phantom{{\it E-mails}:} \texttt{zhoudejian@csu.edu.cn} (D. Zhou)

\bigskip

\noindent Dachun Yang

\smallskip

\noindent  Laboratory of Mathematics and Complex Systems
(Ministry of Education of China),
School of Mathematical Sciences, Beijing Normal University,
Beijing 100875, The People's Republic of China

\smallskip

\noindent {\it E-mail}: \texttt{dcyang@bnu.edu.cn} (D. Yang)

\end{document}